\documentclass[11pt, oneside]{article} 
\usepackage{geometry} 
\usepackage{amsmath,amsthm,amssymb}
\usepackage{upgreek}
\usepackage{bbm}
\usepackage[mathscr]{euscript}
\usepackage{hyperref}
\hypersetup{
    colorlinks=true,
    linkcolor=black,
    urlcolor=blue,
    }
\usepackage{tikz-cd}
\usepackage{pgfplots}
\pgfplotsset{compat=1.12}
\usetikzlibrary{arrows.meta,angles,quotes}
\usepackage{longtable}

\newcommand{\norm}[1]{\left\lVert#1\right\rVert}
\newcommand{\innerpr}[1]{\langle\,#1\,\rangle}
\newcommand{\interior}{\text{int}\,}
\newcommand{\bbR}{\mathbb{R}}
\newcommand{\bbZ}{\mathbb{Z}}
\newcommand{\vb}{\mathcal{B}}
\newcommand{\A}{\mathcal{A}}
\newcommand{\D}{\mathcal{D}}
\newcommand{\E}{\mathcal{E}}
\newcommand{\F}{\mathcal{F}}
\newcommand{\M}{\mathscr{M}}
\newcommand{\T}{\mathcal{T}}
\newcommand{\pf}{\mathbbm{f}}
\newcommand{\pr}{\mathbbm{r}}
\newcommand{\ph}{\mathbbm{h}}
\newcommand{\dotin}{\ \dot{\in}\ }

\newtheorem{theorem}{Theorem}[section]
\newtheorem{lemma}[theorem]{Lemma}
\newtheorem{corollary}[theorem]{Corollary}
\newtheorem{proposition}[theorem]{Proposition}
\newtheorem{definition}[theorem]{Definition}
\newtheorem{remark}[theorem]{Remark}
\newtheorem*{maintheoremA}{Theorem A}
\newtheorem*{maintheoremB}{Theorem B}
\newtheorem*{goal*}{Goal}

\usepackage{chngcntr}
\numberwithin{figure}{section}
\numberwithin{table}{section}
\numberwithin{equation}{section}

\newcommand{\keywords}[1]
{\textbf{Keywords:} #1}

\title{Construction and upper bound on the minimum genus of an embedded surface with Anosov geodesic flow}

\author{
Victor Donnay\thanks{Department of Mathematics, Bryn Mawr College, Bryn Mawr, PA, USA,  vdonnay@brynmawr.edu}, 
Daniel Visscher\thanks{Department of Mathematics, Ithaca College, Ithaca, NY, USA, dvisscher@ithaca.edu.}
}

\begin{document}
\maketitle

\begin{abstract}
    We create examples of smooth, compact surfaces in $\bbR^3$ for which the geodesic flow is Anosov. We  determine their genus, thereby giving a (non-sharp) upper bound for the minimal genus of an embedded surface with Anosov geodesic flow. These   examples  are  explicit physically realizable Anosov systems. 
\end{abstract}

{\let\thefootnote\relax\footnotetext{\noindent {\bf 2020 Mathematics Subject Classification (MSC):} Primary 37D20,   
Secondary 37D40, 53A05, 53C22.  } }
{\let\thefootnote\relax\footnotetext{\noindent \keywords{geodesic flow, embedded surface, Anosov system} }}

\tableofcontents

\newpage
\section*{Notation}

\begin{longtable}{l p{5in}}

$(a,b) \in \Omega$                  & rectangular coordinates (Eqn~\ref{eqn:globalcoordinatesystem})\\

$C_i(s_0,s_1)$                      & $i=1,2,3$ geodesic control constants  (Eqn~\ref{eqn:Ci}) \\

$\D_0 \subset \bbR^2$               & hexagonal packing of unit disks (Sec~\ref{sec:circlepacking}) \\
$\D_\rho$                           & set of disks with same centers as $\D_0$, each scaled to radius $1-\rho \Delta r$ (Def~\ref{def:Drho})\\
$D_{\rho}\subset \D_{\rho} $ & a specific disk of radius $1-\rho \Delta r$ (Def~\ref{def:Drho})\\

$E = E_{(s_0,s_1)}$                 & first fundamental form component for metric $g_{(s_0,s_1)}$\\
$E_\M$                              & first fundamental form component for the embedded metric on the scaled model space $\M_{s_0}$ (Lemma~\ref{lemma:fundformdiffterms})\\
$\Delta E$                          & the difference $E_{(s_0,s_1)} - E_\M$ in the corresponding metric component due to the embedding map (Lemma~\ref{lemma:fundformdiffterms})\\

$\pf$                               & unsmoothed tube profile function (Eqn~\ref{eqn:unsmoothedtubefunction}) \\
$f$                                 & smoothed tube profile function (Prop~\ref{prop:smoothprofilefunction}) \\

$g_{Eucl}$ & the Euclidean metric on $\bbR^3$ (Eqn~\ref{eqn:modelspacemetric})\\
$g_0$                               & Euclidean metric on the plane (Sec~\ref{sec:circlepacking}) \\
$g_{\M_{0}}$                        & restriction of the Euclidean metric $g_{Eucl}$ to the model space $\M_{0}$ (Eqn~\ref{eqn:modelspacemetric})\\
$g_s = g_{(s_0,s_1)}$               & pullback metric of embedded surface: $g_s = X_{s_1}^* g_{Eucl} \big|_{X(\M_{s_0})}$ (Eqn~\ref{eqn:pullbackmetric}) \\
                                    & sometimes in coordinates: $g_s = (X_{s_1} Y_{s_0})^*g_{emb}$ (Sec~\ref{sec:coordinates})\\
$\Delta g$                          & the difference between the embedded metric $g_s$ and the model metric $g_0$ in coordinates (Sec~\ref{sec:coordinates}) \\
$\gamma_0$                          & a geodesic on the Euclidean plane (Sec~\ref{sec:contdep}) \\
$\gamma_s$                          & a geodesic in the metric $g_s$ (Sec~\ref{sec:contdep}) \\
                                    
$K_{pos}$                           & upper bound on the (positive) curvature of $g_s$ outside a given set (Thm~\ref{thm:Anosov}) \\
$K_{neg}$                           & upper bound on the (negative) curvature of $g_s$ inside a given set (Thm~\ref{thm:Anosov}) \\
$K_{\rho}$                          & upper bound on the (negative) curvature of $g_s$ inside $\T_{\rho}$ ($\rho=1/4,\, 1/2$; Eqn~\ref{eqn:Krhodef})\\

$\lambda_s(v)$                      & ratio of the $g_s$-length of $v$ to its $g_0$-length (Eqn~\ref{eqn:lambdadef})\\
$\lambda_s^{lb}$ (resp., $\lambda_s^{ub}$) & lower bound (resp., upper bound) for $\lambda_s(v)$ for any $v \in \Omega$ (Lemma~\ref{lemma:lengthratio2})\\

$\M_{0}$                            & model space (Sec~\ref{sec:tubes})  \\
$\M_{s_0}$                          & model space scaled to height $s_0$ (Sec~\ref{sec:tubescaling})\\

$\Omega = \bbR^2 \setminus \D_1$    & rectangular coordinate space (Sec~\ref{sec:rectangularcoordinates})\\

$\Delta r$                          & $= 1- \cos (\pi/6)$, the minimum radial distance a straight line in $\bbR^2$ is guaranteed to penetrate a disk in $\D_0$ (Eqn~\ref{eqn:DeltarDefn}) \\

$s_0$                               & scaling parameter for the model space (Sec~\ref{sec:tubescaling}) \\
$s_1$                               & embedding parameter (Sec~\ref{sec:embeddingmap}) \\

$\T = \T_{s_0} \subset \M_{s_0}$    & tubes in the model space (Sec~\ref{sec:tubescaling}) \\
$\T_\rho \subset \M_{s_0}$          & tubes sitting above $\D_\rho$ (Def~\ref{def:Tubesrho})\\
$T_{\rho}\subset \T_{\rho} $ & a specific tube  in the collection $\T_{\rho}$ (Def~\ref{def:Tubesrho})\\
$T_{ret}$                           & bound on the time it takes a geodesic to return to a given set in a good way (Def~\ref{def:strongfhp}) \\
$\Delta t$                          & lower bound on amount of time spent inside of a given set (Def~\ref{def:strongfhp})\\
$T_{ret}(\rho)$                     & optimal time for which $\D_1 \subset \D_{\rho}$ has the  $T_{ret}=T_{ret}(\rho)$ strong finite horizon property in the $g_0$ metric (Prop~\ref{prop:DrhoStrongFiniteHorizon})\\
$\Delta t(\rho)$                    & corresponding $\Delta t$ value for the sets $\D_1 \subset \D_{\rho}$ (Prop~\ref{prop:DrhoStrongFiniteHorizon})\\
$T_{gc}$                            & bound on the time for which the distance between partner geodesics is bounded (often by $\Delta r/4$) (Sec \ref{sec:contdep})\\
$T_{tube}(p,v)$                     & first time either $\gamma_{0}(p, v, t)$ or $\gamma_{s}(p, v_s, t)$ leaves $\Omega$ (Eqn~\ref{eqn:Ttube}) \\

$u^*$                               & ``synthetic'' Riccati solution (Def~\ref{def:uRiccatisoln}) \\
$u_\gamma$                          & Riccati solution along geodesic (Pf of Thm~\ref{thm:Anosov}) \\

$X^{sc}_{s_0}$                      & map $\bbR^3 \to \bbR^3$ that vertically scales the tubes of the model space (Sec~\ref{sec:tubescaling}) \\
$X^{emb}_{s_1}$                           & embedding map $\bbR^3 \to \bbR^3$ (Sec~\ref{sec:embeddingmap}) \\
$X_s =X^{emb}_{s_1}\circ X^{sc}_{s_0}$ & map $\bbR^3 \to \bbR^3$ that composes the scaling and embedding maps (Sec~\ref{sec:embeddingmap}) \\

$Y_0$                               & coordinate map on the model space $\M_0$ (Sec~\ref{sec:coordinates}) \\
$Y_{s_0}$                           & coordinate map on the scaled model space $\M_{s_0}$ (Sec~\ref{sec:coordinates}) \\
$Y_{(s_0,s_1)}$                     & coordinate map on the embedded model space $X_{(s_0,s_1)}(\M_0)$ (Sec~\ref{sec:coordinates}) \\

\end{longtable}

\begin{center}
\begin{tikzcd}[row sep=large, column sep=large]
\text{model space} & \M_0 \subset \bbR^3 \ar[r,"X^{sc}_{s_0}"] \ar[rr,bend left=40,"X_{(s_0,s_1)}"] & \M_{s_0} \subset \bbR^3 \ar[r,"X^{emb}_{s_1}"] & \bbR^3 \\
\text{coordinates} & \mathcal{U} \subset \bbR^2 \ar[u,"Y_0"'] \ar[ur,bend right=5,"Y_{s_0}"] \ar[urr,bend right=10,"Y_{(s_0,s_1)}"']
\end{tikzcd}
\end{center}

\newpage
\section{Introduction}

Geodesic flows on smooth, compact surfaces of negative curvature have long served as prototypical examples of chaotic dynamical systems. Starting with Hadamard in 1898~\cite{Hadamard-98} and continuing with work in the 1930's and 40's by Hopf~\cite{Hopf-39, Hopf-48}, Hedlund~\cite{Hedlund-39}, and others, properties such as sensitive dependence on initial conditions, ergodicity, and mixing were established for such geodesic flows, first in constant and then in variable negative curvaturve. In 1967, Anosov~\cite{Anosov-67} recognized that the underlying phenomena responsible for the chaotic dynamics was \emph{uniform hyperbolicity}, and this fundamental property of a dynamical system was subsequently named after him. 

Since compact surfaces of negative curvature cannot be embedded in $\bbR^3$, a natural question about Anosov systems is whether there are any that are ``physically realizable''. In 2003, Hunt and MacKay \cite{Hunt-MacKay-03} established the existence of a physically realizable smooth Anosov system via linkages, which model the movement of point masses connected by rigid rods.  At about the same time, in answer to a question of M. Hermann, Donnay and Pugh~\cite{Donnay-Pugh-04} proved the existence of physically realizable surfaces (i.e., compact surfaces isometrically embedded in $\bbR^3$) with Anosov geodesic flow.   

Both of these results use limiting arguments. Hunt and Mackay produced an Anosov system that is a limit of linkage systems (but not one itself), so that, using the openness of Anosov systems, there are sufficiently small parameter values that yield Anosov linkages. Donnay and Pugh showed that for a sufficiently large pair of concentric spheres and  a sufficiently large number of negatively curved tubes placed between them, with the tubes placed  in such a way that the effects of the negative curvature outweigh the effects of the positive curvature, the geodesic flow would be Anosov. In neither case is the value of ``sufficiently'' determined.\footnote{Hunt and MacKay do give explicit parameter values that are computationally promising, but note that ``to make a proof, however, would probably require computer-assisted estimates." Indeed, the current paper uses computer assistance for computing bounds.}

In 2016, Kourganoff~\cite{Kourganoff-16-linkage} made further progress by constructing an Anosov linkage system in a more explicit fashion. This linkage system, different from the Hunt-MacKay one, comes with explicit values for the lengths of the links and most of the masses. However, the result still requires one of the masses to be ``sufficiently small''. 

In this paper, we return to the category of geodesic flows on embedded surfaces and give the first completely explicit, constructed example of a physically realizable smooth system that is Anosov.\footnote{We note that Sinai billiards is a natural candidate for a physical Anosov system, but the billiard flow is not smooth.}  Once a geodesic flow on a surface has been shown to be Anosov then it is known to be ergodic and mixing \cite{Anosov-67}, Bernoulli \cite{Ratner-74} and have exponential decay of correlations \cite{Dolgopyat-98}, \cite{Liverani-04}. 

Our approach builds on the alternative proof of the Donnay-Pugh result given in \cite{Donnay-Visscher-18}. There we 
created a non-compact model space consisting of two copies of the flat plane joined by tubes of negative curvature which are attached to a finite horizon pattern of disks (see Figure \ref{fig:modelspace}). That model system is Anosov and hence so are all nearby systems, including ones coming from compact embedded surfaces. In this paper we essentially quantify the size of the Anosov neighborhood of the model space\footnote{The model space in this paper is slightly different than the one in \cite{Donnay-Visscher-18}, due to a different finite horizon configuration.} which leads to the proof of Theorem A.  We use a mix of analytic and computational methods 
\cite{Donnay-Visscher-mathematica}, with steps to ensure that computational methods result in true bounds. 

\begin{maintheoremA}
    There exists a compact surface embedded in $\bbR^3$ of genus 17,288,843,803 with Anosov geodesic flow. 
\end{maintheoremA}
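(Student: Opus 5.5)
The plan is to make the Donnay--Visscher perturbative argument quantitative. I would build the model space $\M_0$ as two parallel flat planes joined by negatively curved tubes. The tubes sit above the disks of the hexagonal packing $\D_0$, and each tube has profile function $f$, the smoothing of $\pf$ from Prop~\ref{prop:smoothprofilefunction}. I would then scale the tubes vertically by $s_0$ to get $\M_{s_0}$, and apply the embedding map $X^{emb}_{s_1}$. That map bends the two planes into large concentric spheres, or some other compact closed pair of sheets, and the parameter $s_1$ controls how far the bending is from flat. The resulting compact embedded surface carries the pullback metric $g_s=g_{(s_0,s_1)}$.

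The Anosov property will come from a cone/Riccati criterion (Thm~\ref{thm:Anosov}). It suffices to show that the surface has the strong finite horizon property for the sets $\T_1\subset\T_{1/2}$, with explicit constants $T_{ret}$ and $\Delta t$. Curvature must satisfy $K\le K_{pos}$ outside the tubes and $K\le -K_{\rho}$ inside $\T_\rho$. Finally, the synthetic Riccati solution $u^*$ (Def~\ref{def:uRiccatisoln}) must stay positive along every orbit: it gains enough from the $\Delta t$ spent in negative curvature to survive $T_{ret}$ of weak positive curvature.

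The steps, in order, are as follows.

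\begin{enumerate}
\item In the flat metric $g_0$ on $\Omega$, compute $T_{ret}(\rho)$ and $\Delta t(\rho)$ from the geometry of the hexagonal packing (Prop~\ref{prop:DrhoStrongFiniteHorizon}). These come from $\Delta r=1-\cos(\pi/6)$: every straight line penetrates some disk at least $\Delta r$ deep.
\item Bound $\Delta g=g_s-g_0$ in the coordinates $Y_{(s_0,s_1)}$, through the explicit differences $\Delta E$ and their derivatives (Lemma~\ref{lemma:fundformdiffterms}). This gives the length ratios $\lambda_s^{lb},\lambda_s^{ub}$ and the geodesic control constants $C_i(s_0,s_1)$.
\item Use continuous dependence of geodesics (Sec~\ref{sec:contdep}) to show that a $g_s$-geodesic stays within $\Delta r/4$ of its Euclidean partner for time $T_{gc}$. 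Restart the comparison each time the geodesic passes through a tube. Then the $g_0$ finite horizon property transfers to $g_s$ with slightly degraded constants.
\item Bound the curvature. On the bent sheets the curvature is positive but $O(s_1^2)$-small, which gives $K_{pos}$. Inside the tubes the curvature is at most $-K_\rho$, using the profile $f$ and the scaling $s_0$.
\item Verify the Riccati inequality for explicit numerical values of $s_0$ and $s_1$. All function bounds are computed by interval-style estimates, so the numerics are rigorous.
\end{enumerate}

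The genus count is then combinatorial. The surface has two sheets joined by $N$ tubes. Closing up the sheets gives two spheres, and each additional tube adds a handle, so the genus is $N-1$. Here $N$ is the number of hexagonal-packing disks needed to tile the bent sheets at the scale forced by $s_1$, counted exactly for the specific embedding; that count produces $17{,}288{,}843{,}803$.

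I expect steps 3--5 to be the main obstacle. The positive-curvature sheets must be nearly flat over a region of size about $T_{ret}$, which forces $s_1$ to be tiny. Yet the geodesic comparison constants $C_i$ grow exponentially in time, so the errors must be controlled uniformly across the tubes, where the metric is far from flat. My first approach would be to compare geodesics only outside the tubes, on $\Omega$. There I would treat each tube passage as an exit-and-reentry event, using $T_{tube}(p,v)$. Inside the tubes I would rely only on the negative curvature bound and not on closeness to Euclidean geodesics. Then I would tune $s_0$ to make the tube curvature strong, and take $s_1$ just small enough that $K_{pos}\,T_{ret}$ is beaten by the tube contribution.
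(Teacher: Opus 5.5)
Your overall architecture matches the paper: model space over the hexagonal packing, vertical scaling by $s_0$, pullback by $X^{emb}_{s_1}$, comparison with straight lines only on $\Omega$, curvature bounds, and a Riccati criterion checked with rigorous numerics. But there are genuine gaps exactly where the specific number is decided.

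\textbf{Topology and the genus count.} $X^{emb}_{s_1}$ does not bend the planes into spheres. It wraps each plane $w=\mathrm{const}$ onto a torus with radii $R_1$ and $R_2+w$, and this matters. By Lemma~\ref{lemma:DXdecomp}, $DX^{emb}_{s_1}$ is a rotation plus an error tending uniformly to $0$, so the pullback stays uniformly close to the flat model. By contrast, the hexagonal pattern cannot be wrapped periodically onto a sphere, so you would lose the explicit lattice constants of Proposition~\ref{prop:DrhoStrongFiniteHorizon}. For two nested tori joined by $N$ tubes the genus is $N+1$, not $N-1$. Moreover, $X_s(\M_0)$ closes up to a compact embedded surface only for discrete parameters: the periods must match the lattice, $2\pi R_1=2m$ and $2\pi R_2=2\sqrt3\,n$. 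With $R_1=1/(\pi s_1^2)$ and $R_2=1/(\pi s_1)$ this forces $s_1=1/(\sqrt3 n)$ and $m=3n^2$. Each rectangle $[0,2]\times[0,2\sqrt3]$ carries two tubes, so $N=6n^3$ and $g=6n^3+1$ (Lemma~\ref{lemma:genusformula}). Since $17{,}288{,}843{,}803=6\cdot 1423^3+1$, what must be shown is that the Anosov criterion holds at $s_1=1/(1423\sqrt3)\approx 4.06\times 10^{-4}$ for a suitable $s_0$. Your proposal asserts that the count ``produces'' the number without any of this.

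\textbf{Curvature bound and sharpness of the estimates.} Hypothesis 3 of Theorem~\ref{thm:Anosov} needs $K\le K_{pos}$ everywhere. The dominant positive curvature is not on the sheets, where it is about $\pi^2 s_1^3$. It sits on the outer annuli $\A_{1/4}=\T_0\setminus\T_{1/4}$ of the tubes, where $e_\M g_\M = r^3 s_0^2 f''f'$ decays to $0$ toward the rim while the available bound on $\Delta eg$ is of order $s_0 s_1+s_1^2$. The paper's bound there, roughly $(6.57 s_0 s_1+16.5 s_1^2)/(0.87-10.8 s_1)$ (Proposition~\ref{prop:Kpolarub}), is what the Riccati test must beat. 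Taking $K_{pos}$ from the sheets alone would give an invalid certificate.

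Even after these repairs, the scheme you describe (one bound $K_{neg}$ on $\T_{1/4}$, a single $\Delta t$, comparison for time $3$) is the paper's baseline. That baseline only certifies $s_1\lesssim 1.7\times 10^{-4}$, i.e.\ $n=3387$ and genus $233{,}129{,}289{,}619$. Reaching $n=1423$ needs the refinements of Section~\ref{sec:SophisticatedEstimates}:
\begin{itemize}
\item re-anchoring the comparison line where $\gamma_s$ leaves $\D_0$, so that $T_{gc}=2.5$ suffices;
\item the sharper $C_3$;
\item above all, splitting each good passage into time in $\T_{1/4}$ and time in $\T_{1/2}$ with the stronger bound $K_{1/2}$. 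This uses the refined finite horizon property, the three-piece synthetic solution $u^*_{lb}$, and Theorem~\ref{thm:ImprovedAnosov}.
\end{itemize}

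Two smaller corrections. With $\Delta r/4$ shadowing, $g_s$-geodesics are only guaranteed to reach $\T_{3/4}$ when their Euclidean partners reach $\D_1$, so the right pair of sets is $\T_{3/4}\subset\T_{1/4}$, not $\T_1\subset\T_{1/2}$. And there is no exponential growth to control: the Christoffel symbols of $g_0$ vanish in rectangular coordinates, and integrating twice gives the quadratic bound $C_1t+C_2^2C_3t^2/2$ of Theorem~\ref{thm:deltaabounds}.
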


This result can also be viewed in the tradition of geodesic flow results that consider how far away from surfaces of negative curvature one can go and still retain the Anosov property. Eberlein \cite{Eberlein-73-I,Eberlein-73-II} found conditions under which surfaces of non-positive curvature would be Anosov: every geodesic needs to go through a point of negative curvature.  His results also extend to surfaces of primarily non-positive  curvature but with small amounts of positive curvature.

Too much positive curvature, however, is an obstruction to being Anosov. Klingberg \cite{Klingenberg-74} showed that if a surface had conjugate points, then its geodesic flow could not be Anosov. Work of E. Hopf \cite{Hopf-48} then implies that no Riemannian metric on two-sphere or two-torus can be Anosov, since all metrics on $S^2$ and all non-flat metrics on $T^2$ have conjugate points and those surfaces---whether embedded or not---cannot support Anosov geodesic flow. While all surfaces of genus $g\geq 2$ have metrics of strictly negative curvature and therefore Anosov geodesic flow, these metrics are not realizable as surfaces isometrically embedded in $\bbR^3$.  

From this point of view, Donnay and Pugh's result shows that the minimum genus of a compact embedded surface with an Anosov geodesic flow is well-defined, and the above discussion shows that a lower bound for this number is 2. Our result then gives a first upper bound:

\begin{maintheoremB}
    The minimum genus of a compact embedded surface with Anosov geodesic flow is bounded above by $1.73\times 10^{10}$.
\end{maintheoremB}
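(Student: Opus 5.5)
Theorem B follows from Theorem A together with the discussion preceding it, so my plan is to reduce the statement to Theorem A. The minimum genus is well defined: by Donnay--Pugh the set of genera of compact embedded surfaces with Anosov geodesic flow is nonempty, and by Klingenberg and Hopf every element of it is at least $2$. Hence the minimum is bounded above by any single realized genus. Theorem A supplies a surface of genus $17{,}288{,}843{,}803 \approx 1.7289\times 10^{10} < 1.73\times 10^{10}$, which gives Theorem B immediately once Theorem A is established.

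So the substance lies in proving Theorem A. I would do this in four steps.

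\textbf{Model space.} Build the model space $\M_0$: two copies of the Euclidean plane joined by negatively curved tubes. The tubes sit over the hexagonal packing $\D_0$, with the profile function $f$ of Prop~\ref{prop:smoothprofilefunction}. The packing has a finite-horizon property, quantified by Prop~\ref{prop:DrhoStrongFiniteHorizon}: there is a time $T_{ret}(\rho)$ and a minimal time $\Delta t(\rho)$ spent in tubes.

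\textbf{Quantified perturbation.} Scale the tubes by $s_0$ and bend the plane into a large sphere pair by $s_1$. Then quantify every deviation from the model, using the metric difference bounds (Lemma~\ref{lemma:fundformdiffterms}), the length ratios $\lambda_s^{lb},\lambda_s^{ub}$ (Lemma~\ref{lemma:lengthratio2}), and the geodesic control constants $C_i(s_0,s_1)$. These give that partner geodesics stay within $\Delta r/4$ for time $T_{gc}$. Consequently the strong finite horizon property persists for $g_s$: every geodesic returns to $\T_{1/2}$ within $T_{ret}$ and spends at least $\Delta t$ in it.

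\textbf{Anosov criterion.} Apply Thm~\ref{thm:Anosov} using the synthetic Riccati solution $u^*$. One needs the negative curvature bound $K_\rho$ inside the tubes to dominate the small positive curvature $K_{pos}$ (of order $s_1^2$) accumulated over time $T_{ret}$. This yields an invariant cone field, and hence the Anosov property.

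\textbf{Genus count.} Choose explicit admissible $(s_0,s_1)$ and count the tubes fitting between the two concentric spheres. The genus is (number of tubes) $-1$.

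The main obstacle is making the perturbation constants explicit and rigorous enough that the admissible $s_1$ is not astronomically small. That requires computer-assisted but verified interval bounds on curvature and on the geodesic deviation. It is the step I would spend the most effort on, first trying crude Gronwall-type estimates for the geodesic comparison and refining only where the resulting genus is too large.
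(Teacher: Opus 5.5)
Your reduction is exactly the paper's. The set of realized genera is nonempty by Donnay--Pugh (and bounded below by $2$ via Klingenberg and Hopf), and Theorem~\ref{thm:improvedgenusbound} realizes genus $6\cdot 1423^3+1=17{,}288{,}843{,}803<1.73\times 10^{10}$. So once Theorem A is granted, your argument for Theorem B is complete.

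Your sketch of Theorem A, however, misdescribes the construction in ways that would matter if you actually carried it out.

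\emph{Tori, not spheres.} The map $X^{emb}_{s_1}$ sends the two planes to nested tori with radii $R_1=1/(\pi s_1^2)$ and $R_2=1/(\pi s_1)$. This is essential: a sphere is not a quotient of the plane by a lattice, whereas on the torus the pulled-back metric converges to the flat periodic model. All the rectangular-coordinate bounds ($\lambda_s^{lb}$, $\lambda_s^{ub}$, $C_i$) are computed on that periodic model.

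\emph{Embeddedness and genus.} Embeddedness requires matching periods, $2\pi R_1=2m$ and $2\pi R_2=2\sqrt3\,n$. This restricts $s_1$ to the discrete values $1/(\sqrt3\,n)$ with $m=3n^2$, a step absent from your plan. Two tori are then joined by $2mn=6n^3$ tubes, so the genus is the number of tubes \emph{plus} one, namely $6n^3+1$ (Lemma~\ref{lemma:genusformula}). Your ``minus one'' is the count for two spheres.

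\emph{Order of $K_{pos}$.} On the flat part the curvature is $O(s_1^3)$. On the outer annuli $\A_{1/4}=\T_0\setminus\T_{1/4}$, however, the bound is of order $s_0 s_1$ (Proposition~\ref{prop:kposbound}), not $s_1^2$. Balancing this against $K_{neg}$, which is of order $-s_0^2$, is what forces $s_1$ to be a small multiple of $s_0$ and makes the genus so large.

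\emph{Which set the geodesics return to.} What survives the perturbation is a return to $\T_{1/4}$ entered in a $\T_{3/4}$-good way, not a return to $\T_{1/2}$. The flat return time to $\D_{1/2}$ transfers only to $\T_{1/4}$, after absorbing the $\Delta r/4$ shadowing error.

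\emph{No Gr\"onwall.} Because the comparison metric is flat, integrating the geodesic equations twice gives the quadratic bound $C_1T+C_2^2C_3T^2/2$ directly; the paper never needs an exponential estimate.
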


While the genus of the constructed example (given in Theorem A) is an exact number, it is certainly not a sharp upper bound for the minimum genus of an embedded surface with Anosov geodesic flow: readily thought-of imporovements will marginally improve this result, though also complicate the analysis. The current paper carries through this idea: a simple version of our methods yields a surface of genus 233,129,289,619; bounding improvements pursued in Section 8 yield the surface of genus 17,288,843,803.

The complexity of this example, reflected in the relatively large genus, stems from the difficutly of ensuring hyperbolic dynamics everywhere while in $\bbR^3$. Changing the ambient space in which the embedding occurs alters the problem significantly: for example, Kourganoff \cite{Kourganoff-16-embed} has given examples of Anosov surfaces of genus $\geq 11$ that embed into $S^3$ (but not $\bbR^3$). 

For surfaces embedded in $\bbR^3$, successful methods have interspersed regions of positive and negative curvature in such a way that the small amounts of positive curvature a geodesic encounters in between visits to negative curvatures does not undo the exponential divergence caused by the negative curvature (\cite{Donnay-Pugh-03}, \cite{Donnay-Visscher-18}). The Gauss-Bonnet Theorem links the total amount of curvature on a surface  $M$  with the genus via the formula
    $$\int_M \, K(p) \, dA = 2 \pi (2 - 2g).$$
The larger the genus, the more negative this total curvature becomes, and the easier it is to construct embedded metrics with Anosov geodesic flow.

\subsection*{Overview of the construction}

To construct the embedded surface,  start with a finite horizon pattern of disks on the flat torus. Make a second copy of this flat torus with disks, place it above the first and connect the two by tubes of negative curvature that sit above the disks (Figure~\ref{fig:modelspace}). Take the metric induced by the Euclidean one on $\bbR^3$.  

\begin{figure}[h!]
    \centering
    \includegraphics[width=0.7\linewidth]{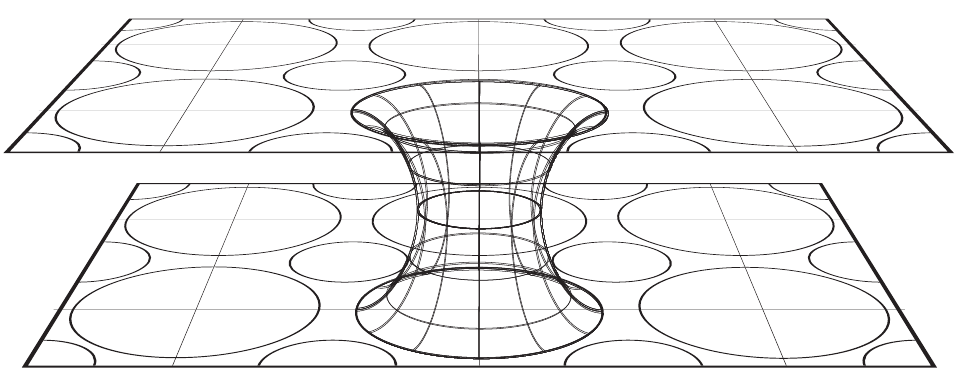}
    \caption{Two flat tori with finite horizon disk patterns and one of the joining tubes of negative curvature drawn in. This finite horizon pattern is from~\cite{Donnay-Visscher-18}; the finite horizon pattern used in this paper is shown in Figure~\ref{fig:circlepacking}.}
    \label{fig:modelspace}
\end{figure}

By the finite horizon condition of the disk configuration, all geodesics will enter a tube sitting above the disk in some finite time and with some positive angle. Geodesics then spend some positive time inside a part of the tube where the curvature is strictly negative. By compactness, we can make all these quantities uniform: there exists a region $S$, a subset of the tubes, in which the curvature is bounded above by $K_{neg} < 0$, so that all geodesics will enter $S$ in time at most $T_{ret} > 0$ and will then  spend at least time $\Delta t > 0$ there. We formulalize these proporties through the definition of a strong finite horizon property (see Definitions \ref{def:strongfhp} and \ref{def:strongfhpgeom}). Outside the disks the surface is flat so the curvature is zero curvature. These properties are enough to prove the Anosov property (see Theorem \ref{thm:Anosov}).

Now periodically extend the disk pattern on the flat torus to cover all of $\bbR^2$. Taking two copies of $\bbR^2$ and connecting them by tubes sitting over the disks gives a non-compact surface which we call the model space  $\M_0$. Denote by $g_{\M_0}$ the metric it inherits from the Euclidean metric on $\bbR^3$. 
Since this surface is just a cover of the initial compact surface, its geodesic flow is also Anosov.   All sufficiently small perturbations of its metric will retain the Anosov property.  

Define an ``embedding map'' $X:\bbR^3 \to \bbR^3$ which,  when applied to the model space $\M_0 \subset \bbR^3$,  will yield an immersed---and for some parameters embedded---surface. Let $X: \bbR^3 \to \bbR^3$ be given by
    $$
    X(u, v, w) = (X_1(u, v, w), X_2(u, v, w), X_3(u, v, w))
    $$
with 
\begin{align}
    X_1(u, v, w)
        & = \left( R_1 +( R_2+w) \cos\left( \frac{v}{R_2}\right) \right) \cos\left( \frac{  u }{R_1}\right) \notag \\
    X_2(u, v, w) 
        &= \left(R_1 +( R_2+w) \cos\left( \frac{  v}{R_2}\right) \right)\sin\left( \frac{  u }{R_1}\right)\\
    X_3(u, v, w) 
        &=  ( R_2+w) \sin\left( \frac{  v}{R_2}\right) \notag
\end{align}
and parameters $R_1$ and $R_2$. This map takes a plane $w=\text{constant}$ and sends it to an immersed  torus, with major radius $R_1$ and minor radius $R_2+w$. As long as $R_2+w < R_1$, the image of this plane is an embedded torus.

The pullback metric by this embedding map gives a new metric $g = DX^*(g_{Eucl})$ on $\bbR^3$. This pullback metric satisfies
\begin{align}\label{formulaforpullbackmetrics1}
    \innerpr{\zeta, \eta}_{g} = \innerpr{DX \zeta, DX \eta}_{g_{Eucl}} = 
    \left( 1+ \frac{(R_2+w) \cos \left(\frac{v}{R_2}\right)}{R_1} \right)^2 \zeta_1 \eta_1
    + \left( 1 +\frac{ w}{R_2} \right)^2 \zeta_2 \eta_2
    + \zeta_3 \eta_3
\end{align}
for vectors $\zeta=(\zeta_1, \zeta_2, \zeta_3)$ and $\eta=(\eta_1, \eta_2, \eta_3)$ in $T_p(\bbR^3)$  expressed in Euclidean coordinates. 

Create a one-parameter family of mappings $X_{s_1}$ by defining $R_1({s_1})$ and $ R_2({s_1})$ with ${s_1}\in(0, 1]$ and require that as ${s_1}\to 0, R_1({s_1}), R_2({s_1})\to \infty$ and $ R_2({s_1})/R_1({s_1}) \to 0$. The pull-back metric   then converges to the Euclidean metric on $\bbR^3$ as ${s_1}\to 0$. Let $g_{s_1}$ be the restriction of the pull-back metric to the model space: $g_{s_1} = DX_{s_1}^*(g_{Eucl})|_{\M_0}$.   For all ${s_1}$ sufficiently small, $g_{s_1}$      will be a small perturbation of the initial model space metric $g_{\M_0}$. Hence the geodesic flow on $(\M_0, g_{s_1})$ will be Anosov. 

Among these sufficiently small ${s_1}$ values is a discrete set for which $X_{s_1}(\M_0)$ is an embedded surface. For the image to be embedded, and not just immersed, one needs the periodicity of the fundamental region of the finite horizon configuration and the periodicity of the embedding map, determined by the radii $R_1({s_1})$ and $R_2({s_1})$, to align (see Section~\ref{sec:genuscalculation}). 

So far this simply gives another proof of the Donnay-Pugh Theorem. To prove Theorem A, we need to quantify all aspects of the construction by determining the set $S$ and the values of $K_{neg}$, $T_{ret} > 0$ and $\Delta t > 0$. Recall the definitions of these terms: $S$ is a a subset of the tubes in which the curvature is bounded above by $K_{neg} < 0$,  all geodesics will enter $S$ in time at most $T_{ret} > 0$ and will then  spend at least time $\Delta t > 0$ there.

If the positive curvature outside the tubes, bounded above by some $K_{pos}> 0$, is not too large, then the uniform time spent in strictly negative curvature, combined with the bounded time $T_{ret}$ spent in small positive curvature, allows us to prove a strictly invariant cone condition (Definition \ref{def:uRiccatisoln}). This condition, expressed in terms of solutions of the Ricatti equation, in turn implies  the Anosov property (Theorem \ref{thm:Anosov}).   

How can one determine numerical values for these quantities?  We start by examining the flat plane with a finite horizon collection of disks and determining bounds for the behavior of the geodesics (straight lines) in the flat metric $g_0$. Create a finite horizon disk configuration using a hexagonal packing of unit disks (Section \ref{sec:circlepacking}). The simplicity of this particular configuration leads to explicit values for the  strong finite horizon property for the straight line geodesics $\gamma_0$. All geodesics that start outside a disk will enter a disk in time at most 2 with angle at least $\pi/6$ (Theorem \ref{thm:diskarrangementangletimebounds}), which then implies other versions of the strong finite horizon property (Corollary \ref{thm:D1StrongFiniteHorizon} and Proposition \ref{prop:DrhoStrongFiniteHorizon}).

We next define a two-parameter family of pullback metrics $g_s = g_{(s_0,s_1)}$ on the model space $\M_0$. The parameter $s_1$ is the embedding parameter described above and $s_0$ is a scaling parameter described below. Choose the following explicit values for the functions $R_i(s_1)$    that  determine the radii of the immersed tori (Section \ref{sec:embeddingmap}): 
    $$R_1(s_1) = \frac 1{\pi s_1^2}, \quad R_2(s_1)=\frac{1}{\pi s_1}.$$

Considering geodesics $\gamma_s$ on $(\M_0, g_s)$ as perturbations of $g_0$ straight line geodesics (until $\gamma_s$ enters sufficiently far into the tubes) leads to a proof that the strongly finite horizon property of the flat metric carries over to a (slightly weaker) strongly finite horizon property on $(\M_0, g_s)$ (Theorem~\ref{thm:Cibounds->geodcontrol->finitehorizon}). The proof  uses ``geodesic control" techniques (Section \ref{sec:contdep}) to determine how far a $\gamma_s$ geodesic can move away from the corresponding $\gamma_0$ straight line  geodesic before they together enter into a tube/disk and is carried out using  rectangular coordinates (Section \ref{sec:rectangularcoordinates}). 

The negatively curved tubes are surfaces of revolution constructed out of arcs of circles (Section \ref{sec:tubes}).  Their ends are modified to attach smoothly to the flat planes on the boundary of the disks (Section~\ref{sec:smoothing}). To ensure that the the pull-back metric $g_s$ on $\M_0$ (Section \ref{sec:embeddingmap}) is a  small perturbation of the flat metric $g_0$ on the outer part of the tubes, we use a scaling parameter $s_0$. The scaling parameter  causes  the two planes in the model space to come  closer together\footnote{This construction of squeezing the two planes together is  reminiscent of an argument by Arnol'd (\cite{Arnold-63}, p.~184). In explaining why the dynamics of billiard motion with a convex scatter should be chaotic, he considered the billiard motion to be the limit of  geodesic motion. Take two flat tori that are parallel, connect them by a tube of negative curvature and then have the distance between the flat tori go to zero ($s_0\to 0$ in our notation). The billiard motion can be thought of (intuitively) as the limit of the chaotic geodesic flow motion. The boundary of the tube corresponds to the  convex scattering obstacle.} and flattens the outer part of the tubes  (Section \ref{sec:tubescaling}). 

Although it is straightforward to calculate the negative curvature of rotationally symmetric tubes in the Euclidean metric, it is more challenging to bound their curvature under the pullback metric $g_s$. Introducing a type of  angular coordinates (Section \ref{sec:angularcoordinates}) allows us to  determine an upper bound $K_{neg}<0 $ for the curvature in $S$ (Section \ref{sec:curvboundsinA14}). The use of  polar coordinates (Section \ref{sec:metricinpolarcoordinates}) leads to  an  upper bound $K_{pos}>0$ for the curvature outside of $S$ (Section \ref{sec:curvboundsoutsideT14}).  

We put all these pieces together (Section \ref{sec:embeddedsurfacesAnosovpart1}) and determine $s=(s_0, s_1)$ values for which the geodesic flow on ($\M_0, g_s)$ satisfies both the strongly finite horizon condition and strictly invariant cone condition and hence is Anosov (Theorem~\ref{thm:AnosovCondition}). From among these values, we determine the discrete set of $s_1$ parameter values for which the image surface $X_{s} (\M_0)$ is  embedded and give a formula  for its genus (Lemma~\ref{lemma:genusformula}). 

We use Mathematica to carry out various calculations that arise in solving our analytic equations (Section \ref{sec:numericallycomputegenus}) and discuss the steps taken to ensure that these calculations produce true bounds (Section \ref{sec:validatingMathematica}). The result is a proof of the Theorem A with genus 233,129,289,619 (Theorem~\ref{thm:genusbound}). Improving the  estimates for geodesic control, negative curvature in the tubes, time spent in negative curvature and solutions of the  Ricatti equation (Section \ref{sec:SophisticatedEstimates}) leads to the example of genus 17,288,843,803. A Mathematica notebook containing all the calculations can be found at \cite{Donnay-Visscher-mathematica}.

\subsection*{Connections to other results}

Could our bounds on the genus of Anosov metrics be improved if we replaced our cone-field condition by other conditions for Anosov systems?  Eberlein \cite{Eberlein-73-I}, \cite{Eberlein-73-II} showed that a geodesic flow on a surface with no focal points and where every trajectory goes through a point of negative curvature will be Anosov. Our method of proof (Lemma \ref{lemma:uknegukpos}) of the invariant cone condition (Definition \ref{def:uRiccatisoln}) implies the no focal points condition. Could the cone condition be relaxed in such a way that the no focal points condition still holds?  We note that Gulliver \cite{Gulliver-75}  created examples of metrics on surfaces which do have focal points but are still Anosov (and hence have no conjugate points). It would  be interesting to see whether our construction could be modified to create Anosov examples with  focal points. 

Guglielmo and Ruggiero \cite{Guglielmo-Ruggiero-26} have explored the structure of the set of metrics  with Anosov geodesic flow. They look at Anosov surfaces with no focal points and with   regions of positive curvature.  They show for such surfaces that there exists a smooth curve of conformal deformations of the metric that preserves the Anosov property and connects the initial metric  to a Riemannian metric of negative curvature. Their result assumes that the positive curvature is contained in disjoint disks (generalized bubbles). In our Anosov examples, the positive curvature is not contained in such bubbles. Can one prove their result for our systems?   

Our surfaces can be used to give examples of  families  of metrics that are Anosov but for which the limit metric is not Anosov but still has no conjugate points. These examples illustrate a conjecture by  Jane and Ruggiero \cite{Jane-Ruggiero-14}: the closure of the set of compact Riemannian surfaces with Anosov geodesic flows in the $C^2$ topology is the set of Riemannian metrics without conjugate points. One such family occurs from shrinking the size of the disks so that in the limit the finite horizon fails and one has a periodic orbit lying completely in zero curvaure. Or, perturb the symmetric tube of negative curvature so that in the limit the neck of the tube has zero curvature. For the periodic orbit around the neck, the Anosov condition fails.

\section{Theoretical framework}

\subsection{A quantified strong finite horizon property}~\label{subsection: FiniteHorizonDisks}
The following defines a quantified version of a finite horizon property that tracks how long it takes for a geodesic to enter a target set $S$ {\it in a substantial way}. We give two ways of quantifying ``substantial'': first giving a lower bound for how long a geodesic stays in that set, and second specifying an interior set that the geodesic must reach.

\begin{definition}[Time version]\label{def:strongfhp}
    Let $M$ be a surface with Riemannian metric $g$, and let $S$ be a closed set on $M$. Then we say the set $S$ has the \emph{$(T_{ret}, \Delta t)$ strong finite horizon property} if for any $g$-geodesic $\gamma$ there exists a sequence of times $t_i^\pm$, $i \in \bbZ$ with $\lim_{i \to \pm \infty} t_i^+ = \pm \infty$, such that for all $i$,
    \begin{enumerate}
        \item $t_i^- \leq t_i^+ < t_{i+1}^- \leq t_{i+1}^+$ 
        \item for all $t \in [t_i^-, t_i^+]$, $\gamma(t) \in S$
        \item $t_i^+-t_i^- \geq \Delta t$
        \item $t_{i+1}^- -t_i^+ \leq T_{ret}$
    \end{enumerate}
    We say such a geodesic enters $S$ in a $\Delta t$ good way at time $t_i^-$.
\end{definition}

The return time $T_{ret}$ is a bound on the time it takes a geodesic that is outside of $S$ to next return to $S$ in a $\Delta t$ good way. 

Property 1 implies that $T_{ret} >0$ and $\Delta t \geq 0$. Note that $T_{ret}$ is an upper bound, and so a smaller value of $T_{ret}$ gives a stronger condition. Similarly, since $\Delta t$ is a lower bound, a larger value of $\Delta t$ gives a stronger condition. Since $\Delta t>0$ places an additional constraint on when to count a geodesic as having reached $S$, larger $\Delta t$ generally means the optimal $T_{ret}$ must also be larger. 

\begin{definition}[Set version]\label{def:strongfhpgeom}
    Let $M$ be a surface with Riemannian metric $g$, and let $S' \subseteq S$ be closed sets on $M$. Then we say the pair of sets $S' \subseteq S$ has the \emph{$T_{ret}$ strong finite horizon property} if for any $g$-geodesic $\gamma$ there exists a sequence of times $t_i^\pm$, $i \in \bbZ$ with $\lim_{i \to \pm \infty} t_i^+ = \pm \infty$, such that for all $i$,
    \begin{enumerate}
        \item $t_i^- \leq t_i^+ < t_{i+1}^- \leq t_{i+1}^+$ 
        \item for all $t \in [t_i^-, t_i^+]$, $\gamma(t) \in S$
        \item there exists $t_i^*\in [t_i^-, t_i^+]$ such that $\gamma(t_i^*) \in S'$
        \item $t_{i+1}^- -t_i^+ \leq T_{ret}$
    \end{enumerate}
    We say such a geodesic enters $S$ in an $S'$ good way at time $t_i^-$.
\end{definition}

Given sets $S' \subseteq S$ and a metric $g$ that have the strong finite horizon property for some $T_{ret}$, there is a smallest such $T_{ret}$ (we call this the optimal $T_{ret}$). If the pair of sets $S' \subseteq S$ has the $T_{ret}$ strong finite horizon property then there exist values of $\Delta t \geq 0$ for which the set  $S$ has a $(T_{ret},\Delta t)$ strong finite horizon property. For such a system, there is an optimal (i.e., largest) $\Delta t$. Given the pair of sets $S' \subseteq S$, we say that the pair of times $(T_{ret}, \Delta t)$ is optimal if $T_{ret}$ is the optimal strong finite-horizon time for $S' \subseteq S$, and if $\Delta t$ is the optimal time for this value of $T_{ret}$.

We note that in the Euclidean case, the correspondence between  $S'$ and the optimal $\Delta t$ (largest possible) can be given explicitly (see Proposition~\ref{prop:DrhoStrongFiniteHorizon}); in the general Riemannian case, we are only able to give a lower bound on the optimal $\Delta t$ for a given $S'$.

The cases $\Delta t=0$ or $S'=S$ reduce to a quantified standard finite horizon property:

\begin{definition}
    $\S \subset M$ has the $T_{ret}$-finite horizon property if for all $(p,v) \in SM$, there exists a $0 \leq t \leq T_{ret}$ such that $\gamma_{(p,v)}(t) \in \S$. 
\end{definition}

Yet another way to quantify how substantially a geodesic enters a set $S$ is by specifying a minimum angle at which the geodesic enters the set. The amount of time $\Delta t$ spent in $S$ is the most relevant for proving dynamical properties, but often other geometric indicators are easier to obtain.

For a set $S$ with the $(T_{ret},\Delta t)$-strong finite horizon property, we make the following canonical choice of the $t_i^\pm$: let $t_i^-$ be the times for which $\gamma(t_i^-) \in \partial S$ (entering) and $\gamma(t_i^+)$ the next time $\gamma(t_i^+) \in \partial S$ (leaving) provided $t_i^+-t_i^- \geq \Delta t$ (Property 3). If there is no such $t_i^+$ (resp. $t_i^-$)---i.e., the trajectory remains in $S$ for all future (resp. past) times---then the remaining values of $t_i^\pm$ can be chosen arbitrarily providing they fit the definition. With this canonical set of times, we refer to the intervals $[t_i^-, t_i^+]$ as ``good times'' and $\gamma(t_i^-)$ as ``good returns to $S$''.

\subsection{Anosov Argument} \label{sec:Anosovargument}
 
\begin{definition}\label{def:uRiccatisoln}
    The numbers $T_{ret}, \Delta t >0$ and $K_{neg}<0\leq K_{pos}$ satisfy the \emph{strictly invariant cone condition} if the solution $u^*(t)$ of the Riccati equation $u'(t) = - K^*(t) - u^2(t)$ with
    \[
    K^*(t) = \begin{cases} 
        K_{neg},  & t\in [0, \Delta t),\\
        K_{pos},& t\in [\Delta t, \Delta t + T_{ret}]
   \end{cases}
    \]
    and initial condition $u^*(0)=0$ is defined for all $t \in [0,\Delta t+T_{ret}]$ and satisfies 
        \begin{equation}\label{eqn:strictlyinvconecondition}
        u^*(\Delta t+ T_{ret}) > 0.
        \end{equation}
\end{definition}

In terms of Jacobi coordinates on the tangent space $T(SM)$, the above conditions implies that the first (and 3rd) quadrants get mapped strictly inside themselves. 
 
\begin{theorem}\label{thm:Anosov}
    Let $M$ be a closed surface with a smooth Riemannian metric $g$, and let $S \subset M$ be closed. Suppose there are numbers $T_{ret},\Delta t>0$ and $K_{neg}<0\leq K_{pos}$ satisfying the strictly invariant cone condition such that
    \begin{enumerate}
        \item $S$ has the $(T_{ret},\Delta t)$ strong finite horizon property,
        \item the Gaussian curvature inside $S$ is bounded above by $K_{neg}$, and
        \item the Gaussian curvature on $M$ is bounded above by $K_{pos}$.
    \end{enumerate}
    Then the geodesic flow is Anosov.
\end{theorem}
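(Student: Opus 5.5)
The plan is to build an invariant cone field out of Riccati solutions and then appeal to the standard cone criterion for geodesic flows on closed surfaces. Along a unit-speed geodesic $\gamma$, write Jacobi fields perpendicular to $\dot\gamma$ as $J(t)=y(t)N(t)$. Away from zeros of $y$, the quotient $u=y'/y$ satisfies the Riccati equation $u'=-K(\gamma(t))-u^2$. The target is the following claim. For every geodesic there exist solutions $u^+_\gamma$, defined for all $t\in\bbR$, and $u^-_\gamma$ obtained by time reversal, such that $u^+_\gamma(t)\ge 0$ for all $t$. In addition, at the end of each return cycle, say at $t=t_i^+ + $ (time until the next good entrance), the quantity $u^+_\gamma$ is bounded below by a fixed $c>0$. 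Uniform positivity of this kind along a closed manifold gives the unstable subbundle with exponential growth $\exp\int u$, and symmetrically the stable subbundle. Equivalently, the first and third quadrants in Jacobi coordinates are mapped strictly inside themselves after every time interval of length at most $2(\Delta t+T_{ret})$. Since $M$ is compact, this strict invariance of a continuous cone family is enough to conclude Anosov, by the Wojtkowski / Alekseev cone criterion.

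The first step is a Riccati comparison lemma, presumably the one called Lemma~\ref{lemma:uknegukpos}. Suppose $K_1\le K_2$ pointwise and $u_1(t_0)\ge u_2(t_0)$. Then $u_1\ge u_2$ wherever $u_2$ is defined, and $u_1$ remains defined as long as $u_2$ is; this holds because blow-up happens only toward $-\infty$. The proof is the usual argument for the difference $w=u_1-u_2$, which satisfies $w'=(K_2-K_1)-(u_1+u_2)w$. The second step handles a single cycle. Take the canonical good times for $\gamma$. Start with $u(t_i^-)\ge 0$. On $[t_i^-,t_i^-+\Delta t]$ we have $\gamma\in S$, so $K\le K_{neg}$. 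On the remaining stretch, up to $t_{i+1}^-$, we have $K\le K_{pos}$; that stretch consists of the rest of the good interval plus a return of length at most $T_{ret}$. If the actual cycle is longer than $\Delta t+T_{ret}$, the excess lies inside $S$, where $K\le K_{neg}<0\le K_{pos}$. The key point is to compare against $u^*$ with the intervals reordered appropriately. Comparison with $u^*$ shifted to start at $t_i^-$ then gives $u(t_i^-+\Delta t+T_{ret})\ge u^*(\Delta t+T_{ret})>0$.

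The main obstacle is exactly this: the real cycle has variable lengths and a different arrangement of intervals from the model. I would first show that for $u\ge 0$, any further time spent in curvature $\le K_{neg}$ only increases $u$ compared to the model. I would also show that extra negative-curvature time inserted before the positive stretch helps. The remaining case is a positive stretch shorter than $T_{ret}$. For this I would use monotonicity of $u^*$ on $[\Delta t,\Delta t+T_{ret}]$: since $u^{*\prime}=-K_{pos}-u^{*2}<0$ there, the minimum over the positive stretch is attained at its right end. That right end is positive by \eqref{eqn:strictlyinvconecondition}. Hence $u>0$ persists through the whole positive stretch and is at least $\delta:=u^*(\Delta t+T_{ret})$ at the next entrance. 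Iterating over $i$ keeps $u\ge 0$ (indeed $\ge\delta$ at each good entrance) forever.

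The third step constructs $u^+_\gamma$ as a limit. Start with $u=0$ at $t_{-n}^-$ and let $n\to\infty$. By comparison these solutions are monotone in $n$ and bounded above, for instance by $\sqrt{\max|K|}$. Their limit is a global nonnegative solution, uniformly positive on each negative interval and at each good entrance. It follows that $\int_{t}^{t+L}u^+\ge c'>0$ for a fixed window length $L$, and since $y'=uy$ this gives uniform exponential growth of unstable Jacobi fields. Running the same argument with time reversed yields $u^-\le 0$, so the two line fields are transverse. Continuity of the splitting and uniformity then follow from compactness of $SM$.
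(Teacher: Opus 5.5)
Your argument is correct, but it reaches the conclusion by a different route from the paper.

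\textbf{The paper's route.} The paper never constructs the stable and unstable bundles. It feeds the one-cycle comparison estimates into Kourganoff's finite-time criterion (Theorem~\ref{thm:Kournagoff}). Its only real work is producing times with bounded gaps. To do this it chops long, even infinite, stays in $S$ into pieces of length $\Delta t$ plus a final piece of length in $[\Delta t,2\Delta t)$. That gives $c=\Delta t$, $C=T_{ret}+2\Delta t$ and $m=u^*(\Delta t+T_{ret})$.

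\textbf{Your route.} Your Steps 1--2 are the same one-cycle comparison the paper uses. In place of Kourganoff, however, you build the Green-type solution $u^+_\gamma$ directly, as a monotone limit of solutions started at $0$ at $t^-_{-n}$.
\begin{itemize}
\item This needs no subdivision: during a long stay in $S$ the solution simply stays above $u^*(\Delta t)$.
\item It gives $u^+_\gamma\ge\delta:=u^*(\Delta t+T_{ret})$ at all times, i.e.\ an explicit expansion rate $e^{\delta t}$.
\item Time reversal gives $u^-_\gamma\le-\delta$. You only write $u^-\le 0$, which is enough for transversality, but uniform contraction needs the $-\delta$.
\end{itemize}
The price is that you must supply two standard facts the paper outsources. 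First, the bounded monotone limit solves the Riccati equation. Second, an invariant splitting with uniform rates is Anosov; continuity of the splitting is then automatic, as for any hyperbolic set.

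\textbf{One caveat.} Your opening appeal to the Wojtkowski/Alekseev criterion does not apply to the quadrant cone field as you state it. That field is not forward-invariant under the flow. On a return stretch where $K>0$, the ray $u=0$ is immediately pushed to $u<0$, so the quadrants are not mapped into themselves over short intervals. This is precisely why the paper uses a criterion that only asks for positivity at a sequence of times. Your Step 3 does not rely on that remark, so the proof stands without it.
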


Note that the synthetic Ricatti solution given by Definition \ref{def:uRiccatisoln} gives a lower bound for Ricatti solutions along geodesics. More precisely, let $\gamma$ be a geodesic that enters $S$ at time $t_i^-$ and then spends at least time $\Delta t$ in $S$ before exiting at time $t_i^+$. Let $u_{\gamma}$ be the Ricatti solution with initial condition $u_{\gamma}(t_i^-) =0$ and with $K(t) = K(\gamma(t))$. Then the Comparision Theorem for Ricatti solutions shows that $u_{\gamma}(t_i^+) \geq u^*(\Delta t)$ and when the geodesic next returns to $S$ in a $\Delta t$ good way at time $t_{i+1}^-$, then $u_{\gamma}(t_{i+1}^-) \geq u^*(\Delta t+T_{ret})$.

The above result would be an immediate consequence of the following formulation of a finite-time uniformly strictly invariant cone fields (\cite{Donnay-Visscher-18} due to Kourganoff \cite{Kourganoff-18}) if the times that geodesics spends in $S$ were bounded. In our situation these times can be unbounded and indeed infinite. However,  since the curvature in $S$ is strictly negative, spending time there helps with generating hyperbolicity. Thus, while the unbounded time adds a technical complication to the proof, it is not a serious problem. 

\begin{theorem}[Kourganoff, \cite{Kourganoff-18}] \label{thm:Kournagoff}
    Let M be a closed surface. Assume that there exist $m > 0$ and $ C > c > 0$   such that for any geodesic $\gamma : \bbR \to M$, there exists an increasing sequence of times $(t_k)_{k\in \bbZ} \in \bbR^\bbZ$ with 
        \begin{equation}\label{Kourganofftimesproperty}
        c \leq t_{k+1} - t_k \leq C,
        \end{equation}
    such that the solution u of the Riccati equation with initial condition $u(t_k) = 0$ is defined on the interval $[t_k, t_{k+1}]$, and $u(t_{k+1}) > m$. Then the geodesic flow $\varphi^t : SM \to  SM$ is Anosov.
\end{theorem}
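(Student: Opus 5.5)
The strategy is to build the unstable and stable line fields of the geodesic flow as limits of Riccati solutions, using the Riccati comparison principle. Then I would show that these line fields are transverse and uniformly expanded/contracted. In Jacobi coordinates $(J,J')$ on the normal part of $T(SM)$, a nonvanishing Jacobi field corresponds to $u=J'/J$, which solves $u'=-K(\gamma(t))-u^2$. Since $M$ is closed, $|K|\le K_0$ for some $K_0$.

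\emph{Step 1 (invariant cones).} Riccati solutions are ordered: if $u_1(s)\ge u_2(s)$ and $u_2$ is defined on $[s,t]$, then $u_1$ is defined there too and $u_1\ge u_2$. This holds because solutions can only blow up to $-\infty$ forward in time. Fix a geodesic $\gamma$ with its times $t_k$. The hypothesis says the solution starting at $0$ at time $t_k$ survives to $t_{k+1}$ and ends above $m$. By comparison, every solution with $u(t_k)\ge 0$ survives and satisfies $u(t_{k+1})>m$. In other words, the cone $\{u\ge 0\}$ at time $t_k$ is carried strictly into $\{u>m\}$ at time $t_{k+1}$. The inequality $u'\le K_0-u^2$ also gives a uniform upper bound
\[
u(t)\le \sqrt{K_0}\coth\bigl(\sqrt{K_0}(t-s)\bigr)
\]
once $t-s\ge c$, so the image cone is $\{m<u\le B\}$ with $B$ depending only on $K_0$ and $c$. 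The same argument in reverse time gives invariant cones $\{u\le 0\}$ mapped into $\{-B\le u<-m\}$.

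\emph{Step 2 (invariant bundles).} Let $u_N$ be the solution with $u_N(t_{-N})=0$. Then $u_{N+1}(t_{-N})>m>0$, so $u_{N+1}\ge u_N$ wherever both are defined. By Step 1 the family is also bounded above by $B$ at the times $t_k$, hence on all intervals, because $u'\le K_0-u^2$ prevents growth beyond $\max(B,\sqrt{K_0})$. Therefore $u^+=\lim u_N$ exists for all $t$ and satisfies $u^+(t_k)\ge m$. It is a Riccati solution along the whole geodesic, and it defines a flow-invariant line field $E^u$. Symmetrically, backward limits give $u^-$ with $u^-(t_k)\le -m$ and a line field $E^s$. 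We have $u^+-u^-\ge 2m$ at every $t_k$. Since $|u^\pm{}'|\le K_0+B'^2$ and $t_{k+1}-t_k\le C$, the two fields stay uniformly separated at all times. Hence $E^u\oplus E^s$ is a continuous, invariant splitting; continuity follows from uniqueness of the limits.

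\emph{Step 3 (uniform expansion), the main obstacle.} Along $E^u$ we have $J(t)=J(s)\exp\int_s^t u^+$. Because $|J'|=|u^+||J|$ with $u^+$ bounded, $|J|$ is equivalent to the Sasaki norm. So it suffices to find $\eta>0$ with $\int_{t_k}^{t_{k+1}}u^+\ge \eta$ for all $k$, or at least over blocks of a fixed number of periods.

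Near $t_{k+1}$, the bound $u^+(t_{k+1})>m$ together with $|u^+{}'|\le L$ gives $u^+\ge m/2$ on an interval of length $m/(2L)$. The danger is that $u^+$ dips very negative earlier in the interval. The first approach I would try is a uniform lower bound $u^+\ge -A$ on $[t_k,t_{k+1}]$. This would come from $u^+\ge u_k$ (the solution from $0$ at $t_k$), together with a compactness argument on $SM$ over interval lengths $\le C$, using that the set of initial data whose $0$-solution survives time $\le C$ with value $>m$ is open. If this lower bound is only available as $-A$ with $A$ large, I would instead use the cone-contraction argument. The linearized flow maps the cone $\{u\ge0\}$ into $\{m<u\le B\}$, which is strictly inside with a uniform margin, so it contracts the Hilbert projective metric by a uniform factor $\theta<1$. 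Combined with the symplectic (area-preserving) property of the Jacobi flow, this forces the ratio of growth along $E^u$ versus decay along $E^s$ to be at least $\theta^{-1}$ per period, so vectors in $E^u$ grow at least like $\theta^{-n/2}$ after $n$ periods. This is Wojtkowski's argument. The gap $c$ ensures that $n$ periods take time comparable to $n$, which yields exponential rates uniform in $t$. The same argument in reverse time handles $E^s$, and the flow is therefore Anosov.
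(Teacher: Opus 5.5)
The paper does not prove this theorem. It imports it from Kourganoff and uses it as a black box, singling out only his Lemma 5.8 (a uniform bound on $\|D\varphi^t\|$ for $t\in[0,2C]$); in your argument that role is played by $|K|\le K_0$. So there is nothing in the paper to compare against. Your architecture (comparison principle, invariant cones, monotone limits, projective contraction) is the natural one, and Step 1 and the construction of $u^+$ in Step 2 are correct. As written, however, Steps 2--3 have a genuine gap.

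\textbf{The gap.} You claim $u^-(t_k)\le -m$, and hence $u^+-u^-\ge 2m$ at every $t_k$. Applying the hypothesis to the reversed geodesic produces that geodesic's own sequence $(s_j)$, which is unrelated to $(t_k)$. What you actually get is $u^-(s_j)<-m$; nothing forces $u^-(t_k)<0$. There is a second, smaller problem: a Lipschitz bound cannot keep a gap of size $2m$ open for time $C$. The difference evolves multiplicatively, $(u^+-u^-)'=-(u^++u^-)(u^+-u^-)$.

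This breaks Step 3 as you phrase it. Turning Hilbert-metric contraction into growth ``along $E^u$ versus decay along $E^s$'' in a chart spanned by $E^u,E^s$ needs exactly the uniform angle you have not established.

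Your first attempt in Step 3 does not close either. A lower bound $|u^+|\le\sqrt{K_0}$ holds for free for any globally defined solution, but it does not give $\int_{t_k}^{t_{k+1}}u^+>0$. Single-period positivity can genuinely fail: the symplectic period map $u\mapsto(1+4u)/(\frac12+u)$ sends $[0,\infty]$ into $[2,4]$, yet has derivative $1/0.8^2>1$ at $u=0.3$.

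\textbf{The repair: take $E^s$ out of the expansion argument.}
\begin{enumerate}
\item Linearizing the Riccati equation gives $\delta u'=-2u\,\delta u$. So the derivative of the $n$-period map $\Phi_{k,n}:u(t_k)\mapsto u(t_{k+n})$ at $u^+(t_k)$ is exactly $\exp(-2\int_{t_k}^{t_{k+n}}u^+)$.
\item The Hilbert metric of the cone $\{u\ge 0\}$ is $|\log x-\log y|$. Each period map sends $[0,\infty]$ into $(m,B]$, so by Birkhoff's theorem it contracts by $\theta=\tanh(\frac14\log(B/m))<1$.
\item Infinitesimally at $x=u^+(t_k)\in(m,B]$ this gives $\Phi_{k,n}'(x)\le\theta^n\,\Phi_{k,n}(x)/x\le\theta^n B/m$. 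Hence $\int_{t_k}^{t_{k+n}}u^+\ge\frac{n}{2}\log\theta^{-1}-\frac12\log(B/m)$.
\item Since $|u^+|\le\sqrt{K_0}$, $|J|$ is comparable to the Sasaki norm and distortion within a period is bounded. There are at least $T/C-1$ periods in time $T$; here it is $C$, not $c$, that matters, since $c$ was already used to get $B$. Together these give uniform exponential growth along $E^u$.
\item The reversed geodesic gives the same backward in time for $E^s$.
\end{enumerate}

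Transversality with a uniform angle then follows a posteriori from the opposite rates. If $u^+(t_0)-u^-(t_0)=\delta$ is small, the Jacobi field $J^u-J^s$ (normalized by $J^u(t_0)=J^s(t_0)=1$) vanishes at $t_0$ with derivative $\delta$. It therefore stays $\le\delta\sinh(\sqrt{K_0}T)/\sqrt{K_0}$, but it must grow like $ae^{\lambda T}-a^{-1}e^{-\lambda T}$, which bounds $\delta$ below.

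Likewise, continuity of the splitting should be deduced from these uniform rates, not from ``uniqueness of the limits'', because the sequence $(t_k)$ does not depend continuously on the geodesic.
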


\bigskip

\noindent {\bf Extension of \ref{thm:Anosov}, \ref{thm:Kournagoff}.} 
    We note that Theorems~\ref{thm:Anosov} and \ref{thm:Kournagoff} will still hold even in cases when $M$ is not closed if Lemma 5.8 of Kourganoff still holds:
        \begin{equation}\label{eqn:suponDphi}
        \sup ||D\varphi^t (p,v)|| < +\infty
        \end{equation}
    for set of all $(t,(p,v)) \in [0,2C] \times SM$. In our case, $C = T_{ret} + 2\Delta t$ and equation (\ref{eqn:suponDphi})  will be satisfied due to the periodic (or close to periodic) nature of our model space.

\bigskip

In the definition of the $(T_{ret},\Delta t)$ strong finite horizon property (\ref{def:strongfhp}), we typically think of $t_i^-$ as a time that $\gamma$ enters $S$ and $t_i^+$ as the subsequent time that $\gamma$ leaves $S$. However, this definition also allows for the case that a geodesic remains in $S$ as $t \to \pm \infty$.

\begin{proof}[Poof of Theorem~\ref{thm:Anosov}]
Consider a geodesic $\gamma$. By above, we can pick the times $t_i^\pm$ with the property $t_i^+ - t_i^- \geq \Delta t$.

If $t_i^+ - t_i^- \geq 2 \Delta t$, then we divide the interval $[t_i^-,t_i^+]$ into $N$ pieces of length $\Delta t$ and a final piece of length between $\Delta t$ and $2\Delta t$. Set $t_i^0 = t_i^-$, $t_i^k = t_i^0 + k \Delta t$ for $1 \leq k \leq N$, and $t_i^{N+1} = t_i^+$. Then we have
\begin{enumerate}
    \item $t_i^{k}-t_i^{k-1} = \Delta t$ for $1 \leq k \leq N$,
    \item $\Delta t \leq t_i^{N+1}-t_i^{N} < 2 \Delta t$, and
    \item $t_{i+1}^0 - t_i^{N+1} \leq T_{ret}$;   so that
    \item $\Delta t \leq t_{i+1}^0-t_i^N < T_{ret}+2\Delta t$.
\end{enumerate}

If $t_i^+ - t_i^- < 2 \Delta t$, then $N=0$ so that $t_i^0 = t_i^N = t_i^-$ and $t_i^{N+1} = t_i^1 = t_i^+$. We define a sequence of times 
    \[
    ...<t_i^0 \underbrace{<t_1^1<...<t_i^N}_{\text{if }N>0} < t_{i+1}^0 < \underbrace{t_{i+1}^1<...<t_{i+1}^N}_{\text{if }N>0} < t_{i+2}^0 ...,
    \]
which has the Kourganoff property~(\ref{Kourganofftimesproperty}) with $c = \Delta t$ and $C = T_{ret}+2\Delta t$.

Let $u^*$ be the Riccati solution as given in Definition~\ref{def:uRiccatisoln}. We will show below that one can choose $m = u^*(\Delta t + T_{ret}) > 0$.  

For $t \in [t_i^k, t_i^{k+1}]$ for $0 \leq k \leq N-1$, $\gamma(t) \in S$. (In the case $N=0$, this case is empty.) Let $u_\gamma(t)$ be the solution of the Riccati equation over this time interval with initial condition $u_\gamma(t_i^k)=0$ and curvature $K(t) =K(\gamma(t))$. Since $K(\gamma(t)) <  K_{neg}$, a standard Comparision Theorem gives that
    $$
    u_\gamma(t_i^{k+1})= u_\gamma(t_i^{k}+ \Delta t) \geq u^*(\Delta t) > 0,
    $$ 

For the interval $[t_i^{N},t_{i+1}^0]$, let $\Delta t_{i} =  t_{i}^{N+1} - t_i^{N}$. For $t\in[0, \Delta t_{i}], \, K(\gamma(t_{i}^N+ t)) \leq K_{neg},$
so by the Comparison Theorem   
    $$
    u_\gamma(t_i^{N+1})=
    u_\gamma(t_i^N + \Delta t_{i} ) \geq u_\gamma(t_i^N+\Delta t) \geq   u^*(\Delta t).  
    $$

For $t \in [\Delta t_{i},t_{i+1}^0-t_i^{N+1}]$, the curvature satifies $K(\gamma(t_{i}^N+ t)) \leq K_{pos}$. Using the above estimate for $ u_\gamma(t_i^{N+1})$, the Comparison Theorem and noting that $t_{i+1}^0 -t_i^{N+1} \leq T_{ret}$ and  $u'<0$ when $K>0$  gives 
    $$
    u_\gamma(t_{i+1}^0) = u_\gamma(t_i^{N}+\Delta t_i + (t_{i+1}^0 -t_i^{N+1} ) )\geq  u^*(\Delta t +(t_{i+1}^0 -t_i^{N+1} ))\geq u^*(\Delta t +T_{ret}) >0 .
    $$

Choosing the Kourganoff constant  
    $$
    m =  \min\{ u^*(\Delta t), u^*(\Delta t +T_{ret})\} = u^*(\Delta t +T_{ret})>0
    $$ 
allows us to satisfy all the conditions of the Kourganoff Theorem which in turn proves Theorem~\ref{thm:Anosov}. 
\end{proof}

Now we derive an easy-to-check condition that implies that condition (\ref{eqn:strictlyinvconecondition}) for the  strictly invariant cone condition holds. Let $u_{pos}$ be the solution of the Riccati equation with constant positive curvature $K(t) \equiv K_{pos}$ and initial condition $u_{pos}(0)=0$. A calculation gives
    \begin{align}\label{eqn:upos}
    u_{pos} (t) = -\sqrt{K_{pos}}\, \tan( \sqrt{K_{pos}} \, t) 
    \end{align}
providing $\sqrt{K_{pos}} \, t< \pi/2$.

Similarly, let $u_{neg}$ be the solution of the Riccati equation with constant negative curvature $K(t) \equiv K_{neg}$ and initial condition $u_{neg}(0)=0$. Then 
    \begin{align}\label{eqn:uneg}
    u_{neg}(t)= \sqrt{-K_{neg}}\tanh(\sqrt{-K_{neg} }\, t).
    \end{align}

\begin{lemma}\label{lemma:uknegukpos}
    The strictly invariant cone condition (\ref{eqn:strictlyinvconecondition}), 
        $$
        u^*(\Delta t +T_{ret}) >0, 
        $$
    will hold if (and only if) 
        $$
        u_{neg}(\Delta t )+ u_{pos}(T_{ret}) > 0. 
        $$ 
    Equivalently, 
        $$
        \sqrt{-K_{neg}}\tanh(\sqrt{-K_{neg} }\, \Delta t)  -\sqrt{K_{pos}}\, \tan( \sqrt{K_{pos}} \, T_{ret}) >0. 
        $$
\end{lemma}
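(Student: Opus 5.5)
The plan is to solve the Riccati equation for $u^*$ explicitly, one piece at a time, and then read off the sign of $u^*(\Delta t+T_{ret})$.

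On $[0,\Delta t)$ we have $K^*\equiv K_{neg}$ and $u^*(0)=0$. By uniqueness of solutions, $u^*=u_{neg}$ there, and by continuity
$$a:=u^*(\Delta t)=u_{neg}(\Delta t)=\sqrt{-K_{neg}}\tanh(\sqrt{-K_{neg}}\,\Delta t)>0.$$
On $[\Delta t,\Delta t+T_{ret}]$ the equation is $u'=-K_{pos}-u^2$ with $u(\Delta t)=a$, which is autonomous with constant coefficient. Write $k=\sqrt{K_{pos}}$.

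First treat the degenerate case $K_{pos}=0$. Then $u(t)=a/(1+a(t-\Delta t))$, which stays defined and positive for all $t\ge\Delta t$. Also $u_{pos}\equiv 0$, so both conditions reduce to $a>0$, which holds.

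For $k>0$, separating variables gives
$$u^*(\Delta t+s)=k\tan\bigl(\arctan(a/k)-ks\bigr).$$
This is valid as long as the argument stays in $(-\pi/2,\pi/2)$. Since $\arctan(a/k)<\pi/2$, the solution can only blow up downward, to $-\infty$, and it must cross zero before doing so. Hence the following are equivalent:
\begin{itemize}
\item $u^*$ is defined on all of $[0,\Delta t+T_{ret}]$ and $u^*(\Delta t+T_{ret})>0$;
\item $\arctan(a/k)>kT_{ret}$.
\end{itemize}

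If $kT_{ret}\ge\pi/2$, this inequality fails, because $\arctan(a/k)<\pi/2$. In that case $u_{pos}(T_{ret})$ is not defined (formula (\ref{eqn:upos}) requires $kT_{ret}<\pi/2$), so we read the lemma's condition as failing too, and the equivalence holds trivially. If $kT_{ret}<\pi/2$, both sides of the inequality lie in $[0,\pi/2)$, where $\tan$ is increasing. Applying $\tan$ gives the equivalent condition
$$a/k>\tan(kT_{ret}),$$
that is,
$$u_{neg}(\Delta t)-\sqrt{K_{pos}}\tan(\sqrt{K_{pos}}\,T_{ret})=u_{neg}(\Delta t)+u_{pos}(T_{ret})>0.$$
Substituting the formulas (\ref{eqn:uneg}) and (\ref{eqn:upos}) gives the explicit form stated in the lemma.

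A cross-check, if wanted, is the tangent subtraction formula:
$$u^*(\Delta t+T_{ret})=\frac{a+u_{pos}(T_{ret})}{1+a\tan(kT_{ret})/k}.$$
The denominator is positive when $kT_{ret}<\pi/2$, so the sign of $u^*(\Delta t+T_{ret})$ is exactly the sign of $u_{neg}(\Delta t)+u_{pos}(T_{ret})$.

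The only delicate point is the bookkeeping around blow-up. We need that "defined on the whole interval and positive at the end" matches the closed-form inequality, including when $\sqrt{K_{pos}}\,T_{ret}\ge\pi/2$. This is handled by the monotone, downward-only blow-up argument above.
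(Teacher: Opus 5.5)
Your proof is correct, but it takes a different route from the paper.

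\textbf{The paper's route.} The paper never solves the second-stage equation. It uses time reversal: $v(t)=-u_{pos}(T_{ret}-t)$ solves $u'=-K_{pos}-u^2$ and vanishes exactly at $t=T_{ret}$. So $-u_{pos}(T_{ret})$ is the threshold value at time $\Delta t$. Ordering of solutions of the same scalar ODE then gives $u^*(\Delta t)>v(0)\Rightarrow u^*(\Delta t+T_{ret})>v(T_{ret})=0$. The condition $u^*(\Delta t)>v(0)$ is exactly $u_{neg}(\Delta t)+u_{pos}(T_{ret})>0$.

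\textbf{Your route.} You write the second stage in closed form as $k\tan(\arctan(a/k)-ks)$. You then read off the sign directly, or via the tangent subtraction identity.

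\textbf{What each buys.} The paper's argument is shorter and rests on the same comparison principle that drives the proof of Theorem~\ref{thm:Anosov}. Yours is more explicit and more complete.
\begin{itemize}
\item It gives the exact value of $u^*(\Delta t+T_{ret})$, and hence a concrete Kourganoff constant $m$.
\item It proves the ``only if'' direction, which the paper asserts parenthetically and leaves implicit. That direction does follow from the same ordering property.
\item It handles the edge cases $K_{pos}=0$ and $\sqrt{K_{pos}}\,T_{ret}\ge\pi/2$.
\end{itemize}

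The last point matters. Read literally, the explicit inequality with $\sqrt{K_{pos}}\,T_{ret}\in(\pi/2,\pi)$ would hold spuriously, because $\tan$ becomes negative there. Your convention that the condition fails when $u_{pos}(T_{ret})$ is undefined is what makes the ``equivalently'' clause literally correct. The paper's proof implicitly assumes $\sqrt{K_{pos}}\,T_{ret}<\pi/2$ when it sets $v(0)=-u_{pos}(T_{ret})$.
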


\begin{proof}
The symmetry of the Riccati equation together with time reversal of the geodesic flow (i.e. $\gamma(p, v) (-t) = \gamma(p,-v)(t)$) implies that if $v(t)$ is a solution of the Riccati equation with curvature $K(t) \equiv K_{pos}$ and initial condition $v(0) = - u_{pos}(T_{ret})$ then $v(T_{ret}) = 0$. Thus, if our Riccati solution $u^*(t)$ satisfies $u^*(\Delta t) > v(0)$ then $u^*(\Delta t + T_{ret}) > v(T_{ret}) = 0$. 

The condition 
    $$
    u_{neg}(\Delta t )+ u_{pos}(T_{ret}) > 0
    $$  
is equivalent to $u^*(\Delta t) > -u_{pos}(T_{ret}) = v(0)$. 
\end{proof}

\section{Construction of surface}

\subsection{Disk arrangement on the flat plane with a finite horizon property}\label{sec:circlepacking}

Let $\bbR^2$ be the plane with the Euclidean metric $g_0$, and let $\D_0$ be the collection of closed disks of unit radius given by the hexagonal packing (pictured in Figure~\ref{fig:circlepacking}). Because the disks are tangent and $\bbR^2 \setminus \D_0$ consists of a collection of uniformly bounded regions, it is clear that $\S=\D_0$ has the $T_{ret}$-finite horizon property for some $T_{ret}$. Moreover, since any straight line must enter the \emph{interior} of $\D_0$, there must be an $\S'$ consisting of disks strictly interior to $\S$ and $T_{ret}$ such that $\S' \subset \S$ has the $T_{ret}$ strong finite horizon property. We determine such $\S'$ and $T_{ret}$ below.

\begin{figure}[h]
    \centering
    \includegraphics[width=.3\textwidth]{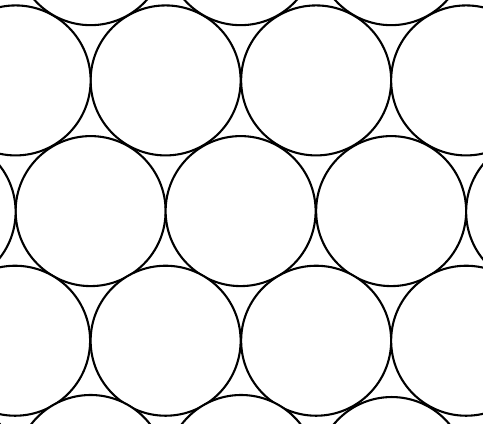}
    \caption{The hexagonal circle packing $\D_0 \subset \bbR^2$.}\label{fig:circlepacking}
\end{figure}

The following is an angle version of strong finite horizon.  

\begin{theorem}\label{thm:diskarrangementangletimebounds} 
    Every straight line geodesic that starts in $\bbR^2 \setminus \text{int}(\D_0)$ will enter $\D_0$ with angle $\theta \geq \pi/6$ in time $t\leq T_{ret}=2$, and this bound on the return time is sharp. 
\end{theorem}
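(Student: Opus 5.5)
The plan is to reduce everything to a single fundamental cell of the packing and then do an elementary, but careful, planar case analysis. I interpret ``enter with angle $\theta$'' as the angle between the line and the tangent line to the circle at the entry point. For a straight line whose distance from the center of a unit disk is $d<1$, this angle satisfies $\cos\theta = d$. So the claim $\theta \ge \pi/6$ is equivalent to the line passing within distance $\cos(\pi/6)=\sqrt3/2$ of the center of the \emph{first} disk it meets. This is also the source of the quantity $\Delta r = 1-\cos(\pi/6)$.

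First I would use the symmetries of the hexagonal lattice of centers (translations, rotations by $\pi/3$, reflections) to normalize the situation. I would take the starting point $p$ in the closed curvilinear triangle bounded by three mutually tangent disks with centers $A,B,C$, forming an equilateral triangle of side $2$. I would take the direction $u$ in a fundamental angular sector of width $\pi/6$ measured from the direction of $\overrightarrow{AB}$.

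The first observation is that each side of triangle $ABC$ is covered by the two disks at its endpoints, which are tangent at its midpoint. Hence the ray from $p$ must enter some disk before leaving the triangle $ABC$. Consequently only the three disks at $A,B,C$ need to be considered.

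Next, let $d_A,d_B,d_C$ be the distances from the line $\ell$ through $p$ to the three centers. The projections of $A,B,C$ onto the unit normal of $\ell$ span an interval of length at least the minimal width of the triangle, which is $\sqrt3$. The line $\ell$ separates at least one vertex from the others, or passes through the triangle between them. From this I expect to get $\min(d_A,d_B,d_C)\le\sqrt3/2$, with equality in the parallel-to-a-side configuration (height $h$ above $AB$ and $\sqrt3-h$ from $C$).

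Then I would parametrize the ray explicitly by $p$ and the angle $\psi\in[0,\pi/6]$. For each of the three disks I would write the entry time as the smaller root of $|p+tu-X|^2=1$. For each configuration I would identify which disk is hit first, and verify that this first disk is one with $d_X\le\sqrt3/2$.

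The main obstacle is exactly this last point. A priori the ray could graze a ``far'' disk (with $d_X\in(\sqrt3/2,1)$) before reaching the disk it penetrates deeply. I would handle this by a monotonicity argument. The grazing disk must lie on the side of $\ell$ opposite to the deeply-hit disk. By the geometry of the gap, a ray can touch that side's disk only after crossing the perpendicular from its center. By then it has passed the near disk, or else it lies in the opposite half of the triangle, where the roles are reversed. If this splitting becomes unwieldy, I would fall back on a finite subdivision of the compact parameter set $(p,\psi)$ with explicit inequalities on each piece.

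For the time bound, the entry time is a continuous function on the compact parameter set. I would maximize it, expecting the maximum on the boundary where $p$ lies on one of the circles and $u$ is tangent to it. There I would exhibit an explicit ray achieving time exactly $2$ to show sharpness.
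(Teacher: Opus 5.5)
\textbf{The gap is a misreading of the statement, and it makes your key step false.} You read the theorem as saying that the \emph{first} disk a ray meets is entered at angle $\ge\pi/6$. What it actually says, and what the paper proves, is that within time $2$ the ray enters \emph{some} disk of $\D_0$ at angle $\ge\pi/6$, possibly after crossing one or two disks at shallower angles.

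Your version fails. In your coordinates $A=(0,0)$, $B=(2,0)$, $C=(1,\sqrt3)$, let $p$ lie on the gap-side arc of $\partial D_C$, at angular distance $\epsilon$ from the tangency point $(\tfrac12,\tfrac{\sqrt3}{2})$ of $D_A$ and $D_C$. Aim the ray at the tangency point $(1,0)$ of $D_A$ and $D_B$.
\begin{itemize}
\item This line is a small rotation about $(1,0)$ of the chord of $D_A$ joining the two tangency points. That chord lies at distance exactly $\sqrt3/2$ from both $A$ and $B$.
\item The rotation raises both of these distances to $\sqrt3/2+\epsilon/4+O(\epsilon^2)$. It lowers the distance to $C$ to $\sqrt3/2-3\epsilon/4+O(\epsilon^2)$.
\item So the ray immediately crosses $D_A$, and then, through $(1,0)$, crosses $D_B$, both at angles just below $\pi/6$.
\item Meanwhile $C$, the only center within $\sqrt3/2$ of the line, lies behind the ray, since the ray leaves $\partial D_C$ outward.
\end{itemize}
Consequently no monotonicity argument can show that a ``grazing'' disk is met only after the deeply penetrated one. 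The reduction to the three disks at $A,B,C$ is also invalid: here the first entry at angle $\ge\pi/6$ is into the disk centered at $(1,-\sqrt3)$, near $(\tfrac32,-\tfrac{\sqrt3}{2})$, at a time that tends to $2$ as $\epsilon\to0$. This is exactly the configuration that makes $2$ sharp.

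Under your reading the constant could not even be sharp. A ray from the closed gap enters the interior of some disk within time $2\sqrt3-2<2$; the worst case is a ray slipping through a cusp along the common tangent. So your plan to exhibit a ray with time $2$ contradicts your own interpretation.

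\textbf{The missing idea is the paper's enlarged configuration.} Besides the three disks around the gap, use the three outer disks, each tangent to two of them. Split the directions at $p$ into six sectors by the rays from $p$ to the six tangency points of this ring.
\begin{itemize}
\item \emph{Outer-disk sectors.} A ray in such a sector enters that outer disk at angle $\ge\pi/6$. This follows by comparison with the two lines through pairs of tangency points of the inner triangle, which meet the outer disk at exactly $\pi/6$, together with monotonicity of the entry angle as the ray rotates across the sector.
\item \emph{Inner-disk sectors.} Their boundary rays pass through tangency points. There the angle at which the ray meets the inner disk equals the angle at which it meets the adjacent outer disk, so the same monotonicity argument applies.
\item \emph{Time bound.} The relevant segments have length at most the distance $2$ from a cusp of the gap to the far tangency point of an outer disk, e.g.\ from $(\tfrac12,\tfrac{\sqrt3}{2})$ to $(\tfrac32,-\tfrac{\sqrt3}{2})$.
\end{itemize}
Finally, the correct quantity (time of first entry at angle $\ge\pi/6$) is not continuous in $(p,v)$: it jumps when an earlier crossing angle passes through $\pi/6$. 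The value $2$ is approached but not attained, because the limiting ray from the cusp enters $D_A$ at exactly $\pi/6$ at time $0$. So sharpness must be shown with a sequence of rays, as the paper does, rather than by maximizing a continuous function on a compact set.
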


When we talk of the angle $\theta$ of entering a disk, we mean the smaller of the two possible ways of measuring the angle.  
\begin{figure}[th]
    \centering
    \includegraphics[width=5in]{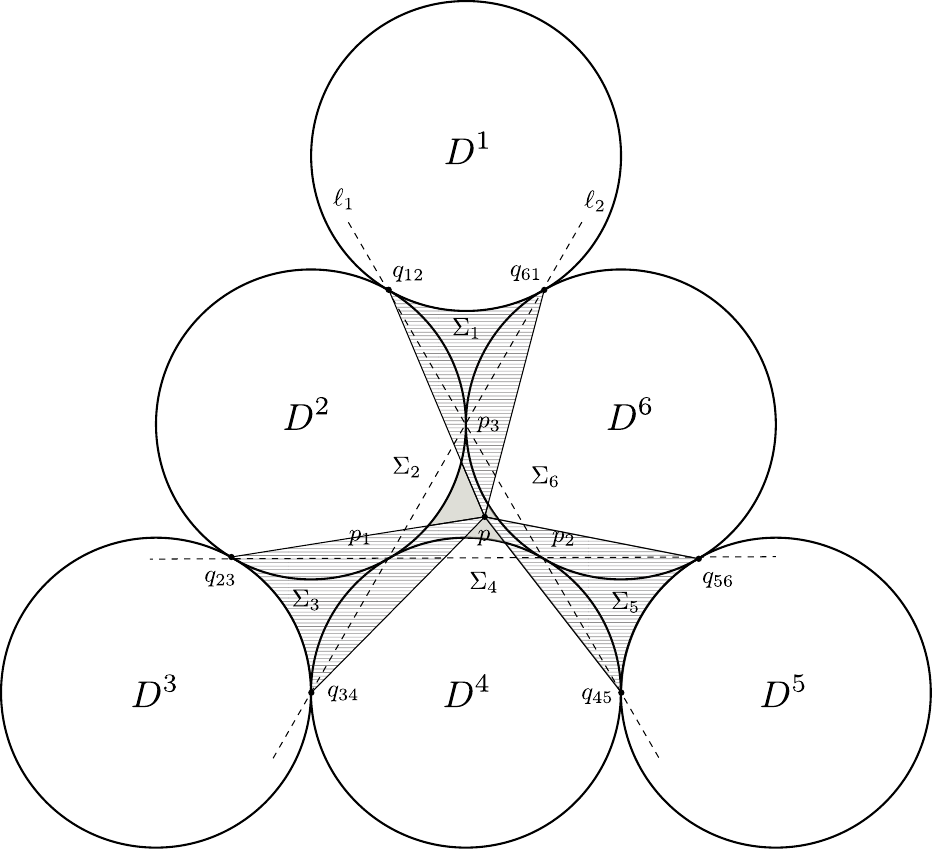} 
    \caption{Any straight line geodesic from the point $p$ enters one of the disks $D^i$, $1 \leq i \leq 6$, at an angle of at least $\pi/6$ within time $T_{ret}=2$.}
    \label{fig:my_label}
\end{figure}

\begin{proof}
Consider an arbitrary point $p \in \bbR^2 \setminus \text{int}(\D_0)$. It is in a region bounded by three disks that we label $D^2$, $D^4$, and $D^6$ in Figure \ref {fig:my_label}.  We consider three additional disks $D^1$, $D^3$, and $D^5$ that are each tangent to two of the inner disks. We will show that any geodesic starting at the point $p$ will enter one of these six disks within time $2$ with angle at least $\pi/6$.

Label the point of tangency between disks $D^i$ and $D^j$ as $q_{ij}$ for $j-i=1$ mod 6. A geodesic ray starting at point $p$ is in one of six sectors bounded by the rays from $p$ to $q_{12}$, $q_{23}$, $q_{34}$, $q_{45}$, $q_{56}$, and $q_{61}$. Let $\Sigma_i(p)$ be the set of unit vectors based at $p$ corresponding to the sector bounded by the rays from $p$ to $q_{i-1,i}$ and from $p$ to $q_{i,i+1}$ (with index arithmetic done mod 6). Note that $\Sigma_i(p)$ is a set of vectors whose rays enter $D_i$ with sizable angle of entry (we will prove below that this is at least $\pi/6$, regardless of which point $p$ we start at).

For a disk $D^i$ and a vector $v\in S_pM$, let $\theta(v,D^i)$ be the angle at which the ray starting at $p$ in the direction $v$ enters $D^i$.

First consider the sectors $\Sigma_1(p)$, $\Sigma_3(p)$, or $\Sigma_5(p)$. Without loss of generality, we make the argument for $\Sigma_1(p)$. Consider the two dotted lines $\ell_1 = \overleftrightarrow{p_2 p_3}$ and $\ell_2 = \overleftrightarrow{p_1 p_3}$ through the points of double tangency as labeled in the picture.  These lines cross $D^1$ at angle $\pi/6$. Because $p$ is in the triangle formed by $p_1$, $p_2$, and $p_3$, by comparison with $\ell_1$ we see that $\theta(\overrightarrow{p q_{61}},D^1) \geq \pi/6$, and similarly by comparison with $\ell_2$ we see that $\theta(\overrightarrow{p q_{12}},D^1) \geq \pi/6$. As the outgoing ray rotates from $\overrightarrow{p q_{61}}$ to $\overrightarrow{p q_{12}}$, the angle of entry into disk $D^1$ increases monotonically from $\theta(\overrightarrow{p q_{61}},D^1) \geq \pi/6$ to a maximum of $\pi/2$, and then decreases monotonically to $\theta(\overrightarrow{p q_{12}},D^1) \geq \pi/6$. This follows from keeping track of the angle of crossing with a fixed line, and then noting that the tangent line along a circle changes in such a way that it further increases (resp. decreases) the angle. Hence, the angle of entry is never less than $\pi/6$.

Now, consider a ray from point $p$ that enters $\Sigma_2(p), \Sigma_4(p)$, or $\Sigma_6(p)$. We will give the argument for $\Sigma_2(p)$. The incoming angle that the ray $\overrightarrow{p q_{12}}$ makes with $D^2$ is equal to the outgoing angle that this ray makes when leaving $D^2$. Due to the tangency of $D^1$ and $D^2$ at this point $q_{12}$, this outgoing angle at $D^2$ is equal to the incoming angle at $D^1 = \theta(\overrightarrow{p q_{12}},D^1)$. Thus $\theta(\overrightarrow{p q_{12}},D^2)=\theta(\overrightarrow{p q_{12}},D^1) \geq \pi/6$. Now the same argument as above shows that the angle of entry with $D^2$ of  rays in the blue region is never less than $\pi/6$.

Now we determine a bound on the time it takes a straight line to enter a disk as above. Without loss of generality, consider a geodesic through $p$ that intersects $D^1$ at a point $p^{in}$ in forward time and intersects $D^4$ at a point $p^{out}$ in backward time. The Euclidean distance $d(p, p^{in})$ between $p$ and $p^{in}$ satisfies
    \[
    d(p, p^{in})\leq d(p^{out}, p^{in}) \leq \max\{  d(p_{1}, q_{12}), d(p_{1}, q_{61})\} = d(p_{1}, q_{61})=2.
    \]

To see that the value $T_{ret}=2$ is sharp, take a sequence $ (p_n, v_n)$ with points $p_n$ on the boundary of  $D^4$   such that   $p_n \to p_{1}$. The vectors $v_n$ are chosen to be in the same direction as the vector $\overrightarrow{p_n p_3}$. The ray determined by $(p_n, v_n)$ first returns to a disk with angle $\theta \geq \pi/6$ when it enters $D^1$. As $p_n \to p_1$, this return time approaches 2. 
\end{proof}

We translate the angle condition for strongly finite horizon into a condition on how far into the disk the trajectories reach.  A straight line geodesic that enters a unit disk with angle $\pi/6$ will reach a minimum radial distance of $\cos (\pi/6)$ from the center of the disk. Measuring inward from the edge of the disk, we say that the trajectory has penetrated a distance
    \begin{align}
    \Delta r = 1- \cos (\pi/6) \label{eqn:DeltarDefn}
    \end{align}
into the disk. 

\begin{definition}\label{defn:D1set}
    Denote by  $\D_1$ be collection of disks with the same centers as $\D_0$ but with radii $1-\Delta r$.
\end{definition}

The disks $\D_1$ have the property that a line entering a $D_0$ at angle $\pi/6$ is tangent to $D_1\subset D_0$ at its point of maximum penetration.

\begin{corollary} \label{thm:D1StrongFiniteHorizon} \hfill
\begin{enumerate}
    \item The sets $\D_1 \subset \D_0$ have the $T_{ret}=2$ strong  finite horizon 
    property. Translated to the optimal time-version statement, $\D_0$ has the $(T_{ret}=2,\Delta t=1)$ strong finite horizon property.
    \item The set $\D_1$ has the $(T_{ret}=3, \Delta t=0)$ strong finite horizon property. 
\end{enumerate}
\end{corollary}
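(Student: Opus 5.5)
The plan is to derive both statements from Theorem~\ref{thm:diskarrangementangletimebounds} together with one elementary fact about lines and unit disks. A line entering a unit disk at angle $\theta$ passes at distance $\cos\theta$ from the center and traverses a chord of length $2\sin\theta$. Hence a line meets a disk of $\D_1$ (radius $1-\Delta r=\cos(\pi/6)$) if and only if it enters the corresponding disk of $\D_0$ at angle $\theta\ge\pi/6$. In that case its chord through $D_0$ has length at least $2\sin(\pi/6)=1$.

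\emph{Part 1.} Fix a geodesic (a straight line). Using the canonical choice of times, let $t_i^-$ be the successive entries into disks of $\D_0$ at angle $\ge\pi/6$, and let $t_i^+$ be the corresponding exits. At an exit time the point lies on $\partial\D_0$, hence in $\bbR^2\setminus\text{int}(\D_0)$. Theorem~\ref{thm:diskarrangementangletimebounds} then gives a next good entry within time $2$, so $t_{i+1}^--t_i^+\le 2$. Entries at smaller angles, which cross disks only shallowly, are simply not counted; this is allowed because the theorem applies from any starting point outside the interior. Applying the same argument to the reversed line, using $\gamma(p,v)(-t)=\gamma(p,-v)(t)$, produces the times for negative $i$. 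Since consecutive good entries are at least $1$ apart, $t_i^\pm\to\pm\infty$. By the fact above, each good visit touches $\D_1$ and lasts at least $1$. This gives the $T_{ret}=2$ property for $\D_1\subset\D_0$ and the $(2,1)$ property for $\D_0$.

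Optimality follows in two steps. Sharpness of $T_{ret}=2$ is exactly the sharpness statement in Theorem~\ref{thm:diskarrangementangletimebounds}, because good visits are the same thing as visits reaching $\D_1$. For $\Delta t$, the extremal lines in that sharpness argument enter $D^1$ at angle exactly $\pi/6$, so their chords have length exactly $1$; hence $\Delta t=1$ cannot be improved at $T_{ret}=2$.

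\emph{Part 2.} Now take $t_i^\pm$ to be the entry and exit times of $\D_1$; here $\Delta t=0$ allows tangential touches. I would bound the gap between exiting a $D_1$ and entering the next $D_1$ in three pieces:
\begin{enumerate}
\item The time from $\partial D_1$ out to $\partial D_0$. The line's closest distance to the center is at least $\cos(\pi/2)=0$ and at most $\cos(\pi/6)$, and the exit length is $\sqrt{1-c^2}-\sqrt{3/4-c^2}$, where $c$ is that closest distance. This is maximized at $c=\sqrt3/2$, giving at most $1/2$.
\item The time from $\partial\D_0$ to the next good entry into $\D_0$, which is at most $2$ by Theorem~\ref{thm:diskarrangementangletimebounds}.
\item The time from that entry point to $\partial D_1$. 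This equals
\[
\sin\theta-\sqrt{\sin^2\theta-1/4}.
\]
It is decreasing in $\sin\theta$, so on $\theta\in[\pi/6,\pi/2]$ it is at most its value $1/2$ at $\theta=\pi/6$.
\end{enumerate}
Summing gives $T_{ret}\le 3$.

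The main obstacle I expect is bookkeeping rather than analysis. The line may pass through other disks shallowly in between, and those must not break the argument. The index sequence must be genuinely bi-infinite and increasing, which requires the time-reversal argument. Finally, the monotonicity claims in the two one-variable functions above need to be checked.
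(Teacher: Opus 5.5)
Your proposal is correct and follows the paper's own proof, with the bookkeeping filled in (canonical times, time reversal, and monotonicity of the two one-variable functions). Both rest on Theorem~\ref{thm:diskarrangementangletimebounds} and on the fact that a line meets $\D_1$ exactly when it enters $\D_0$ at angle at least $\pi/6$, in which case its chord has length at least $2\sin(\pi/6)=1$. Your $\tfrac12+2+\tfrac12$ bound in part~2 is what lies behind the paper's one-line claim that every trajectory starting outside $\D_1$ reaches $\D_1$ within time $3$.

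One minor slip concerns optimality. The lines in the sharpness construction enter $D^1$ at angles slightly above $\pi/6$; only their limit enters at exactly $\pi/6$. To see that $\Delta t=1$ cannot be raised, it is cleaner to do as the paper does and point to a line entering at angle exactly $\pi/6$, i.e.\ tangent to a disk of $\D_1$, whose chord has length exactly $1$.
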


\begin{proof}
A trajectory that enters $\D_0$ with angle $\pi/6$ will take time $\frac{1}{2}$ until it intersects $\D_1$ and then another $\frac{1}{2}$ until it passes out of $\D_0$ thus spending time 1 in $\D_0$. Trajectories that enter with a larger angle will spend more time in $\D_0$.

Every trajectory that starts outside of $\D_1$ will intersect $\D_1$ in time at most 3. Some of those trajectories intersect $\D_1$ tangentially and hence do not spend positive time inside $\D_1$. 
\end{proof}

More generally, it will be useful to replace  $\D_0$ with disks $\D_{\rho}\subset \D_0 $ of smaller than unit radius. We will eventually have tubes of negative curvature sitting above the disks and we will want a strictly negative upper bound $K_{neg}$ on the curvature inside the disk $\D_{\rho}$.

\begin{definition}\label{def:Drho}
    For $0\leq \rho \leq 1$, define $D_{\rho}$ to be the disk of radius $r(\rho)= 1 - \rho \Delta r$ and $\D_\rho$ to be the set of disks $D_\rho$ with the same centers as the hexagonal packing $\D_0$. 
\end{definition} 

The sets $\D_\rho, \, 0 < \rho < 1$ also  have the strong finite horizon property. We will be particularly interested in the case $\rho = \frac{1}{2}$. 

\begin{proposition}\label{prop:DrhoStrongFiniteHorizon}
    The sets $\D_1 \subset \D_{\rho}$ have the  $T_{ret}(\rho)$ strong finite horizon property with optimal value
        $$
        T_{ret}(\rho) = 3 -  2 \sqrt{(1 -\rho \Delta r)^2 - 3/4}.
        $$
    This corresponds to $\D_{\rho}$ having a $(T_{ret}(\rho),\Delta t(\rho))$ strong finite horizon property with optimal value
        $$
        \Delta t(\rho) = 2\sqrt{(1 -\rho \Delta r)^2 - 3/4}.
        $$
    For $\rho = \frac{1}{2}$, to five decimal places and rounding up (respectively down), we get
        \begin{equation}\label{eqn:boundonT12}
        \textstyle
        T_{ret}(\frac 12)= 2.30571 ~~~\text{and}~~~ \Delta t(\frac 12) = 0.69429.
        \end{equation}
\end{proposition}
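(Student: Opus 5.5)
The plan is to reduce everything to the chord geometry of a single disk, using Theorem~\ref{thm:diskarrangementangletimebounds} and Corollary~\ref{thm:D1StrongFiniteHorizon}. A straight line that enters some $D_0$ at angle $\theta\ge \pi/6$ passes within distance $\cos\theta\le \cos(\pi/6)=\sqrt3/2 = 1-\Delta r$ of the center. So it meets the concentric $D_1$, and hence also the concentric $D_\rho\supset D_1$. The good return to $\D_\rho$ then happens on the same disk. Take the chord of $D_\rho$ cut out by a line at distance $d\le\sqrt3/2$ from the center. Its length is $2\sqrt{r(\rho)^2-d^2}$, which is at least $2\sqrt{(1-\rho\Delta r)^2-3/4}$. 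This is the claimed $\Delta t(\rho)$, with equality exactly for lines tangent to $D_1$.

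For the return time, I would argue as follows. Consider a geodesic leaving $\D_\rho$ at time $t_i^+$ on the boundary of some $D_\rho$. Its next good return to $\D_\rho$ is its next entry into $D_\rho$ of the disk $D_0$ that it enters at angle $\ge\pi/6$. The exit point from $D_\rho$ to the next $D_0$-entry is where I plan to use the bound from Theorem~\ref{thm:diskarrangementangletimebounds}. Accounting goes as follows: the time from entry into the good $D_0$ to entry into its $D_\rho$ is at most half of $D_0$-chord minus $D_\rho$-chord. Rather than bound pieces separately, I would compare with $\D_0$ directly. By Corollary~\ref{thm:D1StrongFiniteHorizon}, the sets $\D_1\subset\D_0$ have return time $2$. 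Equivalently, between good visits the gap from leaving a $D_1$-reaching chord's disk to the next one is controlled by the configuration. The cleanest route is to note that the time from leaving $D_\rho$ to entering the next good $D_\rho$ is at most the distance from the exit point of $D_1$-level contact to the next one, which is $3$ (the $\D_1$ return bound, item 2 of the corollary), minus the two half-chords of $D_\rho$ beyond $D_1$. In the worst case (lines tangent to $D_1$) each half-chord is $\sqrt{r(\rho)^2-3/4}$. This gives
\[
T_{ret}(\rho)\le 3-2\sqrt{(1-\rho\Delta r)^2-3/4}.
\]
I would double-check this by writing the worst-case geodesic explicitly.

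For sharpness, I would use the extremal configuration in the proof of Theorem~\ref{thm:diskarrangementangletimebounds}: the line through $p_1$ and $p_3$ (direction $\overrightarrow{p_np_3}$ with $p_n\to p_1$). That line is tangent to $D_1$ of $D^4$ and of $D^1$, and it crosses intermediate disks only shallowly, at angle $<\pi/6$. So it doesn't reach $\D_1$ there, and those visits are not good. Its distance between the two $D_1$-tangency points is $3$. Subtracting the two $D_\rho$ half-chords gives equality, and the same line shows $\Delta t(\rho)$ is attained. A limiting sequence of nearby lines then shows optimality of the pair in the sense defined after Definition~\ref{def:strongfhpgeom}. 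The numerical values in (\ref{eqn:boundonT12}) follow by substituting $\rho=\tfrac12$, $\Delta r = 1-\sqrt3/2$, rounding $T_{ret}$ up and $\Delta t$ down.

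The main obstacle is the upper bound on the return time. I must justify that the gap between consecutive $\D_1$-reaching visits, measured between tangency/closest-approach levels, is at most $3$ uniformly, and that subtracting exactly the worst-case half-chords is legitimate. Deeper crossings have longer $D_\rho$ chords, but they could shift the chord endpoints unfavorably. I would handle this by parametrizing the line by its closest-approach points $c_1,c_2$ to the two disk centers. The exit and entry times are then $\sqrt{r^2-d_1^2}$ and $\sqrt{r^2-d_2^2}$ from $c_1,c_2$. I would bound $|c_1c_2|-\sqrt{r^2-d_1^2}-\sqrt{r^2-d_2^2}$ using the hexagonal geometry, with $|c_1c_2|\le 3$ attained only in the tangent case.
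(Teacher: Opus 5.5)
You have a genuine gap in the upper bound on the return time, which is the core of the statement. Your $\D_1$-based accounting subtracts, on each side, the part of the $D_\rho$-chord lying outside $D_1$. With $r=1-\rho\Delta r$, $r_1=\sqrt3/2$ and $d$ the distance to the center, that part is $\sqrt{r^2-d^2}-\sqrt{r_1^2-d^2}$. This quantity is \emph{increasing} in $d$. It equals $\sqrt{r^2-3/4}$ only for lines tangent to $D_1$, and drops to $r-r_1=(1-\rho)\Delta r$ for lines through the center. So tangency is the most favorable case for you, not the worst. Combined with the $\D_1$-gap bound $3$ from Corollary~\ref{thm:D1StrongFiniteHorizon}(2), it yields only $T_{ret}\le 3-2(1-\rho)\Delta r$, about $2.866$ at $\rho=\tfrac12$.

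Your proposed repair, $|c_1c_2|\le 3$ for consecutive good disks, is false. Take disk centers at $(\pm1,0)$ and $(0,\pm\sqrt3)$. The line $x=0$ passes through the centers $(0,\pm\sqrt3)$, and in between it only touches the other two disks tangentially at the origin, so $|c_1c_2|=2\sqrt3>3$. The conclusion does hold for this line (the gap is $2\sqrt3-2r\approx1.60$), but not by your inequality. Deep crossings are compensated by shorter gaps, and no uniform bound on $|c_1c_2|$ captures that correlation.

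The missing step is the decomposition you mentioned and then set aside: measure from $\D_0$ rather than $\D_1$. The time from leaving a good $D_\rho$ to entering the next good $D_\rho$ is the sum of three pieces:
\begin{itemize}
\item the gap between the corresponding $D_0$-visits, which is at most $2$ by Theorem~\ref{thm:diskarrangementangletimebounds};
\item on each of the two sides, the piece $\sqrt{1-d^2}-\sqrt{r^2-d^2}$ of the $D_0$-chord lying outside $D_\rho$.
\end{itemize}
This second piece is also increasing in $d$, since its derivative is $d/\sqrt{r^2-d^2}-d/\sqrt{1-d^2}>0$. But now it is \emph{added}, so the worst case really is tangency, where it equals $\tfrac12-\sqrt{r^2-3/4}$. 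Hence $T_{ret}(\rho)\le 2+2(\tfrac12-x)=3-2x$. This is the paper's argument: ``trajectories entering with larger angle reach $D_\rho$ more quickly.'' In your closest-approach language, it amounts to writing $|c_1c_2|=\sqrt{1-d_1^2}+\mathrm{gap}_0+\sqrt{1-d_2^2}$ instead of bounding $|c_1c_2|$ uniformly.

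A smaller point on sharpness: the limit line through $p_1$ and $p_3$ is itself at distance exactly $\sqrt3/2$ from the centers of the intermediate disks. So along it every crossing is tangent to a $D_1$, hence good, and consecutive good visits are only $1-2x$ apart. The value $3-2x$ is a supremum approached by the perturbed lines with $p_n\to p_1$, not attained by any single line. Your $\Delta t(\rho)$ argument is fine and matches the paper.
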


\begin{proof} 
First consider a trajectory that is tangent to a disk $D_1$. It entered the disk $D_0$ with angle $\theta =\pi/6$. Assume that  this trajectory is horizontal in $(x,y)$ coordinates. Then its $y$-coordinate is $y=1-\Delta r = \pm \cos(\pi/6) = \pm \sqrt{3}/2$. This trajectory will intersect the  disk of radius $r$ at  points $(x,\pm \sqrt{3}/2)$ where
    $$
    x^2 + (\sqrt{3}/2)^2 = r^2. 
    $$
so that 
    $$
    x = \pm \sqrt{r^2-(1-\Delta r)^2} = \pm \sqrt{r^2 - 3/4}.
    $$
For $r = 1 -\rho \Delta r$, the positive root equals 
    $$
    x = \sqrt{(1 -\rho \Delta r)^2 - (1-\Delta r)^2} = \sqrt{(1 -\rho \Delta r)^2 - 3/4}. 
    $$

Since the  tangential trajectory spends time 1 inside of $D_0$, the time it spends from entering $D_0$ to reaching $D_{\rho}$ is $ \frac 12 - x$. Since it takes time $2$ between returns to $D_0$, it will take time 
    $$
    T_{ret}(\rho)=2 + 2( \frac 12 -x)=  3 - 2x 
    $$ 
between returns to $D_{\rho}$.

If a trajectory enters $D_1$ but not tangentially, then it must have crossed $D_0$ with angle $\theta$  satisfying $\theta > \pi/6$. It will then reach $D_{\rho}$ more quickly than the tangential trajectory. Since the time for ``good" returns to $D_0$ is at most 2,  its time between returns to $D_{\rho}$ will be strictly less than $T_{ret}(\rho)$. 

For the value of $\Delta t(\rho)$, note that the trajectory tangent to $\D_1$ spends time 
    $$
    2x = 2\sqrt{(1 -\rho \Delta r)^2 - 3/4}. 
    $$
inside $\D_{\rho}$ and trajectores that enter $\D_1$ non-tangentially will spend more. 
\end{proof}

\subsection{Constructing the model space} \label{sec:tubes}

We create  the model space $\M_0$ by attaching identical  negatively curved tubes to the disks $\D_0$ given in Section~\ref{sec:circlepacking}  on a top and bottom copy of the plane. We denote by $\T_0$ the collection of all these tubes. The model space comes with the metric 
    \begin{equation}\label{eqn:modelspacemetric}
    g_{\M_0} = g_{Eucl} \big|_{\M_0}
    \end{equation} 
that it inherits from being an embedded surface in Euclidean $\bbR^3$.
 
The set of tubes  $\T_0 \subset \M_0$   will have a strong finite horizon property (since they overlay the disks $\D_0$ and we have not changed the metric outside of those disks). 

We introduce the following   notation. 
\begin{definition}\label{def:Tubesrho}
    Let  $\T_{\rho}\subset \T_0 $ be the part of the tubes $\T_0$ that sits above the disks $\D_{\rho}$. 
\end{definition}

\subsubsection{Tube blueprint: a surface of revolution}\label{subsection: tubeconstruction}

In this section, we explicitly construct our negatively curved tube. It will be a surface of revolution, have maximum radius $R=1$, and smoothly connect to the flat plane at the boundaries of the corresponding disks. The tube will be  symmetric with respect to reflection about the plane half-way up.  

First we will make a $C^1$ smooth construction and then we will apply a standard smoothing argument using a partition of unity function to make the tube - plane attachement $C^{\infty}$ smooth.

Assume a tube based on a disk centered at the origin and give $(u,v,w)$ coordinates to $\bbR^3$. Define the profile curve of the tube in the $(u,w)$-plane by using the left half  of the circle of radius $\frac{1}{2}$ centered at the point $(u=1, w = \frac{1}{2})$; see Figure~\ref{fig:angularcoordinates}. Then rotate this profile curve around the $w$ axis to get the tube.

Express the tube in terms of angular coordinates given by $Y: \bbR^2 \to \bbR^3$:
    $$
    Y(\psi, \theta) = (\pr(\psi) \cos \theta, \pr(\psi) \sin \theta, \ph(\psi))
    $$
where 
    $$
    \pr(\psi) = 1-\frac 12 \cos \psi \text{ and } \ph(\psi)= \frac 12 +\frac 12 \sin \psi
    $$
for $\psi \in (-\pi/2, \pi/2)$ and $\theta \in (0, 2\pi)$.

\begin{figure}[h!]
\centering
\begin{tikzpicture}
\begin{axis}[
    width=8cm,
    axis equal image,
    xmin=0, xmax=1.1,
    ymin=0, ymax=1.05,
    axis lines=left,
    xlabel={$u$},
    ylabel={$w$},
    xtick={0,0.5,1},
    ytick={0,0.5,1},
    grid=both,
    grid style={line width=0.2pt, gray!40},
    samples=200,
    domain=-pi/2:pi/2,
    thick
]

\addplot[
    parametric,
    smooth,
    color=red,
]
(
    {1 - 0.5*cos(deg(x))},
    {0.5 + 0.5*sin(deg(x))}
);

\def\psideg{35}

\coordinate (C) at (axis cs:1,0.5);

\coordinate (L) at (axis cs:0.5,0.5);

\coordinate (P) at (
    axis cs:{1 - 0.5*cos(\psideg)},
            {0.5 + 0.5*sin(\psideg)}
);

\addplot[only marks, mark=*] coordinates {(1,0.5)};

\draw[gray, dashed] (C) -- (L);
\draw[blue, thick, -{Latex}] (C) -- (P);

\node[blue, above left] at (P) {$(r(\psi),h(\psi))$};
\node[blue] at (axis cs:0.78,0.57) {$\psi$};

\end{axis}
\end{tikzpicture}
\caption{The tube profile curve determined by the angular coordinate $\psi$.}\label{fig:angularcoordinates}
\end{figure}

The tube created using circular arcs attaches $C^1$, but not $C^2$, smoothly to the plane. To make the attachment $C^{\infty}$ smooth, use a standard smoothing argument to   modify the tube in a small neighorhood of its boundary. To facilitate the smoothing process, express the tube in polar coordinates: 
    $$
    Y(r,\theta) = (r \cos \theta, r \sin \theta, \pf(r) ).
    $$

Solving the equation of the circle 
    $$
    (r-1)^2+ (w-\frac{1}{2})^2 = \frac{1}{4}, \, 0\leq w \leq 1, 
    $$
for the bottom half of the tube gives  
    \begin{equation}\label{eqn:unsmoothedtubefunction}
    w= \pf(r) = \frac{1}{2}-\sqrt{\frac{1}{4} - (r-1)^2}, \,\frac{1}{2}\leq r \leq 1. 
    \end{equation}

\subsubsection{Tube smoothing}\label{sec:smoothing}

Since the tube must attach to the flat region outside of the tube, we would like the profile function   to go smoothly to zero at $r=1$. We have that $\pf(1)=0, \, \pf'(1)=0$ but $\pf''(1) = 2 \neq 0$. Thus $\pf$ could be extended $C^1$ but not $C^2$ to the flat region. 
 
In what follows, we use a partition of unity function to modify $\pf$ to produce a smooth profile function $f$. This construction is essentially a horizontal shift of $\pf$ by $\delta_h$, a vertical shift by $\delta_v$, and a smoothing over the   interval $r\in [1-\epsilon,1]$ where $\epsilon$ can be made arbitrarily small. 
 
\begin{proposition}\label{prop:smoothprofilefunction}
    Given any $\epsilon>0$, there exists values $\delta_h$ and $\delta_v$ with $0< \delta_h , \delta_v <\epsilon$ and a  $C^{\infty}$ function $f(r)$ such that 
    \begin{enumerate}
        \item $f$ is defined on $r \geq \frac 12 - \delta_h $,
        \item $f(r) = \pf(r + \delta_h ) + \delta_v$ for $\frac 12 - \delta_h  \leq r \leq 1 - \epsilon$,
        \item $f'(r) < 0$, $f''(r)>0$, and $f'''(r)<0$ for $r \in [1-\epsilon,1)$,
        \item the functions $|f'(r)|, |f''(r)|, |\frac{f'(r)}{r}|$ are decreasing for $r\in  [\frac 12 - \delta_h, 1)$,
        \item $f(r) \equiv 0$ for $r \geq 1$.
    \end{enumerate} 
\end{proposition}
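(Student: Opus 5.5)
The plan is to build $f$ by prescribing its second derivative, integrating twice from $r=1$, and choosing $\delta_h$ so that the smoothed piece matches the shifted circle exactly at $r=1-\epsilon$. First I record the signs for the unsmoothed profile. Write $s=1-r$. From (\ref{eqn:unsmoothedtubefunction}),
$$
\pf'(r)=-\frac{s}{\sqrt{1/4-s^2}},\qquad \pf''(r)=\frac{1/4}{(1/4-s^2)^{3/2}},\qquad \pf'''(r)=-\frac{(3/4)\,s}{(1/4-s^2)^{5/2}} .
$$
So on $\tfrac12<r<1$ we have $\pf'<0$, $\pf''>0$, $\pf'''<0$. The expression $G(r)=\tfrac14\,(1/4-s^2)^{-3/2}$ is real-analytic for $|s|<\tfrac12$, so it continues smoothly past $r=1$; there it stays positive, but $G'$ changes sign and is only of size $O(|s|)$.

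Next I fix a $C^\infty$ cutoff $\varphi$ with $\varphi\equiv1$ on $r\le 1-\epsilon$, $\varphi\equiv0$ on $r\ge1$, and $\varphi'<0$ on $(1-\epsilon,1)$. I take it of the standard $e^{-1/(1-r)}$ type near $r=1$, so that $\varphi/|\varphi'|=O((1-r)^2)$ as $r\to1$. For a parameter $\delta\in(0,\epsilon)$ I define
$$
F_\delta''(r)=\varphi(r)\,G(r+\delta)\quad (r<1),\qquad F_\delta''(r)=0\quad (r\ge1),
$$
and then
$$
F_\delta'(r)=-\int_r^1 F_\delta'',\qquad F_\delta(r)=-\int_r^1 F_\delta' .
$$
All derivatives of $\varphi$ vanish at $1$, so $F_\delta$ is $C^\infty$ and vanishes identically for $r\ge1$, which gives (5). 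To get (2) I need $F_\delta'(1-\epsilon)=\pf'(1-\epsilon+\delta)$. Equivalently, the quantity
$$
\Phi(\delta)=\int_{1-\epsilon}^1\varphi\,G(\cdot+\delta)\;+\;\pf'(1-\epsilon+\delta)
$$
must vanish. At $\delta=0$ we have $\int_{1-\epsilon}^1 G=-\pf'(1-\epsilon)$ and $\varphi<1$ on the interior, so $\Phi(0)<0$. At $\delta=\epsilon$ we have $\pf'(1)=0$ and the integral is positive, so $\Phi(\epsilon)>0$. The intermediate value theorem then gives $\delta_h\in(0,\epsilon)$ with $\Phi(\delta_h)=0$. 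With this choice, $f''=\pf''(\cdot+\delta_h)$ and $f'=\pf'(\cdot+\delta_h)$ on $r\le1-\epsilon$, so $f-\pf(\cdot+\delta_h)$ is a constant $\delta_v$ there. This gives (1) and (2) after extending $f$ leftward by $\pf(\cdot+\delta_h)+\delta_v$.

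Then I verify (3) and (4). We have $f''>0$ on $[1-\epsilon,1)$ since $\varphi>0$ and $G>0$, and $f'=-\int_r^1f''<0$. For the third derivative, $f'''=\varphi'G+\varphi G'$. On $[1-\epsilon,1-\delta_h)$ both terms are $\le0$, with $\varphi'G<0$ in the interior and $\varphi G'<0$ at $r=1-\epsilon$. On $[1-\delta_h,1)$ the term $G'$ is positive but $O(\delta_h)$, while $|\varphi' G|\ge|\varphi'|\cdot\tfrac{1}{2}\cdot 4^{\,1}$ dominates $\varphi G'$ because $\varphi/|\varphi'|\to0$. Shrinking $\epsilon$ (hence $\delta_h$) if necessary, this gives $f'''<0$. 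For (4) on all of $[\tfrac12-\delta_h,1)$: $f'<0$ and $f''>0$ give $|f'|$ decreasing; $f'''<0$, together with $\pf'''<0$ on the unsmoothed part, gives $|f''|$ decreasing; and $|f'/r|=(-f')\cdot(1/r)$ is a product of positive decreasing functions, hence decreasing.

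The remaining check is $0<\delta_v<\epsilon$. We have
$$
\delta_v=\int_{1-\epsilon}^1(-f')\;-\;\int_{1-\epsilon+\delta_h}^1(-\pf')
$$
Both terms are bounded by $\epsilon\,|\pf'(1-\epsilon+\delta_h)|=O(\epsilon^{3/2})$, so $\delta_v<\epsilon$ for small $\epsilon$. For positivity I will compare $-f'$ with $-\pf'(\cdot+\delta_h)$ on $[1-\epsilon,1-\delta_h]$ and use the extra interval $[1-\delta_h,1]$ where $-f'>0$. I expect this sign of $\delta_v$, together with the $f'''<0$ estimate on $[1-\delta_h,1)$ where the shifted circle has been continued past its endpoint, to be the delicate part. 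If the comparison fails, my fallback is to replace $G(\cdot+\delta)$ on $r>1-\delta$ by any smooth positive decreasing extension of $G$; that removes the sign issue in $f'''$, and the IVT argument for $\delta_h$ goes through unchanged.
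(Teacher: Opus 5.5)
Your construction is the paper's, reparametrized. The paper slides the cutoff window $[a,a+\epsilon]$ under the continued $\pf''$, picks $a$ by the intermediate value theorem so that $\pf_1'(a+\epsilon)=0$, and then shifts by $\delta_h=a+\epsilon-1$; you fix the window $[1-\epsilon,1]$ and slide $G$ instead. The following are all fine: your IVT for $\delta_h$, smoothness, items (1), (2), (5), $f'<0$, $f''>0$, item (4) and $\delta_v<\epsilon$. Also, $\delta_v>0$ is not delicate: it follows directly from the comparison you describe, since $f''\le\pf''(\cdot+\delta_h)$ and the first derivatives match at $1-\epsilon$.

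\textbf{The gap: $f'''<0$ on $[1-\delta_h,1)$.} On this interval the continued circle has $G'>0$. The paper's proof skips this point as well; it simply asserts that $P_a\pf''$ is decreasing on $(a,a+\epsilon]$.
\begin{itemize}
\item Your fallback cannot work. Since $G'(1)=0$ and $G''(1)=24>0$, every $C^2$ extension of $G|_{(1/2,1]}$ is strictly increasing just to the right of $1$, so there is no smooth decreasing extension.
\item Your main line does not close as written. You need $\varphi(r)G'(r+\delta_h)<|\varphi'(r)|G(r+\delta_h)$ on all of $[1-\delta_h,1)$, i.e.\ roughly $\varphi/|\varphi'|<1/(12\delta_h)$ there. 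The limit $\varphi/|\varphi'|\to0$ at $r=1$ only handles a neighbourhood of $1$.
\item At the other end, $\varphi(1-\epsilon)=1$ while $\varphi'$ vanishes to infinite order at $1-\epsilon$. So $\varphi/|\varphi'|$ blows up faster than any power of $r-(1-\epsilon)$ there. The IVT only gives $\delta_h\in(0,\epsilon)$, so nothing you have written keeps $1-\delta_h$ away from $1-\epsilon$; if it is too close, $\varphi G'$ wins and $f'''>0$.
\item ``Shrinking $\epsilon$'' does not help by itself, because $\varphi$ changes with $\epsilon$ and you have not said how.
\end{itemize}

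\textbf{The repair:} use a scale-invariant cutoff and read the matching equation quantitatively.
\begin{itemize}
\item Take $\varphi(r)=\Psi\bigl((r-1+\epsilon)/\epsilon\bigr)$ for a fixed profile $\Psi$ with $\Psi'<0$ on $(0,1)$ and $\Psi/|\Psi'|\to0$ at $1$. Let $I=\int_0^1\Psi\in(0,1)$.
\item On $[1-\epsilon,1+\epsilon]$ one has $G=2+O(\epsilon^2)$, and $-\pf'(1-\epsilon+\delta)=2(\epsilon-\delta)\bigl(1+O(\epsilon^2)\bigr)$, uniformly in $\delta\in[0,\epsilon]$. So $\Phi(\delta_h)=0$ forces $(\epsilon-\delta_h)/\epsilon=I+O(\epsilon^2)$.
\item Hence, for small $\epsilon$, the interval $[1-\delta_h,1)$ lies in the rescaled range $y\ge I/2$. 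There $\varphi/|\varphi'|=\epsilon\Psi/|\Psi'|\le M\epsilon$, with $M=\sup_{[I/2,1)}\Psi/|\Psi'|<\infty$.
\item Meanwhile $G'/G=12u/(1-4u^2)\le13\delta_h$ for $u=r+\delta_h-1\in[0,\delta_h]$.
\item Therefore $f'''=|\varphi'|G\bigl(\varphi G'/(|\varphi'|G)-1\bigr)<0$ as soon as $13M\epsilon^2<1$.
\end{itemize}
Finally, state explicitly why shrinking $\epsilon$ is legitimate. If the construction works for some $\epsilon'<\epsilon$, then on $[1-\epsilon,1-\epsilon']$ the function $f$ is the shifted circle with $r+\delta_h<1$. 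So (2) and (3) also hold for the original $\epsilon$.
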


\begin{remark}\label{rmk:doteq}
    Since $\delta_h$ and $\delta_v$ can be made arbitrarily small, the differences $f(r)-\pf(r)$, $f'(r)-\pf'(r)$ and $f''(r)-\pf''(r )$ can be made arbitrarily small away from the (arbitrarily small) smoothing region. Hence, for numerical calculations that keep track of only a finite number of decimal places, we can ignore this difference when computing upper bounds by rounding up. We will write, e.g., $f(r) \doteq \pf(r)$ to indicate this numerical equivalence. 
\end{remark}

The bottom half of the smoothed tube is given in polor coordinates by
    $$
    Y (r, \theta) = (r \cos \theta, r \sin \theta, f(r)), \qquad  \frac{1}{2} - \delta_h  \leq r \leq 1.
    $$
The top half of the tube is defined with a similar formula but with
    \begin{equation}\label{def:ftop}
    f^{\text{top}}(r) = 1+2\delta_v-f(r), \qquad \frac{1}{2} - \delta_h  \leq r \leq 1. 
    \end{equation}
so that $\frac 12 + \delta_v \leq  f^{\text{top}}(r) \leq 1+2\delta_v$. The $C^\infty$ smooth tube $T_0$ is then defined by the combination of $f$ and $f^{\text{top}}$. 

In angular coordinates the smoothed  tube can be written as
    $$
    Y (\psi, \theta) = (r(\psi) \cos \theta,  r(\psi) \sin \theta, h(\psi)), \qquad -\pi/2 \leq \psi \leq \pi/2. 
    $$
    
The smooth polar coordinate function  $f$   corresponds to smooth angular coordinate functions $r(\psi)$ and $h(\psi)$ which produce tubes that join smoothly to the flat plane. 

Away from the zone of smoothing, the profile of the tube is still an arc of an circle of radius $\frac{1}{2}$. However the smoothing procedure has shifted the center of the circle from $(u=1, w = \frac12)$ to $(u= 1-\delta_h, w = \frac 12 + \delta_v)$. Hence in this region, the new smooth angular coordinate functions  are translations of the original functions:  
    \begin{equation} \label{eqn:randhsmoothing}
    r(\psi) = (1-\delta_h) -\frac 12 \cos \psi \text{ and } h(\psi)= (\frac 12 + \delta_v)  +\frac 12  \sin \psi
    \end{equation}
providing $-\pi/2 + \tilde \epsilon < \psi < \pi/2 - \tilde \epsilon$ for some small $\tilde \epsilon >0$. 

\begin{remark}\label{rmk:doteqforrandh}
    As is the case for $f$, the differences $r(\psi)-\pr(\psi)$, and $h(\psi)-\ph(\psi)$ can be made arbitrarily small. Hence, $r(\psi) \doteq \pr(\psi)$ and $h(\psi) \doteq \ph(\psi)$. We note that, away from the (arbitrarily small) zone of smoothing, in this case the derivatives are exactly equal: $r'(\psi) = \pr'(\psi)$, and $h'(\psi)=\ph'(\psi)$.
\end{remark}

\begin{definition}
    The model space $\M_0$ is two copies of the plane joined by   the collection $\T_0$ of smooth tubes which are attached to the disks $\D_0$. The model space can be written as $\M_0 = \M^{bot}_0 \cup \M^{top}_0$ where $\M^{bot}_0$ is the subset of $\M_0$ with $0 \leq w\leq \frac 12+\delta_v$ and $\M^{top}_0$ with $ \frac 12+ \delta_v \leq w \leq 1+ 2 \delta_v$.
\end{definition}

\subsubsection{Proof of Proposition~\ref{prop:smoothprofilefunction}  }  \label{sec:proof of Proposition}

\begin{proof}[Proof of Proposition~\ref{prop:smoothprofilefunction}]
Choose $\epsilon >0$. For any $a \in (1-\epsilon, 1)$, we define a partition of unity function $P_a(r)$ to be a $C^{\infty}$ smooth function satisfying
    \begin{align*}
    P_a(r) &=1, \, r\leq a \\
    P_a'(r)& < 0, \,a < r < a+\epsilon \\
    P_a(r)& = 0, \, r \geq \epsilon+a .
    \end{align*}
We use $P_a$ to smoothly connect $\pf''(r)$ to the identically zero function for $r$ near $1$: 
    $$
    \pf_1 ''(r) =  P_a(x) \pf''(r),    \quad  \frac{1}{2}{} < r < \frac{3}{2}, 
    $$
where for $1 < r < \frac 32$, $\pf''$ is defined via the continuation of the circle. The resulting $\pf_1''$ is a smooth function with the properties
    $$
    \pf_1 ''(r) = \begin{cases}
    \pf''(r) & r \in (1/2,a] \\
    \text{decreasing} & r \in (a, a+\epsilon] \\
    0 & r\geq a+ \epsilon >1
    \end{cases}.
    $$
Define $\pf_1'$ for $r> 1/2$ via integration:
    $$
    \pf_1'(r) = \pf'(a) + \int_a^r \pf_1''(\rho)~d\rho.
    $$

Choose a value of $a\in (1-\epsilon, 1)$ for which 
    $$\pf_1'(a+\epsilon) = 0.$$
The existence of such an $a$ follows  by continuity:  for $a=1$, $\pf_1'(a+\epsilon) > 0$, while for $a=1-\epsilon$, $\pf_1'(a+\epsilon) < 0$ as can be seen by comparing with  $\pf',\pf''$. 

We now have a smooth function $\pf_1'$, and values for $\epsilon$ and $a$, with the properties that 
    $$
    \pf_1 '(r) = \begin{cases}
    \pf'(r) & r \in (1/2,a] \\
    < \pf'(r) & r \in (a,a+\epsilon] \\
    0 & r\geq a+ \epsilon >1
    \end{cases}.
    $$
Integrating again gives 
    $$
    \pf_1(r) = \pf(a) + \int_a^r \pf_1'(\rho)~d\rho, 
    $$
resulting in a smooth function $\pf_1$ with the properties
    $$
    \pf_1 (r) = \begin{cases}
    \pf(r) & r \in (1/2,a] \\
    \text{decreasing} & r \in (a,a+\epsilon] \\
    -\delta_v <0 & r\geq a+ \epsilon >1
    \end{cases}.
    $$
where $-\delta_v = \pf(a) + \int_a^{a+\epsilon} \pf_1'(\rho)~d\rho.$

Set $\delta_h  = (a+\epsilon - 1) < \epsilon$. We shift the graph of $\pf_1$ to the left by $\delta_h $ and up by $\delta_v$ to get a final smooth profile curve
    $$
    f(r) = \pf_1(r+\delta_h) +\delta_v.
    $$
The smooth function $f$ is defined for $r \geq 1/2-\delta_h $ and has the properties that 
    $$
    f (r) = \begin{cases}
    1/2 + \delta_v & r = 1/2 - \delta_h \\
    \pf(r+\delta_h)+\delta_v & 1/2-\delta_h < r < a-\delta_h = 1 - \epsilon \\
    \pf_1(r+\delta_h)+\delta_v & r \in [1 - \epsilon, 1] \\
    0 & r \geq 1
    \end{cases}.
    $$    
For $a-\delta_h  < r < 1$, we have $f'<0$ and $f''<\pf''$. Note that because $\delta_h  = (a+\epsilon - 1)$, the interval $(a-\delta_h , 1)$ has length less than $\epsilon$.
 
Note also that
    \begin{align*}
    \delta_v  &\leq \left| \pf(a) + \int_a^{a+\epsilon} \pf_1'(\rho)~d\rho \right| \\
    &\leq \left|  \int_a^{a+\epsilon} \pf_1'(\rho)~d\rho \right|  \\
        & \leq \int_a^{a+\epsilon} \left| \pf_1'(\rho)\right| ~d\rho \\
        & \leq \epsilon \left| \pf'(a) \right| \\
        & \leq \epsilon \left| \pf'(1-\epsilon) \right| \\
        & \leq \epsilon
    \end{align*}
for $\epsilon$ small, such as $\epsilon \leq 0.1$.

Statement (4) now follows from the explicit formula for $f(r), r\in [\frac 12 - \delta_h, 1-\epsilon]$, together with Statement (3). 
\end{proof}

\subsubsection{Tube scaling}\label{sec:tubescaling}

Any set of tubes of negative curvature attached to our finite horizon pattern of disks in the plane will itself satisfy a finite horizon condition since the metric is flat outside of the disks. However, we would not have explicit bounds on the finite horizon parameters. In order to retain good control on where geodesics go, we scale (vertically) the tubes so that the metric away from the center of the tubes is a small perturbation of the flat metric.

To make the outer parts of the tube nearly flat, we vertically scale the tube by $s_0$ via a map 
$$X^{sc}_{s_0}(u,v,w) = (u, v, s_0 w).$$ 
 
The bottom half of the scaled tube in polar coordinates is
    $$
    Y_{s_0} (r, \theta) = (r \cos \theta, r \sin \theta, {s_0} f(r))
    $$
and a similar formula for the top using $f^{\text{top}}$.
In angular coordinates the scaled tube is
    $$
    Y_{s_0} (\psi, \theta) = (r(\psi) \cos \theta,  r(\psi) \sin \theta, s_0 h(\psi)).
    $$
 
For any value of $s_0$, this procedure yields a vertically scaled model surface $\M_{s_0}= X^{sc}_{s_0}(\M_0 )$ which is  non-compact and embedded in $\bbR^3$. Equipping this surface with the metric that it inherits from the Euclinear metric on $\bbR^3$, its  geodesic flow is easily seen to be Anosov.

Pulling this metric back gives a metric $g_{s_0}$ on the model space $\M_0$:  
    \begin{equation}\label{eqn:gs0}
    g_{s_0} = (X^{sc}_{s_0})^* g_{Eucl}|_{\M_{s_0}}.
    \end{equation}
The geodesic flow on $(\M_0, g_{s_0})$ is Anosov (see \cite{Donnay-Visscher-18}).  Hence in the the space of metrics on $\M_{0}$, there is an open neighborhood of $g_{s_0}$ all of  whose metrics are  Anosov (i.e. the geodesic flow they generate is Anosov).

\subsection{Embedding map and pullback metric}\label{sec:embeddingmap}

Now we define an ``embedding map'' $X^{emb}:\bbR^3 \to \bbR^3$ which when applied to the scaled model space $\M_{s_0} \subset \bbR^3$ will yield an immersed--and for some parameters embedded--surface.

In  the standard Euclidean coordinates, the map  $X^{emb}$ is given by
    $$
    X^{emb}(u, v, w) = (X_1(u, v, w), X_2(u, v, w), X_3(u, v, w))
    $$
with 
    \begin{align}
    X_1(u, v, w)
        & = \left( R_1 +( R_2+w) \cos\left( \frac{v}{R_2}\right) \right) \cos\left( \frac{  u }{R_1}\right) \notag \\
    X_2(u, v, w) 
        &= \left(R_1 +( R_2+w) \cos\left( \frac{  v}{R_2}\right) \right)\sin\left( \frac{  u }{R_1}\right) \label{eqn:embeddingmap}\\
    X_3(u, v, w) 
        &=  ( R_2+w) \sin\left( \frac{  v}{R_2}\right) \notag
    \end{align}
for parameters $R_1$ and $R_2$. This takes a plane $w=\text{constant}$ and maps it to an immersed torus, with major radius $R_1$ and minor radius $R_2+w$. Note that as long as $R_2+w < R_1$, this yields an embedded torus.

For reasons that will become clear below, we define a one parameter family of radii via the following explicit   functions: 
    \begin{equation}\label{eqn:r1r2functions}
    R_1(s_1) = \frac 1{\pi s_1^2}, \quad R_2(s_1)=\frac{1}{\pi s_1}  .
    \end{equation}
We denote the embedding map with this choice of radii by $X^{emb}_{s_1}$.

Our final mapping $ X_{(s_0, s_1)}$ from the model space $\M_0$ into $\bbR^3$ is defined via the composition of the vertical scaling followed by the embedding map. Setting $s=(s_0, s_1)$, we have 
    $$
    X_s =X^{emb}_{s_1}\circ X^{sc}_{s_0}. 
    $$

\begin{goal*}
    The goal of our paper is to show that there exist $s=(s_0, s_1)$ values for which the set $X_s(\M_0)$ is an embedded closed surface whose geodesic flow is Anosov, and that such $(s_0, s_1)$ values are computable.
\end{goal*}

The matrix $DX_s$ is non-singular, so the map $X_s$ is an immersion of the model space into Euclidean $\bbR^3$. Hence we can pull back the Euclidean metric $g_{Eucl}$ on $\bbR^3$ by the map $X_s$ to get a metric $g_s$ on the model space $\M_0$ defined by 
    \begin{equation}\label{eqn:pullbackmetric}
    g_s  = X_s^*\, (g_{Eucl} \big|_{X_s(\M_0)}).
    \end{equation}

We study  the geodesic flow on the Riemannian surface $(\M_0, g_s)$ and determine values of $s$ for which the flow is Anosov and for which the set $X_s(\M_0)$ is an embedded surface.

In later parts of the paper, we will need to make use of $DX^{emb}_{s_1}$, the Jacobian matrix of partial derivatives for the map $X^{emb}_{s_1}$, which is given by 
    $$
    DX^{emb}_{s_1}= \left(
        \begin{array}{ccc}
         -\frac{\left(R_1+ (R_2+w) \cos \left(\frac{v}{R_2}\right)\right) \sin \left(\frac{u}{R_1}\right)} {R_1}
         & -\frac{(R_2+w) \sin \left(\frac{v}{R_2}\right) \cos \left(\frac{u}{R_1}\right)}{R_2} 
         & \cos \left(\frac{v}{R_2}\right) \cos \left(\frac{u}{R_1}\right) \\
         \frac{\left(R_1+ (R_2+w) \cos \left(\frac{v}{R_2}\right) \right) \cos \left(\frac{u}{R_1}\right)}{R_1} 
         & -\frac{(R_2+w) \sin \left(\frac{v}{R_2}\right) \sin \left(\frac{u}{R_1}\right)}{R_2} 
         & \cos \left(\frac{v}{R_2}\right) \sin \left(\frac{u}{R_1}\right) \\
         0
         & \frac{(R_2+w) \cos \left(\frac{v}{R_2}\right)}{R_2} 
         & \sin \left(\frac{v}{R_2}\right) \\
        \end{array}
        \right)
    $$
with the radii $R_i=R_i(s_1)$.

\begin{proposition} \label{lemma:metricdecomposition}
    The pullback metric $g_s$ on $\M_{_0}$ can be decomposed: 
        $$
        g_{(s_0,s_1)}   = g_{s_0}  + \Delta g_{(s_0,s_1)}, 
        $$  
    where $g_{{s_0}}$ is given by equation~(\ref{eqn:gs0}). 
    The term
        $$
        \Delta g_{(s_0,s_1)}\to 0 \, \quad\text{ as }\quad  s_1\to 0, 
        $$ 
    and this convergence is uniform for $s_0\in[0,1]$. 
\end{proposition}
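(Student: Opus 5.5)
The plan is to compute the pullback explicitly, using formula (\ref{formulaforpullbackmetrics1}) for the metric $(X^{emb}_{s_1})^*g_{Eucl}$ on $\bbR^3$. For a point $(u,v,w)\in\M_{s_0}$ and tangent vectors $\zeta,\eta$ there,
\[
\innerpr{\zeta,\eta}_{(X^{emb}_{s_1})^*g_{Eucl}} - \innerpr{\zeta,\eta}_{g_{Eucl}}
= a(v,w)\,\zeta_1\eta_1 + b(w)\,\zeta_2\eta_2 ,
\]
where
\[
a = \Bigl(1+\tfrac{(R_2+w)\cos(v/R_2)}{R_1}\Bigr)^2-1,
\qquad
b=\Bigl(1+\tfrac{w}{R_2}\Bigr)^2-1 .
\]
Since $g_s = (X^{sc}_{s_0})^*\bigl((X^{emb}_{s_1})^*g_{Eucl}\bigr)|_{\M_{s_0}}$ and $g_{s_0}=(X^{sc}_{s_0})^*g_{Eucl}|_{\M_{s_0}}$, I would define $\Delta g_{(s_0,s_1)}$ as the pullback under $X^{sc}_{s_0}$ of the tensor $a\,du^2+b\,dv^2$, restricted to $\M_{s_0}$. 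The decomposition $g_s=g_{s_0}+\Delta g$ is then an identity, and only the convergence needs proof.

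For the convergence, note that $X^{sc}_{s_0}$ does not change the $u,v$ components. So for a tangent vector $\xi$ to $\M_0$, the vector $DX^{sc}_{s_0}\xi$ has the same first two components $\xi_1,\xi_2$. Hence
\[
|\Delta g(\xi,\xi)| \le (|a|+|b|)\,|\xi|^2_{g_{Eucl}} ,
\]
and the bound is independent of $s_0$. It then suffices to bound $a$ and $b$ uniformly on $\M_{s_0}$. There $0\le w\le s_0(1+2\delta_v)\le 2$ for all $s_0\in[0,1]$. With $R_1=1/(\pi s_1^2)$ and $R_2=1/(\pi s_1)$ this gives
\[
\frac{R_2+w}{R_1}\le s_1+2\pi s_1^2 ,
\qquad
\frac{w}{R_2}\le 2\pi s_1 .
\]
Therefore
\[
|a|\le 2x+x^2 \quad\text{with } x=s_1+2\pi s_1^2 ,
\qquad
|b|\le 4\pi s_1+4\pi^2 s_1^2 .
\]
Both bounds tend to $0$ as $s_1\to0$, uniformly in $(u,v,w)$ and in $s_0\in[0,1]$.

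I would state the convergence as convergence of the components of $\Delta g$ in a fixed coordinate chart, say $g_{Eucl}$-norm on $T\M_0$. If later sections need derivatives, for example for curvature or geodesic control, the same argument should extend. The derivatives of $a$ in $v$ and $w$ carry further factors of $1/R_2$ or $1/R_1$, so they also vanish uniformly. Since $\M_{s_0}$ is a graph over $\Omega$ and the tubes, the $w$-derivatives along the surface add factors $s_0 f'(r)$. These are bounded in the angular coordinates even though $f'$ is unbounded in polar coordinates.

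The main obstacle is choosing the norm in which to state the uniform convergence. The tube necks have vertical tangent vectors in polar coordinates, so component bounds in $(r,\theta)$ coordinates could degenerate. Measuring with the intrinsic Euclidean norm, as above, avoids this, because the coefficient bound does not depend on the direction of $\xi$.
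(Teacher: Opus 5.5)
Your proof is correct and is essentially the paper's argument. Both define $\Delta g_{(s_0,s_1)} = g_{(s_0,s_1)} - g_{s_0}$ and show it is uniformly small because $X^{emb}_{s_1}$ is uniformly close to an isometry as $R_1,R_2\to\infty$ and $R_2/R_1\to 0$; the paper phrases this as $DX^{emb}_{s_1} = Rot + \Delta DX$ with $\Delta DX\to 0$, while you use the diagonal form (\ref{formulaforpullbackmetrics1}). The two are equivalent, since in fact $\Delta DX = Rot\cdot\mathrm{diag}\bigl(\tfrac{(R_2+w)\cos(v/R_2)}{R_1},\tfrac{w}{R_2},0\bigr)$, and your version has the small advantage of giving explicit bounds on $a$ and $b$ that are independent of $s_0\in[0,1]$, in the spirit of Lemma~\ref{lemma:DeltaEGbound}.
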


This proposition provides the essential ingredient of the proof in \cite{Donnay-Visscher-18}: that for $s_1$ sufficiently small, the pull-back metric $g_s$ must be in the open neighborhood of Anosov metrics around the Anosov model space metric $g_{s_0}$ and so the surface $X_{s}(\M_{0})$ will have an Anosov geodesic flow. For certain of these $s$ values, the surface will also be embedded.    In the present paper, we essentially quantify the size of this open neighborhood so that we can say how small the parameter $s_1$ needs to be in order to have an Anosov geodesic flow. This analysis will involve getting bounds on the size of $\Delta g_{(s_0,s_1)}$. The proof uses the following lemma. 
 
\begin{lemma}\label{lemma:DXdecomp}
    The  differential $DX=DX^{emb}_{s_1}$ can be written as
        $$
        DX = Rot + \Delta DX 
        $$
    where $Rot$ (thought of as a rotation matrix) is an isometry given by 
        $$
        Rot = 
        \left(
        \begin{array}{ccc}
         -\sin \left(\frac{u}{R_1}\right) & -\cos \left(\frac{u}{R_1}\right) \sin \left(\frac{v}{R_2}\right) & \cos \left(\frac{u}{R_1}\right) \cos \left(\frac{v}{R_2}\right) \\
         \cos \left(\frac{u}{R_1}\right) & -\sin \left(\frac{u}{R_1}\right) \sin \left(\frac{v}{R_2}\right) & \sin \left(\frac{u}{R_1}\right) \cos \left(\frac{v}{R_2}\right) \\
         0 & \cos \left(\frac{v}{R_2}\right) & \sin \left(\frac{v}{R_2}\right) \\
        \end{array}
        \right)
        $$
    and $\Delta DX$ is a matrix whose entries are given by 
        $$
        \Delta DX = 
        \left(
        \begin{array}{ccc}
         -\frac{(w+R_2)\sin \left(\frac{u}{R_1}\right) \cos \left(\frac{v}{R_2}\right) }{R_1} & -\frac{\cos \left(\frac{u}{R_1}\right) \sin \left(\frac{v}{R_2}\right) w}{R_2} & 0 \\
         \frac{(w+R_2)\cos \left(\frac{u}{R_1}\right) \cos \left(\frac{v}{R_2}\right) }{R_1} & -\frac{\sin \left(\frac{u}{R_1}\right) \sin \left(\frac{v}{R_2}\right) w}{R_2} & 0 \\
         0 & \frac{\cos \left(\frac{v}{R_2}\right) w}{R_2} & 0 \\
        \end{array}
        \right)
        $$
\end{lemma}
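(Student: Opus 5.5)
The plan is a direct entrywise computation: subtract $Rot$ from the Jacobian $DX^{emb}_{s_1}$ displayed just before Proposition~\ref{lemma:metricdecomposition}, and then check separately that $Rot$ is orthogonal.

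\textbf{Step 1: identify the difference.} We compare $DX^{emb}_{s_1}$ with $Rot$ column by column.

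The third column of $DX^{emb}_{s_1}$ is $(\cos(v/R_2)\cos(u/R_1),\ \cos(v/R_2)\sin(u/R_1),\ \sin(v/R_2))^T$. This equals the third column of $Rot$ exactly, so the third column of $\Delta DX$ is zero.

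For the first column, we split the common factor as
$$\frac{R_1+(R_2+w)\cos(v/R_2)}{R_1} = 1 + \frac{(R_2+w)\cos(v/R_2)}{R_1}.$$
The summand $1$ produces $(-\sin(u/R_1),\ \cos(u/R_1),\ 0)^T$, which is the first column of $Rot$. The remaining summand produces the stated first column of $\Delta DX$.

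For the second column, we write $(R_2+w)/R_2 = 1 + w/R_2$. The summand $1$ gives $(-\cos(u/R_1)\sin(v/R_2),\ -\sin(u/R_1)\sin(v/R_2),\ \cos(v/R_2))^T$, which is the second column of $Rot$. The summand $w/R_2$ gives the stated second column of $\Delta DX$.

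This establishes $DX = Rot + \Delta DX$ with the displayed entries.

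\textbf{Step 2: show $Rot$ is an isometry.} We verify $Rot^T Rot = I$ by recognizing the columns as an orthonormal frame. The first column $e_1=(-\sin\alpha,\cos\alpha,0)$, with $\alpha=u/R_1$, is the unit tangent to the major circle. The third column $e_3=(\cos\beta\cos\alpha,\ \cos\beta\sin\alpha,\ \sin\beta)$, with $\beta=v/R_2$, is the outward unit normal of the torus. The second column $e_2=(-\sin\beta\cos\alpha,\ -\sin\beta\sin\alpha,\ \cos\beta)$ is the unit tangent to the minor circle.

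Each of these vectors has unit length, by $\sin^2+\cos^2=1$. They are pairwise orthogonal:
\begin{itemize}
\item $e_1\cdot e_2 = \sin\beta\sin\alpha\cos\alpha - \sin\beta\sin\alpha\cos\alpha = 0$;
\item $e_1\cdot e_3 = 0$ in the same way;
\item $e_2\cdot e_3 = -\sin\beta\cos\beta(\cos^2\alpha+\sin^2\alpha) + \sin\beta\cos\beta = 0$.
\end{itemize}
Hence $Rot\in O(3)$, and a quick determinant check gives $Rot\in SO(3)$, so it is a genuine rotation.

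\textbf{Obstacle.} No step is a real obstacle; the only care needed is the bookkeeping of splitting the scale factors $1+(R_2+w)\cos(v/R_2)/R_1$ and $1+w/R_2$ correctly. Two sanity checks are worth recording because they are what the lemma is used for in Proposition~\ref{lemma:metricdecomposition}. First, with $R_1=1/(\pi s_1^2)$ and $R_2=1/(\pi s_1)$, the entries of $\Delta DX$ are $O(s_1)$ uniformly for bounded $w$, since $R_2/R_1 = s_1$. Second, the decomposition is consistent with formula~(\ref{formulaforpullbackmetrics1}): $DX^T DX$ is diagonal with entries $(1+(R_2+w)\cos\beta/R_1)^2$, $(1+w/R_2)^2$, and $1$.
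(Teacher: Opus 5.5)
Your proposal is correct and is essentially the paper's approach: the paper states this lemma without a written proof, and the intended argument is exactly this direct entrywise comparison of $DX^{emb}_{s_1}$ with $Rot$. Your column-wise splitting of the factors $1+(R_2+w)\cos(v/R_2)/R_1$ and $1+w/R_2$, together with the orthonormality and determinant check on the columns of $Rot$ (which the paper only asserts), supplies the needed verification.
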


\begin{corollary}\label{cor:approachingrotation}
    Let the radii $R_1, R_2$ be functions of a parameter $s_1$ and let $w\in R$ be in a bounded region. If $ R_1(s_1), R_2(s_1)\to \infty$ and $R_2(s_1)/R_1(s_1) \to 0$ as $s_1 \to 0$, then $\Delta DX \to 0$ uniformly.
\end{corollary}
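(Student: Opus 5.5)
The plan is to read off the entries of $\Delta DX$ from Lemma~\ref{lemma:DXdecomp} and bound each one by an elementary quantity that tends to zero. Since the entries are products of sines and cosines (each bounded by $1$ in absolute value) with explicit coefficients, the bounds are independent of $u$ and $v$, which is where uniformity comes from. We assume $|w|\le W$ for some fixed $W$, the bounded region in the hypothesis. For the model space this holds with $W = s_0(1+2\delta_v) \le 2$, and indeed uniformly for $s_0\in[0,1]$.

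The first column has entries whose absolute values are at most
\[
\frac{|w+R_2|}{R_1} \le \frac{W}{R_1} + \frac{R_2}{R_1}.
\]
Both terms tend to $0$: the first because $R_1(s_1)\to\infty$, the second by the hypothesis $R_2(s_1)/R_1(s_1)\to 0$. The second column has entries bounded in absolute value by $|w|/R_2 \le W/R_2 \to 0$, because $R_2(s_1)\to\infty$. The third column is identically zero.

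It follows that every entry of $\Delta DX$ is bounded by
\[
\max\left\{ \frac{W}{R_1}+\frac{R_2}{R_1},\ \frac{W}{R_2} \right\},
\]
a quantity depending only on $s_1$ that tends to $0$. Hence any matrix norm of $\Delta DX$ (for instance the Frobenius norm, which is at most $3$ times the maximal entry) tends to $0$ uniformly in $(u,v,w)$ over $\bbR^2\times[-W,W]$. For the explicit choice (\ref{eqn:r1r2functions}), the bound becomes $\pi s_1^2 W + s_1 + \pi s_1 W$, which is $O(s_1)$.

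The only potential subtlety is verifying that $Rot + \Delta DX$ actually reproduces $DX$. That is part of the cited Lemma~\ref{lemma:DXdecomp}, which we assume, so no step is a real obstacle. One should simply note that the $(1,1)$ entry uses the split $\bigl(R_1+(R_2+w)\cos\bigr)/R_1 = 1 + (R_2+w)\cos/R_1$, which is where the ratio $R_2/R_1$ (rather than just $1/R_1$) enters and why that hypothesis is needed.
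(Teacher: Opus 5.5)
Your proof is correct and takes the same approach as the paper: the paper gives no separate proof and treats the corollary as immediate from the explicit entries of $\Delta DX$ in Lemma~\ref{lemma:DXdecomp}. Your entrywise bounds $|w+R_2|/R_1 \le W/R_1 + R_2/R_1$ and $|w|/R_2 \le W/R_2$ make that inspection precise, and they give the $O(s_1)$ rate for the choice (\ref{eqn:r1r2functions}).
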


\begin{proof}[Proof of Proposition~\ref{lemma:metricdecomposition}]
Let $\zeta = DX^{sc *}_{s_0} (\zeta_0), \eta= DX^{sc *}_{s_0} (\eta_0) $. Using the notation $DX = DX^{emb}_{s_1}$,
    \begin{align*}
    g_{(s_0,s_1)}(\zeta_0,\eta_0) &=  g_{Eucl}(\zeta, \eta)=
     g_{Eucl}(DX \zeta_0, DX \eta_0) \\
     &= 
                    g_{Eucl}(Rot\, \zeta_0, Rot \,\eta_0) + g_{Eucl}(Rot \,\zeta_0, \Delta DX \eta_0) \\ & \hspace{.8in} + g_{Eucl}(\Delta DX \zeta_0, Rot \,\eta_0) + g_{Eucl}(\Delta DX \zeta_0, \Delta DX \eta_0) \\
                    & = g_{s_0} (\zeta_0,\eta_0) +\Delta g_{(s_0,s_1)} (\zeta_0,\eta_0),
    \end{align*} 
where we use that $Rot$ is an isometry and have $\Delta g_{(s_0,s_1)}$ denote the sum of the three terms which contain $\Delta DX$. Since the maps $Rot$ are uniformly bounded  and, by Corollary~\ref{cor:approachingrotation},  $\Delta DX \to 0$  as $s_1 \to 0$ uniformly  for $s_0\in [0,1]$ , we conclude that $\Delta g_{(s_0,s_1)}\to 0$ unformly. 
\end{proof}

\subsection{Embedded surface and its genus}\label{sec:genuscalculation}
 
For any values of $R_1$ and $R_2$, the map $X$ is well-defined on $\bbR^3$, and the region $[0, 2 \pi R_1] \times [0, 2 \pi R_2] \times \{w_0\} \subset \bbR^3$ is mapped once around an embedded  torus for any $w_0 > 0$ and $R_2+w_0 < R_1$. In particular, for $0<R_2<R_1-1$, the region $[0, 2 \pi R_1] \times [0, 2 \pi R_2] \times [0,1]$ will be mapped to a thickened torus in $\bbR^3$. However, this map may not be periodic when restricted to $\M_0$ due to the arrangement of the connecting tubes (see Section \ref{sec:circlepacking}). To produce an embedding, we need a match between the periodicity of the map and the periodicity of the fundamental region of the model space.
   
For a fixed value of $(R_1, R_2)$, the set $u\in [0, 2\pi R_1), v\in [0, 2 \pi R_2)$ will get mapped one-to-one onto the torus. The fundamental region of the model space hexagonal packing is the rectangle $u\in [0, 2], v\in[0, 2\sqrt3]$. Thus for the two periodicities to match, we need
    $$
    2m = 2 \pi R_1(s_1), \quad 2\sqrt3 n = 2 \pi R_2(s_1)
    $$
for some integers $m, n$ (the number of copies of the $[0,2] \times [0,2\sqrt 3]$ rectangle in the horizontal and vertical directions that will form the fundental region for the map $X_s$). The parametrization 
    \begin{equation*}
    R_1(s_1) = \frac 1{\pi s_1^2}, \quad R_2(s_1)=\frac{1}{\pi s_1}       
    \end{equation*}
gives the condition
    $$
    m= \frac{1}{s_1^2}, \quad \sqrt3 n = \frac{1}{s_1}.
    $$
Solving gives
    $$
    s_1 = \frac{1}{\sqrt3 n}
    $$
and then
    $$
    m = 3n^2.   
    $$

\begin{lemma}\label{lemma:genusformula}
    If $s_1 = \frac{1}{\sqrt3 n}$ and $m = 3n^2$, the resulting embedded surface has genus
        $$g =  6 n^3 +1$$
\end{lemma}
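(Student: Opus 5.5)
The plan is to compute the genus from the Euler characteristic. First we identify the image surface $X_s(\M_0)$ as two embedded nested tori joined by finitely many annuli, and then we count the annuli.

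\emph{Embeddedness and structure.} With $s_1 = \frac{1}{\sqrt3 n}$ we have $2\pi R_1 = 2m$ with $m=3n^2$, and $2\pi R_2 = 2\sqrt3 n$. Hence $X^{emb}_{s_1}$ is invariant under the lattice generated by $(2m,0,0)$ and $(0,2\sqrt3 n,0)$. This lattice is a sublattice of the period lattice of the hexagonal packing $\D_0$, because the rectangle $[0,2]\times[0,2\sqrt3]$ is a fundamental region for the packing (up to translation). Therefore $X_s$ descends to the quotient of $\M_0$ by this lattice. That quotient is a compact surface: two flat tori $[0,2m]\times[0,2\sqrt3 n]$ (the bottom plane $w=0$ and the top plane $w=s_0(1+2\delta_v)$), joined by the quotient tubes.

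On the slab $0\le w\le s_0(1+2\delta_v)\le 2$ we have $R_2+w<R_1$ for all $n\ge1$. So the induced map from $[0,2\pi R_1)\times[0,2\pi R_2)\times[0,s_0(1+2\delta_v)]$ onto a thickened torus is injective: distinct $w$-levels go to distinct concentric tori, and each level is mapped bijectively. Since the quotient surface sits inside this slab, the descended map is an injective immersion of a compact surface, hence an embedding.

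\emph{Counting tubes.} The rectangle $[0,2]\times[0,2\sqrt3]$ contains exactly two disk centers of the hexagonal packing modulo the lattice, for instance $(0,0)$ and $(1,\sqrt3)$. The fundamental region of $X_s$ is tiled by $m\cdot n = 3n^3$ copies of this rectangle, so the surface contains $N = 2\cdot 3n^3 = 6n^3$ tubes.

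\emph{Euler characteristic.} Decompose the surface into three kinds of pieces:
\begin{itemize}
\item each of the two tori with $N$ open disks removed, which has $\chi = 0 - N$;
\item the $N$ tubes, which are annuli with $\chi=0$;
\item the gluing circles, which contribute $\chi=0$.
\end{itemize}
This gives $\chi = -2N$. The surface is connected (a tube joins the two tori) and orientable (it is embedded in $\bbR^3$). So $2-2g=-2N$, giving
$$g = N+1 = 6n^3+1.$$

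As a sanity check, join two tori (genus $1+1$) by one tube, which is a connected sum of genus $2$; each further tube adds a handle, giving $2+(N-1)$, which agrees with $N+1$.

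The main point needing care is the tube count: confirming that the period rectangle $[0,2]\times[0,2\sqrt3]$ contains exactly two disk centers and that the tubes do not overlap under the identification. The remaining steps are routine.
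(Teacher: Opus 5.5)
Your proof is correct and follows the paper's route: count $2mn=6n^3$ tubes and use genus $=$ (number of tubes) $+1$, which the paper simply asserts and you justify via the Euler characteristic computation $\chi=-2N$. Two minor points need care. Since neighboring disks of $\D_0$ are tangent, cut at a radius $r<1$ (say $r=3/4$), so that your pieces really are tori minus $N$ disjoint disks and $N$ disjoint annuli. Also, the crude bound $w\le 2$ does not give $R_2+w<R_1$ when $n=1$; there you need the standing assumption $s_0<0.1$.
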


\begin{proof}
In our construction, we have two flat tori connected by tubes. The genus of this surface is given by
    $$g = \rm{number\,  of\,  tubes} +1.$$
In each fundamental region there are two tubes so if there are $ m \times n$ copies of the fundamental region being mapped 1-1 onto the torus, then the genus of the embedded surface they generate is 
    $$g = 2mn + 1 = 6 n^3 +1.$$  
\end{proof}

\section{Coordinates on the embedded surface}\label{sec:coordinates}
 
In this section we introduce and examine three different coordinate systems which we will use to analyze the geometry of the surface: rectangular, angular and polar. We invite the reader to skim or skip this section initially and return to it as the material is needed in later sections. 

Coordinates on (a subset of) the  model space $\M_0$ are given by a map 
    $$
    Y_0 : \mathscr{S} \subset \bbR^2 \to \M_{0} \subset \bbR^3.
    $$
Coordinates on (a subset of) the scaled model space $\M_{s_0} = X^{sc}_{s_0}(\M_0)$ are given by a map 
    $$
    Y_{s_0} = X^{sc}_{s_0} \circ Y_0: \mathscr{S} \subset \bbR^2 \to \M_{s_0} \subset \bbR^3;
    $$
Coordinates on (a subset of) the final space  $\M^{emb}_{s} = X_{s}(\M_0)$ are given by a map 
    $$
    Y_{(s_0, s_1)} = X^{emb}_{s_1}\circ X^{sc}_{s_0} \circ Y_0: \mathscr{S} \subset \bbR^2 \to \M^{emb}_{s} \subset \bbR^3;
    $$
\begin{figure}
    \centering
    \begin{tikzcd}[row sep=large, column sep=large]
    \text{model space} & \M_0 \subset \bbR^3 \ar[r,"X^{sc}_{s_0}"] \ar[rr,bend left=40,"X_{(s_0,s_1)}"] & \M_{s_0} \subset \bbR^3 \ar[r,"X^{emb}_{s_1}"] & \bbR^3 \\
    \text{coordinates} & \mathcal{U} \subset \bbR^2 \ar[u,"Y_0"'] \ar[ur,bend right=5,"Y_{s_0}"] \ar[urr,bend right=10,"Y_{(s_0,s_1)}"']
    \end{tikzcd}
    \caption{Coordinate maps for various spaces.}
\end{figure}
We will use $(\sigma, \tau) \in U$ as notation for generic coordinates and write
    $$
    Y(\sigma, \tau) = ( u = Y_1(\sigma,\tau), v = Y_2(\sigma,\tau), w = Y_3(\sigma,\tau)).
    $$

We will express our various metrics in terms of such coordinates. To analyze the metric $g_s$ on $\M_0$, we decompose it into the sum of two terms: 
    $$g_s = g_{s_0} + \Delta g,$$
where $g_{s_0}$ is the pullback of the Euclidean metric on the scaled model space $\M_{s_0}$ and $\Delta g = g_s - g_{s_0}$ is the remainder. The $g_{s_0}$ term can easily be  calculated explicitly. In  Proposition~\ref{lemma:metricdecomposition}, we have  shown that $\Delta g_{(s_0, s_1)}\to 0$ as $s_1\to 0$. Our challenge is to find a tractable way to get a bound on how quickly this converges.

\subsection{Decomposition of metric in coordinates}
 
In what follows, we will calculate the first and second fundamental forms of the metric $g_s$ in $(\sigma, \tau)$ coordinates. Let $N_\M = Y_\sigma \times Y_\tau$ be  the normal to the scaled model space and $N = X_\sigma \times X_\tau$ be the normal to the embedded/immersed surface $\M^{emb}_s$.

To simplify notation we set
    $$
    X = X^{emb}_{s_1}, Y = Y_{s_0}, \M = \M_{s_0}  \hbox{ and } s=(s_0,s_1). 
    $$
We omit the $s$ subscript for the first and second fundamental form terms so for example, we set $E = E_s$. 

\begin{lemma}\label{lemma:fundformdiffterms}
    The fundamental form terms for the metric $g_s$ on $\M_0$ are equal to the value of the corresponding term for the scaled model space metric (denoted with subscript $\M$) and a difference term. The scaled model space term only depends on $s_0$. The difference terms depend both on $s_0$ and $s_1$. The first Fundamental Form terms are
    \begin{align*}
        E& = \langle DX.Y_\sigma, DX.Y_\sigma\rangle = E_\M + \Delta E\, \text{with } E_\M = \langle Y_\sigma, Y_\sigma\rangle\\
        F&= \langle DX.Y_\sigma, DX.Y_\tau\rangle = F_\M + \Delta F\, \text{with } F_\M = \langle Y_\sigma, Y_\tau\rangle\\
        G&= \langle DX.Y_\tau, DX.Y_\tau\rangle = G_\M + \Delta G\, \text{with } G_\M = \langle Y_\tau, Y_\tau\rangle\\ 
    \end{align*}
The second Fundamental Form terms are
    \begin{align*}
        e &= \langle X_{\sigma\sigma}, N\rangle = e_\M + \Delta e \text{ with } e_\M = \langle Y_{\sigma\sigma},N_\M\rangle\\ 
        f &= \langle X_{\sigma \tau}, N\rangle = f_\M + \Delta f \text{ with } f_\M = \langle Y_{\sigma \tau},N_\M\rangle\\ 
        g &= \langle X_{\tau\tau}, N\rangle = g_\M + \Delta g \text{ with } g_\M = \langle Y_{\tau\tau},N_\M\rangle\\ 
    \end{align*}
\end{lemma}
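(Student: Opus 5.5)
The statement is mostly definitional. The content to prove is that each fundamental form coefficient of $g_s$ splits as a model-space term depending only on $s_0$, plus a remainder that carries all the $s_1$-dependence. The plan is to use the chain rule for the composition $Y_{(s_0,s_1)} = X^{emb}_{s_1}\circ Y_{s_0}$, together with the decomposition $DX = Rot + \Delta DX$ of Lemma~\ref{lemma:DXdecomp}. We then define each $\Delta$-term as the difference and check that it has the claimed form.

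For the first fundamental form, the chain rule gives $X_\sigma = DX\cdot Y_\sigma$ and $X_\tau = DX\cdot Y_\tau$ (derivatives of the composite). Hence $E = \langle DX.Y_\sigma, DX.Y_\sigma\rangle$, and similarly for $F$ and $G$. Substituting $DX = Rot + \Delta DX$ and using that $Rot$ is an isometry, as in the proof of Proposition~\ref{lemma:metricdecomposition}, gives
\[
E = \langle Y_\sigma,Y_\sigma\rangle + 2\langle Rot\,Y_\sigma,\Delta DX\,Y_\sigma\rangle + \langle \Delta DX\,Y_\sigma,\Delta DX\,Y_\sigma\rangle .
\]
We set $E_\M=\langle Y_\sigma,Y_\sigma\rangle$, which involves only $Y=Y_{s_0}$ and so depends only on $s_0$. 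We set $\Delta E$ equal to the two remaining terms; these depend on $s_1$ through $R_1(s_1)$ and $R_2(s_1)$, and on $s_0$ through $Y_{s_0}$. The same computation handles $F$ and $G$.

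For the second fundamental form, the model-space terms are $e_\M=\langle Y_{\sigma\sigma},N_\M\rangle$ and its analogues, which again depend only on $s_0$. We define $\Delta e := e - e_\M$. To make the structure useful later, we expand $X_{\sigma\sigma} = DX\cdot Y_{\sigma\sigma} + D^2X(Y_\sigma,Y_\sigma)$. We also write $N = (Rot\,Y_\sigma + \Delta DX\,Y_\sigma)\times(Rot\,Y_\tau+\Delta DX\,Y_\tau)$. Since $Rot$ is a rotation, $Rot\,Y_\sigma\times Rot\,Y_\tau = Rot(Y_\sigma\times Y_\tau)=Rot\,N_\M$, so $\langle Rot\,Y_{\sigma\sigma},Rot\,N_\M\rangle = e_\M$ exactly. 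Every other term contains either a factor $\Delta DX$ or a factor $D^2X$. The entries of $D^2X$ carry factors $1/R_1$ or $1/R_2$, so these terms vanish as $s_1\to0$, and they make up $\Delta e$; likewise for $f$ and $g$.

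The main obstacle is only bookkeeping, namely tracking which terms depend on $s_1$. One point needs care. If $N$ is taken to be the unit normal, normalizing by $|N|$ versus $|N_\M|$ produces an extra correction factor. This factor should also be absorbed into $\Delta e$, with the normalization convention stated explicitly.
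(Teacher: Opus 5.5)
Your proposal is correct and follows essentially the paper's own argument. The paper also treats the lemma as definitional: it illustrates the case of $E$ by expanding $DX = Rot + \Delta DX$ and using that $Rot$ is an isometry, exactly as you do, and leaves the second-fundamental-form $\Delta$-terms to explicit (Mathematica) computation. Your extension to $e,f,g$ via $Rot\,Y_\sigma\times Rot\,Y_\tau = Rot\,N_\M$ is sound; it needs $\det Rot = +1$, which holds because the first column of $Rot$ crossed with the second gives the third. Since the paper takes $N = X_\sigma\times X_\tau$ and $N_\M = Y_\sigma\times Y_\tau$ unnormalized, the normalization correction you raise does not arise.
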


We can derive explicit formulas for the $\Delta$ terms based on the decomposition \newline $DX = DX_{s_1}^{emb} = Rot + \Delta DX $ as described in Lemma~\ref{lemma:DXdecomp}.
We illustrate the $\Delta$ formula  in the case of $E$.

\begin{lemma}
    $E = E_\M + \Delta E$ where  
    $$\Delta E = 2 \, \langle Rot. Y_{\sigma},\Delta DX. Y_{\sigma} \rangle + \langle \Delta DX. Y_{\sigma}, \Delta DX.Y_{\sigma} \rangle $$
\end{lemma}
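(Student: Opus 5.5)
The plan is a direct expansion using bilinearity. By Lemma~\ref{lemma:fundformdiffterms}, $E = \langle DX.Y_\sigma, DX.Y_\sigma\rangle$ with $DX = DX^{emb}_{s_1}$ evaluated at the point $Y(\sigma,\tau)$ of $\M_{s_0}$, and $E_\M = \langle Y_\sigma, Y_\sigma\rangle$. I would substitute the decomposition $DX = Rot + \Delta DX$ from Lemma~\ref{lemma:DXdecomp} into the definition of $E$.

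Expanding the Euclidean inner product gives four terms:
\begin{align*}
E &= \langle Rot.Y_\sigma, Rot.Y_\sigma\rangle + \langle Rot.Y_\sigma, \Delta DX.Y_\sigma\rangle \\
&\quad + \langle \Delta DX.Y_\sigma, Rot.Y_\sigma\rangle + \langle \Delta DX.Y_\sigma, \Delta DX.Y_\sigma\rangle.
\end{align*}
The Euclidean inner product is symmetric, so the two cross terms are equal and combine into $2\langle Rot.Y_\sigma, \Delta DX.Y_\sigma\rangle$.

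The one point that needs checking is that $Rot$ is an isometry, so that the first term equals $\langle Y_\sigma, Y_\sigma\rangle = E_\M$. To verify this, I would check that the columns of $Rot$ are orthonormal. Writing $c_u=\cos(u/R_1)$, $s_u=\sin(u/R_1)$, and similarly $c_v, s_v$, the columns are
\[
(-s_u, c_u, 0),\qquad (-c_u s_v, -s_u s_v, c_v),\qquad (c_u c_v, s_u c_v, s_v).
\]
Each has unit length. The first is orthogonal to the other two because the $u$-terms cancel. The last two have inner product $-c_u^2 s_v c_v - s_u^2 s_v c_v + c_v s_v = 0$. Hence $Rot^T Rot = I$, and subtracting $E_\M$ from $E$ gives exactly the stated formula for $\Delta E$.

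There is no real obstacle. The only care needed is that $Rot$ and $\Delta DX$ are evaluated at the same point $Y(\sigma,\tau)$ at which $DX$ is taken. The same expansion yields the analogous formulas for $\Delta F$ and $\Delta G$; for $\Delta F$ the cross terms are not symmetric in the two vectors, so they remain as two separate terms.
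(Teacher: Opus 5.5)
Your proposal is correct and follows the paper's proof: expand $\langle DX.Y_\sigma, DX.Y_\sigma\rangle$ using $DX = Rot + \Delta DX$, combine the two symmetric cross terms, and identify $\langle Rot.Y_\sigma, Rot.Y_\sigma\rangle = E_\M$ because $Rot$ is an isometry. Your explicit check that the columns of $Rot$ are orthonormal is a small addition the paper takes for granted; the paper simply asserts in Lemma~\ref{lemma:DXdecomp} that $Rot$ is an isometry.
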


\begin{proof}
    \begin{align*}
    E &= \langle DX. Y_{\sigma},DX. Y_{\sigma}\rangle\\
      &=\langle Rot. Y_{\sigma},Rot. Y_{\sigma}\rangle + 2 \langle Rot. Y_{\sigma},\Delta DX. Y_{\sigma}\rangle + \langle \Delta DX. Y_{\sigma}, \Delta DX. Y_{\sigma}\rangle
    \end{align*}
By the properties of isometry, $\langle Rot. Y_{\sigma},Rot. Y_{\sigma}\rangle= \langle   Y_{\sigma},  Y_{\sigma}\rangle = E_\M.$
\end{proof}   

We will want to compare the length of vectors in the Euclidean metric $g_0$ with their length in the $g_s$ metric. 

Let $\lambda_s: T\M_0 \to \bbR$ be given by
    \begin{equation}\label{eqn:lambdadef}
    \lambda_s(v) = \frac{\norm{v}_s}{\norm{v}_0}.
    \end{equation}
where $\norm{v}_0$ is the Euclidean norm of the vector and $\norm{v}_s$ is its $g_s$ length. Let  $ Q_s$ be the matrix of first fundamental form of the metric $g_s$, 
    $$Q_s =  \begin{pmatrix}
        E_s & F_s \\         
        F_s & G_s  
    \end{pmatrix}. 
    $$
    
\begin{lemma}
    The ratio $\lambda_s(v)$ is bounded by 
    \begin{equation}
    \sqrt{\lambda_{\min}(a,b)}   \leq \lambda_s(v) \leq   \sqrt{\lambda_{\max}(a,b)},
    \end{equation}
where $\lambda_{\min}(a,b) \leq \lambda_{\max}(a,b)$ are the minimum and maximum eigenvalues of the symmetric matrix $Q_s(a,b)$ at the point $(a,b) \in M$.
\end{lemma}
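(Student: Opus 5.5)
This is the Rayleigh quotient bound for the symmetric positive definite matrix $Q_s(a,b)$. Write a tangent vector $v$ at the point $(a,b)$ in the coordinates $(\sigma,\tau)$ as a column vector $v=(v_1,v_2)^T$. In the coordinate charts used here, the flat comparison metric $g_0$ is the standard Euclidean inner product on the coordinate plane, so $\norm{v}_0^2 = v^Tv$. By Lemma~\ref{lemma:fundformdiffterms} the first fundamental form gives $\norm{v}_s^2 = E_s v_1^2 + 2F_s v_1 v_2 + G_s v_2^2 = v^T Q_s v$. Consequently
\[
\lambda_s(v)^2 = \frac{v^T Q_s(a,b)\, v}{v^T v},
\]
which is exactly the Rayleigh quotient of $Q_s(a,b)$.

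The key step is the standard bound on this quotient. Since $Q_s(a,b)$ is real symmetric, the spectral theorem gives an orthogonal matrix $O$ with $Q_s = O^T \mathrm{diag}(\lambda_{\min},\lambda_{\max})\, O$. Set $w = Ov$; then $w^Tw = v^Tv$, because $O$ is orthogonal. We get
\[
v^TQ_sv = \lambda_{\min} w_1^2 + \lambda_{\max} w_2^2 ,
\]
and this lies between $\lambda_{\min}\, w^Tw$ and $\lambda_{\max}\, w^Tw$. Dividing by $v^Tv$ gives $\lambda_{\min} \le \lambda_s(v)^2 \le \lambda_{\max}$.

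Positive definiteness of $Q_s$ holds because $g_s$ is the pullback under the immersion $X_s$, whose differential $DX_s$ is nonsingular. It follows that $\lambda_{\min}>0$, and taking positive square roots gives the stated inequality.

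The only point needing care, and the one I expect to be the main obstacle, is checking that $\norm{\cdot}_0$ really is the standard coordinate norm in the chart being used. This holds in the rectangular coordinates $(a,b)\in\Omega$, where $g_0$ is the flat metric $da^2+db^2$. If a non-orthonormal chart were used, I would instead take the generalized eigenvalues of $Q_s$ relative to the matrix of $g_0$. The argument is otherwise identical, using simultaneous diagonalization.
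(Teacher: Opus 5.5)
Your proof is correct and is essentially the paper's argument. The paper writes $\lambda_s(v)^2 = v^TQ_sv$ for $\norm{v}_0=1$ and bounds it above by $\norm{Q_s}_{op}=\lambda_{\max}$ and below by $1/\norm{Q_s^{-1}}_{op}=\lambda_{\min}$, which is the same Rayleigh-quotient bound you get by explicit orthogonal diagonalization, and it relies on the same implicit fact you flag, that $\norm{\cdot}_0$ is the standard coordinate norm $da^2+db^2$ in rectangular coordinates.
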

 
\begin{proof}
The matrix $Q_s$ determines the length of a vector in the $g_s$ metric by
    $$
    \norm{v}_s = \sqrt{v^T Q_s v}.
    $$ 
Then
    $$
    \sup \lambda_s(v) = \sup \frac{\norm{v}_s}{\norm{v}_0} = \sup_{\norm{v}_0=1} \sqrt{v^T Q_s v}
    =: \sqrt{\norm{Q_s}_{op}},   
    $$
where $\norm{\cdot}_{op}$ is the operator norm. With a similar lower bound calculation, we get:
    \begin{equation} 
    \frac{1}{\sqrt{\norm{Q_s^{-1}}_{op}}} \leq \lambda_s(v) \leq \sqrt{\norm{Q_s}_{op}}.
    \end{equation}
Thus 
    \begin{equation}\label{eqn:Qoperatorbound}
    \sqrt{\lambda_{\min}(a,b)} = \frac{1}{\sqrt{\norm{Q_s^{-1}}_{op}}} \leq \lambda_s(v) \leq \sqrt{\norm{Q_s}_{op}} = \sqrt{\lambda_{\max}(a,b)}.
    \end{equation}
\end{proof}

In Lemma~\ref{lemma:lengthratio2} we will use rectangular coordinates to get uniform bounds  $\lambda_s^{lb}\leq \lambda_{\min}(a,b)$ and $\lambda_s^{ub} \geq \lambda_{\max}(a,b)$ for these quantities.

\subsection{Rectangular coordinates}\label{sec:rectangularcoordinates}

Rectangular coordinates can be defined on the bottom (top) half of the (scaled) model space. When using rectangular coordinates, we will always restrict ourselves to the region $\Omega$ defined as follows. 

\begin{definition}  
    Define $\Omega \subset \bbR^2$ by 
        $$
        \Omega = (\interior \D_1)^c=\bbR^2 \setminus \interior(\D_1),
        $$
    where $\D_1$ are the disks of radius $1-\Delta r$ as given in Definition~\ref{defn:D1set}.
\end{definition}

On $\M^{\text{bot}}_{s_0}$, the bottom half of the scaled model space,  we define a coordinate system $Y_{s_0}^{bot}: \Omega \to \M^{\text{bot}}_{s_0}$ given by
    \begin{equation}\label{eqn:globalcoordinatesystem}
    Y_{s_0}^{bot}(a,b) = (a, b, \F^{bot}({s_0};a,b))
    \end{equation}
where 
    $$
    \F^{bot}(s_0;a,b) = 
        \begin{cases}
            0 & \text{for } (a,b) \not\in \D_0 \\
            s_0 f(r) & \text{for } (a,b) \in \D_0
        \end{cases}
    $$
with $f$ the smooth function given by Proposition~\ref{prop:smoothprofilefunction} and $r$ is the distance from $(a,b)$ to the center of the disk in which it is located. For the upper component $\M^{\text{top}}_{s_0}$, we use a coordinate map $Y_{s_0}^{top}$ defined with the function
    $$
    \F^{top}(s_0;a,b) \dot= 
        \begin{cases}
            s_0 & \text{for } (a,b) \not\in \D_0 \\
            s_0 (1-f(r)) & \text{for } (a,b) \in \D_0
        \end{cases}
    $$
(see equation~(\ref{def:ftop})).

The following lemma bounds rectangular coordinate partial derivatives in terms of the radial derivatives of the tube profile function. The following corollary gives global bounds of these partial derivatives for points in $\Omega$.
  
\begin{lemma}\label{lemma:bigFbounds}
    For any $(a,b) \in \Omega \cap \D_0$ and $\F = \F^{bot}$ or $\F^{top}$, let $r$ be the corresponding radial distance of the point $(a,b)$ from the center of the disk. 
    Let $\alpha, \beta \in \{a, b\}$.  
    \begin{enumerate}
        \item $|\F_{\alpha}(s_0;a,b)| \leq s_0 |f'(r)|$  
        \item$|\F_{\alpha \beta}(s_0;a,b)| \leq s_0(|f''(r)|+|\frac{f'(r)}{r}|)$.   
    \end{enumerate}
\end{lemma}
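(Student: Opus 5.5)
This is a direct chain-rule computation for a radial function. Inside a disk of $\D_0$, place the origin at the disk's center, so that $r=\sqrt{a^2+b^2}$ and $\F(s_0;a,b)=s_0 f(r)$ for the bottom sheet. For the top sheet, $\F^{top}=s_0(1+2\delta_v-f(r))$ up to the constant, which differs from $s_0 f(r)$ only by an additive constant and a sign. Neither affects the absolute values of derivatives, so it suffices to treat $\F=s_0f(r)$. Since $(a,b)\in\Omega$, we have $r\ge 1-\Delta r>\tfrac12$. Hence $r$ is bounded away from $0$ and is smooth in $(a,b)$, and $f$ is smooth there by Proposition~\ref{prop:smoothprofilefunction}.

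For the first derivatives, compute $r_a=a/r$ and $r_b=b/r$. This gives $\F_\alpha=s_0 f'(r)\,\alpha/r$. Since $|\alpha|\le r$, we get $|\F_\alpha|\le s_0|f'(r)|$, which is item 1.

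For the second derivatives, differentiate again:
\[
\F_{\alpha\beta}=s_0\Big(f''(r)\,r_\alpha r_\beta+f'(r)\,r_{\alpha\beta}\Big),\qquad r_{\alpha\beta}=\frac{\delta_{\alpha\beta}}{r}-\frac{\alpha\beta}{r^3}.
\]
Bound the two terms separately. First, $|r_\alpha r_\beta|=|\alpha\beta|/r^2\le 1$. Second, $|r_{\alpha\beta}|\le 1/r$: in the diagonal case $r_{aa}=b^2/r^3\le 1/r$, and in the off-diagonal case $|r_{ab}|=|ab|/r^3\le \tfrac{1}{2r}$. The triangle inequality then gives $|\F_{\alpha\beta}|\le s_0(|f''(r)|+|f'(r)/r|)$, which is item 2.

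The only point requiring care is the boundary of the disk, $r=1$, where the definition of $\F$ switches to a constant. By item 5 of Proposition~\ref{prop:smoothprofilefunction}, $f$ and all its derivatives vanish there, so the formulas agree continuously with the zero derivatives outside $\D_0$. No step is a real obstacle; the bound on $r_{\alpha\beta}$ is the only spot needing a moment's attention.
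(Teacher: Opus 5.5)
Your proposal is correct and follows the same route as the paper: a direct chain-rule computation of $\F_\alpha$ and $\F_{\alpha\beta}$ for the radial function $s_0 f(r)$, bounding $|\alpha|/r\le 1$, $|\alpha\beta|/r^2\le 1$, and the coefficient of $f'(r)$ by $1/r$. Your explicit handling of the top sheet (sign and additive constant) and of the disk boundary via item 5 of Proposition~\ref{prop:smoothprofilefunction} are small details that the paper leaves implicit.
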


\begin{proof}
\begin{enumerate}
    \item The chain rule gives for $r=\sqrt{a^2 + b^2}$ that
        $$
        f_a = \frac{\partial f}{\partial r}\frac{\partial r}{\partial a} = f'(r) \frac{a }{\sqrt{a^2+b^2}}
        $$
    so that $|\F_a(s_0;a,b)| \leq s_0 |f_a(r)| \leq s_0 |f'(r)|$ (for both top and bot). Similarly for $|\F_b|$.

    \item For the second partials, 
        $$
        f_{aa} = f''(r) \frac{ a^2}{a^2 + b^2} + f'(r) \frac{ b^2}{(a^2 + b^2)^{3/2}} 
        $$
    so 
        $$
        |f_{aa}|\leq |f''(r)| + |f'(r) |\frac{1}{r}. 
        $$
    Similarly for $f_{bb}$. For the mixed second partial,
        $$
        f_{ab} = f''(r) \frac{ a b }{a^2 + b^2} - f'(r) \frac{ ab}{(a^2 + b^2)^{3/2}} 
        $$
    and hence again
        $$
        |f_{ab}|\leq |f''(r)| + |f'(r) |\frac{1}{r}. 
        $$
    Thus $|\F_{\alpha \beta}(s_0;a,b)| \leq s_0|f_{\alpha \beta}(r)| \leq s_0(|f''(r)|+|\frac{f'(r)}{r}|)$.
\end{enumerate}
\end{proof}
 
Combining the above with the monotonicity property (4) of Proposition~\ref{prop:smoothprofilefunction}, we get the following.   

\begin{corollary} \label{cor:f'maxf''max}
    Let $r_1 = 1-\Delta r$. For any $(a,b) \in \Omega \cap \D_0$ and $\F = \F^{bot}$ or $\F^{top}$,
    \begin{enumerate}
        \item $|\F(s_0;a,b)|\leq  s_0 f_{\max}(\Omega)$
        \item  $|\F_{\alpha}(s_0;a,b)| \leq    s_0 f_{\max}'(\Omega)$  
        \item $|\F_{\alpha \beta}(s_0;a,b)| \leq     s_0 f_{\max}''(\Omega)$
    \end{enumerate}
\end{corollary}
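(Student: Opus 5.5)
The plan is to combine the pointwise bounds of Lemma~\ref{lemma:bigFbounds} with the monotonicity in property (4) of Proposition~\ref{prop:smoothprofilefunction}. The key observation is that every point $(a,b)\in\Omega\cap\D_0$ lies in a disk of $\D_0$ but outside the interior of the corresponding disk of $\D_1$. So its radial distance satisfies $r\in[r_1,1]$, with $r_1=1-\Delta r=\cos(\pi/6)$. On this radial interval the relevant quantities are all decreasing in $r$, so each supremum is attained at the inner endpoint $r=r_1$. I would therefore define
\[
f_{\max}(\Omega)=f(r_1),\qquad f'_{\max}(\Omega)=|f'(r_1)|,\qquad f''_{\max}(\Omega)=|f''(r_1)|+\Big|\frac{f'(r_1)}{r_1}\Big|,
\]
which is the natural reading of these constants in the statement.

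The three items then go as follows.

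\textbf{Item 1.} For $\F^{bot}=s_0f(r)$, use that $f\ge 0$ and $f$ is decreasing on $[\tfrac12-\delta_h,1]$ (by $f'<0$ there, from item (3) and the explicit circular formula). This gives $0\le\F^{bot}\le s_0f(r_1)$. For $\F^{top}$, I read the bound as concerning the deviation of the sheet from its flat level $s_0$, namely $s_0f(r)$ again up to the $\doteq$ convention of Remark~\ref{rmk:doteq}. If instead the literal height is intended, one simply enlarges the constant to $s_0(1+2\delta_v)$.

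\textbf{Item 2.} Lemma~\ref{lemma:bigFbounds}(1) gives $|\F_\alpha|\le s_0|f'(r)|$. Property (4) says $|f'|$ is decreasing on $[\tfrac12-\delta_h,1)$, so $|f'(r)|\le|f'(r_1)|$ for $r\in[r_1,1)$. At $r=1$ we have $f'(1)=0$, so the bound holds there too.

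\textbf{Item 3.} Lemma~\ref{lemma:bigFbounds}(2) gives $|\F_{\alpha\beta}|\le s_0(|f''(r)|+|f'(r)/r|)$. Property (4) says both $|f''|$ and $|f'/r|$ are decreasing on $[\tfrac12-\delta_h,1)$, so each term is bounded by its value at $r_1$.

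For points of $\Omega$ outside $\D_0$, all the derivatives vanish and $\F$ is constant, so the bounds hold trivially. The corollary is only stated on $\Omega\cap\D_0$ anyway.

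The main subtlety is the boundary $r=1$, where $f''$ is discontinuous in the unsmoothed $\pf$. Because we are working with the smoothed $f$, $f$ is $C^\infty$ and identically zero for $r\ge1$, so continuity handles the endpoint. A second point to check is that $r_1\approx0.866$ lies in the region where property (4) applies, i.e.\ $r_1\ge\tfrac12-\delta_h$, which is clear. Finally, note that the numerical values of these constants, via $f\doteq\pf$, can be computed from the explicit circle formula: for instance $|\pf'(r_1)|=|r_1-1|/\sqrt{\tfrac14-(r_1-1)^2}$.
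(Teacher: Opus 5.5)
Your argument matches the paper's: Lemma~\ref{lemma:bigFbounds} combined with the monotonicity in property (4) of Proposition~\ref{prop:smoothprofilefunction} on $r\in[r_1,1]$. This gives exactly the paper's constants $f'_{\max}(\Omega)=|f'(r_1)|$ and $f''_{\max}(\Omega)=|f''(r_1)|+|f'(r_1)/r_1|$.

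For item 1, commit to your fallback; the paper takes $f_{\max}(\Omega)\doteq 1$. Since $\F^{top}\doteq s_0(1-f(r))$ tends to $s_0$ as $r\to 1$, the choice $f(r_1)\approx 0.018$ fails on the top sheet. The ``deviation from the flat level'' reading is also not what is used later: estimates such as Lemma~\ref{lemma:DeltaEGbound} use $f_{\max}$ to bound the actual height $w/s_0$ on both sheets.
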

where 
    $$
    f_{\max}(\Omega) \doteq 1, \quad 
    f_{\max}'(\Omega) =|f'(r_1)| \leq  0.27812, \, \quad
    f_{\max}''(\Omega) =(|f''(r_1)|+|\frac{f'(r_1)}{r_1}|) \leq 2.5577.
    $$
In our Mathematica calculations, we use these values as   upper bounds for $f', f''$ on $\Omega$ .

\subsubsection{Bounds on First Fundamental Form}

By direct  calculation, we determine the values of the First Fundamental Form in rectangular coordinates. The following formulas and bounds hold for both the bottom and top sheet with $f$ as described above.  

\begin{lemma}\label{lemma:DeltaEGF}
    \begin{align*}
    E_\M &= 1+ s_0 f_a(a,b)^2\\
    \Delta E &= 2 \left( \frac{\cos \left(\frac{b}{R_2(s_1)}\right) (s_0f(a,b))+R_2(s_1))}{R_1(s_1)}\right) + \left( \frac{\cos \left(\frac{b}{R_2(s_1)}\right) (s_0f(a,b))+R_2(s_1))}{R_1(s_1)}\right)^2 \\
    F_\M &=  s_0^2  f_a(a,b)\, f_b(a,b)\\
    \Delta F &= 0\\
    G_\M &= 1+ s_0 f_b(a,b)^2\\
    \Delta G &= 2 \left(\frac{s_0 f(a,b)}{R_2(s_1)}\right) + \left(\frac{s_0 f(a,b)}{R_2(s_1)}\right)^2
    \end{align*}
\end{lemma}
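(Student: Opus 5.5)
The plan is a direct computation in rectangular coordinates. We use the decomposition of Lemma~\ref{lemma:DXdecomp}, $DX = Rot + \Delta DX$, together with the formula $\Delta E = 2\langle Rot.Y_\sigma,\Delta DX.Y_\sigma\rangle + \langle \Delta DX.Y_\sigma,\Delta DX.Y_\sigma\rangle$ and its analogues for $F$ and $G$.

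\textbf{Tangent vectors and model-space terms.} In coordinates $(\sigma,\tau)=(a,b)$, the parametrization $Y(a,b)=(a,b,s_0 f(a,b))$ gives
\[
Y_a = (1,0,s_0 f_a), \qquad Y_b=(0,1,s_0 f_b).
\]
Since $Rot$ is an isometry, the model-space terms are just the Euclidean inner products of these vectors. This gives $E_\M = 1+s_0^2 f_a^2$, $F_\M = s_0^2 f_a f_b$ and $G_\M = 1+s_0^2 f_b^2$. The statement writes $s_0 f_a^2$, which I read as $(s_0 f_a)^2$; this should be checked against the intended notation.

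\textbf{The $\Delta$ terms.} Because the third column of $\Delta DX$ vanishes, the $w$-component of $Y_a$ and $Y_b$ plays no role. Hence $\Delta DX.Y_a$ is the first column $c_1$ of $\Delta DX$, and $\Delta DX.Y_b$ is the second column $c_2$.

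\emph{Inner products with $Rot$.} The key observation is that $c_1$ is a scalar multiple of the first column $r_1$ of $Rot$:
\[
c_1 = \frac{(w+R_2)\cos(v/R_2)}{R_1}\, r_1 .
\]
Similarly $c_2 = (w/R_2)\, r_2$, where $r_2$ is the second column of $Rot$. Since $Rot$ is orthogonal, its columns $r_1,r_2,r_3$ are orthonormal. For a general vector,
\[
Rot.Y_a = r_1 + s_0 f_a\, r_3 ,
\]
so $\langle Rot.Y_a, c_1\rangle$ equals the scalar coefficient $\frac{(w+R_2)\cos(v/R_2)}{R_1}$, and $\langle c_1,c_1\rangle$ is its square. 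Substituting $w=s_0 f(a,b)$ and $v=b$ yields $\Delta E$ in the stated form. In the same way, $\langle Rot.Y_b,c_2\rangle = w/R_2$ and $\langle c_2,c_2\rangle = (w/R_2)^2$, which gives $\Delta G$.

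\emph{The off-diagonal term.} We have
\[
\Delta F = \langle Rot.Y_a, c_2\rangle + \langle c_1, Rot.Y_b\rangle + \langle c_1,c_2\rangle .
\]
Each of these vanishes by orthogonality: $c_2 \parallel r_2$ is orthogonal to $r_1$ and $r_3$, and $c_1 \parallel r_1$ is orthogonal to $r_2$ and $r_3$.

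\textbf{Main obstacle.} The only real difficulty is verifying the column proportionality $c_1 \parallel r_1$ and $c_2 \parallel r_2$ directly from the matrices in Lemma~\ref{lemma:DXdecomp}. This is an entrywise check and is routine. The top sheet is handled identically, since there $w = \F^{top}$ and the same argument applies with $f$ replaced accordingly.
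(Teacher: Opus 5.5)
Your computation is correct and is essentially the paper's ``direct calculation''. The column proportionality you check says that $DX$ has mutually orthogonal columns $\bigl(1+\tfrac{(R_2+w)\cos(v/R_2)}{R_1}\bigr)r_1$, $\bigl(1+\tfrac{w}{R_2}\bigr)r_2$ and $r_3$, which is the diagonal form (\ref{formulaforpullbackmetrics1}) of the pullback metric; substituting $Y_a=(1,0,s_0f_a)$, $Y_b=(0,1,s_0f_b)$ and $(u,v,w)=(a,b,s_0f)$ then gives all six formulas. You are also right that $E_\M$ and $G_\M$ should read $1+s_0^2f_a^2$ and $1+s_0^2f_b^2$. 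This is the form the paper itself uses later, e.g.\ in the proof of Lemma~\ref{lemma:Bijmax}.
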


In order to bound higher order terms in the following lemmas, we need to set an upper bound on $s_0$ and $s_1$. The bound $0.1$ is strategically chosen to quickly collapse higher order terms while not providing a substantive restriction to the $s_0$ and $s_1$ values we eventually work with.

We note that is easy in the following case to get a serviceable analytic upper bound (of $2s_1 + 2s_1^2$). However, we will use the better numeric upper bounds going forward and these methods are increasingly important for more complicated formulas.

We use the symbol $:=$ to indicate a definition of the notation to the right. 

\begin{lemma}\label{lemma:DeltaEGbound}
    For points in $\Omega$ and values $s_0, s_1 <0.1$
        $$
        \max\{ |\Delta E(s_0,s_1)|, |\Delta G(s_0,s_1)| \}  < 2s_1 + 1.693s_1^2 :=\Delta^{ub}(s_0,s_1).
        $$
\end{lemma}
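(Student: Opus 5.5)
The plan is to substitute the explicit radii $R_1(s_1)=\frac{1}{\pi s_1^2}$ and $R_2(s_1)=\frac{1}{\pi s_1}$ from (\ref{eqn:r1r2functions}) into the formulas of Lemma~\ref{lemma:DeltaEGF}. Each of $\Delta E$ and $\Delta G$ has the form $2x+x^2$ for a single scalar quantity $x$, so $|2x+x^2|\le 2|x|+|x|^2$. It then suffices to bound $|x|$ uniformly on $\Omega$ and use the restrictions $s_0,s_1<0.1$ to absorb the higher order terms into the $s_1^2$ coefficient.

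\textbf{Step 1: $\Delta E$.} Set
$$x_E=\frac{\cos(b/R_2)\,s_0 f(a,b)+R_2}{R_1}=s_1+\pi s_1^2 s_0 \cos(b/R_2)\, f(a,b).$$
By Corollary~\ref{cor:f'maxf''max} we have $|s_0 f|\le s_0 f_{\max}(\Omega)\doteq s_0$ on both sheets; this is the $\doteq$ convention of Remark~\ref{rmk:doteq}, and it covers $f^{top}\le 1+2\delta_v$. Hence $|x_E|\le s_1+\pi s_0 s_1^2$. This gives
$$|\Delta E|\le 2s_1+2\pi s_0 s_1^2+s_1^2(1+\pi s_0 s_1)^2 .$$
With $s_0<0.1$ the middle term is at most $0.6284\,s_1^2$. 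With $s_0,s_1<0.1$ the last factor satisfies $(1+\pi s_0s_1)^2\le(1+0.0315)^2<1.064$. The total $s_1^2$ coefficient is therefore below $1.693$, which gives the stated bound.

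\textbf{Step 2: $\Delta G$.} Set $x_G=s_0 f/R_2=\pi s_1 s_0 f$, so that $|x_G|\le \pi s_0 s_1<0.3142\,s_1$. Then $|\Delta G|\le 0.6284\,s_1+0.0987\,s_1^2$, which is comfortably below $2s_1+1.693s_1^2$.

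\textbf{Main obstacle.} The only delicate point is the rounding in Step 1. The constant $1.693$ is barely above the true coefficient $2\pi(0.1)+(1+0.01\pi)^2\approx 1.6922$, so each numerical value must be rounded upward, consistently with Remark~\ref{rmk:doteq}. I would also double-check that $f_{\max}(\Omega)\doteq 1$ is really an upper bound for the top-sheet height once the $\delta_v$ shift is included. The strict inequality absorbs the arbitrarily small smoothing error.
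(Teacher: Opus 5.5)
Your proposal is correct and follows the paper's proof. The paper likewise substitutes $R_1=1/(\pi s_1^2)$ and $R_2=1/(\pi s_1)$ into Lemma~\ref{lemma:DeltaEGF}, uses $|\cos|\le 1$ and $f\doteq 1$, and absorbs the $s_1^3,s_1^4$ terms into the $s_1^2$ coefficient. It also treats $\Delta G\le 2\pi s_0s_1 f+\pi^2s_0^2s_1^2f^2$ the same way you do.

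One harmless slip: by (\ref{formulaforpullbackmetrics1}) the cosine multiplies the whole numerator, so $x_E=\cos(b\pi s_1)\,(s_1+\pi s_0 s_1^2 f)$, not only the $f$-term. Since you bound $|\cos|\le 1$ anyway, your estimate $|x_E|\le s_1+\pi s_0s_1^2$ is unaffected.
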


\begin{proof}
We start with Lemma~\ref{lemma:DeltaEGF} with $R_1(s_1) = \frac{1}{\pi s_1^2}$, $R_2(s_1)=\frac{1}{\pi s_1}$. Then we use Mathematica with the bounds $|\cos(x)| \leq 1$, $f_{max} \dot=1$, $s_0,s_1 < 0.1$ and rational upper bounds on real coefficients to get:
    \begin{align*}
        |\Delta E|
            &= 2 s_1 |\cos(b \pi s_1)|+s_1^2 \cos(b \pi s_1)^2+2 \pi s_0 s_1^2 |\cos(b \pi s_1)| f \\
                & \qquad \qquad \qquad +2 \pi s_0 s_1^3 \cos(b \pi s_1)^2 f+\pi^2 s_0^2 s_1^4 \cos(b \pi s_1)^2 f^2 \\
            &\leq 2 s_1 +s_1^2 +2 \pi s_0 s_1^2 f+2 \pi s_0 s_1^3 f+\pi^2 s_0^2 s_1^4 f^2 \\
            &\leq 2s_1 + s_1^2 + 0.693 s_1^2 \\
            &= 2s_1 + 1.693 s_1^2.
    \end{align*}
Similarly, 
    \begin{align*}
         |\Delta G|
            &\leq   2s_1 s_0 \pi f+ s_1^2 s_0^2 \pi^2 f^2 \\
            &< 6.29 s_0 s_1 + .099 s_1^2.
    \end{align*}
\end{proof}

\begin{lemma}\label{lemma:DetLbQ}
For points in $\Omega$ and values $s_0, s_1 <0.1$
    $$\det  
    \begin{bmatrix}
    E & F\\
    F & G
    \end{bmatrix} 
      \geq 1 -  2s_1  - 0.155 s_0^2s_1 - 1.946s_1^2:=\det Q_s^{lb}
    $$
\end{lemma}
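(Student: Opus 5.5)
We expand $\det Q_s = EG - F^2$ using the decomposition of Lemma~\ref{lemma:fundformdiffterms} together with the explicit formulas of Lemma~\ref{lemma:DeltaEGF}. Since $\Delta F = 0$, we have $F = F_\M = s_0^2 f_a f_b$, and
\[
EG - F^2 = (E_\M + \Delta E)(G_\M + \Delta G) - F_\M^2 = \big(E_\M G_\M - F_\M^2\big) + E_\M\,\Delta G + G_\M\,\Delta E + \Delta E\,\Delta G .
\]
The first group is the determinant of the scaled model space metric. For a graph $w = s_0 f$ it equals $1 + s_0^2(f_a^2+f_b^2) \geq 1$. The formulas in Lemma~\ref{lemma:DeltaEGF} contain a typo: $E_\M$ should read $1+s_0^2 f_a^2$. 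So the main term is at least $1$, and the task reduces to bounding the remaining terms from below.

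Next we handle the signs. From Lemma~\ref{lemma:DeltaEGF}, $\Delta G = 2x + x^2$ with $x = s_0 f/R_2 = \pi s_0 s_1 f \geq 0$ on both sheets (since $f\ge 0$ and $f^{top}\ge 0$). Hence $\Delta G \geq 0$, and the term $E_\M\Delta G \geq 0$ can simply be dropped. The term $\Delta E = 2y + y^2$, with $y = (\cos(b\pi s_1)\, s_0 f + R_2)/R_1$, can be negative only through the cosine. We therefore substitute the explicit $y$ and expand in powers of $s_1$. This gives
\[
G_\M\,\Delta E \;\geq\; -(1+s_0^2 f_b^2)\big(2s_1 + 2\pi s_0 s_1^2 f + \cdots\big).
\]
The leading $-2s_1$ is the stated linear term. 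The cross term $-2 s_0^2 f_b^2 s_1$ gives $-0.155\,s_0^2 s_1$, because $2 (f'_{\max})^2 = 2(0.27812)^2 \approx 0.1547$ by Corollary~\ref{cor:f'maxf''max}.

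All remaining terms are of order at least $s_1^2$. These come from the higher-order pieces of $\Delta E$ and from $\Delta E\,\Delta G$. For $\Delta E\,\Delta G$ we bound $|\Delta E| \leq \Delta^{ub}$ by Lemma~\ref{lemma:DeltaEGbound} and $\Delta G \leq 6.29 s_0 s_1 + 0.099 s_1^2$. We then collapse everything into a single multiple of $s_1^2$ using $s_0, s_1 < 0.1$, $f_{\max}\doteq 1$, $|\cos|\le 1$ and $f'_{\max}$, as in the proof of Lemma~\ref{lemma:DeltaEGbound}. This step is intended to be done symbolically in Mathematica with rational upper bounds on the coefficients.

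The main obstacle is this last bookkeeping. Each higher-order monomial must be bounded in the unfavorable direction, and the constant must come out as $1.946$. The $s_1^2$ contributions are $2\pi s_0 s_1^2 f$ (at most $0.63 s_1^2$) from $G_\M\Delta E$, plus the products with $\Delta G$ and the $y^2$ terms. If the tally exceeds $1.946$, we would sharpen it by keeping $E_\M\Delta G \geq 0$ to offset the negative cross terms, rather than discarding it.
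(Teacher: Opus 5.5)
Your route is the paper's. You expand $EG-F^2$, use $E_\M G_\M-F_\M^2=1+s_0^2(f_a^2+f_b^2)\ge 1$, drop $E_\M\Delta G\ge 0$, and bound $G_\M\Delta E+\Delta E\,\Delta G$ from below numerically.

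A small slip first. The correct form is $y=\cos(b\pi s_1)(s_0f+R_2)/R_1=\cos(b\pi s_1)(s_1+\pi s_0s_1^2f)$. With your parenthesization $y>0$, so $\Delta E$ could never be negative. Your expansion does use the correct form.

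As you feared, the tally does not come out with the bounds you name. Bounding the product with $|\Delta E|\le\Delta^{ub}=2s_1+1.693s_1^2$ and $\Delta G\le 6.29s_0s_1+0.099s_1^2$ contributes about $1.386\,s_1^2$. The $G_\M\Delta E$ term adds about $0.629\,s_1^2$, for a total of roughly $2.01\,s_1^2>1.946\,s_1^2$. The loss comes from $\Delta^{ub}$ containing the $y^2$ part, which is nonnegative and should not be charged against you.

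The paper's constant comes from using the one-sided bound $\Delta E\ge 2y\ge-(2s_1+2\pi s_0s_1^2)$ in both places, so your ``$\cdots$'' is empty. Since $G_\M+\Delta G=G>0$,
\[
\det Q_s\ \ge\ 1+\Delta E\,(G_\M+\Delta G)\ \ge\ 1-(2s_1+2\pi s_0s_1^2)\big(1+(f'_{\max})^2s_0^2+2\pi s_0s_1+\pi^2s_0^2s_1^2\big).
\]
Expanding with $s_0,s_1<0.1$ gives $2s_1+0.1547\,s_0^2s_1+(6\pi\cdot 0.1+6\pi^2\cdot 0.001+\dots)\,s_1^2$. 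The $s_1^2$ coefficient is about $1.9453\le 1.946$.

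Your fallback also works and gives more, but only through an identity. Offsetting monomials with $f$ bounded separately in each would not do it. The identity is $E_\M\Delta G+\Delta E\,\Delta G=E\,\Delta G\ge 0$, because $E=(1+y)^2+s_0^2f_a^2>0$. In fact, with $x=\pi s_0s_1f\ge 0$, one has exactly
\[
\det Q_s=(1+y)^2(1+x)^2+s_0^2\big[(1+x)^2f_a^2+(1+y)^2f_b^2\big]\ \ge\ (1+y)^2\ \ge\ 1-2s_1-2\pi s_0s_1^2 .
\]
Since $2\pi s_0<1.946$, this implies the lemma with no computer bookkeeping at all.
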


\begin{proof}
Writing $E = E_\M + \Delta E, G = G_\M + \Delta G $ and $F = F_M$ gives
    \begin{align*}
    \det
    \begin{bmatrix}
      E & F\\
      F & G
    \end{bmatrix}
        &= (E_\M G_\M- F_\M^2)+ E_\M \Delta G + \Delta E G_\M  + \Delta E \Delta G.\\
        & = 1 + s_0^2(f_a^2 + f_b^2) + E_\M \Delta G + \Delta E G_\M  + \Delta E \Delta G.\\
        & \geq 1 + \Delta E G_\M  + \Delta E \Delta G, \, \hbox{since } \Delta G \geq 0 \\
        & \geq 1 -  2s_1  - 0.155 s_0^2s_1 - 1.946s_1^2.
    \end{align*}
The last estimate comes from using Mathematica to compute numerical lower bounds for how negative $\Delta E$ can be and  upper bounds for $G_\M$ and $\Delta G \geq 0$, before combining via the formula above and bounding higher order terms.
\end{proof}

We will need bounds on the partial derivatives of the first fundamental form: $\Delta E_a$, $\Delta E_b$, $\Delta G_a$, $\Delta G_b$.

\begin{lemma}\label{lemma:Delta'max}
    For points in $\Omega$ and values $s_0, s_1 < 0.1$, 
    \begin{equation*}
       \max \{ |\Delta E_a |, |\Delta E_b |, |\Delta G_a |, |\Delta G_b| \} < 1.748 s_0 s_1 + 7.342 s_1^2
       =:\Delta_{\alpha}^{ub} (s_0, s_1)
    \end{equation*} 
\end{lemma}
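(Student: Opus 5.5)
We will differentiate the explicit formulas of Lemma~\ref{lemma:DeltaEGF} in $a$ and $b$, after substituting $R_1(s_1)=\frac{1}{\pi s_1^2}$ and $R_2(s_1)=\frac{1}{\pi s_1}$. With these substitutions, and writing $f$ for $f(a,b)$, the two difference terms become
\[
\Delta E = 2\cos(\pi s_1 b)\,(\pi s_0 s_1^2 f + s_1) + \cos^2(\pi s_1 b)\,(\pi s_0 s_1^2 f + s_1)^2,
\qquad
\Delta G = 2\pi s_0 s_1 f + \pi^2 s_0^2 s_1^2 f^2 .
\]
Two features matter. First, the only dependence on $a$ is through $f$. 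Second, $b$ enters both through $f$ and through the factor $\cos(\pi s_1 b)$. Differentiating the $\cos(\pi s_1 b)$ factor costs a factor $\pi s_1$, and this is the source of the pure $s_1^2$ term in the bound.

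The derivatives split into three cases, handled in order.

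For $\Delta G$, only $f$ carries coordinate dependence, so
\[
\Delta G_\alpha = 2\pi s_0 s_1 f_\alpha + 2\pi^2 s_0^2 s_1^2 f f_\alpha .
\]
Corollary~\ref{cor:f'maxf''max} gives $|f_\alpha|\le f'_{\max}(\Omega)\le 0.27812$ and $f\doteq 1$. The leading term is then at most $2\pi(0.27812)s_0 s_1\approx 1.7475\,s_0s_1$, which matches the coefficient $1.748$. The second term is $O(s_0^2 s_1^2)$, and using $s_0<0.1$ we absorb it into the $s_1^2$ coefficient.

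For $\Delta E_a$, we have the analogous expression
\[
\Delta E_a = 2\cos(\pi s_1 b)\,\pi s_0 s_1^2 f_a \,\bigl(1 + \cos(\pi s_1 b)(\pi s_0 s_1^2 f + s_1)\bigr).
\]
This is $O(s_0 s_1^2)$, so it is dominated by the claimed bound.

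For $\Delta E_b$, the product rule produces two kinds of terms. Terms where the derivative falls on $\cos(\pi s_1 b)$ are of size $2\pi s_1\cdot s_1 + O(s_1^3)$, i.e.\ about $6.283\,s_1^2$ plus higher order. Terms where the derivative falls on $f$ give $2\pi s_0 s_1^2 |f_b| + \dots$. We bound each term by taking absolute values, using $|\cos|,|\sin|\le1$, $f\le 1$, $|f_b|\le 0.27812$, and $s_0,s_1<0.1$. Any factor $s_0 s_1^{k}$ or $s_1^{k}$ with $k\ge 3$ is reduced to $s_1^2$ times a numerical constant via $s_0,s_1<0.1$. This is the same procedure as in the proof of Lemma~\ref{lemma:DeltaEGbound}.

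Finally we collect the coefficients of $s_0s_1$ and $s_1^2$ across all four derivatives and take the maximum coefficient-wise. Since each derivative is then bounded by $c_1 s_0s_1 + c_2 s_1^2$, the maximum of these is bounded by $1.748\,s_0s_1 + 7.342\,s_1^2$.

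The main obstacle is the bookkeeping in $\Delta E_b$. Its many cross terms must be summed with rational upper bounds so that the total $s_1^2$ coefficient stays below $7.342$. Our rough accounting gives $2\pi\approx 6.283$ plus contributions from $2\pi s_1^3$-type, $s_1^2$-squared, and $s_0 s_1^2 f_b$ terms absorbed using the $0.1$ caps. We would carry out this summation as Mathematica interval arithmetic with rounded-up rational coefficients.
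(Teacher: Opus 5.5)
Your proposal is correct and follows the paper's proof. The paper likewise differentiates the explicit $\Delta E,\Delta G$ of Lemma~\ref{lemma:DeltaEGF}, bounds $|\cos|,|\sin|$ and $f$ by $1$ and $|f_\alpha|$ by $0.27812$, and collapses higher-order terms using $s_0,s_1<0.1$. Completing your $\Delta E_b$ bookkeeping gives about $7.3418\,s_1^2$, while $\Delta E_a$ and $\Delta G_\alpha$ give $0.193\,s_1^2$ and $1.748\,s_0s_1+0.055\,s_1^2$, which are exactly the component bounds the paper reports.
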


\begin{proof}
Taking the partial derivatives of $\Delta E$ and $\Delta G$ from Lemma~\ref{lemma:DeltaEGF} and following the same computational bounding technique from Lemma~\ref{lemma:DeltaEGbound} in Mathematica yields:
        $$
        |\Delta E_a | \leq 0.193 s_1^2, \qquad \quad |\Delta E_b| \leq 7.342 s_1^2 
        $$
        $$
        |\Delta G_a|, |\Delta G_b| \leq 1.748 s_0 s_1 + 0.055 s_1^2
        $$
Thus the maximum is bounded by $1.748 s_0 s_1 + 7.342 s_1^2$.
\end{proof}

\subsubsection{Bounds on the ratio of lengths of a vector}

Here we determine upper and lower bounds for the quantity
    $$
    \lambda_s(v) = \frac{\norm{v}_s}{\norm{v}_0}, 
    $$
which was introduced in equation~(\ref{eqn:lambdadef}). 

\begin{lemma}\label{lemma:lengthratio2}
Let
    $$\lambda_s^{lb} = \sqrt{ 1 - 2s_1 - 6.284 s_0 s_1^2 }$$
and
    $$\lambda_s^{ub} = \sqrt{1 +  2s_1 + 0.155 s_0^2 + 1.693 s_1^2 }.$$
Then for any $v$ with footprint in $\Omega$ and values $s_0, s_1 < 0.1$,
    \begin{equation}\label{ineq:lambdalbub}
    \lambda_s^{lb} \leq \lambda_s(v) \leq \lambda_s^{ub}.
    \end{equation}
\end{lemma}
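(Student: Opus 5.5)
By the preceding lemma, $\sqrt{\lambda_{\min}(a,b)} \le \lambda_s(v) \le \sqrt{\lambda_{\max}(a,b)}$, where $\lambda_{\min}\le\lambda_{\max}$ are the eigenvalues of $Q_s(a,b)$ at the footprint $(a,b)\in\Omega$. It therefore suffices to prove, uniformly on $\Omega$ and for $s_0,s_1<0.1$,
$$\lambda_{\max}(a,b)\le 1+2s_1+0.155s_0^2+1.693s_1^2, \qquad \lambda_{\min}(a,b)\ge 1-2s_1-6.284s_0s_1^2 .$$
Taking square roots is monotone, so these eigenvalue bounds give the claim.

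\textbf{Upper bound.} Use $\lambda_{\max}\le \operatorname{tr}$-type bounds, or more sharply Gershgorin/the operator norm of a sum. Write $Q_s = Q_\M + \operatorname{diag}(\Delta E,\Delta G)$, which is valid because $\Delta F=0$ by Lemma~\ref{lemma:DeltaEGF}. Then
$$\lambda_{\max}(Q_s)\le \lambda_{\max}(Q_\M)+\max\{|\Delta E|,|\Delta G|\}.$$
The model part is $Q_\M=I+s_0^2\nabla f\,\nabla f^T$ (reading the formulas of Lemma~\ref{lemma:DeltaEGF} as the graph metric). It has eigenvalues $1$ and $1+s_0^2|\nabla f|^2$. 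Moreover $|\nabla f|=|f'(r)|\le f'_{\max}(\Omega)\le0.27812$ by Corollary~\ref{cor:f'maxf''max}. This gives $\lambda_{\max}(Q_\M)\le 1+0.0774s_0^2$, comfortably below $1+0.155s_0^2$; the slack presumably absorbs cruder bounds such as bounding $f_a^2+f_b^2$ by $2f'^2_{\max}$. Combining this with Lemma~\ref{lemma:DeltaEGbound}, $\max\{|\Delta E|,|\Delta G|\}<2s_1+1.693s_1^2$, yields the upper bound.

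\textbf{Lower bound.} The minimal eigenvalue of $Q_\M$ is exactly $1$, and $\Delta G\ge0$. So I would write $Q_s=(Q_\M+\operatorname{diag}(0,\Delta G))+\operatorname{diag}(\Delta E,0)$. The first summand is positive semidefinite plus $Q_\M$, hence has $\lambda_{\min}\ge1$. Weyl's inequality then gives
$$\lambda_{\min}(Q_s)\ge 1-\max(0,-\Delta E).$$
What remains is a one-sided lower bound on $\Delta E$, and obtaining it is the main computational step. From Lemma~\ref{lemma:DeltaEGF}, with $c=\cos(b\pi s_1)$, the quantity $\Delta E$ has the form $2x+x^2$ where
$$x=\frac{c\,(s_0 f+R_2)}{R_1} = c\,(s_1+\pi s_0 s_1^2 f).$$
Since $2x+x^2\ge 2x$, we get $\Delta E\ge -2|x|\ge -2s_1-2\pi s_0s_1^2 f$. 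Using $f\doteq\le 1$ gives $\Delta E\ge -2s_1-6.2832\,s_0s_1^2$, which rounds to the stated $-6.284s_0s_1^2$. Hence $\lambda_{\min}\ge 1-2s_1-6.284s_0s_1^2$.

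The hardest part is bookkeeping rather than ideas. One must make sure the Weyl/positive-semidefinite decomposition is legitimate, which relies on $\Delta F=0$ and $\Delta G\ge0$ (both from Lemma~\ref{lemma:DeltaEGF}, since $f\ge0$). One must also ensure that the numerical constants are rounded in the safe direction, using the conventions of Remark~\ref{rmk:doteq} for $f_{\max}\doteq1$. If the sharper Weyl argument caused trouble, the fallback would be the explicit formula for the eigenvalues of a $2\times2$ symmetric matrix, $\lambda_{\min}=\det Q_s/\lambda_{\max}$, combined with Lemma~\ref{lemma:DetLbQ} and the upper bound above. That route should still give a bound of the same shape, though with worse constants.
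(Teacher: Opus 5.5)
Your proof is correct, and it is essentially the paper's argument in eigenvalue language. The paper expands $\|v\|_s^2=c_a^2E+2c_ac_bF+c_b^2G$ using $c_a^2+c_b^2=1$. For the lower bound it drops the nonnegative square $(s_0f_ac_a+s_0f_bc_b)^2$ and uses $(1+s_0f/R_2)^2\ge 1$ (i.e. $\Delta G\ge 0$) and $\Delta E\ge 2x$, which is exactly what your Weyl decomposition encodes. For the upper bound the paper bounds that square crudely by $2s_0^2(f'_{\max})^2<0.155\,s_0^2$; this is the factor-of-two slack you correctly guessed, and your rank-one eigenvalue computation gives the sharper $0.0774\,s_0^2$.
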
 

\begin{proof}
Let $w_a=DX.Y_a, w_b=DX.Y_b$ be the basis vectors in the $(a,b)$-coordinate system at the same base point as $v$.

By scaling $v$, we can assume that $\norm{v}_0 = 1$. Expressing  $v = c_a w_a + c_b w_b$ gives that 
    $$
    ||v||_s  = \sqrt{c_a^2 E + 2 c_a c_b F + c_b^2 G}. 
    $$
Using the First Fundamental Form 
    \begin{align*}
      E_{(s_0, s_1)} &=  \left(1 + \frac{\cos \left(\frac{b}{R_2(s_1)}\right) (s_0f(a,b))+R_2(s_1))}{R_1(s_1)}\right)^2+ s_0^2f_a(a,b)^2\\
      &\\
      F_{(s_0, s_1)}&= s_0^2 \, f_a(a,b) \, f_b(a,b)\\
      &\\
      G_{(s_0, s_1)} &=  \left(1+ \frac{s_0f(a,b)}{R_2(s_1)}\right)^2 + s_0^2 f_b(a, b)^2, 
    \end{align*}
we get a lower bound by ignoring all terms that are clearly positive: 
    $$
    s_0^2f_a^2 c_a^2 + s_0^2 \, f_a \, f_b c_a c_b + s_0^2 f_b^2 c_b^2 = (s_0 f_a c_a + s_0 f_b c_b)^2 \geq 0
    $$
and
    $$
    \left(1+ \frac{s_0f(a,b)}{R_2(s_1)}\right)^2 c_b^2 \geq c_b^2,
    $$
so
    $$
    ||v||_s  \geq 
      \sqrt{  c_a^2 + c_b^2+ \frac{2 c_a^2 R_2 \cos[\frac{b}{R_2}]}{R_1} + \frac{2 c_a^2 s_0 \cos[\frac{b}{R_2}]f(a,b)}{R_1}
       }.
    $$
Since $c_a^2+c_b^2 = 1$ and $c_a^2\leq 1$ and using the values of $R_1(s) = \frac{1}{\pi s_1^2}$ and $R_2(s)= \frac{1}{\pi s_1}$ gives the result:
    \begin{align*}
    ||v||_s  
        &\geq \sqrt{ 1 - \frac{2 R_2}{R_1} - \frac{2 s_0 f(a,b)}{R_1}} 
        \geq \sqrt{ 1 - 2s_1 - 2\pi  s_0 s_1^2 f }
        \geq \sqrt{ 1 - 2s_1 - 6.284 s_0 s_1^2 }
    \end{align*}
when $s_0 , s_1 < 0.1$ and $2\pi f < 6.284$.
 
Using Lemma~\ref{lemma:DeltaEGbound} and that $c_a^2+c_b^2 = 1$ and $(c_a+ c_b)^2 \leq 2$,
    \begin{align*}
    ||v||^2_s &=  c_a^2 E + 2 c_a c_b F + c_b^2 G   \\
        &=  c_a^2 (1 + \Delta E)  + c_b^2 (1+ \Delta G) +(s_0 f_a(a,b)c_a + s_0f_b(a,b) c_b)^2 \\
        &\leq 1 + c_a^2 \Delta E  + c_b^2 \Delta G +s_0^2 f_{\max}'^2(c_a + c_b)^2\\
        &\leq 1 + (c_a^2+c_b^2)  \max\{ |\Delta E|, |\Delta G|\} +2 s_0^2 f_{\max}'^2\\
        &\leq 1 +  \Delta^{ub} +2 s_0^2 f_{\max}'^2\\
        &\leq 1 + 2s_1 + 1.693 s_1^2+  .155 s_0^2 
    \end{align*}
since $\Delta^{ub} = 2s_1 + 1.693 s_1^2$ by Lemma~\ref{lemma:DeltaEGbound} and $2 f_{\max}'^2 <0.155$ by Corollary~\ref{cor:f'maxf''max}. 
\end{proof}

\subsection{Angular coordinates}\label{sec:angularcoordinates}

These coordinates will be used for bounding the negative curvature inside the tubes $\T_{1/4}$ (see Section~\ref{sec:curvboundsinA14}) which require bounds on the first and second fundamental forms for the metric $g_s$.

Using  the angular coordinates from Section \ref{sec:tubescaling},  the vertically scaled tubes  are given by: 
    $$Y_{s_0}(\psi, \theta) = (r(\psi) \cos \theta, r(\psi) \sin \theta, s_0 h(\psi))$$
where 
    $$r(\psi) \doteq 1-\frac 12 \cos \psi \text{ and } h(\psi) \doteq \frac 12 +\frac 12 \sin \psi, \quad \text{for}~ \psi \dotin (-\pi/2, \pi/2) \text{ and }\theta \in (0, 2\pi). $$ 

Recall that the $\doteq$ sign means that these formulas are not exact, but (arbitrarily!) close enough that we can use them for numerical calculations. See equation~(\ref{eqn:randhsmoothing}) and Remark~\ref{rmk:doteqforrandh} for a discussion of the smoothing of these functions. Similarly, the $\dotin$ refers to the fact that $\psi$ needs to avoid the arbitrarily small transition zone close to $\psi = \pm \pi/2$ where the functions have been smoothed (see Section~\ref{sec:smoothing}).

Denote by $T_{\rho}$ the part of the tube that sits above the disk $D_{\rho}$ (Definition \ref{def:Drho}), the disk of radius $r(\rho)=1 -\rho \Delta r$ for $0 \leq \rho \leq 1$ where $\Delta r = 1 - \cos (\pi/6)$ (see equation (\ref{eqn:DeltarDefn})). In what follows, we simplify notation and set $Y = Y_{s_0}, \M = \M_{s_0}$. 

\begin{definition}\label{def:psirho}
    Let  $\psi_\rho$ be the angular coordinate such that $r(\psi_\rho) = 1-\rho\Delta r =r(\rho)$.
\end{definition} 
Then we can write
    $$
    T_{\rho} = \{ Y(\psi, \theta) ~|~ \psi\in [-\psi_{\rho}, \psi_{\rho}],~ \theta\in [0,2\pi) \}.
    $$

\begin{lemma}\label{lemma:EFGbdsangcoordmodelsp}
    The first fundamental form for the scaled model space in angular coordinates is given by
        \begin{align*}
       E_\M &= \langle Y_\psi, Y_\psi \rangle = r'(\psi)^2 + s_0^2 \, h'(\psi)^2 \\
       F_\M&=\langle Y_\psi, Y_\theta \rangle =0\\
       G_\M &=\langle Y_\theta, Y_\theta \rangle =  r(\psi)^2
        \end{align*}
    and the second fundamental form by
        \begin{align*}
        e_\M &= \langle Y_{\psi\psi}, N_\M \rangle 
        =-\frac 14 s_0 r(\psi) \\
        f_\M &= \langle Y_{\psi\theta}, N_\M \rangle=0, \\
        g_\M &=\langle Y_{\theta \theta}, N_\M \rangle =s_0 r^2(\psi) h'(\psi) 
        \end{align*} 
    where $N_\M =  Y_\psi \times Y_\theta$ is the normal to the scaled model space surface.
\end{lemma}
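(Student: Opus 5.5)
The plan is a direct computation from the parametrization $Y_{s_0}(\psi,\theta)=(r(\psi)\cos\theta,\, r(\psi)\sin\theta,\, s_0 h(\psi))$. Throughout, $N_\M=Y_\psi\times Y_\theta$ is the \emph{unnormalized} cross product, as stated in the lemma. I will first derive the formulas for general $r,h$. Only at the end will I substitute the circular-arc formulas. By Remark~\ref{rmk:doteqforrandh}, away from the smoothing zone the derivatives $r',r'',h',h''$ equal exactly those of $\pr,\ph$, so the resulting identities hold exactly there.

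First compute the tangent vectors
$$Y_\psi=(r'\cos\theta,\; r'\sin\theta,\; s_0h'),\qquad Y_\theta=(-r\sin\theta,\; r\cos\theta,\; 0).$$
Taking dot products gives $E_\M=r'^2+s_0^2h'^2$, $F_\M=-r'r\cos\theta\sin\theta+r'r\sin\theta\cos\theta=0$ and $G_\M=r^2$. Next form the normal
$$N_\M=Y_\psi\times Y_\theta=(-s_0 r h'\cos\theta,\; -s_0 r h'\sin\theta,\; r r').$$

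For the second fundamental form, compute the second derivatives
$$Y_{\psi\psi}=(r''\cos\theta,\, r''\sin\theta,\, s_0h''),\quad Y_{\psi\theta}=(-r'\sin\theta,\, r'\cos\theta,\, 0),\quad Y_{\theta\theta}=(-r\cos\theta,\, -r\sin\theta,\, 0).$$
Dotting each with $N_\M$ gives
$$e_\M=s_0 r\,(r'h''-r''h'),\qquad f_\M=s_0rr'h'(\sin\theta\cos\theta-\cos\theta\sin\theta)=0,\qquad g_\M=s_0r^2h'.$$
The last two are already in the claimed form.

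The only step needing the specific profile is $e_\M$. With $r=1-\delta_h-\tfrac12\cos\psi$ and $h=\tfrac12+\delta_v+\tfrac12\sin\psi$ from (\ref{eqn:randhsmoothing}), we have $r'=\tfrac12\sin\psi$, $r''=\tfrac12\cos\psi$, $h'=\tfrac12\cos\psi$ and $h''=-\tfrac12\sin\psi$. Then $r'h''-r''h'=-\tfrac14(\sin^2\psi+\cos^2\psi)=-\tfrac14$, which gives $e_\M=-\tfrac14 s_0 r(\psi)$.

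There is no real obstacle here. The only points needing care are keeping the normal unnormalized, so that the factor $r$ appears in $e_\M$ and $g_\M$, and noting that the formula for $e_\M$ is exact off the arbitrarily small smoothing zone (and $\doteq$ overall). The remaining formulas hold for any profile.
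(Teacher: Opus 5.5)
Your proposal is correct and follows essentially the same route as the paper: compute $Y_\psi, Y_\theta$, the unnormalized normal $N_\M=(-s_0 r h'\cos\theta,\,-s_0 r h'\sin\theta,\,r r')$, and the second derivatives. You then reduce $e_\M$ to $s_0 r(r'h''-r''h')=-\tfrac14 s_0 r$ using the circular-arc derivatives from (\ref{eqn:randhsmoothing}). Your remark that only $e_\M$ depends on the specific profile, and is exact off the smoothing zone, is a nice clarification that the paper leaves implicit.
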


\begin{proof}
The formulas for $E_\M$, $F_\M$, and $G_\M$ follow directly from  
    $$
    Y_\psi = (r'(\psi) \cos \theta, r'(\psi) \sin \theta, s_0 \, h'(\psi)), \qquad Y_{\theta}= (-r(\psi)  \sin \theta, r(\psi) \cos \theta , 0).
    $$
     
For the second fundamental form, note that
    $$
    Y_{\psi\psi} = (r''(\psi) \cos \theta, r''(\psi) \sin \theta, s_0 h''(\psi)), \quad Y_{\psi\theta} =(-r'(\psi) \sin \theta, r'(\psi) \cos \theta, 0)
    $$
    $$
    Y_{\theta \theta}= (-r(\psi) \cos \theta, -r(\psi) \sin \theta, 0),
    $$
and
    $$
    N_\M = (-s_0 r(\psi) h'(\psi) \cos \theta, -s_0  r(\psi) h'(\psi) \sin \theta, r(\psi)r'(\psi) ).
    $$
Then 
    \begin{align*}
     e_\M &= \langle Y_{\psi\psi}, N_\M \rangle =  s_0 r(\psi) \left( r'(\psi) h''(\psi) - h'(\psi) r''(\psi) \right )  
    =-\frac 14 s_0 r(\psi)
    \end{align*}
since from equation (\ref{eqn:randhsmoothing}),
    $$
    r'(\psi) = \frac 12 \sin \psi, \qquad r''(\psi) = \frac 12 \cos \psi
    $$
    $$
    h'(\psi)= \frac 12  \cos \psi, \quad \text{and} \quad h''(\psi) = -\frac 12 \sin \psi
    $$
Similar calculations give $f_\M$ and $g_\M$.
\end{proof}

\begin{lemma}\label{lemma:rhbounds}
Maxima for the absolute value of the $r(\psi)$ and $h(\psi)$ functions and their derivatives over the tube $T_{\rho}$ (i.e. for $\psi \in [-\psi_\rho, \psi_\rho]$) are given by
    \begin{center}
    \begin{align*}
        \max_{|\psi|\leq \psi_\rho} r(\psi)  &= r(\psi_\rho) \doteq 1 -\frac 12 \cos \psi_\rho = 1 - \rho \Delta r \\
        \max_{|\psi|\leq \psi_\rho} r'(\psi)  &= r'(\psi_\rho) = \frac{\sin \psi_\rho}{2} =\frac{\sqrt{1- 4 \rho^2 \Delta r^2}}{2}\\
        \max_{|\psi|\leq \psi_\rho} r''(\psi)  & = r''(0) = \frac 12 \\
        \max_{|\psi|\leq \psi_\rho} h(\psi)  &= h(\psi_\rho) \doteq \frac12 +\frac 12 \sin \psi_\rho = \frac12 +\frac {\sqrt{1- 4 \rho^2 \Delta r^2}}{2}\\
        \max_{|\psi|\leq \psi_\rho} h'(\psi)  &= h'(0) = \frac 12\\
        \max_{|\psi|\leq \psi_\rho} h''(\psi)  &= h''(\psi_\rho) = \frac{\sin \psi_\rho}{2}=\frac{\sqrt{1- 4 \rho^2 \Delta r^2}}{2} 
    \end{align*}
    \end{center}
\end{lemma}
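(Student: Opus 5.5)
The plan is to reduce everything to the explicit circular-arc formulas and then use elementary monotonicity of $\sin$ and $\cos$ on $[0,\pi/2]$.

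\emph{Step 1: formulas are exact on $T_\rho$.} By equation~(\ref{eqn:randhsmoothing}) and Remark~\ref{rmk:doteqforrandh}, away from the smoothing zone we have $r(\psi)\doteq 1-\frac12\cos\psi$ and $h(\psi)\doteq\frac12+\frac12\sin\psi$. The derivatives are exactly
\[
r'=\tfrac12\sin\psi,\quad r''=\tfrac12\cos\psi,\quad h'=\tfrac12\cos\psi,\quad h''=-\tfrac12\sin\psi .
\]
For $\rho>0$ the disk $D_\rho$ has radius $1-\rho\Delta r<1-\epsilon$ once the smoothing parameter $\epsilon$ of Proposition~\ref{prop:smoothprofilefunction} is chosen smaller than $\rho\Delta r$. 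So the whole tube $T_\rho$ lies in the unsmoothed region, and the derivative formulas hold exactly there. For $\rho=0$ (or in general) we only claim the $\doteq$ statements, as the lemma does.

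\emph{Step 2: locate $\psi_\rho$.} Definition~\ref{def:psirho} gives $1-\frac12\cos\psi_\rho=1-\rho\Delta r$, so $\cos\psi_\rho=2\rho\Delta r$. Since $\Delta r=1-\cos(\pi/6)\approx0.134$, we have $0\le 2\rho\Delta r<1$. Hence $\psi_\rho\in(0,\pi/2]$ and $\sin\psi_\rho=\sqrt{1-4\rho^2\Delta r^2}$.

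\emph{Step 3: maximize each function.} On $[-\psi_\rho,\psi_\rho]\subset[-\pi/2,\pi/2]$:
\begin{itemize}
\item $\cos\psi$ is even, nonnegative, and decreasing in $|\psi|$. So $r=1-\frac12\cos\psi$ is maximized at $\pm\psi_\rho$, with value $1-\rho\Delta r$.
\item $r''=h'=\frac12\cos\psi$ are positive and maximized at $\psi=0$, with value $\frac12$.
\item $|\sin\psi|$ is even and increasing in $|\psi|$. So $|r'|$ and $|h''|$ are maximized at $\pm\psi_\rho$, with value $\frac12\sin\psi_\rho=\frac12\sqrt{1-4\rho^2\Delta r^2}$.
\item $h=\frac12+\frac12\sin\psi$ is increasing and nonnegative. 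So $|h|$ is maximized at $\psi_\rho$, giving $\frac12+\frac12\sqrt{1-4\rho^2\Delta r^2}$.
\end{itemize}
Substituting $\cos\psi_\rho$ and $\sin\psi_\rho$ from Step 2 yields each entry of the table.

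\emph{Main obstacle.} The only delicate point is bookkeeping around the smoothing zone near $\psi=\pm\pi/2$. We must justify that either $T_\rho$ avoids it, via the choice of $\epsilon$ above, or that the $\doteq$ convention of Remark~\ref{rmk:doteqforrandh} absorbs the arbitrarily small discrepancy in $r$ and $h$.

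For the derivatives, Proposition~\ref{prop:smoothprofilefunction}(4) says $|f'|$ and $|f''|$ are monotone. This suggests the smoothing does not create larger values of $|r'|$ or $|h''|$ than those at the edge of the exact region. I would check this only if the case $\rho=0$ were actually needed.
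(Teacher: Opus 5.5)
Your argument is correct and follows the paper's proof. Both use the circular-arc formulas $r\doteq 1-\frac12\cos\psi$ and $h\doteq\frac12+\frac12\sin\psi$, read off each maximum from the monotonicity of $\sin$ and $\cos$ on $[0,\pi/2]$, and convert via $\cos\psi_\rho=2\rho\Delta r$, $\sin\psi_\rho=\sqrt{1-4\rho^2\Delta r^2}$. Your extra bookkeeping is more careful than the paper's: you choose $\epsilon<\rho\Delta r$ so that $T_\rho$ avoids the smoothing zone, and you read the $h''$ line as a bound on $|h''|$, since literally $h''(\psi_\rho)=-\frac12\sin\psi_\rho$. Note, though, that the horizontal shift $\delta_h$ in (\ref{eqn:randhsmoothing}) gives $\cos\psi_\rho=2(\rho\Delta r-\delta_h)$, so your Step~2 closed forms hold only up to $\doteq$, exactly as in the paper.
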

 
\begin{proof}
Since $ r(\psi) \doteq 1 -\frac 12 \cos \psi$ and $h(\psi) \doteq \frac 12 +\frac 12 \sin \psi$, the results on the maximum values follow from noting that, for example, $$\max_{|\psi|\leq \psi_\rho} \left(1 -\frac 12 \cos \psi\right) = 1 -\frac 12 \cos \psi_\rho.$$

To express these maxmium values in terms of $\rho \, \Delta r$, note that  
    $$ 
    \frac 12 \cos \psi_\rho = 1 - r(\rho) = \rho \Delta r
    $$ 
so 
    $$
    \cos \psi_\rho = 2 \rho \Delta r
    $$
and
    $$
    \sin \psi_\rho = \sqrt{1- 4 \rho^2 \Delta r^2} 
    $$
\end{proof}

\begin{lemma}  \label{lemma:ModelEFGupperboundsangularcoord}
Upper bounds for the absolute value of the $E_\M, G_\M, e_\M, g_\M$ terms over the tube $T_{\rho}$ are given by 
    \begin{align*}
    \max_{|\psi| \leq \psi_\rho} |E_\M(\psi)|&=  E_\M(\psi_\rho) = \frac 14 ( 1 - 4\rho^2 \Delta r^2 ( 1 - s_0^2)) :=  E_{\M,abs}^{ub} (\rho)
    \\[.1in]
    \max_{|\psi| \leq \psi_\rho} |G_\M(\psi)|&= G_\M(\psi_\rho) \doteq r(\psi_\rho)^2 = (1 - \rho\Delta r) ^2:=  G_{\M,abs}^{ub} (\rho) 
    \\[.1in]
   \max_{|\psi| \leq \psi_\rho} |e_\M(\psi)| & = |e_\M (\psi_\rho)| = \frac 14 s_0 (1 - \rho \Delta r):=  e_{\M,abs}^{ub}(\rho) 
   \\[.1in]
    \max_{|\psi| \leq \psi_\rho}  |g_\M(\psi)|  & \leq 0.15 s_0:=g_{\M,abs}^{ub}(\rho)
    \end{align*}
In addition, 
    \[  
    e_\M g_\M(\psi_\rho) \doteq -\frac {s_0^2}{4}   (1-  \frac{\cos  \psi_\rho}{2})^3 \, \frac {\cos \psi_\rho}{2} = -\frac {s_0^2}{4} (1- \rho \Delta r)^3 \,  \rho \Delta r:= e_\M g_\M(\rho)
    \]
\end{lemma}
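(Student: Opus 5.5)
The plan is to substitute the explicit formulas of Lemma~\ref{lemma:EFGbdsangcoordmodelsp} into each quantity, using the exact circular-arc expressions $r(\psi)\doteq 1-\frac12\cos\psi$, $h(\psi)\doteq\frac12+\frac12\sin\psi$, $r'=\frac12\sin\psi$, $h'=\frac12\cos\psi$ (Remark~\ref{rmk:doteqforrandh}). Each expression then becomes an explicit trigonometric function of $\psi$. We analyze its monotonicity on $[0,\psi_\rho]$, using evenness in $\psi$ where it holds and handling $g_\M$ separately. Finally we convert $\cos\psi_\rho=2\rho\Delta r$ and $\sin\psi_\rho=\sqrt{1-4\rho^2\Delta r^2}$ as in the proof of Lemma~\ref{lemma:rhbounds}.

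For $E_\M=\frac14\sin^2\psi+\frac{s_0^2}{4}\cos^2\psi=\frac14\bigl(1-(1-s_0^2)\cos^2\psi\bigr)$, the function is even and increasing in $|\psi|$ because $s_0<1$. It is therefore maximized at $\psi_\rho$, and substituting $\cos^2\psi_\rho=4\rho^2\Delta r^2$ gives the stated value. For $G_\M=r(\psi)^2$ and $|e_\M|=\frac14 s_0 r(\psi)$, both are even and increasing in $|\psi|$ since $r$ is, so both are maximized at $\psi_\rho$ with $r(\psi_\rho)=1-\rho\Delta r$. The product formula follows by direct multiplication: $e_\M g_\M=-\frac14 s_0 r\cdot s_0 r^2 h'=-\frac{s_0^2}{4}r^3\cdot\frac{\cos\psi}{2}$. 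Evaluating at $\psi_\rho$ and using $\frac12\cos\psi_\rho=\rho\Delta r$ gives $-\frac{s_0^2}{4}(1-\rho\Delta r)^3\rho\Delta r$.

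The one nontrivial step is $g_\M=s_0 r^2 h'=\frac{s_0}{2}(1-\frac12\cos\psi)^2\cos\psi$, which is not monotone. Setting $c=\cos\psi\in[\cos\psi_\rho,1]\subseteq[0,1]$, we study $\phi(c)=\frac12 c(1-\frac c2)^2$. Its derivative is $\phi'(c)=\frac12(1-\frac c2)(1-\frac{3c}{2})$, so $\phi$ has an interior maximum at $c=2/3$ with value $\frac12\cdot\frac23\cdot\frac49=\frac{4}{27}\approx0.1482<0.15$. This bound holds uniformly in $\rho$, which explains why the statement gives a $\rho$-independent constant; no knowledge of whether $2/3$ lies in $[\cos\psi_\rho,1]$ is needed, since we bound over all of $[0,1]$.

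The only subtlety I anticipate is the smoothing zone near $\psi=\pm\pi/2$. This is handled, as in the surrounding lemmas, by the $\doteq$ convention: $\psi_\rho$ stays away from $\pm\pi/2$ for $\rho>0$, where the formulas are exact up to arbitrarily small shifts absorbed by rounding. The bound $0.15$ has slack over $4/27$ to absorb them.
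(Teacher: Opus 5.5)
Your proposal is correct and follows the paper's own argument: $E_\M$, $G_\M = r^2$ and $|e_\M| = \tfrac14 s_0 r$ are increasing in $|\psi|$, so their maxima occur at $\psi_\rho$. The value $e_\M g_\M(\psi_\rho)$ is a direct substitution, and $g_\M$ is handled by a one-variable maximization; your substitution $c=\cos\psi$ makes explicit the paper's ``straightforward calculation,'' whose maximizer $\psi \approx 0.841069$ is exactly $\arccos(2/3)$, with value $\tfrac{4}{27}s_0 \approx 0.148148\, s_0 < 0.15\, s_0$.
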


\begin{proof}
$E_\M(\psi) = \frac 14 \left( \sin^2 \psi + s_0^2 \cos^2 \psi \right)$ is monotone increasing in $\psi$, thus 
    \begin{align*}
    \max_{|\psi| \leq \psi_\rho} E_\M(\psi)&= E_\M(\psi_\rho)  
    =\frac 14 \left( \sin^2 \psi_\rho + s_0^2 \cos^2 \psi_\rho \right)\\
    &=\frac 14 ( 1 - 4 \rho \Delta r(1 - s_0^2)).
    \end{align*}

For $G_\M(\psi) = r(\psi)^2$ and $e_\M(\psi) = -\frac 14 s_0 r(\psi)$, the results follow by the monotonicity of  $r(\psi)$. 
    
A straightforward calculation gives that the maximum of 
    $$
    g_\M = s_0 (1 -\frac  12 \cos \psi)^2 \frac{\cos \psi}{2}.
    $$
occurs at $\psi= 0.841069$ with value $g_\M(0.841069) = 0.148148 s_0 $. 

We have 
    \begin{align*}
    e_\M g_\M(\psi_\rho) &= -\frac{1}{4} s_0^2\, r^3(\psi_\rho) h'(\psi_\rho) \doteq -\frac {s_0^2}{4}   (1-  \frac{\cos  \psi_\rho}{2})^3 \, \frac {\cos \psi_\rho}{2}\\
    &=-\frac {s_0^2}{4} (1- \rho\Delta r)^3 \, \rho \Delta r. 
    \end{align*}
\end{proof}

Now we consider the first and second fundamental forms for the embedded surface in angular coordinates and their differences with the scaled model space values \newline (i.e., $\Delta E = E - E_\M$). 

\begin{lemma}\label{lemma:DeltaEFGupperboundsangularcoord}
    Assuming $s_0, s_1 < 0.1$, for $\rho = \frac 14$ a computation gives
    \begin{align*}
    \max_{\psi\in [-\psi_\rho, \psi_\rho]} |\Delta E(\psi, \theta)| &\leq   (0.498 + 1.563 s_0)s_1+ .446 s_1^2:= \Delta E_{abs}^{ub}(\rho=\tfrac 14) \\
    \max_{\psi\in [-\psi_\rho, \psi_\rho]} |\Delta F(\psi, \theta)| &\leq (0.483 + 1.514 s_0) s_1 + .432 s_1^2 := \Delta F_{abs}^{ub}(\rho=\tfrac14)  \\
    \max_{\psi\in [-\psi_\rho, \psi_\rho]} |\Delta G(\psi, \theta)| &\leq   (1.869 + 5.863 s_0)s_1 + 1.672 s_1^2:= \Delta G_{abs}^{ub}(\rho=\tfrac14) 
    \end{align*}
and 
    \begin{align*}
    \max_{\psi\in [-\psi_\rho, \psi_\rho]} |\Delta e(\psi, \theta)| &\leq      (0.380  + 0.484 s_0 + 
    3.034  s_0^2) s_1 + 1.165 s_1^2 := \Delta e_{abs}^{ub}(\rho = \tfrac14) \\
     \max_{\psi\in [-\psi_\rho, \psi_\rho]} |\Delta g(\psi, \theta)| &\leq    (1.416 + 0.468 s_0 + 1.466 s_0^2) s_1 + 3.039 s_1^2 := \Delta g_{abs}^{ub}(\rho = \tfrac14)
    \end{align*}
For  $\Delta eg = e_\M \Delta g + g_\M \Delta e + \Delta e \Delta g$,  
    $$
    \max_{\psi\in [-\psi_\rho, \psi_\rho]} |\Delta eg(\psi, \theta)| \leq  
    (0.399 s_0 + 0.185  s_0^2 + 0.804 s_0^3) s_1 + 1.114 s_1^2
    :=\Delta eg_{abs}^{ub}(\rho = \tfrac14).
    $$
\end{lemma}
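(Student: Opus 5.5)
I would prove Lemma~\ref{lemma:DeltaEFGupperboundsangularcoord} in the same way as Lemmas~\ref{lemma:DeltaEGbound} and \ref{lemma:Delta'max}. First I would write every $\Delta$-term explicitly in the angular coordinates, using the decomposition $DX=Rot+\Delta DX$ of Lemma~\ref{lemma:DXdecomp}. Then I would bound each resulting expression term by term, uniformly in $\theta$ and $\psi\in[-\psi_{1/4},\psi_{1/4}]$, and finally collapse the higher order terms using $s_0,s_1<0.1$.

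\textbf{First fundamental form.} The embedding is evaluated at the point $(u,v,w)=Y_{s_0}(\psi,\theta)$. So $Y_\psi$ and $Y_\theta$ are the vectors listed in the proof of Lemma~\ref{lemma:EFGbdsangcoordmodelsp}, and $w=s_0h(\psi)$. I would expand
\[
\Delta E = 2\langle Rot\,Y_\psi,\Delta DX\,Y_\psi\rangle+\langle \Delta DX\,Y_\psi,\Delta DX\,Y_\psi\rangle,
\]
together with the analogous expressions for $\Delta F$ (a symmetrized cross term) and $\Delta G$.

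The entries of $\Delta DX$ carry explicit factors:
\begin{itemize}
\item $(w+R_2)/R_1=s_1+\pi s_0 s_1^2 h$,
\item $w/R_2=\pi s_0 s_1 h$.
\end{itemize}
Consequently each product is a polynomial in $s_1$ and $s_0$. Its coefficients are trigonometric expressions in $u/R_1$, $v/R_2$ and $\theta$, multiplied by $r,r',h,h'$.

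For the bounds I would use $|\cos|,|\sin|\le 1$ for the trigonometric factors. For $r,r',h,h'$ I would insert the maxima of Lemma~\ref{lemma:rhbounds} at $\rho=\tfrac14$. The factors in $\theta$ should be kept together rather than bounded separately, e.g.\ $r'\cos\theta$ and $r'\sin\theta$ appear in combination. This gives the linear coefficients, such as $0.498\approx 2\max|r'|^2$ for $\Delta E$, and $1.869$ for $\Delta G$, which comes from $2r^2$ at $r(\psi_{1/4})$. The contributions of order $s_1^2$ and higher are collected using $s_0,s_1<0.1$, with the constants rounded up, as in Lemma~\ref{lemma:DeltaEGbound}.

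\textbf{Second fundamental form.} Here $e=\langle X_{\psi\psi},N\rangle$. I would write
\[
X_{\psi\psi}=DX\,Y_{\psi\psi}+D^2X(Y_\psi,Y_\psi), \qquad N=DX\,Y_\psi\times DX\,Y_\theta.
\]
Substituting $DX=Rot+\Delta DX$ and using that $Rot$ is a rotation gives $Rot\,Y_\psi\times Rot\,Y_\theta=Rot\,N_\M$. The main term $\langle Rot\,Y_{\psi\psi},Rot\,N_\M\rangle$ therefore equals $e_\M$.

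Every remaining term contains either a factor $\Delta DX$ or a second derivative $D^2X$. The second derivatives have entries of size $1/R_1=\pi s_1^2$ and $1/R_2=\pi s_1$, up to factors $(w+R_2)/R_1$ and similar. So each remaining term is $O(s_1)$ with explicit coefficients, and I would bound them with the same procedure as above. The same applies to $\Delta g$ using $Y_{\theta\theta}$. For the combination
\[
\Delta eg=e_\M\Delta g+g_\M\Delta e+\Delta e\,\Delta g,
\]
I would not simply multiply the bounds $e^{ub}_{\M,abs}\,\Delta g^{ub}_{abs}+\dots$ from Lemma~\ref{lemma:ModelEFGupperboundsangularcoord}. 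Doing so would leave an $s_0^0 s_1$ term. Instead I would expand symbolically: $e_\M$ and $g_\M$ both carry a factor $s_0$, so every linear-in-$s_1$ term carries at least one factor of $s_0$. I would then bound the resulting products, which explains the shape $(0.399s_0+\dots)s_1$.

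\textbf{Main obstacle.} I expect the hardest step to be $\Delta e$ and $\Delta g$. The second derivatives of $X$ and the cross products produce many terms. Cancellation, together with the need to keep the true $O(s_1)$ leading behaviour without spurious constants, requires careful symbolic expansion. I would do this in Mathematica with interval-style bounds, substituting $\cos,\sin\in[-1,1]$ and bounding $\psi$ over $[-\psi_{1/4},\psi_{1/4}]$ by optimizing each coefficient function of $\psi$ separately. Rational upper bounds would then be taken for the constants, so that the stated inequalities are rigorous.
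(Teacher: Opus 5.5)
For $\Delta E,\Delta F,\Delta G,\Delta e,\Delta g$ your plan is the paper's. You compute the differences symbolically (the paper uses Mathematica), make all coefficients positive, and bound sines and cosines by $1$. You then insert the maxima of $r,h$ and their derivatives from Lemma~\ref{lemma:rhbounds}, and absorb higher powers of $s_1$ using $s_0,s_1<0.1$.

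Your decomposition $X_{\psi\psi}=DX\,Y_{\psi\psi}+D^2X(Y_\psi,Y_\psi)$, together with $Rot\,Y_\psi\times Rot\,Y_\theta=Rot\,N_\M$ (valid since $\det Rot=1$), is a small improvement in exposition. It makes explicit the cancellation of the $s_1^0$ term in $\Delta e$, which the paper obtains only by expanding $e=\langle X_{\psi\psi},N\rangle$ before simplifying.

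The paper bounds each trigonometric factor by $1$ separately. That is why $\Delta E$ carries both $0.498\,s_1$ (from the $(w+R_2)/R_1$ entry) and $1.563\,s_0s_1\approx 2\pi h_{\max}r'^2_{\max}\,s_0s_1$ (from the $w/R_2$ entry). Grouping the $\theta$-factors as you propose can only sharpen this.

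The one real departure is $\Delta eg$, and your stated reason for it is wrong. The paper bounds it exactly by
\[
e^{ub}_{\M,abs}\,\Delta g^{ub}_{abs}+g^{ub}_{\M,abs}\,\Delta e^{ub}_{abs}+\Delta e^{ub}_{abs}\,\Delta g^{ub}_{abs},
\]
and this produces no $s_0^0s_1$ term. Both model-space bounds carry a factor $s_0$: $e^{ub}_{\M,abs}=\tfrac14 s_0(1-\tfrac14\Delta r)\approx0.2416\,s_0$, and the maximum of $g_\M$ is $\tfrac4{27}s_0\approx0.1481\,s_0$. Meanwhile $\Delta e^{ub}_{abs}\,\Delta g^{ub}_{abs}=O(s_1^2)$.

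In fact this product reproduces the stated coefficients:
\begin{itemize}
\item $0.2416\cdot1.416+0.1481\cdot0.380\approx0.398$;
\item $0.2416\cdot0.468+0.1481\cdot0.484\approx0.185$;
\item $0.2416\cdot1.466+0.1481\cdot3.034\approx0.804$.
\end{itemize}

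Your symbolic expansion of $e_\M\Delta g+g_\M\Delta e+\Delta e\,\Delta g$ is still valid, and with the same term-wise bounding it cannot give worse constants. So there is no gap in the argument. The detour is unnecessary, though, and you should drop the claim that motivates it.
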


\begin{proof}
We use Mathematica to carry out the calculations and to take bounds. The case of $\Delta E$ illustrates our method. 

Using Mathematica to compute $\Delta E = E - E_\M$ yields many terms; we start managing these with a Simpify command. To compute an upper bound $\Delta E_{abs}^{ub}$, we make all the coefficients positive, bound all sine and cosine terms by 1, and take the maximum of terms involving $r(\psi), h(\psi)$ and their derivatives for $\psi\in [-\psi_\rho, \psi_\rho]$ over the region $\T_{1/4}$ using Lemma~\ref{lemma:rhbounds}.

For the second fundamental form terms, the calculation is  longer but follows the same approach. Here we note that sometimes Mathematica will be unable to make certain desirable simplifications under certain orders of substitution and Simplify and Expand commands. For instance, in order to cancel out a constant term $(s_1)^0$ in the expression for $\Delta e$, it helps to first expand out the expression for $e = \langle X_{\psi \psi}, N \rangle$.

To bound $\Delta eg$,  we use the relation
    $$
    \max_{\psi\in [-\psi_\rho, \psi_\rho]} |\Delta eg(\psi, \theta)| \leq   e_{\M,abs}^{ub} (\rho)\Delta g_{abs}^{ub}(\rho) + g_{\M,abs}^{ub}(\rho) \Delta e_{abs}^{ub}(\rho) + \Delta e_{abs}^{ub}(\rho) \, \Delta g_{abs}^{ub}(\rho). 
    $$
\end{proof}

In Section~\ref{sec:MinTimeVariousRegions}, we will get an improved bound for negative curvature by using Theorem~\ref{thm:knegformula} but with bounds evaluated at $\rho = \frac 12$ rather than at $\rho=\frac14$

\begin{lemma} \label{lemma:DeltaEFGupperboundsangularcoordrho1}
    Assuming $s_0, s_1 < 0.1$, for $\rho = \frac 12$ a computation gives
        \begin{align*}
        \max_{\psi\in [-\psi_\rho, \psi_\rho]} |\Delta E(\psi, \theta)| &\leq   (0.492 + 1.536 s_0)s_1+ .439 s_1^2:= \Delta E_{abs}^{ub}(\rho=\tfrac 12) \\
        \max_{\psi\in [-\psi_\rho, \psi_\rho]} |\Delta F(\psi, \theta)| &\leq (0.463 + 1.446 s_0) s_1 + .414 s_1^2 := \Delta F_{abs}^{ub}(\rho=\tfrac12)  \\
        \max_{\psi\in [-\psi_\rho, \psi_\rho]} |\Delta G(\psi, \theta)| &\leq   (1.742 + 5.446 s_0)s_1 + 1.556 s_1^2:= \Delta G_{abs}^{ub}(\rho=\tfrac12)
        \end{align*}
    and 
        \begin{align*}
        \max_{\psi\in [-\psi_\rho, \psi_\rho]} |\Delta e(\psi, \theta)| &\leq      (0.367  + 0.467 s_0 + 
        2.922  s_0^2) s_1 + 1.123 s_1^2 := \Delta e_{abs}^{ub}(\rho = \tfrac12) \\
        \max_{\psi\in [-\psi_\rho, \psi_\rho]} |\Delta g(\psi, \theta)| &\leq    (1.265 + 0.436 s_0 + 1.362 s_0^2) s_1 + 2.714 s_1^2 := \Delta g_{abs}^{ub}(\rho = \tfrac12)\\
        \max_{\psi\in [-\psi_\rho, \psi_\rho]} |\Delta eg(\psi, \theta)| &\leq  
        (0.350 s_0 + 0.171  s_0^2 + 0.751 s_0^3) s_1 + 0.964 s_1^2
        :=\Delta eg_{abs}^{ub}(\rho = \tfrac12).
        \end{align*}
\end{lemma}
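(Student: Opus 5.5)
The proof will follow exactly the method of Lemma~\ref{lemma:DeltaEFGupperboundsangularcoord}, with the tube region $T_{1/4}$ replaced by the larger region $T_{1/2}$. The computation is carried out in angular coordinates $(\psi,\theta)$. The embedded surface is written as $X^{emb}_{s_1}\circ Y_{s_0}(\psi,\theta)$, with $Y_{s_0}(\psi,\theta)=(r(\psi)\cos\theta, r(\psi)\sin\theta, s_0 h(\psi))$, and we insert $R_1=\frac{1}{\pi s_1^2}$ and $R_2=\frac{1}{\pi s_1}$.

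\textbf{First fundamental form.} Using Lemma~\ref{lemma:DXdecomp}, $DX = Rot + \Delta DX$, we write
\[
\Delta E = 2\langle Rot\, Y_\psi, \Delta DX\, Y_\psi\rangle + \langle \Delta DX\, Y_\psi, \Delta DX\, Y_\psi\rangle,
\]
and similarly for $\Delta F$ (polarized) and $\Delta G$. Each entry of $\Delta DX$ is, after substitution of the radii, a polynomial in $s_1$ multiplied by trigonometric factors. The entries carrying a factor of $w=s_0h(\psi)$ produce the $s_0 s_1$ terms, and the $(R_2+w)/R_1$ entries produce the pure $s_1$ terms. We expand symbolically and bound every sine and cosine of $u/R_1$, $v/R_2$ and $\theta$ by $1$ in absolute value, after first making all coefficients nonnegative. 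The functions $r,h,r',h',r'',h''$ are then replaced by their maxima over $|\psi|\le\psi_{1/2}$ from Lemma~\ref{lemma:rhbounds}. With $\rho=\frac12$ these maxima are $r\le 1-\frac12\Delta r$, $|r'|,|h''|\le \frac12\sqrt{1-\Delta r^2}$, $|r''|,|h'|\le\frac12$ and $h\le \frac12+\frac12\sqrt{1-\Delta r^2}$. Finally, using $s_0,s_1<0.1$, all monomials of order $s_1^k$ with $k\ge 3$ (possibly carrying powers of $s_0$) are absorbed into the $s_1^2$ coefficient, and each real coefficient is rounded up to three decimals. These three steps give the bounds on $|\Delta E|$, $|\Delta F|$ and $|\Delta G|$.

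\textbf{Second fundamental form.} For $\Delta e$ and $\Delta g$ we compute $e=\langle X_{\psi\psi},N\rangle$ with $N=X_\psi\times X_\theta$, and subtract $e_\M$ from Lemma~\ref{lemma:EFGbdsangcoordmodelsp}. This is the delicate step. The $s_1^0$ terms must cancel exactly against $e_\M$, but they only cancel cleanly if $e$ is fully expanded before any bounding is applied. Applying absolute-value bounds term by term too early leaves a spurious $O(1)$ remainder. We therefore expand completely, cancel symbolically, and only then apply the same bounding procedure as above; the same order of operations is used for $\Delta g$.

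\textbf{The mixed term $\Delta eg$.} Here we use the triangle inequality
\[
|\Delta eg|\le e_{\M,abs}^{ub}(\tfrac12)\,\Delta g_{abs}^{ub}(\tfrac12)+g_{\M,abs}^{ub}(\tfrac12)\,\Delta e_{abs}^{ub}(\tfrac12)+\Delta e_{abs}^{ub}(\tfrac12)\,\Delta g_{abs}^{ub}(\tfrac12).
\]
The model bounds come from Lemma~\ref{lemma:ModelEFGupperboundsangularcoord} at $\rho=\frac12$, namely $e_{\M,abs}^{ub}=\frac14 s_0(1-\frac12\Delta r)$ and $g_{\M,abs}^{ub}=0.15s_0$. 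We expand the product, keep the terms linear in $s_1$, and absorb the higher-order terms into $s_1^2$ using $s_0,s_1<0.1$.

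\textbf{Rigor and consistency.} To make the bounds rigorous, we bound trigonometric and irrational constants such as $\pi$ and $\Delta r$ by rational upper bounds before rounding. As a consistency check, every coefficient should come out no larger than its $\rho=\frac14$ counterpart in Lemma~\ref{lemma:DeltaEFGupperboundsangularcoord}, since the relevant maxima of $r'$, $h$ and $h''$ shrink as $\rho$ grows.
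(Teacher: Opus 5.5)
Your outline is the paper's own argument. The paper gives no separate proof here: it reruns the Mathematica procedure of Lemma~\ref{lemma:DeltaEFGupperboundsangularcoord} with the $\rho=\tfrac12$ maxima of Lemma~\ref{lemma:rhbounds}, expands before bounding so that the $s_1^0$ parts of $\Delta e,\Delta g$ cancel, and uses the product inequality for $\Delta eg$. One small slip: $T_{1/2}\subset T_{1/4}$ is the \emph{smaller} region, as your closing consistency check in fact presupposes.

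What your description does not secure is the stated constants, which have essentially no slack. ``Expand, then bound every sine and cosine by $1$'' does not by itself reproduce them. Write $e_1,e_2,e_3$ for the orthonormal columns of $Rot$. Then Lemma~\ref{lemma:DXdecomp} says $\Delta DX=[\,A e_1,\ B e_2,\ 0\,]$ with $A=(s_1+\pi s_0 h s_1^2)\cos(v/R_2)$ and $B=\pi s_0 h s_1$, so
\[
\Delta E=(2A+A^2)\,r'^2\cos^2\theta+(2B+B^2)\,r'^2\sin^2\theta,\qquad
\Delta F=\bigl(2(B-A)+B^2-A^2\bigr)\,r r'\sin\theta\cos\theta .
\]
$\Delta G$ is $\Delta E$ with $r'$ replaced by $r$ and $\sin\theta,\cos\theta$ interchanged. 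The stated leading coefficients are just these quantities rounded up:
\begin{itemize}
\item[] $2r'(\psi_{1/2})^2\approx0.4910$ and $2\pi h_{\max}r'(\psi_{1/2})^2\approx1.5356$ for $\Delta E$;
\item[] $2r_{1/2}^2\approx1.7410$ and $2\pi h_{\max}r_{1/2}^2\approx5.4450$ for $\Delta G$;
\item[] $r_{1/2}r'(\psi_{1/2})\approx0.4623$ and $\pi h_{\max}r_{1/2}r'(\psi_{1/2})\approx1.4458$ for $\Delta F$.
\end{itemize}
So two things are required. First, apply $\sin^2+\cos^2=1$ to the $u/R_1,v/R_2$ factors before bounding. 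In Cartesian form, $\langle Rot\,Y_\psi,\Delta DX\,Y_\psi\rangle$ contains $A\bigl(\sin^2\tfrac{u}{R_1}+\cos^2\tfrac{u}{R_1}\bigr)r'^2\cos^2\theta$, which termwise bounding doubles. Second, in $\Delta F$ use $|\sin\theta\cos\theta|\le\tfrac12$. Bounding the two factors separately gives $(0.925+2.892s_0)s_1+\dots$, which does not prove the stated $\Delta F$ line.

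The same tightness affects $\Delta eg$. Feed the stated $\Delta e,\Delta g$ coefficients and $g_{\M,abs}^{ub}=0.15s_0$ into your triangle inequality. You get $s_1$-coefficients $0.3501s_0$, $0.1717s_0^2$ and $0.7560s_0^3$, each above the stated $0.350$, $0.171$, $0.751$.
\begin{itemize}
\item[] The stated values require the exact maximum $g_\M\le\tfrac{4}{27}s_0\approx0.14815\,s_0$, attained at $\cos\psi=\tfrac23$, which lies inside $|\psi|\le\psi_{1/2}$.
\item[] The $s_1^2$ coefficient $0.964$ additionally needs unrounded intermediate coefficients; with the three-decimal ones it comes out $\approx0.9648$.
\item[] Alternatively, let your full symbolic cancellation in $\Delta e$ also use $\cos^2\theta+\sin^2\theta=1$. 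Then the $s_0s_1$ and $s_0^2s_1$ coefficients of $\Delta e$ come out near $\tfrac14 r_{1/2}\approx0.233$ and $1.46$, about half the stated ones, which leaves ample room.
\end{itemize}
Finally, your absorption step must also fold the $s_0^j s_1^2$ terms ($j\ge1$) into the $s_1^2$ coefficient, not only the $s_1^k$ terms with $k\ge3$.
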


\subsection{Polar coordinates} \label{sec:metricinpolarcoordinates}

Using the angular coordinates from the previous section, we will obtain a bound on the negative curvature on the inner part of tubes (see Section~\ref{sec:curvboundsinA14}).  Using rectangular coordinates, we will compute the curvature outside of the tubes (see Section~\ref{sec:curvboundsoutsideT0}) and determine an upper bound on the positive curvature there. To bound the curvature over the outer part of the tube $\A_{1/4} = \T_0 \setminus \T_{1/4}$, we will use polar coordinates  
(see Section~\ref{sec:curvboundsinsideT0minusT14}). We note that $\A_{1/4}$ consists of two connected components (from the top and bottom of the tube), which are isometric in the model space but behave slightly differently under the embedding map. Polar coordinates $Y = Y_{s_0} $ for the bottom half of the  scaled tubes are given by 
    $$
    Y(r, \theta) = (u= r \cos \theta, v=r \sin \theta, w=s_0\,  f(r)) 
    $$
with $f$ defined by equation (\ref{eqn:unsmoothedtubefunction}) and Proposition \ref{prop:smoothprofilefunction}. For the top half of the tubes one uses $f^{top}$ given by equation~(\ref{def:ftop}).

We determine the Fundamental Forms with these coordinates and then obtain bounds. For the tubes in the scaled model space, the metric is symmetric between the top and bottom. 
  
\begin{lemma} 
    The Fundamental Forms for the scaled model space in polar coordinates are
    \begin{align*}
    E_\M =  1 + s_0^2 f'(r)^2, \qquad F_\M &= 0 , \qquad G_\M = r^2 \\
    e_\M =  r s_0 f''(r), \qquad f_\M &= 0 , \qquad g_\M = r^2 s_0 f'(r).
    \end{align*}
\end{lemma}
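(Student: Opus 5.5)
We compute the fundamental forms directly from the explicit parametrization $Y(r,\theta) = (r\cos\theta, r\sin\theta, s_0 f(r))$ of the bottom half of a scaled tube, exactly as in the proof of Lemma~\ref{lemma:EFGbdsangcoordmodelsp}. First we write down the first partials,
$$
Y_r = (\cos\theta, \sin\theta, s_0 f'(r)), \qquad Y_\theta = (-r\sin\theta, r\cos\theta, 0).
$$
Taking inner products then gives $E_\M = \cos^2\theta+\sin^2\theta + s_0^2 f'(r)^2 = 1 + s_0^2 f'(r)^2$ and $G_\M = r^2$. The cross term $F_\M$ vanishes because the $\theta$-dependent parts cancel: $-r\cos\theta\sin\theta + r\sin\theta\cos\theta = 0$.

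For the second fundamental form we use, as in the angular case, the unnormalized normal $N_\M = Y_r \times Y_\theta$. A direct cross product gives
$$
N_\M = (-s_0 r f'(r)\cos\theta,\; -s_0 r f'(r)\sin\theta,\; r).
$$
The second partials are
$$
Y_{rr} = (0,0,s_0 f''(r)), \qquad Y_{r\theta} = (-\sin\theta, \cos\theta, 0), \qquad Y_{\theta\theta} = (-r\cos\theta, -r\sin\theta, 0).
$$
Pairing these with $N_\M$ yields the three coefficients. The first is $e_\M = r s_0 f''(r)$. The second, $f_\M$, is $s_0 r f'(r)(\sin\theta\cos\theta - \cos\theta\sin\theta) = 0$. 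The third is $g_\M = s_0 r^2 f'(r)(\cos^2\theta + \sin^2\theta) = r^2 s_0 f'(r)$.

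For the top half we replace $f$ by $f^{top} = 1 + 2\delta_v - f$. This leaves $E_\M$ and $G_\M$ unchanged, since only $(f')^2$ enters them. It flips the sign of $f'$ and $f''$, and hence the signs of $e_\M$ and $g_\M$. Reversing the orientation of the normal, or equivalently noting the reflection symmetry of the model tube, recovers the same formulas. This matches the remark that the model metric is symmetric between top and bottom.

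The only real point of care is this sign and orientation convention: the normal is unnormalized and chosen to agree with the angular-coordinate convention. There is no genuine obstacle; the remaining computation is routine.
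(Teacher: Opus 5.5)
Your computation is correct and is essentially the paper's proof: a direct calculation from $Y_r=(\cos\theta,\sin\theta,s_0f'(r))$ and $Y_\theta=(-r\sin\theta,r\cos\theta,0)$, with the unnormalized normal $N_\M=Y_r\times Y_\theta$ implicit in the stated formulas. Your top-half remark goes beyond what the paper's proof addresses but is consistent with it. It is also harmless, because later only $|e_\M|$, $|g_\M|$ and the sign-invariant product $e_\M g_\M$ are used.
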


\begin{proof}
These follow by direct calculation using that  
    $$
    Y_r = ( \cos    \theta,  \sin \theta, s_0 f'(r)) \quad \text{and} \quad
    Y_{\theta} = ( -r \sin    \theta, r \cos \theta, 0).
    $$
\end{proof} 

In proving Proposition~\ref{prop:Kpolarub}, which gives us an upper bound for the potentially positive curvature in $\A_{1/4}$,  we will want lower bounds for $E_\M, G_\M$ and upper bounds for $e_\M, g_\M$ over $\A_{1/4}$. Recall that $r(\tfrac14)$ denotes the radius $r(\rho=\frac 14) = 1 - \frac 14 \Delta r$ (see equation (\ref{eqn:DeltarDefn})).
  
\begin{lemma} \label{lemma:modelEFGupperboundspolarcoord}
    For $Y(r, \theta) \in \A_{1/4}$, 
        \begin{align*}
        r &\leq 1 := r^{max}\\
        f(r) &\leq 1+\epsilon: = f^{max} ~~\text{for some arbitrarily small }\epsilon\\
        |f'(r)|& \leq |f'(r(\tfrac14))|: = f'_{max}(\A_{1/4})\\
        |f''(r)|& \leq |f''(r(\tfrac14))| := f''_{max}(\A_{1/4})\\
        &\\
        E_\M &=  1 + s_0^2 f'(r)^2 \geq 1 := E_\M^{lb}\\
        G_\M &= r^2 \geq r(\tfrac14)^2 :=G_\M^{lb}\\
        &\\
        |e_\M| &=  r s_0 |f''(r)|\leq s_0 |f''(r(\tfrac14))| := e_{\M, abs} ^{ub}\\ 
        |g_\M| &= r^2 s_0 |f'(r)|\leq s_0 |f'(r(\tfrac14))| :=  g_{\M, abs}^{ub}.\\ 
        \end{align*}
\end{lemma}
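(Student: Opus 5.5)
The plan is to prove each bound by combining the explicit polar formulas from the preceding lemma with the monotonicity properties of the profile function in Proposition~\ref{prop:smoothprofilefunction}. The region $\A_{1/4} = \T_0 \setminus \T_{1/4}$ is the part of the tubes lying over the annuli $r(\tfrac14) \le r \le 1$ with $r(\tfrac14) = 1 - \tfrac14\Delta r$. On the bottom half we use $f$; on the top half we use $f^{top} = 1 + 2\delta_v - f$. The top half has the same $|f'|$ and $|f''|$, so every bound involving derivatives transfers verbatim.

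\emph{Bounds on $r$, $f$, and the derivatives.} The bound $r \le 1$ is immediate from the definition of the region. For $f$, recall that $f$ is decreasing on $[\tfrac12 - \delta_h, 1)$ (property (4) and $f' < 0$), and its maximum value is $\tfrac12 + \delta_v$. The quantity $f^{top}$ is bounded by $1 + 2\delta_v$. Since $\delta_v < \epsilon$, both are $\le 1 + \epsilon$ after renaming $\epsilon$.

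For $f'$ and $f''$, property (4) of Proposition~\ref{prop:smoothprofilefunction} says $|f'(r)|$ and $|f''(r)|$ are decreasing on $[\tfrac12 - \delta_h, 1)$. Hence on $[r(\tfrac14), 1)$ they are maximized at the left endpoint $r(\tfrac14)$. At $r = 1$ and beyond, property (5) gives $f \equiv 0$, so the bounds hold trivially there. This is the one step needing care: we must check that $r(\tfrac14) > 1 - \epsilon$ does not occur, or else that property (4) covers the smoothing zone. Property (4) is stated on the whole interval, including the zone, so the endpoint evaluation is valid for any $\epsilon$.

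\emph{First fundamental form.} We have $E_\M = 1 + s_0^2 f'^2 \ge 1$, since the added term is a square. Also $G_\M = r^2 \ge r(\tfrac14)^2$, because $r \ge r(\tfrac14)$ on $\A_{1/4}$.

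\emph{Second fundamental form.} For $|e_\M| = r s_0 |f''(r)|$, we use $r \le 1$ together with the bound on $|f''|$ just established. For $|g_\M| = r^2 s_0 |f'(r)|$, we use $r^2 \le 1$ together with the bound on $|f'|$. Both give the stated constants.

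I do not expect any real obstacle. The only subtlety is the bookkeeping of the smoothing parameters $\delta_h, \delta_v, \epsilon$ in the bound on $f$ and at the endpoint $r = 1$, and that is absorbed by the "arbitrarily small $\epsilon$" clause in the statement.
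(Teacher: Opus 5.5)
Your proposal is correct and follows the reasoning the paper relies on. The paper states this lemma without a separate proof; as with the rectangular-coordinate bounds in Corollary~\ref{cor:f'maxf''max}, the estimates come from the explicit polar formulas for the model fundamental forms. They are combined with the monotonicity of $|f'|$ and $|f''|$ from property (4) of Proposition~\ref{prop:smoothprofilefunction}, the range $r(\tfrac14)\le r\le 1$ on $\A_{1/4}$, and the crude bound $1+2\delta_v$ on the top sheet, which is exactly your argument.
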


To determine the $\Delta$ terms and their bounds, we proceed in the same manner as  for the angular coordinates in Section~\ref{sec:angularcoordinates}. We calculate the embedding terms and take the difference with the scaled model space terms. We then take absolute values, bound any sine and cosine terms by 1, and  bound $f(r), f'(r), f''(r)$ by their maximum values over the region. This leads to the values
    $$
    \Delta E_{abs}^{ub}, \Delta G_{abs}^{ub},\Delta F_{abs}^{ub}, \Delta e_{abs}^{ub},\Delta g_{abs}^{ub}.
    $$

Since
    $$
    \Delta eg = e_\M \Delta g + g_\M \Delta e + \Delta e \Delta g, 
    $$
we get 
    $$
    \Delta eg \leq  e_{\M, abs}^{ub} \Delta g_{abs}^{ub} + g_{\M, abs}^{ub} \Delta e_{abs}^{ub} + \Delta e_{abs}^{ub} \Delta g_{abs}^{ub}:= \Delta eg_{abs}^{ub}
    $$

\begin{lemma}\label{lemma:deltaEFGupperboundspolarcoord}
    Assuming that $s_0, s_1 < 0.1$,   bounds   for each of the $\Delta$ terms in the first fundamental form over the set  $\A_{1/4}$  are given by 
        \begin{align*}
         \max_{r\in [r(\tfrac14), 1]} |\Delta E (r, \theta)| & \leq   (2 +  6.284 s_0) s_1 + 1.791 s_1^2:= \Delta E_{abs}^{ub} \\\
          \max_{r\in [r(\tfrac14), 1]} |\Delta F (r, \theta)| & \leq  (1 +3.142 s_0) s_1 + .896 s_1^2 := \Delta F_{abs}^{ub} \\
           \max_{r\in [r(\tfrac14), 1]} |\Delta G (r, \theta)| &\leq (2 +  6.284 s_0) s_1 +  1.791 s_1^2:=   \Delta G_{abs}^{ub} 
        \end{align*}
  
    Bounds for  the $\Delta$ terms in the second fundamental form are given by 
        \begin{align*}
          \max_{r\in [r(\tfrac14), 1]}|\Delta e (r, \theta)|&\leq (3.142 + 2.014 s_0 + 6.355 s_0^2) s_1 + 6.297 s_1^2:= \Delta e_{abs}^{ub}   \\
          \max_{r\in [r(\tfrac14), 1]} |\Delta g (r, \theta)|&\leq (3.142 + 0.068 s_0 + 0.211 s_0^2) s_1 +6.195 s_1^2 := \Delta g_{abs}^{ub}  \\
          \max_{r\in [r(\tfrac14), 1]} |\Delta eg (r, \theta)|&\leq (6.538 s_0 + 0.271 s_0^2 + 0.852 s_0^3) s_1 + 16.507 s_1^2 := \Delta eg_{abs}^{ub}.
        \end{align*}
\end{lemma}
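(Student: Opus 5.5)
We follow the same computational scheme used for the angular-coordinate bounds in Lemma~\ref{lemma:DeltaEFGupperboundsangularcoord}, now in the polar parametrization of the outer annulus $\A_{1/4}$. First we write down $Y(r,\theta)=(r\cos\theta, r\sin\theta, s_0 f(r))$ and compose with $X^{emb}_{s_1}$, with $R_1=\frac{1}{\pi s_1^2}$ and $R_2=\frac{1}{\pi s_1}$. Using the decomposition $DX=Rot+\Delta DX$ of Lemma~\ref{lemma:DXdecomp}, we get
\[
\Delta E=2\langle Rot\,Y_r,\Delta DX\,Y_r\rangle+\langle \Delta DX\,Y_r,\Delta DX\,Y_r\rangle,
\]
and analogous formulas for $\Delta F$ and $\Delta G$ with $Y_r=(\cos\theta,\sin\theta,s_0f'(r))$ and $Y_\theta=(-r\sin\theta,r\cos\theta,0)$. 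Every entry of $\Delta DX$ carries an explicit factor $w/R_2=\pi s_0 f s_1$ or $(w+R_2)/R_1=\pi s_1^2 w+s_1$. Consequently each $\Delta$ term is a polynomial in $s_1$ with no constant term, whose coefficients are trigonometric expressions in $u/R_1$, $v/R_2$ and $\theta$, multiplied by powers of $r$, $s_0$, $f$ and $f'$. For example, the leading $2s_1$ in $\Delta E$ and $\Delta G$ comes from the cross term $2\langle Rot\,\cdot,\Delta DX\,\cdot\rangle$ with $R_2/R_1=s_1$, and the $6.284 s_0 s_1\approx 2\pi s_0 s_1$ comes from the $w$-contribution with $f\le 1$.

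For the second fundamental form, we compute $X_{rr}$, $X_{r\theta}$, $X_{\theta\theta}$ by the chain rule. These involve the second derivatives $D^2X^{emb}$, whose entries carry factors $1/R_1$ or $1/R_2$, i.e.\ $\pi s_1^2$ or $\pi s_1$. We also compute the unnormalized normal $N=X_r\times X_\theta=\det(DX)\,(DX)^{-T}N_\M$, or simply expand the cross product directly. We then expand $e=\langle X_{rr},N\rangle$ fully before subtracting $e_\M=r s_0 f''$, so that the $s_1^0$ terms cancel exactly; the paper notes this cancellation needs care in Mathematica. The same procedure gives $\Delta g$.

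Next we bound each expression by the same recipe: replace every coefficient by its absolute value and every $|\sin|,|\cos|$ by $1$. We insert the maxima over $\A_{1/4}$ from Lemma~\ref{lemma:modelEFGupperboundspolarcoord}, namely $r\le 1$, $f\le 1$, $|f'|\le f'_{max}(\A_{1/4})$ and $|f''|\le f''_{max}(\A_{1/4})$. These are valid by the monotonicity in Proposition~\ref{prop:smoothprofilefunction}(4) and Remark~\ref{rmk:doteq}. Finally, using $s_0,s_1<0.1$, every monomial of total $s_1$-degree at least $2$ is absorbed into a single $C s_1^2$, rounding all constants upward. For $\Delta eg$, we avoid expanding the product and instead use
\[
|\Delta eg|\le e^{ub}_{\M,abs}\Delta g^{ub}_{abs}+g^{ub}_{\M,abs}\Delta e^{ub}_{abs}+\Delta e^{ub}_{abs}\Delta g^{ub}_{abs},
\]
again collapsing higher-order terms with $s_0,s_1<0.1$.

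The main obstacle is the second fundamental form. Near $r=r(\tfrac14)$ the quantity $f''$ is large (of order $f''(r(\tfrac14))$), so the $\Delta e$ coefficients, such as the $6.355 s_0^2$, must be tracked carefully. Moreover, the exact cancellation of the zeroth-order terms must be verified symbolically rather than numerically. To keep the computer-assisted bounds rigorous, we would use rational upper approximations for $\pi$, for $f'_{max}$ and for $f''_{max}$, following the validation procedure of Section~\ref{sec:validatingMathematica}.
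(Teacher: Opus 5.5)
Your proposal is correct and matches the paper's own argument. Like the paper, you compute the polar-coordinate fundamental forms of the embedded surface, subtract the scaled model-space terms, take absolute values with the trigonometric factors bounded, insert the maxima of $r$, $f$, $|f'|$, $|f''|$ over $\A_{1/4}$, absorb higher powers of $s_1$ using $s_0,s_1<0.1$, and obtain $\Delta eg^{ub}_{abs}$ from $\Delta eg=e_\M\Delta g+g_\M\Delta e+\Delta e\,\Delta g$ rather than by direct expansion.

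One point matters if you want the exact stated constants. Write $A=1+(R_2+w)\cos(v/R_2)/R_1$ and $B=1+w/R_2$. Then $\Delta F=r\sin\theta\cos\theta\,\bigl((B^2-1)-(A^2-1)\bigr)$, so the stated $\Delta F_{abs}^{ub}$, which is exactly half of $\Delta E_{abs}^{ub}$, requires $|\sin\theta\cos\theta|\le\tfrac12$. Bounding $\sin\theta$ and $\cos\theta$ each by $1$ would give twice that constant.
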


\section{Strong finite horizon property and time bounds}
\label{sec:contdep}

For scaling parameter $s_0$ and  embedding parameter $s_1$  sufficiently close to zero, the pullback metric $g_s, \, s=(s_0, s_1)$,  can be viewed as a uniformly small perturbation of the flat metric $g_0$ outside of the tubes $\T_1$. We  will  use information about the strong finite-horizon property for flat plane with disks (Proposition~\ref{prop:DrhoStrongFiniteHorizon}) to determine  a   strong finite-horizon condition for the metric $g_s$. 

To determine the size of the perturbation,  we examine geodesics in metrics $g_0$ and $g_s$ that start at the same point and in the same direction and see how  close they stay for a certain finite amount of time. We call this process geodesic control. All comparisons are measured in the common rectangular coordinate system. We quantify all aspects of this statement below. Note that a general technique for this situation would be to use a Gr\"onwall inequality, which would lead to an exponential estimate. Because we are perturbing from the flat metric, which is integrable and parabolic, we get a polynomial (in fact quadratic) bound; see Theorem~\ref{thm:deltaabounds}. 

We carry out the analysis using rectangular coordinates (see Section \ref{sec:rectangularcoordinates}) on the set $\Omega \subset \bbR^2$ where $\Omega = (\mbox{int}(\D_1))^{c} = \overline{\D_1^{c}};$ i.e., $\Omega$ is the  closed set exterior to the disks $\D_1$. We have two coordinate chart maps: one mapping $\Omega$ into the bottom half of $\M_{s_0}$ and one into the top half (see equation (\ref{eqn:globalcoordinatesystem})). 

We will examine the distance between the $g_s-$ geodesic $\gamma_s(p, v_s, t)$  and its partner $g_0-$ geodesic $ \gamma_0(p, v, t)$. These geodesics both start at point $p\in \Omega$ and point in the same direction relative to the rectangular coordinate system: the vector $v$ has unit length in the $g_0$ metric, and $v_s$ is the vector $v$ scaled to be unit length in the $g_s$ metric. The distance between these geodesics, $dist(\gamma_s(p, v_s, t), \gamma_0(p, v, t))$, is measured in the Euclidean metric on the rectangular coordinate system. The following theorem says that if we can control, for a certain amount of time $T_{gc}$, the  distance between the geodesics, then we can transfer a strong finite-horizon condition (set-version and time version) for the $g_0$ metric to a strong finite-horizon condition for the $g_s$ metric.  

For fixed value of $s$ and for $p \in \Omega$ and $v \in S_p \Omega$, let 
    \begin{equation}\label{eqn:Ttube}
    T_{tube}(p,v)>0
    \end{equation}
be the first time either $\gamma_{0}(p, v, t)$ or $\gamma_{s}(p, v_s, t)$ leaves $\Omega$ (so either $\gamma_0$ enters $\D_1$ or, on $\M_{s_0}$,  the $\gamma_s$    geodesic enters  $\T_1$). We note that from the finite horizon condition for $\D_1$ on the flat plane (Corollary~\ref{thm:D1StrongFiniteHorizon}) that $T_{tube}(p,v)\leq 3$.
    
The functions of $s_0, s_1$ below are defined in Sections \ref{sec:estimatingC1} and \ref{sec:boundC3}.
    \begin{align}
    C_1(s_0, s_1)&= \frac{\sqrt{1 +  2s_1 + 0.155s_0^2 + 1.693s_1^2 } - 1 }{\sqrt{ 1 - 2s_1 - 6.284 s_0 s_1^2 }} \notag \\ 
    C_2(s_0, s_1)&= \frac{1}{\sqrt{ 1 - 2s_1 - 6.284 s_0 s_1^2 }} \label{eqn:Ci}\\
    C_3(s_0, s_1) &= 3\sqrt{2}\; \left(\frac{0.712 s_0^2 + .874 s_0 s_1 + 3.671 s_1^2  }{ 1 - 2s_1 - 6.284 s_0 s_1^2} \right).
    \notag
    \end{align} 

\begin{theorem}\label{thm:Cibounds->geodcontrol->finitehorizon} 
    Consider a metric $g_s$ and set $T_{gc}=3$. Then for the following statements, I $\Rightarrow$ II $\Rightarrow$ III.
        \begin{enumerate}
        \item[I.] For $s=(s_0,s_1)$ with $s_0, s_1 < 0.1$,
            \begin{equation}\label{eqn:contdep}
            C_1(s) \, T_{gc}  +  C_2^2(s)\, C_3(s) \, \frac{T_{gc}^2}{2}\,   \leq \frac{\Delta r}{4\sqrt{2}},
            \end{equation}
        \item[II.] for all $p \in \Omega$, $v \in S_p \Omega$, and $0 \leq t \leq \min\{ T_{tube}(p,v), T_{gc}\}$,
            \begin{equation}\label{eqn:Deltarover4}
            dist(\gamma_{s}(p, v_s, t), \gamma_{0}(p, v, t)) \leq \frac{\Delta r}{4}  
            \end{equation} 
        \item[III.] on $\M_{s_0}$ the pair of sets $\T_{3/4} \subset \T_{1/4}$ is $T_{ret}$ strongly finite horizon in the $g_{s}$ metric with 
            $$
            \textstyle T_{ret} = 2.30571.
            $$
        \end{enumerate}
\end{theorem}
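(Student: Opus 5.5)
Two implications need proof, and they are independent. For I $\Rightarrow$ II I will compare the $g_s$-geodesic and its flat partner in the rectangular coordinates on $\Omega$. Write $\gamma_s(t)=(a(t),b(t))$. The geodesic equation is $\ddot x^k=-\Gamma^k_{ij}\dot x^i\dot x^j$. The flat partner satisfies $\gamma_0(t)=p+tv$.

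Let $\delta(t)=\gamma_s(t)-\gamma_0(t)$. Then $\delta(0)=0$, and
\[
\dot\delta(0)=v_s-v=\Bigl(\tfrac{1}{\lambda_s(v)}-1\Bigr)v .
\]
The key point is to avoid a Gr\"onwall argument. Since $\gamma_0$ has zero acceleration, I simply integrate twice:
\[
|\delta(t)|\le t\,|\dot\delta(0)|+\tfrac{t^2}{2}\sup|\ddot\gamma_s| .
\]

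The two terms on the right are exactly what $C_1$, $C_2$, $C_3$ are designed to bound.

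\textbf{Velocity discrepancy.} Lemma~\ref{lemma:lengthratio2} gives $\lambda_s^{lb}\le\lambda_s\le\lambda_s^{ub}$. Hence $|1/\lambda_s-1|\le(\lambda_s^{ub}-1)/\lambda_s^{lb}=C_1$. This is the case where the initial speed is too small; the other side is smaller still.

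\textbf{Euclidean speed.} Since $\gamma_s$ has unit $g_s$-speed, its Euclidean speed is at most $1/\lambda_s^{lb}=C_2$.

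\textbf{Christoffel symbols.} The Christoffel symbols are $\Gamma=\tfrac12 Q_s^{-1}(\partial Q_s)$. I bound $\|Q_s^{-1}\|$ by $1/\det Q_s^{lb}$, or by $1/(\lambda_s^{lb})^2$. For $\partial Q_s$ I use two ingredients. The model part contributes $\partial(s_0^2 f_\alpha f_\beta)$, which by Corollary~\ref{cor:f'maxf''max} is at most $2s_0^2 f'_{\max}f''_{\max}\approx 1.42 s_0^2$. This matches the $0.712s_0^2$ term after the factor $\tfrac12$. The $\Delta$ part is bounded by $\Delta^{ub}_\alpha$ from Lemma~\ref{lemma:Delta'max}, which gives the $0.874s_0s_1+3.671s_1^2$ terms after halving. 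Summing $\Gamma^k_{ij}\dot x^i\dot x^j$ over indices gives $|\ddot\gamma_s|\le C_2^2C_3$, where the factor $3\sqrt2$ absorbs the index count and the passage from componentwise bounds to the Euclidean norm.

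\textbf{Conclusion of I $\Rightarrow$ II.} For $t\le\min\{T_{tube},T_{gc}\}$ both curves stay in $\Omega$, so all these bounds apply. This gives
\[
|\delta|\le\sqrt2\,(C_1T_{gc}+C_2^2C_3T_{gc}^2/2)\le \Delta r/4 .
\]
The $\sqrt2$ converts a componentwise bound into a Euclidean one, which is why it appears in~(\ref{eqn:contdep}).

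\textbf{Obstacle in I $\Rightarrow$ II.} The main difficulty is bookkeeping the constant in $C_3$. The estimate only holds while both curves are in $\Omega$, and this is why $T_{tube}$ appears. I will cite the Mathematica bounds already established rather than recompute them.

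\textbf{II $\Rightarrow$ III, good returns.} Here I transfer Proposition~\ref{prop:DrhoStrongFiniteHorizon}. The buffer $\Delta r/4$ is designed so that the disks of radius $1-\tfrac14\Delta r$, $1-\tfrac12\Delta r$, $1-\tfrac34\Delta r$, $1-\Delta r$ are separated by $\Delta r/4$. Take a $g_s$-geodesic leaving $\T_{1/4}$ at some point. Launch the flat partner from the first point where $\gamma_s$ is in $\Omega$ on exiting $\T_{1/2}$, and use the footprint in rectangular coordinates. By Proposition~\ref{prop:DrhoStrongFiniteHorizon} with $\rho=\tfrac12$, the flat line reaches $\D_{1/2}$ and then $\D_1$ within $T_{ret}(\tfrac12)=2.30571\le 3=T_{gc}$.

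There are two cases. Suppose first that $\gamma_s$ reaches $\T_1$ before the flat line reaches $\D_1$. Then it has entered $\T_{3/4}\subset\T_1$, and within II's window it stayed within $\Delta r/4$ of the flat line. Since the flat line was outside $\D_{1/2}$ until its entry time, $\gamma_s$ could not have entered $\T_{1/4}$ much earlier. Conversely, when the flat line hits $\D_{1/2}$, $\gamma_s$ is within $\Delta r/4$ of it, hence inside $\T_{1/4}$. Suppose instead that the flat line reaches $\D_1$ first. Then at that time $\gamma_s$ lies within $\Delta r/4$ of $\partial\D_1$, hence inside $\D_{3/4}$, so it has entered $\T_{3/4}$.

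\textbf{II $\Rightarrow$ III, staying in $\T_{1/4}$.} I must also check that between entering $\T_{1/4}$ and reaching $\T_{3/4}$ the geodesic stays in $\T_{1/4}$. The $\Delta r/4$ buffer gives this, since the flat segment lies in $\D_{1/2}$ during that interval.

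\textbf{Return time.} Combining these, the time from leaving $\T_{1/4}$ to the next good entry is bounded by the flat time to $\D_{1/2}$, which gives $T_{ret}=2.30571$.

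\textbf{Obstacle in II $\Rightarrow$ III.} The delicate part is aligning the start times. I have to measure from the exit of $\T_{1/4}$ rather than from the exit of $\T_{1/2}$, and handle the small time shifts caused by the displacement; I expect this to be the main obstacle. I would deal with it by restarting the comparison at the exit point from $\T_{1/4}$. The flat line from there stays clear of $\D_{1/2}$ only on its way out, and the bound from Proposition~\ref{prop:DrhoStrongFiniteHorizon} already covers a start anywhere outside $\D_{1/2}$. The displacement of at most $\Delta r/4$ then converts entry into $\D_{1/2}$ into entry into $\T_{1/4}$, and entry into $\D_1$ into entry into $\T_{3/4}$.
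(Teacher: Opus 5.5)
Your proposal is correct in outline and follows the paper's own proof. For I $\Rightarrow$ II you integrate the geodesic equations twice in rectangular coordinates against the straight-line partner, using the bounds $C_1,C_2,C_3$. For II $\Rightarrow$ III you restart the comparison at a point of $\Omega$ (the paper takes an arbitrary $p\in\Omega$; you take the exit point from $\T_{1/4}$), invoke the flat $T_{ret}(\tfrac12)=2.30571$ property of $\D_1\subset\D_{1/2}$, split into cases by whether $\gamma_0$ or $\gamma_s$ reaches $\D_1$ first, and use the $\Delta r/4$ buffer to turn $\D_{1/2}$ into $\T_{1/4}$ and $\D_1$ into $\T_{3/4}$.

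There are three minor slips:
\begin{enumerate}
\item The inclusion is $\T_1\subset\T_{3/4}$, not the reverse.
\item Within time $2.30571$ the flat line is only guaranteed to reach $\D_{1/2}$ in a good way. What you actually need is that it reaches $\D_1$ by time $3=T_{gc}$, so that II covers all of $[0,T_{tube}]$.
\item Your reason for $|1/\lambda_s-1|\le C_1$ is the inequality $1-\lambda_s^{lb}\le\lambda_s^{ub}-1$, which the paper also asserts. It is false when $s_0$ is small relative to $s_1$, e.g.\ $s_0=0$. In your Euclidean-norm version, however, $|\ddot\gamma_s|\le C_2^2C_3$ already holds without the outer factor $\sqrt2$, so that factor is free to absorb the discrepancy.
\end{enumerate}
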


Theorem~\ref{thm:Cibounds->geodcontrol->finitehorizon} is proved in Section~\ref{sec:ThmgeodcontimpliesfhpProof}. Tools to prove I $\Rightarrow$ II are developed in the next few sections. Note that except for some technical details dealt with in that section,  II $\Rightarrow$ III follows by noting that in the flat case, $\D_1 \subset \D_{1/2}$ has the $T_{ret}(\frac 12) = 2.30571$ strongly finite horizon property (see Proposition~\ref{prop:DrhoStrongFiniteHorizon}) and $\gamma_s$ is within $\frac{\Delta r}{4}$ of $\gamma_0$.  

In Section~\ref{sec:TimeInDiskD14}, we derive a time-version of the finite-horizon property of Statement III: i.e., a $\Delta t$ for which $\T_{1/4}$ has the $(T_{ret}=2.30571,\Delta t)$ finite horizon property. We think of $T_{gc}$ as the length of time for which geodesic control holds, i.e. the $g_s, g_0$ geodesics stay close (condition II). In Section ~\ref{sect:improvedTgc}, we show that the results of this theorem hold  under the weaker condition of  $T_{gc}= 2.5$.

\subsection{Geodesics in rectangular coordinates}\label{sec:geodesicsincoords}

In what follows we use rectangular coordinates on the scaled model space $\M_{s_0}$: 
    $$
    Y^{bot}(a, b) = (a,b,\F^{bot}(s_0;a,b)) = (a, b, s_0 f(a,b))
    $$
and
    $$
    Y^{top}(a, b) = (a,b,\F^{top}(s_0;a,b)) \dot= (a, b, s_0 (1- f(a,b)))
    $$
defined on the set $\Omega$ (see equation~(\ref{eqn:globalcoordinatesystem})). 

The geodesic equations for a metric $g$ in a coordinate system $(a,b)$ are expressed in terms of Christoffel symbols $\Gamma^i_{jk}(a,b)$. A path $\gamma(t) = (a(t),b(t))$ is a geodesic in the $g$ metric if the coordinate functions $a$ and $b$ satisfy
    \[
    a''+ \Gamma^1_{11} a'^2 +  \Gamma^1_{12} a' b' +  \Gamma^1_{22} b'^2=0 ,
    \]
    \[
    b''+ \Gamma^2_{11} a'^2 +  \Gamma^2_{12} a' b' +  \Gamma^2_{22} b'^2=0 ,
    \]
where we have suppressed the dependence of $\Gamma^i_{jk}$ on $a$ and $b$.

For the flat metric $g_0$, the first fundamential form in the $(a,b)$ coordinates is given by
    $$
    E_0(a,b) =1, \, F_0(a,b) = 0, G_0(a,b)=1.
    $$
The Christoffel symbols are defined in terms of derivatives of the first fundamental form and hence are identically zero for $g_0$:  $\Gamma^i_{jk}=0$ for all $i,j,k$. If we denote by $(a_0(t), b_0(t))$ the solutions of the geodesic equations for $g_0$, then 
    $$
    a_0'' =0 , \quad b_0''=0.
    $$
and  
    $$
    a_0(t) = a_0(0) + a_0'(0) t, \quad 
    b_0(t) = b_0(0) + b_0'(0) t.
    $$

Let $g_s$ be a continuous $s=(s_0,s_1)$ parameterized family of $C^\infty$ metrics defined on $\Omega \subseteq \bbR^2$ including the Euclidean metric $g_0$.

Let $\gamma_s(t) = (a_{s} (t), b_{s} (t))$ denote the solutions of the geodesic equations for the metric $g_{s}$. Then 
    \[
    a_{s}''=-( \Gamma^1_{11} a_{s}'^2 +  \Gamma^1_{12} a_{s}' b_{s}' +  \Gamma^1_{22} b_{s}'^2)  ,
    \]
    \[
    b_{s}''=-( \Gamma^2_{11} a_{s}'^2 +  \Gamma^2_{12} a_{s}' b_{s}' +  \Gamma^2_{22} b_{s}'^2) 
    \]
where the Christoffel symbols $\Gamma^i_{jk}= \Gamma^i_{jk}(a,b;s)$ are functions of parameter $s$ as well as the point $(a,b)$ on the surface.  

We want to examine the evolution of geodesics under the $g_s$ and $g_0$   metrics that start with the ``same" initial conditions. However, 
a unit vector in one metric may no longer be a unit vector in another metric. Let $\lambda_s: T\Omega \to \bbR$ be a scaling function defined by 
    \begin{equation}
    \lambda_s(v) = \frac{\norm{v}_s}{\norm{v}_0}.
    \end{equation}

Then we can ``match'' a $\gamma_s$ geodesic with a $\gamma_0$ geodesic by choosing initial conditions such that
    $$
    \gamma_s(0) = \gamma_0(0)
    \quad 
    \text{and}
    \quad
    \gamma_s'(0) =  \frac1{\lambda_s(\gamma_0'(0))} \,   \gamma_0'(0)= \frac{\gamma_0'(0)}{\norm{\gamma_0'}_s}. 
    $$
where $\norm{\gamma'(0)}_0 = 1$. Then the vector $\gamma_{s}'(0)$ has unit length in the $g_s$ metric and, in coordinates,
    $$
    (a_{s}(0) = a_0(0), b_{s}(0) = b_0(0))
    \quad 
    \text{and}
    \quad
    ( a_{s}'(0) = \frac1{\lambda_s} \,   a_0'(0), b_{s}'(0) = \frac1{\lambda_s} \,  b_0'(0)).
    $$
Thus the geodesics in the two different metrics start at the same base point and point in the same direction with respect to $(a,b)$ coordinates.

We wish to control how far apart this trajectories can move as a function of $t$. We define
    $$ \Uplambda a_s(t) = a_s(t) - a_{0}(t), \quad 
    \Uplambda b_s(t) = b_s(t) - b_{0}(t).
    $$
Then
    \begin{equation} \label{eqn:deltaa''}
    \begin{aligned}
    \Uplambda a_s'' &= a_s'' - a_0''= ( \Gamma^1_{11} a_s'^2 +  \Gamma^1_{12} a_s' b_s' +  \Gamma^1_{22} b_s'^2) \\
    \Uplambda b_s'' &= b_s'' - b_0''= ( \Gamma^2_{11} a_s'^2 +  \Gamma^2_{12} a_s' b_s' +  \Gamma^2_{22} b_s'^2)
    \end{aligned}
    \end{equation}
with initial conditions
    $$
   \Uplambda a_s(0) = \Uplambda b_s(0) = 0,
    $$
    $$
    \Uplambda a_s'(0)= (\frac1{\lambda_s}-1) a_0' , \quad \text{and} \quad
    \Uplambda b_s'(0)= (\frac1{\lambda_s}-1) b_0' .
    $$
    
For $g_s$ not too far away from $g_0$ on $\Omega$ in $C^1$, the objects below are all bounded and depend only  on $s$. 
For $s=(s_0,s_1)$, let $C_1(s)$, $C_2(s)$, $C_3(s)$ be any functions that satisfy the  following:
    \begin{equation} \label{eqn:threebounds}
    \begin{aligned}
    \max\{ |\Uplambda a_s'(0)|, |\Uplambda b_s'(0)|\} &\leq C_1(s)  \\
    \max\{|a_{s}'(t)|, |b_{s}'(t)|\}  &\leq C_2(s) \\
    \max_{k=1,2}  |\Gamma^k_{11}  +  \Gamma^k_{12} +  \Gamma^k_{22}| &\leq C_3(s) . 
    \end{aligned}
    \end{equation}
These maximums are taken over all $\gamma_s$ defined above and  values of $t$ such that $\gamma_s(\tau) \in \Omega$ for all $0 \leq \tau \leq t$. We will eventually make a specific (but non-optimal) choice of the bounding functions $C_i(s)$ (see Sections \ref{sec:estimatingC1} and \ref{sec:boundC3}.).
  
Note that for points in $\Omega$,  as $s\to 0$ the metric $g_{s} \to g_0$ so that by continuity 
    \begin{equation*}
    \begin{aligned}
    \max\{ |\Uplambda a_s'(0)|, |\Uplambda b_s'(0)|\} &\to 0  \\
    \max\{|a_{s}'(t)|, |b_{s}'(t)|\}  &\to 1 \\
    \max_{k=1,2}  |\Gamma^k_{11}  +  \Gamma^k_{12} +  \Gamma^k_{22}| &\to 0 . 
    \end{aligned}
    \end{equation*}
Of course finding better (smaller) bounds allows us to have better control on geodesics, so we will be looking for bounds such that $C_1(s) \to 0$, $C_2(s) \to 1$, and $C_3(s) \to 0$.

\subsection{Bounding separation of geodesics under one parameter family of metrics }

Using the Gr\"onwall inequality we could  give bounds for $\Uplambda a_s(t), \Uplambda b_s(t)$ which are  exponential in $t$. By perturbing off the flat metric we can achieve a quadratic bound.

\begin{theorem}\label{thm:deltaabounds} 
    Let $\gamma_0(t), \gamma_s(t)$ be geodesic with matching initial conditions as described above. Let $T$ be a time such that both $\gamma_0(t), \gamma_s(t) \in \Omega$ for $0 \leq t \leq T$. Then for  any $C_1(s)$, $C_2(s)$, and $C_3(s)$ that satisfy the inequalities~(\ref{eqn:threebounds})
        \begin{align}\label{eqn:contdepformula}
        |\Uplambda a_s(t)|    &  \leq  C_1\, T +  C_2^2\, C_3\ \frac {T^2}2 ~~~ \text{and}\\
        |\Uplambda b_s(t)|       & \leq  C_1\, T +  C_2^2\, C_3\ \frac {T^2}2 .
        \end{align}
\end{theorem}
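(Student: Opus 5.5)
The plan is to integrate the difference equations (\ref{eqn:deltaa''}) twice, using the flat metric as the reference so that the only forcing is the Christoffel term. We treat $\Uplambda a_s$; the argument for $\Uplambda b_s$ is identical with $\Gamma^2_{jk}$ in place of $\Gamma^1_{jk}$.

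\emph{Step 1: bound the second derivative.} For $0\le t\le T$ both geodesics stay in $\Omega$, so the bounds (\ref{eqn:threebounds}) apply along $\gamma_s$. Since $a_0''=0$, equation (\ref{eqn:deltaa''}) gives
\[
|\Uplambda a_s''(t)| = |\Gamma^1_{11} a_s'^2 + \Gamma^1_{12} a_s' b_s' + \Gamma^1_{22} b_s'^2| .
\]
The hypothesis in (\ref{eqn:threebounds}) controls the combination $\Gamma^1_{11}+\Gamma^1_{12}+\Gamma^1_{22}$, so we read it as bounding the sum of absolute values $|\Gamma^1_{11}|+|\Gamma^1_{12}|+|\Gamma^1_{22}|$. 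This is the reading that will be checked when $C_3$ is constructed in Section~\ref{sec:boundC3}, by bounding each symbol separately. Each product $a_s'^2$, $a_s'b_s'$, $b_s'^2$ is at most $C_2^2$ in absolute value, so
\[
|\Uplambda a_s''(t)|\le C_2^2 C_3 .
\]
This step is the main point needing care. A literal bound on $|\Gamma_{11}+\Gamma_{12}+\Gamma_{22}|$ would not control the weighted sum, so if only that were available we would instead bound each Christoffel symbol individually (from $\Delta E_\alpha$, $\Delta G_\alpha$, $\F_\alpha$, $\F_{\alpha\beta}$ and $\det Q_s^{lb}$) and define $C_3$ as the sum of those bounds.

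\emph{Step 2: integrate twice.} From the initial conditions, $|\Uplambda a_s'(0)| = |(1/\lambda_s-1)a_0'|\le C_1$. Integrating once gives
\[
|\Uplambda a_s'(t)|\le C_1 + C_2^2C_3\, t .
\]
Integrating again, using $\Uplambda a_s(0)=0$, gives
\[
|\Uplambda a_s(t)|\le C_1 t + C_2^2C_3\, t^2/2 .
\]
Since $t\le T$, this is at most $C_1T+C_2^2C_3T^2/2$, which is (\ref{eqn:contdepformula}).

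\emph{Regularity.} The integrations need $\Uplambda a_s$ to be $C^2$ on $[0,T]$. This holds because $g_s$ is smooth on $\Omega$ and both curves remain in $\Omega$ on $[0,T]$, so no further estimate is required.
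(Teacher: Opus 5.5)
Your argument is the paper's own proof: bound $|\Uplambda a_s''|$ by $C_2^2C_3$ on $[0,T]$, then integrate twice using $\Uplambda a_s(0)=0$ and $|\Uplambda a_s'(0)|\le C_1$. Your remark that (\ref{eqn:threebounds}) has to be read as a bound on $|\Gamma^k_{11}|+|\Gamma^k_{12}|+|\Gamma^k_{22}|$ is correct, because a bound on the plain sum does not control the weighted sum. This repairs a step the paper passes over silently. The $C_3$ of Corollary~\ref{cor:C3formula} is an upper bound for $3\max_{i,j}\norm{\Gamma_{ij}}_{\max}$, so it does supply this stronger bound.

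One thing you inherited from the paper: there $\Gamma_{12}$ is the standard Christoffel symbol, defined through $\vb_{12}=(\frac12E_b,\frac12G_a)$. The true geodesic equation therefore reads $a''+\Gamma^1_{11}a'^2+2\Gamma^1_{12}a'b'+\Gamma^1_{22}b'^2=0$. With only the componentwise bounds $|a_s'|,|b_s'|\le C_2$, your Step 1 would then yield $2C_2^2C_3$ rather than $C_2^2C_3$.

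The fix costs nothing. The lemma in Section~\ref{sec:estimatingC1} actually proves the Euclidean bound $a_s'^2+b_s'^2\le C_2^2$. With that, $|\Gamma^1_{11}a_s'^2+2\Gamma^1_{12}a_s'b_s'+\Gamma^1_{22}b_s'^2|\le(|\Gamma^1_{11}|+|\Gamma^1_{12}|+|\Gamma^1_{22}|)(a_s'^2+b_s'^2)\le C_2^2C_3$, and your two integrations go through unchanged.
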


In the following, we suppress the dependency of $\Uplambda a$ and $\Uplambda b$ on $s$.
\begin{proof}
Integrating~(\ref{eqn:deltaa''}) gives  
    \begin{align*}
    \Uplambda a'(t)= \Uplambda a'(0)  + \int_0^t \Uplambda a''(t)\, dt
    \end{align*}
so that 
    \begin{align*}
    |\Uplambda a'| &\leq |\Uplambda a'(0) | + \int_0^t |\Uplambda a''(t)|\, dt\\
      &\leq |\Uplambda a'(0) | + \int_0^t | \Gamma^1_{11} a_s'^2 +  \Gamma^1_{12} a_s' b_s' +  \Gamma^1_{22} b_s'^2| dt\\
      &\leq C_1 + C_2^2\, C_3 \,t,
    \end{align*}
and similarly for $|\Uplambda b'(t) |$. 
Then integrating once more gives
    \begin{align*}
    \Uplambda a(t)= \Uplambda a(0)  + \int_0^t \Uplambda a'(t)\, dt
    \end{align*}
so
    \begin{align*}
    |\Uplambda a(t)| &\leq |\Uplambda a(0) | + \int_0^t |\Uplambda a'(t)|\, dt\\
      &\leq |\Uplambda a(0) | + \int_0^t (C_1 + C_2^2\, C_3\,t \,) dt\\
      &\leq |\Uplambda a(0) | + C_1\, t + C_2^2 \, C_3 \frac {t^2}2,
    \end{align*}
and similarly for $|\Uplambda b(t) |$. Since we have that $|t| \leq T$ and $\Uplambda a(0) = \Uplambda b(0) =0$, it follows that
    \begin{align*}
    |\Uplambda a_s(t)|       &\leq  C_1\, T + C_2^2 \, C_3\ \frac {T^2}2 \,\\
     |\Uplambda b_s(t)|       &\leq  C_1\, T + C_2^2 \, C_3 \ \frac {T^2}2.
    \end{align*}
\end{proof}

\subsubsection{Bounds $C_1$ and $C_2$} \label{sec:estimatingC1}

Quantifying the above geodesic control argument comes down to finding  upper bounds $C_1(s), C_2(s), C_3(s)$. 

Let $\lambda_s^{lb}$ and $\lambda_s^{ub}$ be any values that provide lower and upper bounds for the scaling function $\lambda_s( v)$ (equation~(\ref{eqn:lambdadef}))  over all vectors $v$ with basepoint $p\in \Omega$  
    $$ 
    \lambda_s^{lb} \leq \lambda_s(v) \leq \lambda_s^{ub} 
    $$
Using  Lemma  \ref{lemma:lengthratio2}, we will eventually make a specific (but non-optimal) choice of the values $\lambda_s^{lb}, \lambda_s^{ub}$.

\begin{lemma} 
    For any  lower and upper bound $\lambda_s^{ub}, \lambda_s^{ub}$, we have that  
        $$\max\{ |\Uplambda a_s'(0)|, |\Uplambda b_s'(0)|\} \leq \frac {1}{{\lambda_s^{lb}}} \max {\{\lambda_s^{ub}- 1, \,1 - \lambda_s^{lb} \}}
        $$
    and
        $$ \max \{|a_s'(t)|, |b_s'(t)|\} \leq \frac{1   }{\lambda_s^{lb}}$$ 
    whenever $\gamma_s(t) \in \Omega$.
\end{lemma}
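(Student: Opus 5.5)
Both inequalities follow directly from the definition of the matched initial conditions and the fact that every vector involved has its basepoint in $\Omega$, so the bounds $\lambda_s^{lb} \leq \lambda_s(v) \leq \lambda_s^{ub}$ apply.

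\emph{The bound on $\Uplambda a_s'(0)$ and $\Uplambda b_s'(0)$.} The initial velocity $v=\gamma_0'(0)$ is a $g_0$-unit vector. We have $\Uplambda a_s'(0) = (\frac{1}{\lambda_s}-1)a_0'(0)$ with $\lambda_s=\lambda_s(v)$, and likewise for $b$. Since $\norm{v}_0=1$, we get $|a_0'(0)|,|b_0'(0)|\leq 1$. It therefore suffices to bound $|\frac1{\lambda_s}-1| = \frac{|1-\lambda_s|}{\lambda_s}$. The denominator satisfies $\lambda_s\geq\lambda_s^{lb}$, so $\frac{1}{\lambda_s}\leq \frac{1}{\lambda_s^{lb}}$. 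For the numerator we split into two cases. If $\lambda_s\geq 1$, then $|1-\lambda_s| = \lambda_s-1\leq \lambda_s^{ub}-1$. If $\lambda_s<1$, then $|1-\lambda_s| = 1-\lambda_s\leq 1-\lambda_s^{lb}$. In either case $|1-\lambda_s|\leq \max\{\lambda_s^{ub}-1,\,1-\lambda_s^{lb}\}$, and multiplying the two estimates gives the first claim.

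\emph{The bound on $a_s'(t)$ and $b_s'(t)$.} We use that $\gamma_s$ is a $g_s$-geodesic parametrized by $g_s$-arclength. Its speed is constant, so $\norm{\gamma_s'(t)}_s=\norm{\gamma_s'(0)}_s=1$ for all $t$. Whenever $\gamma_s(t)\in\Omega$, the definition of $\lambda_s$ together with the lower bound gives
$$\norm{\gamma_s'(t)}_0 = \frac{\norm{\gamma_s'(t)}_s}{\lambda_s(\gamma_s'(t))}\leq \frac{1}{\lambda_s^{lb}}.$$
Finally, $\norm{\cdot}_0$ is the Euclidean norm in the rectangular $(a,b)$ coordinates, which dominates each coordinate. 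Hence $|a_s'(t)|,|b_s'(t)|\leq \norm{\gamma_s'(t)}_0\leq 1/\lambda_s^{lb}$.

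The only point requiring care is the implicit hypothesis that $\lambda_s^{lb}>0$ is a valid lower bound at \emph{every} point the geodesic visits. This is why the statement restricts to times with $\gamma_s(t)\in\Omega$. There is no real obstacle, since the lemma is essentially a direct translation of the definitions.
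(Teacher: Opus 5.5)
Your proposal is correct and follows essentially the same route as the paper. You bound $|1/\lambda_s - 1|$ using $|a_0'|,|b_0'|\le 1$ and the bounds on $\lambda_s$, then use that $\gamma_s'(t)$ is $g_s$-unit, so its Euclidean length is $1/\lambda_s \le 1/\lambda_s^{lb}$. Your explicit case split on $\lambda_s \gtrless 1$ and your remark that the Euclidean norm dominates each coordinate fill in steps the paper leaves implicit.
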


\begin{proof}
We have 
    $$
    \Uplambda a_s'(0) = (\frac{1}{\lambda_s}-1) a_0' = \frac{1 - \lambda_s}{\lambda_s} a_0'.
    $$ 
Hence using that $|a_0'|\leq 1$, 
     $$
    |\Uplambda a_s'(0)| 
        = |\frac{1 - \lambda_s}{\lambda_s}| |a_0'| 
        \leq \frac {1}{{\lambda_s^{lb}}} \max {\{\lambda_s^{ub}- 1, \,1 - \lambda_s^{lb} \}}
    $$
The same estimate holds for  $|\Uplambda b_s'(0)|$. 
    
Now let $v_{s}(t) = (a_{s}'\, (t), b_{s}'\,(t))$ be a unit tangent vector to the $\gamma_s$ geodesic so 
$$v_{s}(t) = \frac{v_0}{\lambda_s(v_0)}  $$
for  a vector $v_0$ with unit Euclidean length. Hence its length in the Euclidean metric is 
    $$
    ||v_s(t)||_0 = \frac 1{\lambda_s(v_0)} \leq \frac{1   }{\lambda_s^{lb}}. 
    $$ 
\end{proof}

Using the results of Lemma \ref{lemma:lengthratio2} and assuming that  $s_0, s_1 < 0.1$, we have explicit formulas for a particular pair of lower and upper bounds for $\lambda_s$: 
    \begin{equation}\label{eqn:lambdalbs}
    \lambda_s^{lb} = \sqrt{ 1 - 2s_1 - 6.284 s_0 s_1^2 }
    \end{equation} 
and
    \begin{equation}\label{eqn:lambdaubs}
    \lambda_s^{ub} = \sqrt{1 +  2s_1 + 0.155s_0^2 + 1.693s_1^2 }.
    \end{equation}

\begin{corollary} \label{cor:C1C2formulas} 
    Define 
        $$
        C_1(s_0, s_1):= \frac{\sqrt{1 +  2s_1 + 0.155s_0^2 + 1.693s_1^2 } - 1 }{\sqrt{ 1 - 2s_1 - 6.284 s_0 s_1^2 }},\quad C_2(s_0, s_1):= \frac{1}{\sqrt{ 1 - 2s_1 - 6.284 s_0 s_1^2 }}.
        $$
    For $s_0, s_1 < .1$, 
        $$\max\{ |\Uplambda a_s'(0)|, |\Uplambda b_s'(0)|\} \leq C_1(s_0, s_1)
        $$
    and
        $$ \max \{|a_s'(t)|, |b_s'(t)|\} \leq C_2(s_0, s_1)$$ 
    whenever $\gamma_s(t) \in \Omega$.
\end{corollary}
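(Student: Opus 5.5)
The plan is to deduce the corollary from the preceding lemma together with Lemma~\ref{lemma:lengthratio2}, whose $\lambda_s^{lb}$ and $\lambda_s^{ub}$ are exactly the radicals appearing in $C_1$ and $C_2$. The $C_2$ statement is immediate. A $g_s$-unit tangent vector has Euclidean length $1/\lambda_s(v_0) \le 1/\lambda_s^{lb}$, so each coordinate satisfies $|a_s'(t)|,|b_s'(t)| \le 1/\lambda_s^{lb} = C_2(s_0,s_1)$ whenever $\gamma_s(t)\in\Omega$.

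For $C_1$, write $\Uplambda a_s'(0) = \frac{1-\lambda_s}{\lambda_s}\,a_0'$ with $|a_0'|\le 1$, where $\lambda_s=\lambda_s(\gamma_0'(0))$, and likewise for $b$. The denominator is handled by $\lambda_s \ge \lambda_s^{lb}$. It therefore suffices to show
\[
|1-\lambda_s| \le \lambda_s^{ub}-1
\]
at every point of $\Omega$. When $\lambda_s\ge 1$ this is just $\lambda_s\le\lambda_s^{ub}$.

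The main obstacle is the case $\lambda_s<1$. Here the crude estimate $1-\lambda_s\le 1-\lambda_s^{lb}$ is \emph{not} dominated by $\lambda_s^{ub}-1$ when $s_0\ll s_1$. For instance, with $s_0=0$ one has $1-\sqrt{1-2s_1}\approx s_1+\tfrac12 s_1^2$, whereas $\sqrt{1+2s_1+1.693s_1^2}-1\approx s_1+0.35s_1^2$. I will therefore not pass through $\lambda_s^{lb}$ in the numerator. Instead I return to the explicit first fundamental form in the proof of Lemma~\ref{lemma:lengthratio2}. Dropping the nonnegative terms $(s_0f_ac_a+s_0f_bc_b)^2$ and $((1+s_0f/R_2)^2-1)c_b^2$ gives
\[
\|v\|_s^2 \ge c_a^2\Bigl(1-\tfrac{R_2+s_0 f}{R_1}\Bigr)^2 + c_b^2 \ge \bigl(1-s_1-\pi s_0 s_1^2 f\bigr)^2 .
\]
Note that this keeps the squared form rather than expanding it to first order. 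Since $s_0,s_1<0.1$ and $f\doteq 1$, the base is positive, so this yields $1-\lambda_s \le s_1+\pi s_0 s_1^2$.

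It then remains to verify the elementary inequality
\[
1+s_1+\pi s_0s_1^2 \le \sqrt{1+2s_1+0.155s_0^2+1.693s_1^2}
\]
for $s_0,s_1<0.1$. Squaring, the left side is $1+2s_1+s_1^2+2\pi s_0s_1^2(1+s_1)+\pi^2s_0^2s_1^4$. So one needs $0.693s_1^2+0.155s_0^2$ to dominate $2\pi s_0s_1^2(1+s_1)+\pi^2 s_0^2 s_1^4$. By AM--GM, $0.693s_1^2+0.155s_0^2\ge 2\sqrt{0.693\cdot0.155}\,s_0s_1\approx 0.655\,s_0s_1$. Meanwhile $2\pi s_0s_1^2(1+s_1)\le 2\pi(0.1)(1.1)\,s_0s_1\approx 0.691\,s_0s_1$, so the margin is tight. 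I expect to need to split the estimate. In the regime where $s_0$ is small compared with $s_1$, the $0.693s_1^2$ term alone absorbs the cross term. Otherwise the $0.155s_0^2$ term carries it. If a clean analytic split is awkward, I would fall back on a validated interval/Mathematica check of the two-variable polynomial inequality on $[0,0.1]^2$, in the spirit of Section~\ref{sec:validatingMathematica}. Combining the two cases gives $|1-\lambda_s|\le\lambda_s^{ub}-1$, and dividing by $\lambda_s\ge\lambda_s^{lb}$ yields $\max\{|\Uplambda a_s'(0)|,|\Uplambda b_s'(0)|\}\le C_1(s_0,s_1)$.
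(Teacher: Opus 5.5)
Your proposal is correct. It matches the paper's proof for $C_2$, but at the $C_1$ step you take a different and more careful route.

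The paper bounds $|1-\lambda_s|$ by $\max\{\lambda_s^{ub}-1,\,1-\lambda_s^{lb}\}$ and then simply asserts that for $s_0<0.1$ this maximum equals $\lambda_s^{ub}-1$. As you observed, that equality fails when $s_0$ is small compared with $s_1$: at $s_0=0$ one has $1-\lambda_s^{lb}\approx s_1+\tfrac12 s_1^2 > s_1+0.35s_1^2\approx \lambda_s^{ub}-1$. The equality does hold when $s_1$ is small relative to $s_0$, which covers the parameters $s_1\approx 0.005\,s_0$ the paper finally uses. It does not cover, for instance, the $s_0=0$ end of the geodesic control curve in Table~\ref{table:geodcontrolvalues}.

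Your fix keeps the squared form, $\|v\|_s^2\ge (1-s_1-\pi s_0 s_1^2 f)^2$, instead of expanding to first order. This gives the sharper numerator bound $1-\lambda_s\le s_1+\pi s_0 s_1^2 f$ and proves the corollary on the full stated range $s_0,s_1<0.1$. The price is one extra elementary inequality.

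That inequality is easier than you expected, so you need no AM--GM, no regime split and no interval check. Note that $(1+s_1+\pi s_0 s_1^2)^2-1 = 2s_1+s_1^2+2\pi s_0 s_1^2+2\pi s_0 s_1^3+\pi^2 s_0^2 s_1^4$. This is exactly the expression that the proof of Lemma~\ref{lemma:DeltaEGbound} already bounds by $2s_1+1.693s_1^2$. The extra terms satisfy $2\pi s_0 s_1^2(1+s_1)+\pi^2 s_0^2 s_1^4 \le \bigl(2\pi(0.1)(1.1)+10^{-4}\pi^2\bigr)s_1^2 < 0.693\,s_1^2$, using only $s_0,s_1<0.1$ and $f\doteq 1$. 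Hence $1+s_1+\pi s_0 s_1^2 \le \sqrt{1+2s_1+1.693s_1^2}\le \lambda_s^{ub}$ directly, without the $0.155s_0^2$ term.
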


We observe that the functions $C_i$ are both monotone increasing in $s_0$ and $s_1$.

\begin{proof}
For $s_0 < 0.1$, 
    \begin{align*}
    \frac { \max {\{\lambda_s^{ub}- 1, \,1 - \lambda_s^{lb} \}}}{\lambda_s^{lb}}&= 
    \frac{ \lambda_s^{ub} -1}{\lambda_s^{lb}}
    = \frac{\sqrt{1 +  2s_1 + 0.155s_0^2 + 1.693s_1^2 } - 1 }{\sqrt{ 1 - 2s_1 - 6.284 s_0 s_1^2 }} 
    \end{align*}    
\end{proof}

\subsubsection{Bound $C_3$} \label{sec:boundC3}

The Christoffel symbols for the $g_s$ metric in $(a,b)$-coordinates can be determined by solving equations of the form
    \begin{equation}\label{Gammaeqation}
    Q_{s} \Gamma_{ij} = {\vb}_{ij}
    \end{equation}
where 
    $$
    Q_s =  \begin{pmatrix}
        E_s & F_s \\         
        F_s & G_s  
    \end{pmatrix}.
    $$
is the first fundamental form of the $g_s$ metric. Note that the terms $\Gamma_{ij}, {\vb}_{ij}$ also depend on $s$ and the coordinates $(a,b)$.
 
We have three equations for differing values of $i$ and $j$:
    \begin{align*}
    \Gamma_{11} =  \begin{pmatrix}
    \Gamma^1_{11} \\ 
      \Gamma^2_{11}   \\ 
    \end{pmatrix} 
        \, &\qquad
    {\vb}_{11} = 
    \begin{pmatrix}
    \frac 12 E_a \\ 
    F_a - \frac12 E_b  
    \end{pmatrix} 
     \\
    \Gamma_{12} =  \begin{pmatrix}
    \Gamma^1_{12} \\ 
      \Gamma^2_{12}   \\ 
    \end{pmatrix} 
        \,& \qquad
    {\vb}_{12} = 
    \begin{pmatrix}
    \frac 12 E_b \\ 
     \frac12 G_a   
    \end{pmatrix}
    \\
    \Gamma_{22} =  \begin{pmatrix}
    \Gamma^1_{22} \\ 
      \Gamma^2_{22}   \\ 
    \end{pmatrix} 
        \,& \qquad
    \vb_{22} = 
    \begin{pmatrix}
    F_b - \frac 12 G_a \\ 
    \frac12 G_b   \\ 
    \end{pmatrix}
    \end{align*}
Taking inverses gives
    $$
    \Gamma_{ij} = Q_s^{-1} \vb_{ij}
    $$
where
    $$
    Q_s^{-1} = \frac{1}{\det Q_s} \begin{pmatrix}
        G_s & -F_s \\         
        -F_s & E_s  
    \end{pmatrix}
    $$
 
We wish to bound
    $$
    \max_{i,j} \norm{\Gamma_{ij}}_\text{max},
    $$
over all points in $\Omega$, where $\norm{\cdot}_\text{max}$ is the maximum entry norm. This will then allow us to bound the sum of $\Gamma_{ij}$
    $$\max_{k=1,2}  |\Gamma^k_{11}  +  \Gamma^k_{12} +  \Gamma^k_{22}| \leq 3 \max_{i,j} \norm{\Gamma_{ij}}_\text{max}
    $$
in order to obtain the bound $C_3$.

\begin{lemma} \label{lemma:Gammaijmax}
    Over points in $\Omega$,
        $$\norm{\Gamma_{ij}}_\text{max} \leq \sqrt{2} \frac{|| \vb_{ij}||_{\max}}{(\lambda_s^{lb})^2}.
        $$
    where 
        $$ || \vb_{ij}||_{\max} = \max\{||\vb_{11}||_{\max}, ||\vb_{12}||_{\max},  ||\vb_{22}||_{\max} \}$$
    and $\lambda_s^{lb} $ is a lower bound for $\lambda_s(v)$. 
\end{lemma}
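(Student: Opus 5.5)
We start from the representation $\Gamma_{ij} = Q_s^{-1}\vb_{ij}$ and bound each entry of $\Gamma_{ij}$ by the operator norm of $Q_s^{-1}$ times a norm of $\vb_{ij}$. Since $Q_s$ is symmetric positive definite, $\norm{Q_s^{-1}}_{op} = 1/\lambda_{\min}(a,b)$. By equation~(\ref{eqn:Qoperatorbound}) we have $\sqrt{\lambda_{\min}(a,b)} = 1/\sqrt{\norm{Q_s^{-1}}_{op}}$, and $\lambda_s(v) \geq \sqrt{\lambda_{\min}}$ for all $v$. Moreover, the infimum of $\lambda_s(v)$ over unit vectors at the point is exactly $\sqrt{\lambda_{\min}}$, since it is attained at the corresponding eigenvector. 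So any lower bound $\lambda_s^{lb}$ for $\lambda_s(v)$ over vectors based in $\Omega$ satisfies $\lambda_s^{lb} \leq \sqrt{\lambda_{\min}(a,b)}$. This gives
$$
\norm{Q_s^{-1}}_{op} \leq \frac{1}{(\lambda_s^{lb})^2}
$$
uniformly over points of $\Omega$.

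Next we pass between the max-entry norm and the Euclidean norm on $\bbR^2$. Each entry of $\Gamma_{ij}$ is bounded in absolute value by its Euclidean norm, so
$$
\norm{\Gamma_{ij}}_{\max} \leq \norm{\Gamma_{ij}}_2 \leq \norm{Q_s^{-1}}_{op}\,\norm{\vb_{ij}}_2 .
$$
For a vector in $\bbR^2$ we have $\norm{\vb_{ij}}_2 \leq \sqrt{2}\,\norm{\vb_{ij}}_{\max}$, and $\norm{\vb_{ij}}_{\max}$ is bounded by the maximum over the three pairs $(i,j)\in\{(1,1),(1,2),(2,2)\}$. Combining the two displays gives $\norm{\Gamma_{ij}}_{\max} \leq \sqrt{2}\,\norm{\vb_{ij}}_{\max}/(\lambda_s^{lb})^2$, which is the claimed bound.

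The only point requiring care is the first step. We must make sure the comparison $\lambda_s^{lb} \leq \sqrt{\lambda_{\min}}$ is legitimate, namely that $\lambda_s$ is measured using exactly the matrix $Q_s$ of the $(a,b)$ coordinates in which the Christoffel symbols are computed. This holds because $\norm{v}_s^2 = v^TQ_sv$ in these coordinates. We must also make sure $Q_s$ is positive definite throughout $\Omega$, which follows from Lemma~\ref{lemma:DetLbQ} for $s_0,s_1<0.1$. Everything else is elementary linear algebra, so we do not expect a genuine obstacle.
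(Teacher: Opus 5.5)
Your proof is correct and follows the paper's argument exactly: the same chain $\norm{\Gamma_{ij}}_{\max}\le\norm{\Gamma_{ij}}_2\le\norm{Q_s^{-1}}_{op}\norm{\vb_{ij}}_2\le\sqrt{2}\,\norm{Q_s^{-1}}_{op}\norm{\vb_{ij}}_{\max}$, closed off with inequality~(\ref{eqn:Qoperatorbound}). Your remark that $\inf_v\lambda_s(v)=\sqrt{\lambda_{\min}}$, so that any lower bound satisfies $\lambda_s^{lb}\le\sqrt{\lambda_{\min}}$, spells out a step the paper leaves implicit; positive definiteness of $Q_s$ is automatic because $Q_s$ is the Gram matrix of the pulled-back metric $g_s$, so you do not need Lemma~\ref{lemma:DetLbQ} for that.
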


\begin{proof}
First we bound the maximum value of a single $\Gamma_{ij}$ over points in $\Omega$:
    \begin{align*}
    ||\Gamma_{ij}||_{\max} &\leq  ||\Gamma_{ij}||_{Eucl}
    \leq
       || Q_s^{-1}||_{op}\, || \vb_{ij}||_{Eucl}\\
    &\leq \sqrt{2} || Q_s^{-1}||_{op}\, || \vb_{ij}||_{\max}
    \leq \sqrt{2} \frac{|| \vb_{ij}||_{\max}}{(\lambda_s^{lb})^2}.
    \end{align*}
by inequality~(\ref{eqn:Qoperatorbound}).
\end{proof}

\begin{lemma} \label{lemma:Bijmax}
    Over points in $\Omega$ and assuming $s_0, s_1 < .1$
        $$
        || \vb_{ij}||_{\max} = \max\{||\vb_{11}||_{\max}, ||\vb_{12}||_{\max},  ||\vb_{22}||_{\max} \} \leq   0.712 s_0^2 + .874 s_0 s_1 + 3.671 s_1^2 .
        $$
\end{lemma}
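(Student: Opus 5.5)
The plan is to bound the six entries of $\vb_{11},\vb_{12},\vb_{22}$ directly from the explicit first fundamental form in rectangular coordinates. By Lemma~\ref{lemma:DeltaEGF} (and the explicit formulas in the proof of Lemma~\ref{lemma:lengthratio2}), $E=E_\M+\Delta E$, $F=F_\M$ (since $\Delta F=0$), and $G=G_\M+\Delta G$. Here $E_\M=1+s_0^2f_a^2$, $F_\M=s_0^2f_af_b$, and $G_\M=1+s_0^2f_b^2$. Consequently every entry of every $\vb_{ij}$ is, up to a factor $\frac12$ or $1$, one of the partial derivatives $E_a,E_b,F_a,F_b,G_a,G_b$. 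Each of these splits as a model part plus a $\Delta$ part, so I would bound the two parts separately and add.

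For the model parts, differentiating gives $(E_\M)_\alpha=2s_0^2f_af_{a\alpha}$, $(G_\M)_\alpha=2s_0^2f_bf_{b\alpha}$, and $(F_\M)_\alpha=s_0^2(f_{a\alpha}f_b+f_af_{b\alpha})$. By Lemma~\ref{lemma:bigFbounds} and Corollary~\ref{cor:f'maxf''max}, each is bounded by $2s_0^2f'_{\max}(\Omega)f''_{\max}(\Omega)\le 2(0.27812)(2.5577)s_0^2\approx 1.4227\,s_0^2$. This bound holds on $\Omega\cap\D_0$, and all these terms vanish outside $\D_0$. For the $\Delta$ parts, Lemma~\ref{lemma:Delta'max} gives the entrywise bounds $|\Delta E_a|\le 0.193s_1^2$, $|\Delta E_b|\le 7.342s_1^2$, and $|\Delta G_a|,|\Delta G_b|\le 1.748s_0s_1+0.055s_1^2$, while $\Delta F\equiv0$.

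Next I would go through the entries one at a time, keeping the factors $\frac12$ so that the bound is not wasteful. For example, the entry $\frac12E_b$ is bounded by $\frac12(1.4227s_0^2+7.342s_1^2)=0.712s_0^2+3.671s_1^2$, and $\frac12G_b$ is bounded by $0.712s_0^2+0.874s_0s_1+0.028s_1^2$. These two computations account for the three coefficients in the claimed bound. The mixed entries $F_a-\frac12E_b$ and $F_b-\frac12G_a$ are more delicate, because the crude bound on $F_\alpha$ plus the bound on $\frac12E_b$ would overshoot the $s_0^2$ coefficient. For these I would cancel exactly at the level of the model terms before taking absolute values. For instance, $F_a-\frac12(E_\M)_b=s_0^2f_bf_{aa}$, which is bounded by $s_0^2f'_{\max}f''_{\max}\approx0.712s_0^2$. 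Adding the $\Delta$ contribution $\frac12|\Delta E_b|\le3.671s_1^2$ to this gives exactly the same bound as before. In the same way, $F_b-\frac12(G_\M)_a=s_0^2f_af_{bb}$, and adding $\frac12|\Delta G_a|$ stays under the claimed bound. Taking the maximum over all six entries then gives $0.712s_0^2+0.874s_0s_1+3.671s_1^2$.

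I expect the main obstacle to be these mixed entries. The exact cancellation in the model part has to be done before any absolute values are taken, and the numerical constants (computed in Mathematica with rational upper rounding, as in Lemma~\ref{lemma:DeltaEGbound}) have to be rounded up consistently so that each of the six entries is dominated by the single stated polynomial. The restriction $s_0,s_1<0.1$ is used only through Lemma~\ref{lemma:Delta'max}, where higher-order terms were absorbed into the listed coefficients.
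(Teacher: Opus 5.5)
Your proposal is correct and follows essentially the same route as the paper. The paper also splits each derivative into a model part, bounded by $s_0^2 f'_{\max}f''_{\max}$ via Corollary~\ref{cor:f'maxf''max}, and a $\Delta$ part, bounded via Lemma~\ref{lemma:Delta'max}. It uses the same exact cancellations in the mixed entries, namely $F_a-\frac12 (E_\M)_b=s_0^2 f_b f_{aa}$ and $F_b-\frac12 (G_\M)_a=s_0^2 f_a f_{bb}$. The only difference is that you bound the $\Delta$ contributions entry by entry, whereas the paper applies the uniform bound $\frac12\Delta^{ub}_\alpha$ to every entry; both give the same final polynomial.
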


\begin{proof}
We have 
    $$
    E= 1 + s_0^2 f_a^2+ \Delta E, \quad F = s_0^2 f_a\, f_b, \quad G= 1 + s_0^2\, f_b^2 + \Delta G.
    $$
so 
    \begin{align*}
    E_a = 2 s_0^2\, f_a \, f_{aa} + \Delta E_b,\quad & E_b = 2 s_0^2\, f_a \, f_{ab} + \Delta E_b,\\
    F_a =  s_0^2(  f_{aa}\, f_b + f_a f_{ba}),\quad &  \, 
    F_b = s_0^2 (f_{ab}f_b + f_a f_{bb}), \\
    G_a = 2s_0^2\,  f_b \, f_{ab} + \Delta G_a, \quad & G_b = 2s_0^2\,  f_b \, f_{bb} + \Delta G_b.
    \end{align*}
Then 
    $$
    F_a -\frac 12 E_b = s_0^2 f_{aa}f_b - \frac{1}{2}\Delta E_b
    $$
and
    $$
    F_b -\frac 12 G_a = s_0^2 f_{a}f_{bb} - \frac{1}{2}\Delta G_a
    $$

Thus, 
    \begin{align*}
    &\max \{ \frac 12 E_a, F_a -\frac 12 E_b, \frac 12 E_b, \frac 12 G_a, F_b - \frac 12 G_a, \frac 12 G_b \}  \\
    &\leq s_0^2 f_{\max}'f''_{\max}+\frac 12 
    \max \{ \Delta E_a, \Delta E_b, \Delta G_a, \Delta G_b\} = s_0^2 f_{\max}'f''_{\max}+\frac 12 
    \Delta_{\alpha}^{ub}\\
    &< 0.712 s_0^2 + .874 s_0 s_1 + 3.671 s_1^2, 
    \end{align*}
where we estimate $ f_{\max}'\, f''_{\max}$  by Collorary \ref{cor:f'maxf''max} and $\Delta_{\alpha}^{ub}(s_0,s_1) = 1.748 s_0 s_1 + 7.342 s_1^2$ from Lemma~\ref{lemma:Delta'max}. 
\end{proof}

\begin{corollary} \label{cor:C3formula} 
    Define 
        $$
        C_3(s_0, s_1) = 3\sqrt{2}\; \left(\frac{0.712 s_0^2 + .874 s_0 s_1 + 3.671 s_1^2  }{ 1 - 2s_1 - 6.284 s_0 s_1^2} \right).
        $$
    Then for $s_0, s_1 < .1$ and all points in $\Omega$, 
        $$\max_{k=1,2}  |\Gamma^k_{11}  +  \Gamma^k_{12} +  \Gamma^k_{22}| \leq C_3(s_0, s_1).$$ 
\end{corollary}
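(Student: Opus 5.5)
The corollary follows by chaining Lemma~\ref{lemma:Gammaijmax} and Lemma~\ref{lemma:Bijmax} and then summing over the three index pairs. Fix a point $(a,b)\in\Omega$ and $k\in\{1,2\}$. The triangle inequality gives
$$
|\Gamma^k_{11}+\Gamma^k_{12}+\Gamma^k_{22}| \leq |\Gamma^k_{11}|+|\Gamma^k_{12}|+|\Gamma^k_{22}| \leq 3\max_{i,j}\norm{\Gamma_{ij}}_{\max},
$$
since each $|\Gamma^k_{ij}|$ is an entry of the vector $\Gamma_{ij}$. So it suffices to bound $\max_{i,j}\norm{\Gamma_{ij}}_{\max}$ uniformly over $\Omega$ by $\sqrt{2}\,(0.712 s_0^2 + .874 s_0 s_1 + 3.671 s_1^2)/(1 - 2s_1 - 6.284 s_0 s_1^2)$.

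For this, apply Lemma~\ref{lemma:Gammaijmax}, which gives $\norm{\Gamma_{ij}}_{\max}\leq \sqrt2\,\norm{\vb_{ij}}_{\max}/(\lambda_s^{lb})^2$ for each pair $(i,j)$. Its proof used $\norm{Q_s^{-1}}_{op}\leq (\lambda_s^{lb})^{-2}$ from (\ref{eqn:Qoperatorbound}), so any valid lower bound for $\lambda_s$ on $\Omega$ may be used. Take the explicit lower bound from Lemma~\ref{lemma:lengthratio2}, valid for $s_0,s_1<0.1$:
$$
(\lambda_s^{lb})^2 = 1-2s_1-6.284 s_0 s_1^2 .
$$
This quantity is positive in that range, since it is at least $1-0.2-0.007>0$, so dividing by it is legitimate. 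Next, Lemma~\ref{lemma:Bijmax} bounds the numerator: $\norm{\vb_{ij}}_{\max}\leq 0.712 s_0^2 + .874 s_0 s_1 + 3.671 s_1^2$ over $\Omega$ for $s_0,s_1<0.1$. Multiplying these bounds by $3$ gives exactly $C_3(s_0,s_1)$, uniformly in the point and in $k$.

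The argument is essentially bookkeeping, and I see no genuine obstacle. The one point to check is that both cited bounds hold at every point of $\Omega$ under the same hypothesis $s_0,s_1<0.1$, so that they can be combined pointwise. This is the case: both Lemma~\ref{lemma:lengthratio2} and Lemma~\ref{lemma:Bijmax} are stated over $\Omega$ with the derivative bounds of Corollary~\ref{cor:f'maxf''max}, and the top and bottom charts are treated identically. The maximum over $k$ is then immediate.
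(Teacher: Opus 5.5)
Your proposal is correct and is essentially the paper's own proof: the triangle inequality gives the bound $3\max_{i,j}\norm{\Gamma_{ij}}_{\max}$, Lemma~\ref{lemma:Gammaijmax} turns this into $3\sqrt{2}\,\norm{\vb_{ij}}_{\max}/(\lambda_s^{lb})^2$, and Lemmas~\ref{lemma:Bijmax} and~\ref{lemma:lengthratio2} then supply the numerator and the denominator. Your explicit check that $1-2s_1-6.284\,s_0s_1^2>0$ for $s_0,s_1<0.1$ is a small addition that the paper leaves implicit.
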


\begin{proof}
    $$
    \max_{k=1,2}  |\Gamma^k_{11}  +  \Gamma^k_{12} +  \Gamma^k_{22}| \leq 3 ||\Gamma_{ij}||_{\max} 
    \leq 3\sqrt{2} \frac{|| \vb_{ij}||_{\max}}{(\lambda^{lb})^2}.
    $$
Now use Lemma \ref{lemma:Bijmax} and the formula for $\lambda_s^{lb}$ from Lemma \ref{lemma:lengthratio2}.
\end{proof}

In Section~\ref{sec:ImprovedC3formula}, we show how to get an improved bound for $C_3$ using more extensive numerical calculations.

\subsection{Proof of Theorem \ref{thm:Cibounds->geodcontrol->finitehorizon}} \label{sec:ThmgeodcontimpliesfhpProof}

\begin{proof}
We work in rectangular coordinates. The cases below are the same  whether the coordinate map $Y: \Omega \rightarrow \M_{s_0}$ gives the bottom or top half of $\M_{s_0} \setminus \T_1$.

I $\Rightarrow$ II: 
Let  $p \in \Omega$, $v \in S_p \Omega$ (so that $\norm{v}_0 =1$), and $v_s = \lambda_s(v) v$. For $0 \leq t \leq \min\{ T_{tube}(p,v), T_{gc}\}$, both $\gamma_{s}(p, v_s, t)$ and $\gamma_{0}(p, v, t)$ remain in $\Omega$. Hence 
    \begin{align*} 
     dist(\gamma_{s}(p, v_s, t), \gamma_{0}(p, v, t))&= \sqrt{(\Uplambda a_s(t))^2 + ( \Uplambda b_s(t))^2} \\
     & \leq 
     \sqrt{2}\max \{ \Uplambda a_s(t), \Uplambda b_s(t)\} \\
     \text{ which by Theorem \ref{thm:deltaabounds}} & \text{ and Corollaries \ref{cor:C1C2formulas} and \ref{cor:C3formula}} \\
     & \leq \sqrt{2} \left( C_1(s) t + C_2^2(s) C_3(s) \frac{t^2}{2} \right) \\\text{ for  }
    \, t\leq T_{gc} = 3 \,  &
     \\
     & \leq \sqrt{2} \left( 3 C_1(s)  + \frac 92 C_2^2(s) C_3(s) \right) \\
     & \leq \sqrt{2} \left (\frac{ \Delta r}{4\sqrt{2}} \right )
  = \frac{\Delta r}{4}
    \end{align*}

\noindent by the assumption of part I. This proves the first implication of Theorem \ref{thm:Cibounds->geodcontrol->finitehorizon}.  

II $\Rightarrow$ III:
Let $p \in \Omega$ and $v \in S_p \Omega$. The geodesics will remain in one of the two disconnected components $\bbR^2\setminus \D_1$ of $\Omega$ for the time under consideration. We proceed by cases on $T_{tube}(p,v)$. 

By  Proposition~\ref{prop:DrhoStrongFiniteHorizon},  $\gamma_0$ will leave $\Omega$ in time at most 3. Suppose $\gamma_0$ is the geodesic that leaves $\Omega$ first.  By Proposition~\ref{prop:DrhoStrongFiniteHorizon}, $\gamma_0$ will enter $\D_{1/2}$ in a $\D_1-$ good way (Definition \ref{def:strongfhpgeom})  in time at most $T_{ret}(\frac 12)$. By equation (\ref{eqn:Deltarover4}) when $\gamma_0 \in \D_1$ (resp. $D_{1/2}$), $\gamma_s \in \D_{3/4}$ (resp. $D_{1/4}$). Hence $T_{ret}(\frac 12)$ from equation~(\ref{eqn:boundonT12}) serves as a bound on the time between good returns to $\D_{1/4}$ (i.e. will immediately go on to enter $\D_{3/4}$) for $\gamma_s$.  

Now suppose $\gamma_s$ is the geodesic that leaves $\Omega$ first (i.e., enters $\D_1$)  at time $t=T_{tube}$. The geodesic has  had to first enter $D_{1/4}$ and then enter $D_{3/4}$. Let $t^* < T_{tube}$ be the largest time for which $\gamma_s(t^*) \in \partial \D_{1/4}$ and $\gamma_s(t) \in \D_{1/4} $ for $t\in [t^*, T_{tube}]$. Thus $\gamma_s$ enters $D_{1/4}$ in a $D_{3/4}-$ good way at time $t^*$. We claim that $t^* \leq T_{ret}(\frac 12) $. By Condition II,  we have that $\gamma_0(t^*) \in \D_0 \setminus int(D_{1/2})$. Since $T_{ret}(\frac 12)$ is an upper bound for  the time at which $\gamma_0$ enters $\D_{1/2}$ in a $\D_1-$ good way, one has that  $t^* \leq T_{ret}(\frac 12)$. Hence $\gamma_s$ enters $D_{1/4}$ in a $\D_{3/4}-$ good way in time at most $T_{ret}(\frac 12)$. 
\end{proof}

\begin{lemma} \label{lemma:monotonicityinGCeqn}
    We consider values of $s_0, s_1 <0.1$, $T_{gc}=3$, and values of $C_i$ given by equation~(\ref{eqn:Ci}).
    \begin{enumerate}
        \item[(a)] For $s_1=0$, there is a unique solution $s_0 = \hat{s}_0 \approx 0.04106$ for which 
        \begin{equation}
            C_1(s_0,0) \, T_{gc}  +  C_2^2(s_0,0)\, C_3(s_0,0) \, \frac{T_{gc}^2}{2}\,   = \frac{\Delta r}{4\sqrt{2}}, \notag
        \end{equation}
        \item[(b)] For any fixed $s_0<\hat{s}_0$, there is a unique $s_1>0$ for which \begin{equation}\label{eqn:contdepequality}
            C_1(s_0,s_1) \, T_{gc}  +  C_2^2(s_0,s_1)\, C_3(s_0,s_1) \, \frac{T_{gc}^2}{2}\,   = \frac{\Delta r}{4\sqrt{2}},
        \end{equation}
        \item[(c)] As $s_0$ increases, the value of $s_1$ for which equation~(\ref{eqn:contdepequality}) holds decreases.
        \item[(d)] If 
        \begin{equation}\label{eqn:contdep2}
            C_1(s_0,s_1) \, T_{gc}  +  C_2^2(s_0,s_1)\, C_3(s_0,s_1) \, \frac{T_{gc}^2}{2}\,   \leq \frac{\Delta r}{4\sqrt{2}},
        \end{equation}
        holds for $(s_0^*, s_1^*)$,  then it holds for any $(s_0, s_1)$ with $0 \leq s_0 \leq s_0^* $ and $0 < s_1 \leq s_1^*$.
    \end{enumerate}
\end{lemma}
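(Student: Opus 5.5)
The whole lemma rests on one monotonicity observation. Write
\[
\Phi(s_0,s_1) = 3\,C_1(s_0,s_1) + \tfrac92\, C_2^2(s_0,s_1)\, C_3(s_0,s_1)
\]
for the left-hand side of (\ref{eqn:contdep2}) with $T_{gc}=3$, and set $c^* = \Delta r/(4\sqrt2)$. The plan is to show that $\Phi$ is continuous and strictly increasing in each variable separately on $[0,0.1)^2$. Parts (a)--(d) then follow from the intermediate value theorem and an implicit-function style argument.

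\textbf{Monotonicity.} For $s_0,s_1\ge0$, each $C_i$ is a quotient of two kinds of expressions. The numerators are $\sqrt{1+2s_1+0.155s_0^2+1.693s_1^2}-1$ and $0.712s_0^2+0.874s_0s_1+3.671s_1^2$: polynomials with nonnegative coefficients, nondecreasing in each variable. The denominators are powers of $1-2s_1-6.284s_0s_1^2$, which is positive on $[0,0.1)^2$ (it is at least $1-0.2-0.00629>0$) and nonincreasing in each variable.
\begin{itemize}
\item Hence $C_1,C_2,C_3$ are nonnegative and nondecreasing in both variables, as the paper already observes after Corollary~\ref{cor:C1C2formulas}.
\item Strictness: in $s_1$, $C_3$ is strictly increasing because of its $3.671s_1^2$ term (and $C_1$ because of its $2s_1$ term). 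In $s_0$, $C_3$ is strictly increasing because of its $0.712s_0^2$ term, for $s_0>0$.
\item Since $C_2\ge1>0$, the product $C_2^2C_3$ inherits strict monotonicity.
\end{itemize}
So $\Phi$ is continuous and strictly increasing in each variable on the region, away from the trivial point $s_0=0$ in the $s_0$ direction.

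\textbf{Part (a).} At $s_1=0$ the formulas reduce to $C_2=1$, $C_1=\sqrt{1+0.155s_0^2}-1$ and $C_3=3\sqrt2\cdot0.712s_0^2$. Therefore $\Phi(s_0,0)$ is a strictly increasing function of $s_0$ with $\Phi(0,0)=0<c^*$. Evaluating at $s_0=0.1$ (or any convenient value below $0.1$) should give $\Phi>c^*$; this is a single numerical check, since $c^*\approx 0.0237$ while $\tfrac92\cdot3\sqrt2\cdot0.712\cdot0.01\approx0.136$. The intermediate value theorem together with strict monotonicity then gives a unique root $\hat s_0$. Its numerical value $\approx0.04106$ comes from solving the resulting equation in $s_0^2$, or by bisection with rigorous rounding as in the paper's Mathematica validation.

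\textbf{Parts (b) and (c).}
\begin{itemize}
\item For (b), fix $s_0<\hat s_0$. Then $\Phi(s_0,0)<\Phi(\hat s_0,0)=c^*$. Check that $\Phi(s_0,0.1^-)\ge\Phi(0,0.1^-)>c^*$; the $s_1^2$ term of $C_3$ alone already exceeds $c^*$ near $s_1=0.1$. Continuity and strict increase in $s_1$ then give a unique root $s_1(s_0)$, which is positive because $\Phi(s_0,0)<c^*$.
\item For (c), take $s_0<s_0'$. Then $\Phi(s_0',s_1(s_0))>\Phi(s_0,s_1(s_0))=c^*$. Since $\Phi(s_0',\cdot)$ is increasing, its root must lie strictly below $s_1(s_0)$.
\end{itemize}

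\textbf{Part (d).} This is immediate from the coordinatewise monotonicity: $\Phi(s_0,s_1)\le\Phi(s_0^*,s_1)\le\Phi(s_0^*,s_1^*)\le c^*$.

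\textbf{Main obstacle.} The only real work is bookkeeping. First, verify the sign of the denominator and the endpoint inequality near $s_1=0.1$ with enough margin that rounded coefficients do not matter. Second, the case $s_0=0$ in (c) and (d) needs only non-strict monotonicity in $s_0$, which holds as shown above.
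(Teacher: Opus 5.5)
Your proposal is correct and is essentially the paper's argument: coordinatewise monotonicity of the $C_i$ together with the intermediate value theorem gives (a) and (b), and (c) and (d) are read off directly from monotonicity. The only difference is at the upper endpoints. The paper appeals to blow-up ($C_1(s_0,0)\to\infty$ as $s_0\to\infty$, and $C_i\to\infty$ as $\lambda_s^{lb}\to 0$, which only happens for $s_1$ near $1/2$), whereas your explicit checks at $0.1$ keep both roots inside the stated range $s_0,s_1<0.1$. Incidentally, actually solving $\Phi(s_0,0)=\Delta r/(4\sqrt2)$ gives $\hat s_0\approx 0.0414$, matching the $m=0$ row of Table~\ref{table:geodcontrolvalues}. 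So the quoted $0.04106$ appears to be a dropped-digit typo for $0.041406$, and your computation would not reproduce it.
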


\begin{proof}
First, note that each function $C_i(s_0,s_1)$ from equation~(\ref{eqn:Ci}) is an increasing function of $s_0$ and an increasing function of $s_1$. For (a), note that $C_1(0,0)=C_3(0,0)=0$ and $C_1(s_0,0), C_3(s_0,0)$ go to infinity as $s_0 \to \infty$; the result follows by monotonicity. For (b), note that from part (a), for $s_0<\hat{s}_0$, 
    $$
     C_1(s_0,0) \, T_{gc}  +  C_2^2(s_0,0)\, C_3(s_0,0) \, \frac{T_{gc}^2}{2}\, < \frac{\Delta r}{4\sqrt{2}}.
    $$ 
The $C_i$ go to infinity as $\lambda_s^{lb} \to  0$, so  there is a unique $s_1=s_1(s_0)$ for which equality holds. Statements (c) and (d) follow from the monotonicity of the functions $C_i(s_0,s_1)$.
\end{proof}

\subsubsection{Numerical calculation of the geodesic control curve}

We define the geodesic control curve to be the boundary of the region in the $(s_0,s_1)$ plane for which 
    $$
    C_1(s_0,s_1) \, T_{gc}  +  C_2^2(s_0,s_1)\, C_3(s_0,s_1) \, \frac{T_{gc}^2}{2}\,   \leq \frac{\Delta r}{4\sqrt{2}}
    $$
holds. 

In order to numerically find this curve, we set $s_1 = m s_0$ and use Mathematica to solve the resulting equation for $s_0$. With $T_{gc}=3$ and functions $C_i$ given by equation~(\ref{eqn:Ci}), we get the results given in Table \ref{table:geodcontrolvalues}. Using these values, we sketch the region of geodesic control in the $(s_0, s_1)$ plane (see Figure \ref{fig:geodesiccontrolregion}).
\begin{table}[h!]
    \centering
    \begin{tabular}{c | c | c}
        $m$ & $s_0$ & $s_1$  \\ \hline
        $\infty$ & 0 & 0.0067417 \\
        1 & 0.0064204 & 0.0064204 \\
        1/2 & 0.011874 & 0.0059371 \\
        1/4 & 0.019675 & 0.0049188 \\
        1/10 & 0.029929 & 0.0029929 \\
        1/100 & 0.040058 & 0.00040058 \\
        1/1000 & 0.04127 & 0.00004127 \\
        0 & 0.041406 & 0
    \end{tabular}
    \caption{Boundary values of $s_0$ and $s_1$ for which we have $\Delta r / (4 \sqrt 2)$ geodesic control.}\label{table:geodcontrolvalues}
\end{table}

Inside the region of geodesic control, the strong finite horizon property holds. In Sections  \ref{sec:curvaturbounds} and \ref{sec:embeddedsurfacesAnosovpart1}, we will show that the strictly invariant cone conditions holds on a subset of this region which will imply the Anosov property. 

\begin{figure}[h!]
    \includegraphics[width=\textwidth]{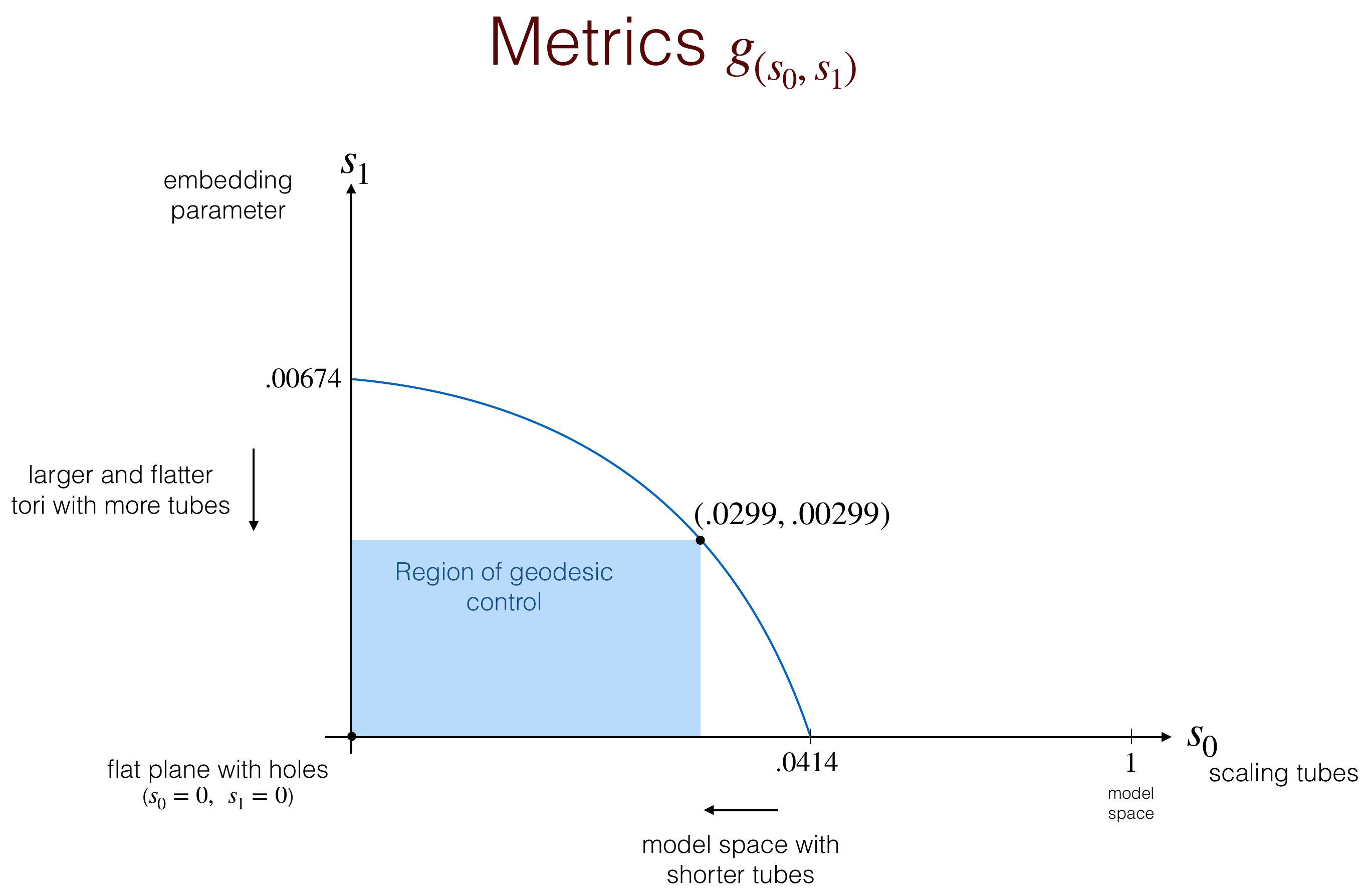}
    \caption{Region of the $(s_0,s_1)$-parameter space for which the metrics $g_{(s_0,s_1)}$ have geodesic control. Coordinates given above come from setting $T_{gc}=3$ and using functions $C_i$ given by Equation~(\ref{eqn:Ci}).}
    \label{fig:geodesiccontrolregion}
\end{figure}

\subsection{Time bounds for metrics with strong finite horizon property}\label{sec:TimeInDiskD14}

The following theorem gives a time-based strong finite horizon property, an alternative to Statement III of Theorem~\ref{thm:Cibounds->geodcontrol->finitehorizon}.

\begin{theorem} \label{thm:Deltattheorem}
    If a metric $g_s$ satisfies Condition I of Theorem~\ref{thm:Cibounds->geodcontrol->finitehorizon}, then
    $\T_{1/4}$ has the $(T_{ret},\Delta t)$ strongly finite horizon property in the metric $g_s$ with 
    $$T_{ret} =  2.30571$$
    and
    $$\Delta t = 0.678015 \,\lambda^{lb}_s = 0.678015 \, \sqrt{1-2s_1-6.284 s_0 s_1^2}.$$
\end{theorem}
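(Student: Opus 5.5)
The plan is to get the time statement from the set statement (Statement III of Theorem~\ref{thm:Cibounds->geodcontrol->finitehorizon}) in two steps. First I bound from below the \emph{Euclidean} length, measured in rectangular coordinates, of the piece of $\gamma_s$ that lies in $\D_{1/4}$ during a good visit. Then I convert that length to $g_s$-time using the speed bound. The return time $T_{ret}=2.30571$ is inherited verbatim from Statement III: we keep the same good entrance times $t_i^-$, namely the entrances into $\T_{1/4}$ that go on to reach $\T_{3/4}$. Only Property~3 of Definition~\ref{def:strongfhp} needs new work.

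For the conversion, I would argue as in Corollary~\ref{cor:C1C2formulas}. A $g_s$-unit vector $v_s$ with footprint in $\Omega$ has Euclidean length $1/\lambda_s(v_0)\le 1/\lambda_s^{lb}$. Hence a $g_s$-geodesic segment in $\Omega$ of Euclidean length $L$ takes $g_s$-time at least $\lambda_s^{lb}L$. This accounts for the factor $\lambda_s^{lb}=\sqrt{1-2s_1-6.284s_0s_1^2}$ from Lemma~\ref{lemma:lengthratio2}. It remains to show $L\ge 0.678015$ for the relevant portion of $\gamma_s$.

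For the length bound, I treat the entering half and the exiting half separately, using time reversal of the geodesic flow. The reason is that geodesic control (Condition II) is only available while both curves stay in $\Omega$.
\begin{enumerate}
\item[(i)] On the entering side, $\gamma_s$ crosses $\partial\D_{1/4}$ at time $t_i^-$ and must later reach $\partial\D_{3/4}$ (or $\D_1$). During this interval it stays in $\Omega$ and within $\Delta r/4$ of its partner $\gamma_0$, by Condition II.
\item[(ii)] The partner $\gamma_0$ is a straight line. By the proof of Theorem~\ref{thm:Cibounds->geodcontrol->finitehorizon}, it enters $\D_{1/2}$ and then meets $\D_1$ at least tangentially.
\item[(iii)] Hence the Euclidean distance travelled by $\gamma_s$ from $\partial\D_{1/4}$ to the point of deepest penetration is at least the length of the straight chord computed in the worst-case configuration. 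That configuration is a line tangent to $\D_1$ at distance $\sqrt3/2$ from the center, shifted outward by $\Delta r/4$. Endpoint errors of size $\Delta r/4$ are absorbed using the triangle inequality.
\item[(iv)] Applying the same argument to the reversed geodesic $\gamma(-t)$ gives the symmetric exiting half.
\item[(v)] Adding the two halves and evaluating numerically, with rounding down as in Proposition~\ref{prop:DrhoStrongFiniteHorizon}, should produce $0.678015$. This is slightly below the flat value $\Delta t(\tfrac12)=0.69429$, and the loss is the $\Delta r/4$ displacement.
\end{enumerate}

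The main obstacle is the stretch where $\gamma_s$ dips inside $\D_1$, i.e.\ leaves $\Omega$. There the rectangular coordinates and geodesic control are no longer available, so I cannot follow the curve directly. I would handle this by using only the two $\Omega$-portions, just before first entering $\D_1$ and just after last leaving it. Each of these can be compared with a fresh partner $\gamma_0$ started at a point of $\Omega$: the forward partner for the entering portion, and the backward partner, obtained by time reversal, for the exiting portion. The time spent in between is nonnegative and only helps. I would first check that the worst case for each half really is the tangent line displaced by exactly $\Delta r/4$, since that is what fixes the constant.
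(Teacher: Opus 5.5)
There is a genuine gap. Your conversion from Euclidean length to $g_s$-time via $\lambda_s^{lb}$, and keeping $T_{ret}=2.30571$ from Statement III, match the paper. The Euclidean length bound, however, fails in two places.

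\textbf{First gap: the constant.} The number $0.678015$ equals $2\sqrt{r_{1/4}^2-(r_{3/4}+\Delta r/24)^2}$. It encodes closeness $\Delta r/24$ to a line at distance $r_{3/4}$ from the center, not a $\Delta r/4$ band around a $\D_1$-tangent line. The missing idea is to re-center the comparison and shorten its time window.
\begin{enumerate}
\item In the worst case, $\gamma_s$ is tangent to $\partial\D_{3/4}$ at time $0$. Take as partner the straight line through that point in the direction $\gamma_s'(0)$.
\item This line leaves $\D_0$ within time $\tfrac12$ in each direction, so $\gamma_s$ has left $\D_{1/4}$ by $|t|=\tfrac12$.
\item Theorem~\ref{thm:deltaabounds} on the window $|t|\le\tfrac12$ gives $\tfrac12C_1+\tfrac18C_2^2C_3\le\tfrac16\bigl(3C_1+\tfrac92C_2^2C_3\bigr)\le\frac{\Delta r}{24\sqrt2}$.
\item Hence $\gamma_s$ stays in the strip $|b|\le r_{3/4}+\Delta r/24$, and its entry and exit points on $\partial\D_{1/4}$ are at least $0.678015$ apart.
\end{enumerate}
Your $\Delta r/4$ band with triangle-inequality endpoint corrections gives roughly $1/\sqrt2-\Delta r/2\approx0.64$. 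A $\Delta r/4$ band around a $\D_{3/4}$-tangent line gives about $0.50$. So step (v) does not follow.

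Step (ii) is also unsupported. A $\gamma_s$ that reaches $\D_{3/4}$ only forces its partner into $\D_{1/2}$, not $\D_1$. The proof of Theorem~\ref{thm:Cibounds->geodcontrol->finitehorizon} puts $\gamma_0$ in $\D_1$ only in the case where $\gamma_0$ leaves $\Omega$ first.

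\textbf{Second gap: geodesics that enter $\D_1$.} Treating the stretch inside $\T_1$ as ``nonnegative, only helps'' discards exactly what is needed. For a nearly radial entry, the two $\Omega$-portions (from $\partial\D_{1/4}$ to $\partial\D_1$ and back out) have total Euclidean length only about $2(r_{1/4}-r_1)=\tfrac32\Delta r\approx0.20$. So claim (iii) is false for your truncated halves: the tangential configuration is the worst case only when the whole crossing is counted. You need a lower bound on the time spent inside the tube, which the paper supplies by cases:
\begin{enumerate}
\item A geodesic that passes to the other sheet must reach the neck $r=\tfrac12$. This gives length at least $2(r_{1/4}-\tfrac12)\lambda_s^{lb}>0.93\,\lambda_s^{lb}$.
\item A geodesic that returns on the same sheet and winds by more than $\pi$ around the tube has length at least $\tfrac{\pi}{2}\lambda_s^{lb}$.
\item A geodesic that returns on the same sheet and winds by at most $\pi$ has an entry--exit chord at least that of the tangential case. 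This follows from monotonicity of the rotation angle in the entry angle, since there are no conjugate points in the negatively curved region.
\end{enumerate}
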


\begin{proof} 
Since we will be comparing $\gamma_s$ with $\gamma_0$, everything will be done in the coordinate space, and we will work with the disks $D_\rho$ that get mapped to the tubes $T_\rho$ by the coordinate mapping.
First we consider a geodesic $\gamma_s$ that tangentially intersects $\D_{3/4}$ and show it spends time at least $0.678015 \,\lambda^{lb}_s$ inside $\D_{1/4}$, where $\lambda^{lb}_s$ is a lower bound on the ratio of the length of a vector in the $g_s$ metric to that in the Euclidean metric (see Lemma \ref{lemma:lengthratio2}). Then we show that any other geodesic that intersects $\D_{3/4}$ must also spend more than $0.678015 \,\lambda^{lb}_s$ in $\D_{1/4}$. Picking the explicit bound of $\lambda^{lb}_s$ given by equation~(\ref{eqn:lambdalbs}) yields the formula with $s_0, s_1$.
\begin{enumerate}
    \item  Let $\gamma_s$ be a geodesic that  tangentially intersects $\D_{3/4}$.
    Let $\gamma_s(0) \in \partial \D_{3/4}$ and $\gamma_s'(0)$ be tangent to $\D_{3/4}$. 
    Let $t_{1/4}^\pm$ be the times at which $\gamma_s(t_{1/4}^\pm) \in \partial \D_{1/4}$ and $\gamma_s(t) \in \D_{1/4}$ for $t_{1/4}^- \leq t \leq t_{1/4}^+$. 

    Using rectangular coordinates $(a,b)\subset \bbR^2$, we analyze $\gamma_s$ by comparing it to the  corresponding straight line geodesic $\gamma_0$.   By applying an appropriate translation and rotation, we can assume that $\D_{1/4}$ is centered at the origin, the vector $\gamma_s'(0)$ is horizontal (i.e., its $b$-component is 0), and $\gamma_s(0) = (0,b)$ with $b = 1 - \frac 34 \Delta r >0$.
    
    First we see that in time $\frac 12$ the straight line $\gamma_0$ leaves $\D_0$. Since $\gamma_s(\frac 12)$ must be within $\Delta r/4$ of $\gamma_0$, it must be outside of $\D_{1/4}$.  Thus $|t_{1/4}^\pm| < 1/2$. 
    
    This allows us to get tighter geodesic control. We repeat the geodesic control   argument but for the shorter period of time $T_{gc}=\frac 12$. For $(s_0, s_1)$ satisfying Statement I of Theorem~\ref{thm:Cibounds->geodcontrol->finitehorizon} and $0 \leq t \leq \frac 12$,
        \begin{align}
        \max\{ |\Delta a(t,s)|, |\Delta b(t,s)|\}  
            &\leq  C_1 t + \frac 12  C_2^2 C_3 t^2 \notag \\
            &\leq \frac 12  C_1 + \frac 18  C_2^2 C_3 \notag \\
            &\leq \frac 16 ( 3 C_1 + \frac 92  C_2^2 C_3 ) \notag \\
            & \leq \frac 16 ( \frac{\Delta r}{ 4 \sqrt{2}} )
            = \frac{\Delta r}{ 24 \sqrt{2}}. \label{eqn:DeltaabBounds}
        \end{align}
    Defining
        $$
        \E_{1/4}= \frac{\Delta r}{ 24} , 
        $$
    we see that for any time $- \frac 12 \leq t \leq \frac 12$ the distance between $\gamma_s(t)$ and the corresponding point on the straight line $\gamma_0(t)$ is at most $  \E_{1/4}$.  Hence the $b$-coordinate of $\gamma_s(t)$ for $-\frac 12 \leq t \leq \frac 12$ is bounded by
        $$ 
        |b(\gamma_s(t))| \leq  1 - \frac{3}{4}\Delta r +\E_{1/4}:= b_{max}.
        $$
     
    This allows us to estimate where $\gamma_s$ enters and exits $\D_{1/4}$.
    Let $a^\pm_{1/4}$ be the $a$-coordinate values such that $(a^\pm_{1/4}, b_{max}) \in \partial \D_{1/4}$. 

    Let $\ell(p,q)$ denote the Euclidean line from $p$ to $q$, and let $length_s(c)$ be the length of a curve $c$ measured in the metric $g_s$. The amount of time $\gamma_s(t)$ spends in $\D_{1/4}$ is the $length_s(\gamma_s(t))$ for $t_{1/4}^- \leq t \leq t_{1/4}^+$. Then
    \begin{align*}
        length_s(\gamma_s(t)) 
        & \geq \lambda^{lb}_s * length_0(\gamma_s(t)) \\
        & \geq\lambda^{lb}_s * length_0(\ell(\gamma_s(t_{1/4}^-),\gamma_s(t_{1/4}^+)) \\
        & \geq\lambda^{lb}_s * length_0(\ell((a_{1/4}^-,b_{max}),(a_{1/4}^+,b_{max})))\\
        & = \lambda^{lb}_s * (a_{1/4}^+ - a_{1/4}^-). 
    \end{align*}
    
    The above argument is based on the fact that since the geodesic $\gamma_s$ shadows the above straight line, the angle it sweeps out around the circle $\partial \D_{1/4}$ between the  points $\gamma_s(t_{1/4}^-)$ and $\gamma_s(t_{1/4}^+)$ must be less than $\pi$. Hence, the Euclidean length of the curve $\gamma_s(t)$ for $t \in [t_{1/4}^-,t_{1/4}^+]$ is greater than or equal to the length of the straight line connecting the two endpoints which in turn is bounded below by $a_{1/4}^+-a_{1/4}^-$. 

    To compute $(a_{1/4}^+ - a_{1/4}^-)$, note that
        $$
        (a^\pm_{1/4})^2 + (1-3/4 \Delta r+\E_{1/4})^2 = (1- \Delta r/4)^2, 
        $$
    so
        $$
        a^\pm_{1/4} = \pm \sqrt{(r_{1/4})^2- (r_{3/4}+\E_{1/4})^2}.
        $$
 
    This gives
        $$\Delta t \geq length_s(\gamma_s(t)) \geq \lambda_s^{lb}  * 2 \sqrt{(r_{1/4})^2- (r_{3/4}+\frac{\Delta r}{24})^2} = 0.678015 \, \lambda^{lb}_s.$$ 

    \item Now consider trajectories that cross $\D_{1/4}$ and then enter into the interior of $\D_{3/4}$. Let $p=\gamma_s(t_{1/4}^-)$ be  the basepoint at which the above mentioned tangential geodesic intersects $\D_{1/4}$. Consider the family of geodesics $\gamma_s(t, \alpha)$ that start at point $p$ with angle $\alpha$ so that $\gamma_s(0, \alpha)=p$. Let angle $\alpha_0$ correspond to the tangential trajectory. Consider $\alpha \in (\alpha_0, \pi/2]$. There are a few cases to consider.
 
        \begin{enumerate}
        \item Suppose the  geodesic $\gamma_s(t, \alpha)$ stays on the same sheet while in $\D_{3/4}$, and let $t_{1/4}^+(\alpha)$ be the time at which $\gamma_s(t, \alpha)$ leaves $\D_{1/4}$. If $\gamma_s$ does not leave $\D_{1/4}$, the theorem is trivially satisfied (with $t_{1/4}^+ = \infty$). Otherwise, denote by $\Delta \theta (\alpha)$ the angle that the geodesic rotates from entering to leaving $\D_{1/4}$, as measured relative to the center of the disk that the tube lies above. 

        So long as the geodesics $\gamma_s(t, \alpha)$ remain in negative curvature, there are no conjugate points in this region and hence for $\alpha \geq \alpha_0$, the rotation function  $\Delta \theta (\alpha)$ is a monontone increasing function up until the first $\alpha$ at which $\Delta \theta (\alpha)$ is undefined. This $\alpha $ value corresponds to the geodesic that becomes asympototic to the closed geodesic around the neck of the tube.     

            \begin{enumerate}
            \item Suppose that $\Delta \theta (\alpha)\leq \pi$. We argue as in case (1). The length of the geodesic from entering to leaving $\D_{1/4}$ is bounded below by the Euclidean length of the curve times $\lambda^{lb}_s$. And the Euclidean length of the curve is  greater than the Euclidean length of the straight line connecting the points $\gamma_s(0, \alpha)$ and $\gamma_s(t_{1/4}^+(\alpha), \alpha)$. This distance in turn is greater than the length of the straight line connecting the points $\gamma_s(0, \alpha_0)$ and $\gamma_s(t_{1/4}^+(\alpha_0) ,\alpha_0)$, which equals $\lambda^{lb}_s*(a_{1/4}^+-a_{1/4}^-)$.

            \item  Suppose that $\Delta \theta (\alpha)> \pi$. Then, following a similar bounding argument as in part (1), the distance traveled in the $g_s$ metric is greater than or equal to $\lambda^{lb}_s$ times the Euclidean length of the curve. The Euclidean metric in polar coordinates is $ds^2 = r^2 + r^2 d\theta^2$. In the tube $\D_0$, the radius $r\geq 1/2$. Thus the Euclidean length of the curve is $\geq \frac 12 \pi$ and the $g_s$ length is $\geq 1.57 \, \lambda^{lb}_s$. 
     
            \end{enumerate}   
        \item Suppose $\gamma_s(t, \alpha)$ passes from the lower sheet to the upper sheet before exiting $\D_{1/4}$. Then it must reach the inner most part  of the tube where $r=1/2$. Using a similar argument as in part (1), this length $> 2 \lambda^{lb}_s (r_{1/4} - \frac 12) > .93 \, \lambda^{lb}_s $.  
        \end{enumerate}
    \end{enumerate}
\end{proof}

The following generalizes the previous result for any outer tube of radius $1/4 \leq \rho \leq 1$. It follows from the same argument as above.

\begin{corollary}\label{cor:deltatinTrho} 
    Let $\gamma_s$ enter the tube $\T_{\rho}$, $\frac14 \leq \rho \leq 1$, in a $\T_{3/4}$-good way. 
    The time spent in $\T_\rho$ is bounded below by 
        $$\Delta t_s(\rho) = \lambda^{lb}_s * 2 \sqrt{(r_{\rho})^2- (r_{3/4}+\frac{\Delta r}{24})^2} =\sqrt{ 1 - 2s_1 - 6.284 s_0 s_1^2 } * \sqrt{(r_{\rho})^2- (r_{3/4}+\frac{\Delta r}{24})^2}. $$
\end{corollary}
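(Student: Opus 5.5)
The plan is to rerun the proof of Theorem~\ref{thm:Deltattheorem}, replacing the inner disk $\D_{1/4}$ by $\D_\rho$ everywhere, and to check that each step only uses $\rho\ge \tfrac14$ through inclusions that still hold. As there, all comparisons are made in rectangular coordinates on $\Omega$, against the partner straight line $\gamma_0$ with matching initial conditions. Condition I of Theorem~\ref{thm:Cibounds->geodcontrol->finitehorizon} is assumed, so that Theorem~\ref{thm:deltaabounds} with the constants $C_i$ from (\ref{eqn:Ci}) is available.

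\textbf{Extremal case.} First treat the extremal geodesic $\gamma_s$ that is tangent to $\partial\D_{3/4}$ at time $0$, normalized as before so that $\gamma_s(0)=(0,r_{3/4})$ with horizontal velocity. The straight line $\gamma_0$ leaves $\D_0$ by time $\tfrac12$. By (\ref{eqn:Deltarover4}), $\gamma_s(\pm\tfrac12)$ then lies outside $\D_{1/4}$, and hence outside $\D_\rho\subseteq\D_{1/4}$ for every $\rho\ge\tfrac14$. So the whole passage of $\gamma_s$ through $\D_\rho$ happens for $|t|\le\tfrac12$.

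On this short interval the sharpened estimate (\ref{eqn:DeltaabBounds}) gives $|\Delta a|,|\Delta b|\le \Delta r/(24\sqrt2)$. Therefore the $b$-coordinate of $\gamma_s$ is at most $b_{max}=r_{3/4}+\Delta r/24$. This is the same $b_{max}$ as before, since it does not depend on $\rho$.

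The entry and exit points of $\gamma_s$ on $\partial\D_\rho$ lie in the strip $|b|\le b_{max}$ and on opposite sides of the $b$-axis, because the swept angle is less than $\pi$. Hence their Euclidean distance is at least the chord $2\sqrt{r_\rho^2-b_{max}^2}$. Multiplying by $\lambda_s^{lb}$ (Lemma~\ref{lemma:lengthratio2}) converts Euclidean length into $g_s$-length, which gives the stated $\Delta t_s(\rho)$. This needs $r_\rho> b_{max}$, i.e.\ $1-\rho\Delta r>1-\tfrac34\Delta r+\Delta r/24$, which holds for $\rho<\tfrac{17}{24}$. For larger $\rho$ the quantity under the root is nonpositive, so I read it as $0$ and the bound is vacuous.

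\textbf{Remaining geodesics.} For geodesics entering $\D_{3/4}$ non-tangentially, I repeat case (2) of the earlier proof, fanning out in angle $\alpha\ge\alpha_0$ from the entry point on $\partial\D_\rho$.
\begin{itemize}
\item[(a)] If the rotation satisfies $\Delta\theta(\alpha)\le\pi$, then by the monotonicity of $\Delta\theta$ coming from the absence of conjugate points in negative curvature, the chord is at least the tangential one.
\item[(b)] If $\Delta\theta(\alpha)>\pi$, the Euclidean length is at least $\pi/2$, since $r\ge\tfrac12$ in the tube. This exceeds the chord $2\sqrt{r_\rho^2-b_{max}^2}$, which is at most $2\sqrt{r_{1/4}^2-b_{max}^2}\approx0.68$.
\item[(c)] If the geodesic crosses to the other sheet, it must traverse from $r_\rho$ down to $r=\tfrac12$ and back out. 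This gives length at least $2(r_\rho-\tfrac12)$. For $\rho\ge\tfrac14$ this is compared with the chord by a direct numerical check, which is the same check that produced $0.93$ at $\rho=\tfrac14$.
\end{itemize}

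\textbf{Main obstacle.} I expect the main difficulty to be the uniformity in $\rho$ of case (c). It amounts to verifying that $2(r_\rho-\tfrac12)\ge 2\sqrt{r_\rho^2-b_{max}^2}$ on the relevant range of $\rho$. Squaring reduces this to $r_\rho\le (b_{max}^2+\tfrac14)$, up to normalization. If that inequality fails for some $\rho$, I would instead replace the bound by the minimum of the two expressions. A second point to make precise is the monotonicity of $\Delta\theta(\alpha)$: it requires the relevant arcs to stay in the negatively curved region. That is guaranteed for $\rho\ge\tfrac14$ only once the curvature bound on $\T_{1/4}$ from the later sections is available, so I would cite that bound at this step.
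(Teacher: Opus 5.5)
Your proposal is correct and follows the paper's route. The paper proves this corollary only by saying that the argument of Theorem~\ref{thm:Deltattheorem} goes through with $\D_{1/4}$ replaced by $\D_\rho$. Your added remarks are correct refinements that the paper leaves implicit:
\begin{itemize}
\item the bound is non-vacuous only for $\rho<\tfrac{17}{24}$;
\item the case (c) comparison $r_\rho\le b_{max}^2+\tfrac14\approx 1.07$ holds automatically, since $r_\rho\le 1$.
\end{itemize}
Your argument yields the first expression in the statement, the one with the factor $2$. That is the form that matches Theorem~\ref{thm:Deltattheorem} and Theorem~\ref{thm:timeintubes}; the second expression in the statement has dropped that factor by a typo.
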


\section{Curvature bounds} \label{sec:curvaturbounds}

\subsection{Curvature bound $K_{neg}$ inside $\T_{\rho}$ in angular coordinates}\label{sec:curvboundsinA14}

We determine an upper bound $K_{neg}(\rho)< 0$ for the curvature of the $g_s$ metric inside the tube $\T_{\rho}$ using the angular coordinates of Section~\ref{sec:angularcoordinates}: 
    $$Y(\psi, \theta) = (r(\psi) \cos \theta, r(\psi) \sin \theta, s_0 h(\psi))$$
where 
    $$r(\psi) \doteq 1-\frac 12 \cos \psi \text{ and } h(\psi)\doteq \frac 12 +\frac 12 \sin \psi$$
for $\psi \dotin (-\pi/2, \pi/2)$ and $\theta \in (0, 2\pi)$ and the dot indicating a numerical precision coming from the smoothing process (see Section~\ref{sec:smoothing}). Recall that 
    $$\T_{\rho} = \{ Y(\psi, \theta) ~|~ \psi\in [-\psi_{\rho}, \psi_{\rho}] \},$$ 
where $\psi_\rho$ is defined implicitly in Definition~\ref{def:psirho} by the formula $$r(\psi_\rho) = 1 - \rho \Delta r.$$ 

The curvature of a metric can be expressed in coordinates using the first and second fundamental forms
    $$
    K = \frac {eg-f^2}  {(EG-F^2)^2}.
    $$

In what follows, we denote the scaled model space by $\M=\M_{s_0}$. On the scaled model space, we use the angular coordinates of Section~\ref{sec:angularcoordinates} to determine that the curvature $K_\M$  inside the tube is a monotone increasing negative function of $|\psi|$ given by  
    $$
    K_\M(\psi) =\frac{e_\M\,g_\M}{(E_\M G_\M)^2}
    = \frac{-\frac 14 s_0^2   r^3(\psi) h'(\psi) }{(\frac 14 r(\psi)^2 \left( \sin^2 \psi + s_0^2 \cos^2 \psi \right))^2}
    = \frac{-4 s_0^2  h'(\psi) }{r(\psi)\left( \sin^2 \psi + s_0^2 \cos^2 \psi \right)^2}. 
    $$
Note that the numerator $e_\M\,g_\M = -\frac 14 s_0^2   r^3(\psi) h'(\psi) $ is not monotone, but the curvature is still montone due to partial cancelation with the denominator. 

In estimating the curvature for the embedded surface, however, we will not have the same cancellation, so we consider the numerator and denominator of the curvature formula separately. We view these terms as perturbations of the numerator and denominator for the model space case, and we find a weaker form of monotonicity for the numerator $e_\M\,g_\M = -\frac 14 s_0^2   r^3(\psi) h'(\psi)$ which will be enough for our purposes. 

\begin{lemma}\label{lemma:egmodelspacemonotonicitylight}
    The function $e_\M g_\M$ has the following propoerties:
    \begin{enumerate} 
        \item 
            $
            \max_{ |\psi| \leq \psi_{\rho}} e_\M g_\M(\psi) = \max \{ e_\M g_\M (0), e_\M g_\M (\psi_\rho)\} <0. 
            $
        \item There is a unique $\psi^* > 0$ for which $e_\M\,g_\M(\psi^*) = e_\M\, g_\M(0)= -\frac {s_0^2}{64}$.
        \item For $\psi_\rho \geq \psi^*$, 
            \[
            \max_{ |\psi| \leq \psi_{\rho}} e_\M g_\M(\psi) = e_\M g_\M (\psi_\rho)= -\frac {s_0^2}{4} (1- \rho \Delta r)^3 \,  \rho \Delta r <0
            \]
        \item $\psi_{1/4} > \psi_{1/2} > \psi^*$. 
    \end{enumerate}
\end{lemma}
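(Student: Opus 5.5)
We reduce everything to a one-variable calculus problem. By Lemma~\ref{lemma:EFGbdsangcoordmodelsp} and equation~(\ref{eqn:randhsmoothing}), away from the smoothing zone
\[
e_\M g_\M(\psi) = -\tfrac14 s_0^2\, r^3(\psi) h'(\psi) \doteq -\tfrac{s_0^2}{8}\,\phi(\psi), \qquad \phi(\psi) := \bigl(1-\tfrac12\cos\psi\bigr)^3\cos\psi .
\]
Since $\phi$ is even, it suffices to study $\phi$ on $[0,\psi_\rho]$. Maximizing $e_\M g_\M$ there is the same as minimizing $\phi$ there. Write $c=\cos\psi$ and $p(c)=(1-c/2)^3c$, so $\phi(\psi)=p(\cos\psi)$, and $\cos$ decreases from $1$ to $\cos\psi_\rho = 2\rho\Delta r$ as $\psi$ runs over $[0,\psi_\rho]$ (Lemma~\ref{lemma:rhbounds}).

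\textbf{Shape of $p$.} Compute $p'(c)=(1-c/2)^2(1-2c)$. Thus $p$ is increasing on $[0,\tfrac12]$ and decreasing on $[\tfrac12,1]$. As a function of $\psi\in[0,\pi/2)$, $\phi$ therefore increases from $\phi(0)=1/8$ up to its maximum at $\cos\psi=\tfrac12$ (the value $\approx 0.148$ already found in Lemma~\ref{lemma:ModelEFGupperboundsangularcoord}), and then decreases toward $0$. So $e_\M g_\M$ is first decreasing and then increasing on $[0,\psi_\rho]$. A function of this unimodal shape attains its maximum over an interval at an endpoint, which gives (1). Negativity is clear because $\phi>0$ there.

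\textbf{The crossing $\psi^*$.} Since $e_\M g_\M(0) = -\tfrac{s_0^2}{8}\cdot\tfrac18 = -\tfrac{s_0^2}{64}$, we need the unique $c^*\in(0,\tfrac12)$ with $p(c^*)=\tfrac18$. On the increasing branch $[0,\tfrac12]$ we have $p(0)=0<\tfrac18<p(\tfrac12)$, so the intermediate value theorem and strict monotonicity give a unique $c^*$. On the decreasing branch $(\tfrac12,1)$ we have $p>p(1)=\tfrac18$, so there is no other crossing. Setting $\psi^*=\arccos c^*$ proves (2). Solving $(1-c/2)^3c=1/8$ numerically gives roughly $c^*\approx0.175$, so $\psi^*\approx1.39$.

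\textbf{Parts (3) and (4).} For $\psi_\rho\ge\psi^*$ we have $\phi(\psi_\rho)\le \tfrac18=\phi(0)$, because we are on the decreasing branch past $\psi^*$. Hence the endpoint maximum in (1) is attained at $\psi_\rho$, and its value comes from Lemma~\ref{lemma:ModelEFGupperboundsangularcoord}; this proves (3). For (4), $\cos\psi_\rho=2\rho\Delta r$ with $\Delta r = 1-\sqrt3/2\approx0.134$. This gives $\cos\psi_{1/4}\approx0.067$ and $\cos\psi_{1/2}\approx0.134$, both below $c^*$. Because $\cos$ is decreasing, it follows that $\psi_{1/4}>\psi_{1/2}>\psi^*$.

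The main obstacle is step (4). It is a certified numerical comparison, and the margin between $0.134$ and $c^*$ is modest. I would check it rigorously by evaluating $p(0.134)<1/8$ with exact rational arithmetic; note that $p(2\rho\Delta r)$ is exactly $8\rho\Delta r(1-\rho\Delta r)^3$ times the normalization. I would also remark that the smoothing only affects $|\psi|$ arbitrarily close to $\pi/2$, which lies outside $[-\psi_{1/4},\psi_{1/4}]$, so the $\doteq$ formulas are exact on the relevant range.
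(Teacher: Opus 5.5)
Your proof is correct and follows essentially the same route as the paper: the endpoint maximum comes from the decreasing-then-increasing shape of $e_\M g_\M$, $\psi^*$ comes from the intermediate value theorem plus monotonicity, and (4) is a direct comparison $e_\M g_\M(\psi_{1/2})>e_\M g_\M(0)$; your substitution $c=\cos\psi$ with $p'(c)=(1-c/2)^2(1-2c)$ actually verifies the monotonicity that the paper only asserts. There are a few harmless slips: the maximum of $\phi$ is $p(\tfrac12)=27/128\approx 0.211$ (the $0.148$ in Lemma~\ref{lemma:ModelEFGupperboundsangularcoord} is the maximum of $g_\M/s_0$, attained at $\cos\psi=2/3$); $c^*\approx 0.161$ and $\psi^*\approx 1.41$ rather than $0.175$ and $1.39$, though the comparison in (4) survives since $p(\Delta r)\approx 0.109<\tfrac18$; $p(2\rho\Delta r)=2\rho\Delta r(1-\rho\Delta r)^3$; and because of the horizontal shift $\delta_h$ in $r(\psi)$ the formulas on $[-\psi_{1/4},\psi_{1/4}]$ hold only in the $\doteq$ sense (the same convention the paper uses), not exactly.
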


\begin{proof}
\begin{enumerate}
    \item The function 
        $$e_\M\,g_\M(\psi)=-\frac 14 s_0^2   r^3(\psi) h'(\psi)
        = -\frac {s_0^2}{4}   (1-  \frac{\cos  \psi_\rho}{2})^3 \, \frac {\cos \psi_\rho}{2} = -\frac {s_0^2}{4} (1- \rho \Delta r)^3 \,  \rho \Delta r
        $$
    (by Lemmas~\ref{lemma:EFGbdsangcoordmodelsp} and \ref{lemma:rhbounds}) is negative for $|\psi|<\pi/2$ and is monotonically decreasing for small $\psi$. It reaches a minimum at some value $\psi_0$ and then is monotonically increasing, which proves the claim. 
    \item Since $e_\M\,g_\M(\pi/2) = 0$, the existence of a unique $\psi^*$ follows from the Intermediate Value Theorem and monotonicity properties.
    \item Follows from monotonicity properties.
    \item A direct calculation shows $e_\M g_\M(\psi_{1/2}) > e_\M g_\M(0)$.
\end{enumerate}
\end{proof}

Now we derive the formula with which we can get negative upper bounds on the curvature of the embedded space. 

\begin{theorem}\label{thm:knegformula} 
    Let $\rho>0$ such that $\psi_\rho \geq \psi^*$ and  fix $s_0<0.1$ . 
    \begin{enumerate}
        \item Then for sufficiently small $s_1$,  the curvature $K(\psi, \theta)$ for the embedded metric $g_s$ on the tube $\T_\rho$ is negative and bounded above by 
            \begin{align*}
            \max_{Y(\psi, \theta) \in \T_\rho} K(\psi, \theta) & \leq \frac{e_\M\, g_\M(\psi_\rho) +  \Delta eg_{abs}^{ub}(\rho)}
            {\{(E_{\M,abs}^{ub}(\rho)+ \Delta E_{abs}^{ub}(\rho))(G_{\M,abs}^{ub}(\rho) + \Delta G_{abs}^{ub}(\rho))  \}^2}:= K_{neg}(\rho) <0
            \end{align*}
        where
            $$
            \Delta eg_{abs}^{ub}(\rho): =  e_{\M,abs}^{ub} (\rho)\Delta g_{abs}^{ub}(\rho) + g_{\M,abs}^{ub}(\rho) \Delta e_{abs}^{ub}(\rho) + \Delta e_{abs}^{ub}(\rho) \, \Delta g_{abs}^{ub}(\rho).
            $$
        All the bounding terms in the above formula have been defined in Lemmas~\ref{lemma:ModelEFGupperboundsangularcoord} and \ref{lemma:DeltaEFGupperboundsangularcoord}.  

        \item For a fixed $s_0$, as $s_1$ decreases the value of  $K_{neg}(\rho)$ decreases. 
    \end{enumerate}
\end{theorem}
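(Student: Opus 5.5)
The plan is to start from the coordinate formula $K=(eg-f^2)/(EG-F^2)^2$ in the angular coordinates on $\T_\rho$. I would bound the numerator above by a negative number and the denominator above by a positive number. A negative numerator divided by a positive denominator is only made larger (closer to zero) when the denominator is replaced by something bigger, so this gives an upper bound.

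For the numerator, I would drop the term $-f^2\le 0$ and write $eg = e_\M g_\M + \Delta eg$, where
$$\Delta eg = e_\M\Delta g + g_\M\Delta e + \Delta e\,\Delta g.$$
By Lemma~\ref{lemma:egmodelspacemonotonicitylight}(3), which applies because $\psi_\rho\ge\psi^*$, we have $e_\M g_\M(\psi)\le e_\M g_\M(\psi_\rho)$ for all $|\psi|\le\psi_\rho$. The perturbation satisfies $|\Delta eg|\le \Delta eg_{abs}^{ub}(\rho)$ by the triangle inequality, using the bounds on $|e_\M|$, $|g_\M|$ from Lemma~\ref{lemma:ModelEFGupperboundsangularcoord} and on $|\Delta e|$, $|\Delta g|$ from Lemma~\ref{lemma:DeltaEFGupperboundsangularcoord}. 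Together these give
$$eg-f^2\le N:=e_\M g_\M(\psi_\rho)+\Delta eg_{abs}^{ub}(\rho).$$
Every coefficient in $\Delta eg_{abs}^{ub}$ carries a factor of $s_1$, while $e_\M g_\M(\psi_\rho)=-\tfrac{s_0^2}{4}(1-\rho\Delta r)^3\rho\Delta r$ is a fixed negative number for fixed $s_0>0$. Hence $N<0$ once $s_1$ is small enough. This smallness requirement is exactly the ``sufficiently small $s_1$'' in the statement.

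For the denominator, I would use
$$0<EG-F^2\le EG\le (E_\M+|\Delta E|)(G_\M+|\Delta G|)\le (E_{\M,abs}^{ub}+\Delta E_{abs}^{ub})(G_{\M,abs}^{ub}+\Delta G_{abs}^{ub}).$$
Positivity holds because $g_s$ is a Riemannian metric, since $X_s$ is an immersion. The estimate $EG-F^2\le EG$ needs only $F^2\ge 0$. The entries $E=E_\M+\Delta E$ and $G=G_\M+\Delta G$ are bounded above by the triangle inequality together with the two lemmas already cited. Squaring both sides is legitimate because both are positive. With $N<0$ this yields $K\le N/D^2=K_{neg}(\rho)<0$ at every point of $\T_\rho$. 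I do not need to bound $|\Delta F|$ at all, because the $F$ term only helps.

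For part 2, I would note that every $\Delta$-bound appearing in the formula is a polynomial in $s_1$ with nonnegative coefficients and no constant term, so each is increasing in $s_1$. As $s_1$ decreases, $N$ therefore decreases and becomes more negative, and $D$ decreases while staying positive, so $|N|/D^2$ increases. It follows that $K_{neg}=-|N|/D^2$ decreases.

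The main obstacle is the first step: making the comparison $e_\M g_\M(\psi)\le e_\M g_\M(\psi_\rho)$ uniform over the whole tube, not just at its edge. This works only because the numerator of the model curvature is not monotone, and that is the reason for the hypothesis $\psi_\rho\ge\psi^*$ and for Lemma~\ref{lemma:egmodelspacemonotonicitylight}. There is also a subtlety to keep in mind: the $\Delta$-bounds in Lemma~\ref{lemma:DeltaEFGupperboundsangularcoord} were computed at $\rho=\tfrac14$, so the argument for general $\rho$ uses the analogous bounds at that $\rho$, for example those of Lemma~\ref{lemma:DeltaEFGupperboundsangularcoordrho1} when $\rho=\tfrac12$.
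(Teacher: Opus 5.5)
Your proposal is correct and matches the paper's proof step for step: drop $-f^2$, bound $e_\M g_\M$ by its value at $\psi_\rho$ via Lemma~\ref{lemma:egmodelspacemonotonicitylight}, absorb the perturbation into $\Delta eg_{abs}^{ub}(\rho)$, bound $EG-F^2\le EG$ by the product of the upper bounds, and get part 2 from monotonicity of each bound in $s_1$. The only differences are minor: you obtain $EG-F^2>0$ directly from $X_s$ being an immersion, whereas the paper imposes it as an extra smallness condition on $s_1$; and you rightly note that the $\Delta$-bounds must be taken at the given $\rho$, since the cited lemma is stated only for $\rho=\tfrac14$.
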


In Section \ref{sec:numericallycomputegenus} we describe how to explicitly calculate the $s_1$ value. 
    
\begin{proof}
We express the first and second fundamental form for the embedded surface as  perturbations of the scaled model space case. 

For the angular coordinates, $F_\M = f_\M =0$ so that
    $$
    K =\frac{(e_\M + \Delta e)(g_\M + \Delta g) - \Delta f^2} {\{(E_\M+ \Delta E)(G_\M +\Delta G) - \Delta F^2\}^2}
    $$
We estimate the numerator and denonimator over $\T_\rho$. For the 
numerator, setting
    $$
    \Delta eg: =  e_{\M} \Delta g+ g_\M \Delta e + \Delta e \, \Delta g,
    $$ 
we get 
    \begin{align}
    (eg -f^2)(\psi, \theta) &= (e_\M\, g_\M + \Delta eg -\Delta f^2) (\psi, \theta)\notag \\
    &\leq  (e_\M\, g_\M + \Delta eg)(\psi, \theta) \notag \\
    &\leq e_\M \,g_\M (\psi_\rho) + \Delta e g_{abs}^{ub}(\rho) \, \text{ by Lemma~\ref{lemma:egmodelspacemonotonicitylight}  and Lemma~\ref{lemma:DeltaEFGupperboundsangularcoord}} \notag \\
    &< 0 \label{eqn:Knum<0}
    \end{align}
for $s_1$ sufficiently small since $\Delta e g_{abs}^{ub}(\rho) \to 0 $ as $s_1\to 0$ and $e_\M \,g_\M (\psi_\rho) < 0$. In our numerical calculations, this is one of the conditions we will impose on $s_1$.

For the denominator, we have
    \begin{equation}\label{eqn:Kdenom>0}
    (E_\M+ \Delta E)(G_\M +\Delta G) - \Delta F^2 > 0 
    \end{equation}
for  $s_1$ sufficiently small since the $\Delta E, \Delta F$ and $\Delta G$ terms all go to zero uniformly in $s_1$ by Lemma~\ref{lemma:DeltaEFGupperboundsangularcoord}. This is another condition imposed upon $s_1$ in our numerical calculations.

Hence 
    \begin{align*}
    |(E_\M+ \Delta E)(G_\M +\Delta G) - \Delta F^2| & \leq |(E_\M+ \Delta E)(G_\M +\Delta G) | \\
    & \leq (E_{\M,abs}^{ub}(\rho)+ \Delta E_{abs}^{ub}(\rho))(G_{\M,abs}^{ub}(\rho) + \Delta G_{abs}^{ub}(\rho) ).
    \end{align*}
 
Combining numerator and denominator, and choosing $s_1$ is sufficiently small that (\ref{eqn:Knum<0}) and (\ref{eqn:Kdenom>0}) hold, finishes the proof of part (1). For (2), note that the numerator is negative and decreases as $s_1$ decreases while the denominator is positive and decreases. 
\end{proof}

For the case of $\rho = \frac 14$, we use the bounds from Lemmas~\ref{lemma:ModelEFGupperboundsangularcoord}, \ref{lemma:DeltaEFGupperboundsangularcoord} and \ref{lemma:egmodelspacemonotonicitylight} to get an upper bound  for $K_{neg}(1/4)$: 
    $$
    K_{neg} < \frac{ -0.0075599 s_0^2 + 0.42466 s_0 s_1 + 1.1138 s_1^2}{0.054052 + 0.00048732 s_0^2 + 0.91349 s_1}.  
    $$
This is negative providing that $s_1 < 0.0170409 s_0$.

\subsection{Curvature bound $K_{pos}$ outside of $\T_{1/4}$}\label{sec:curvboundsoutsideT14}

We split the analysis of the curvature outside $\T_{1/4}$ into two cases: (i) the flat region of the model space corresponding to the complement of all of the tubes $\T_0$ and (ii) the region $\A_{1/4}$, consisting of two disjoint annuli for each tube,  outside of $\T_{1/4}$ but inside $\T_0$.

\subsubsection{Curvature bound $K_{tor}^{ub}$ outside of $\T_0$} \label{sec:curvboundsoutsideT0}
    
\begin{lemma}
    The curvature in the pullback metric $g_s$ on the scaled model surface $\M_{s_0}$ outside of $\T_0$ is bounded above by
        \[
        K_{tor}^{ub} = \frac{\pi^2 s_1^3}{1-s_1-\pi s_0 s_1^2}.
        \]
\end{lemma}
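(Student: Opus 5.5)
Outside $\T_0$ the model space $\M_{s_0}$ consists of two horizontal planes, $w = 0$ (bottom sheet) and $w = w_0 := s_0(1+2\delta_v) \doteq s_0$ (top sheet). The metric $g_s$ is by definition~(\ref{eqn:pullbackmetric}) the pullback of the Euclidean metric under $X_s$. So on these regions its Gaussian curvature equals, pointwise, the Gaussian curvature of the image surface $X^{emb}_{s_1}(\{w = w_0\})$. By Section~\ref{sec:embeddingmap}, this image is (locally) a standard torus of revolution with major radius $R_1 = R_1(s_1)$ and minor radius $\tilde R_2 := R_2 + w_0$. I would therefore not work with the rectangular-coordinate fundamental forms of Lemma~\ref{lemma:DeltaEGF}. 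Instead I would use the classical closed-form curvature of a torus.

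First, write the torus in the coordinates $(u,v)$ with $\phi = v/R_2$, read off $E = (1 + \tilde R_2 \cos\phi / R_1)^2$, $F=0$, $G = (\tilde R_2/R_2)^2$ from~(\ref{formulaforpullbackmetrics1}), and compute the second fundamental form. Alternatively, quote the standard formula directly:
$$K(u,v) = \frac{\cos\phi}{\tilde R_2\,\bigl(R_1 + \tilde R_2 \cos\phi\bigr)}.$$
This is valid as long as $\tilde R_2 < R_1$, which holds for $s_0, s_1 < 0.1$.

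Second, bound $K$ above crudely. In the numerator use $\cos\phi \le 1$. In the denominator use $\tilde R_2 \ge R_2$ and $R_1 + \tilde R_2\cos\phi \ge R_1 - \tilde R_2 \ge R_1 - R_2 - s_0$. The last step uses $w_0 \le s_0$, up to the arbitrarily small $\delta_v$ absorbed by the $\doteq$ convention of Remark~\ref{rmk:doteq}. Both factors are positive for $s_0, s_1<0.1$. Thus
$$K \le \frac{1}{R_2\,(R_1 - R_2 - s_0)}.$$

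Third, substitute $R_1 = 1/(\pi s_1^2)$ and $R_2 = 1/(\pi s_1)$ from~(\ref{eqn:r1r2functions}). Then $1/R_2 = \pi s_1$ and
$$R_1 - R_2 - s_0 = \frac{1 - s_1 - \pi s_0 s_1^2}{\pi s_1^2}.$$
The product is exactly $\pi^2 s_1^3/(1 - s_1 - \pi s_0 s_1^2) = K_{tor}^{ub}$.

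The only point needing care is justifying that the pullback curvature equals the torus curvature. Since $X_s$ restricted to each sheet is a local isometry onto its image (an immersion with the induced metric), this follows from the Theorema Egregium. One must also check that the same bound covers both sheets. The bottom sheet gives a smaller bound, and the top sheet is the worst case because $w_0 > 0$. I expect no real obstacle; the bound is deliberately non-sharp, coming from mixing $\cos\phi = 1$ in the numerator with $\cos\phi = -1$ in the denominator.
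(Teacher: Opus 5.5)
Your proposal is correct and follows essentially the same route as the paper. The paper also applies the classical curvature formula $\cos(v/R_2)/\bigl((R_2+w)(R_1+(R_2+w)\cos(v/R_2))\bigr)$ to each flat sheet $w\in\{0,s_0\}$, bounds it above by $1/\bigl(R_2(R_1-R_2-s_0)\bigr)$, and substitutes $R_1=1/(\pi s_1^2)$, $R_2=1/(\pi s_1)$. Your added justification that the pullback curvature equals the torus curvature, and your handling of the $\delta_v$ offset of the top sheet via the $\doteq$ convention, are slightly more careful than the paper's one-line version.
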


\begin{proof}
We use rectangular coordinates $(u, v, w)$ on $\M_{s_0} \setminus \T_0$ with $w\in\{0, s_0\}$ giving either the bottom plane  ($w=0$) or top plane ($w\doteq s_0)$  (see Section~\ref{sec:rectangularcoordinates}).

For a fixed value of $w$, a direct calculation gives the curvature of the corresponding embedded torus as
    $$  
    K_{tor}(u, v, w)= \frac{\cos \left(\frac{v}{R_2}\right)}{(R_2+w) \left(R_1+(R_2+w) \cos \left(\frac{v}{R_2}\right)\right)}
    $$
Hence the maximum of the curvature in the $g_s$ metric on the surface $\M_{s_0} \setminus \T_0$  is bounded above by
    $$
    K_{tor}^{ub} =
    \frac{1}{(R_2) (R_1-(R_2+s_0) ) }.
    $$
Then substituting
    $$
   R_1(s_1) = \frac 1{\pi s_1^2}, \quad R_2(s_1)=\frac{1}{\pi s_1}
    $$
gives the result.
\end{proof}

\subsubsection{Curvature bound $K_{\A_{1/4}}^{ub}$ in $\A_{1/4} = \T_0 \setminus \T_{1/4}$} \label{sec:curvboundsinsideT0minusT14}

To estimate the curvature in the  region $\A_{1/4}$ we use polar coordinates. 

\begin{proposition}\label{prop:Kpolarub} 
    For $s_0 < .1$ and  $s_1$ sufficiently small, the curvature $K$ in the  region $\A_{1/4}$ is bounded above by 
        \begin{align*}
        K  & 
        \leq \frac{ \Delta eg_{abs}^{ub}}{\{(E_\M^{lb}- \Delta E_{abs}^{ub})(G_\M^{lb} - \Delta G_{abs}^{ub})-(\Delta F_{abs}^{ub})^2  )\}^2}
        := K_{\A_{1/4}}^{ub}
        \end{align*}
    where these terms are defined in Section~\ref{sec:metricinpolarcoordinates}.
\end{proposition}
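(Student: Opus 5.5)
The plan mirrors the proof of Theorem~\ref{thm:knegformula}, with the inequalities reversed: we now want an \emph{upper} bound on a possibly positive curvature, so we bound the numerator from above and the denominator from below. In polar coordinates $F_\M = f_\M = 0$. Hence the Gauss formula reads
    $$
    K = \frac{(e_\M+\Delta e)(g_\M+\Delta g) - \Delta f^2}{\{(E_\M+\Delta E)(G_\M+\Delta G) - \Delta F^2\}^2}.
    $$

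\emph{Numerator.} Expand it as $e_\M g_\M + \Delta eg - \Delta f^2$, with $\Delta eg = e_\M\Delta g + g_\M\Delta e + \Delta e\,\Delta g$. The key observation is that on $\A_{1/4}$ the model-space term is nonpositive. By Proposition~\ref{prop:smoothprofilefunction} we have $f'(r)\le 0$ and $f''(r)\ge 0$ for $r\in[r(\tfrac14),1]$; this holds both on the circular arc and in the smoothing zone. Since $e_\M = r s_0 f''(r)$ and $g_\M = r^2 s_0 f'(r)$, this gives $e_\M g_\M = r^3 s_0^2 f' f'' \le 0$. The top sheet is handled by replacing $f$ with $f^{top}$, which flips the sign of both $f'$ and $f''$, so the product is unchanged. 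Dropping $-\Delta f^2\le 0$ as well, we get that the numerator is at most $|\Delta eg|$. This in turn is at most $\Delta eg_{abs}^{ub}$, by Lemma~\ref{lemma:modelEFGupperboundspolarcoord} and Lemma~\ref{lemma:deltaEFGupperboundspolarcoord}.

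\emph{Denominator.} Use the lower bounds $E_\M\ge E_\M^{lb}=1$ and $G_\M\ge G_\M^{lb}=r(\tfrac14)^2$ together with $|\Delta E|,|\Delta G|,|\Delta F|$ bounded by the corresponding $\Delta(\cdot)_{abs}^{ub}$. This gives
$(E_\M+\Delta E)(G_\M+\Delta G)-\Delta F^2 \ge (E_\M^{lb}-\Delta E_{abs}^{ub})(G_\M^{lb}-\Delta G_{abs}^{ub}) - (\Delta F_{abs}^{ub})^2$.
This requires each factor to be positive, which is where ``$s_1$ sufficiently small'' enters. The $\Delta$ bounds are $O(s_1)$ uniformly for $s_0<0.1$, and $E_\M^{lb}, G_\M^{lb}$ are bounded away from $0$, so the right-hand side is positive for small $s_1$; this positivity is the condition imposed in the numerical work. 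Squaring preserves the inequality, and dividing the numerator bound by this yields $K\le K_{\A_{1/4}}^{ub}$.

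\emph{Main obstacle.} The analytic steps are routine. The step needing care is the sign claim $e_\M g_\M\le 0$ throughout $\A_{1/4}$, including the smoothing collar and the top sheet, since without it one would have to add the genuinely nonzero term $e_\M g_\M$ to the bound. The second point of care is confirming that the $\Delta$ bounds of Lemma~\ref{lemma:deltaEFGupperboundspolarcoord} are valid up to $r=1$, where $f''$ jumps in the unsmoothed profile. Both are handled by the monotonicity and sign properties (3)--(4) of Proposition~\ref{prop:smoothprofilefunction}, which bound $|f'|,|f''|$ by their values at $r(\tfrac14)$ on the whole annulus.
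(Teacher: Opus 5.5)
Your proposal is correct and follows the paper's proof essentially step for step. You drop $-\Delta f^2$ and the nonpositive model term $e_\M g_\M$ from the numerator and bound what remains by $\Delta eg_{abs}^{ub}$; you then bound $EG-F^2$ from below using $E_\M^{lb}$, $G_\M^{lb}$ and the $\Delta$ bounds, with ``$s_1$ sufficiently small'' guaranteeing that each factor is positive. Your explicit check that $e_\M g_\M = r^3 s_0^2 f'f''\le 0$ also holds on the smoothing collar and the top sheet is a welcome minor addition; the paper only asserts $e_\M g_\M<0$, which is in fact only $\le 0$ at $r=1$.
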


\begin{proof}
We use the estimates from Section~\ref{sec:metricinpolarcoordinates}. 
    \begin{align*}
      K &= \frac{eg-f^2}{(EG-F^2)^2}
      \leq \frac{eg}{(EG-F^2)^2}
      = \frac{e_\M\, g_\M + \Delta eg}{(EG-\Delta F^2)^2} \leq \frac{\Delta eg}{(EG-\Delta F^2)^2}  
    \end{align*}
since $e_\M\, g_\M < 0$ on the region $\A_{1/4}$. Note that 
    $$
    \Delta eg \leq \Delta eg_{abs}^{ub}
    $$ 
and
    \begin{equation}\label{eqn:Kposgroupcheck}
    \begin{gathered}
    E \geq E_\M^{lb} - \Delta E_{abs}^{ub} >0 \\ 
    F  = \Delta F \leq  \Delta F_{abs}^{ub}  \\ 
    G \geq G_\M^{lb} - \Delta G_{abs}^{ub} >0 \\ 
    (E_\M^{lb}- \Delta E_{abs}^{ub})(G_\M^{lb} - \Delta G_{abs}^{ub})-(\Delta F_{abs}^{ub})^2   >0 ~
    \end{gathered}
    \end{equation}
for $s_1$ sufficiently small, where these terms are defined and computed in Lemmas~\ref{lemma:modelEFGupperboundspolarcoord} and \ref{lemma:deltaEFGupperboundspolarcoord}. These give three more conditions imposed on $s_1$ in our numerical calculations. Then
    \begin{align*}
    K  & 
       \leq \frac{ \Delta eg_{abs}^{ub}}{\{(E_\M^{lb}- \Delta E_{abs}^{ub})(G_\M^{lb} - \Delta G_{abs}^{ub})-(\Delta F_{abs}^{ub})^2  )\}^2}:= K_{\A_{1/4}}^{ub}
    \end{align*}
\end{proof}

Combining the estimates for curvature in these two regions lets us get an upper bound for the curvature outside of $\T_{1/4}$:

\begin{proposition}\label{prop:kposbound} 
    For $s_0 < .1$ and  $s_1$ sufficiently small, 
        \begin{enumerate}
        \item  the curvature $K$ in the  region $\M_{s_0} \setminus \T_{1/4}$  is bounded above by
            $$
            \max_{\M_{s_0} \setminus \T_{1/4}} K \leq \max\{ K_{tor}^{ub},
            K_{\A_{1/4}}^{ub}\}:= K_{pos}
            $$
        \item as $s_1$ decreases, $K_{pos}$ decreases. 
        \end{enumerate}
\end{proposition}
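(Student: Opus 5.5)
The complement $\M_{s_0}\setminus \T_{1/4}$ splits into two pieces: the flat part $\M_{s_0}\setminus \T_0$ and the annular region $\A_{1/4}=\T_0\setminus\T_{1/4}$. For part (1), I will bound the curvature on each piece separately and take the maximum. On $\M_{s_0}\setminus \T_0$, the lemma of Section~\ref{sec:curvboundsoutsideT0} gives $K\le K_{tor}^{ub}$ directly. On $\A_{1/4}$, Proposition~\ref{prop:Kpolarub} gives $K\le K_{\A_{1/4}}^{ub}$, valid once $s_1$ is small enough that the positivity conditions (\ref{eqn:Kposgroupcheck}) hold. These conditions can be met because $E_\M^{lb}=1$ and $G_\M^{lb}=r(\tfrac14)^2>0$, while every $\Delta$ bound in Lemma~\ref{lemma:deltaEFGupperboundspolarcoord} is $O(s_1)$ uniformly for $s_0<0.1$. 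The interface circles $r=1$ and $r=r(\tfrac14)$ are contained in one of the closed regions, so $\max_{\M_{s_0}\setminus\T_{1/4}}K\le\max\{K_{tor}^{ub},K_{\A_{1/4}}^{ub}\}$. I will also check that this is a legitimate $K_{pos}\ge 0$: $K_{tor}^{ub}>0$, and $\Delta eg_{abs}^{ub}\ge 0$ with a positive denominator.

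For part (2), I need each of the two quantities to be monotone in $s_1$, since the maximum of two nondecreasing functions is nondecreasing. For
$$K_{tor}^{ub}=\frac{\pi^2 s_1^3}{1-s_1-\pi s_0 s_1^2},$$
the numerator increases with $s_1$ and the denominator decreases while remaining positive for $s_0,s_1<0.1$, so the quotient increases with $s_1$.

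For $K_{\A_{1/4}}^{ub}$, the numerator $\Delta eg_{abs}^{ub}=(6.538 s_0+0.271s_0^2+0.852s_0^3)s_1+16.507s_1^2$ is increasing in $s_1$ because all its coefficients are nonnegative. In the denominator, $\Delta E_{abs}^{ub}$, $\Delta F_{abs}^{ub}$ and $\Delta G_{abs}^{ub}$ are polynomials in $s_1$ with nonnegative coefficients, so each increases with $s_1$. Consequently, $E_\M^{lb}-\Delta E_{abs}^{ub}$ and $G_\M^{lb}-\Delta G_{abs}^{ub}$ are positive and decreasing, their product decreases, and subtracting the increasing $(\Delta F_{abs}^{ub})^2$ keeps the bracket decreasing. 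On the range where the bracket stays positive, its square is also decreasing. So the quotient has an increasing nonnegative numerator over a decreasing positive denominator, hence increases with $s_1$; equivalently, it decreases as $s_1$ decreases.

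The only delicate point is the phrase ``$s_1$ sufficiently small'': monotonicity must be claimed on an interval $(0,\bar s_1(s_0))$ on which the positivity conditions (\ref{eqn:Kposgroupcheck}) hold, rather than just at a point. This is what makes the argument work, because once those conditions hold at some $s_1$, monotonicity of the individual terms shows they hold for every smaller $s_1$. So the admissible set is an initial interval, and on that interval both bounds, and hence $K_{pos}$, decrease as $s_1$ decreases.
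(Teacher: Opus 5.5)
Your proposal is correct and follows the paper's own argument. Part (1) combines the torus curvature bound on $\M_{s_0}\setminus\T_0$ with Proposition~\ref{prop:Kpolarub} on $\A_{1/4}$, and part (2) rests on the monotonicity in $s_1$ of $K_{tor}^{ub}$ and $K_{\A_{1/4}}^{ub}$. You supply details the paper leaves implicit: the term-by-term monotonicity check, and the observation that the range of $s_1$ where the positivity conditions (\ref{eqn:Kposgroupcheck}) hold is an initial interval.
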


Result (2) follows since each of the functions $K_{tor}^{ub}$ and $K_{\A_{1/4}}^{ub}$ are monotonic is $s_1$. 

Since  the leading order term of $K_{tor}$ is $s_1^3$,  for $s_1$ small enough the maximum of the two terms will be given by $K^{ub}(\A_{1/4})$. Estimating the terms in $K_{\A_{1/4}}^{ub}$ using the polar coordinates of Section~\ref{sec:metricinpolarcoordinates} gives
    $$
    K_{\A_{1/4}}^{ub} \leq  \frac{6.57265 s_0 s_1+16.507 s_1^2}{0.87258 - 10.7863 s_1}. 
    $$

\section{ Embedded surfaces with Anosov geodesic flow} \label{sec:embeddedsurfacesAnosovpart1}

In Section~\ref{sec:MainThmProof}, we reprove  the existence of embedded surfaces with Anosov geodesic flow using our two-parameter family of metrics $g_{(s_0, s_1)}$.  We   use Mathematica to  numerically estimate parameter values for which we can prove the metric gives an embedded surface with Anosov geodesic flow (Section~\ref{sec:numericallycomputegenus}) \cite{Donnay-Visscher-mathematica}.  Employing  an algorithm that optimizes (minimizes) the genus of these Anosov embedded examples---i.e., finds the largest $s_1$ value for which our methods show $g_{(s_0,s_1)}$ has an Anosov geodesic flow.

\subsection{Main Theorem}\label{sec:MainThmProof}

The following theorem provides a parametrized version of the Donnay-Pugh Theorem: that there are compact embedded surfaces whose geodesic flow is Anosov. This approach  provides a method for computing the genus of such a surface. We use Mathematica to carry out numerical computations of parameter values  (see  Section~\ref{sec:numericallycomputegenus}) and use a root finding algorithm to optimize the genus produced by this method.

\begin{theorem}\label{thm:AnosovCondition} 
    Consider the model space $\M_0$ equipped with a pull-back metric $g_{(s_0, s_1)}$. 
        \begin{enumerate}
        \item Given $m>0$ there exists $s_0^*>0$ such that the sets $\T_{3/4} \subset \T_{1/4}$ have the $(T_{ret},\Delta t)$ strongly finite horizon property in the  $g_{(s_0^*,m s_0^*)}$ metric, with values of $T_{ret}$ and $\Delta t$ given in Theorem~\ref{thm:Deltattheorem}. Moreover, any $(s_0,s_1)$ with $s_0 \leq s_0^*$ and $s_1 \leq ms_0^*$ also has the same strongly finite horizon property.
    
        \item Then there exists an $s_1^* \leq m s_0^*$ such that $g_{(s_0^*,s_1)}$ has an Anosov geodesic flow for any $0\leq s_1 \leq s_1^*$.

        \item Additionally, there exists an $s_1^{e} \leq s_1^*$ for which the metric $g_{(s_0^*,s_1^e)}$ comes from a compact embedded surface. From above, this metric has an Anosov geodesic flow.
        \end{enumerate}
\end{theorem}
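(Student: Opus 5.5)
The proof splits along the three parts of the statement. In each part we assemble results already established: geodesic control (Theorem~\ref{thm:Cibounds->geodcontrol->finitehorizon}, Lemma~\ref{lemma:monotonicityinGCeqn}), the time bound (Theorem~\ref{thm:Deltattheorem}), the curvature bounds (Theorem~\ref{thm:knegformula}, Proposition~\ref{prop:kposbound}), the Anosov criterion (Theorem~\ref{thm:Anosov} with Lemma~\ref{lemma:uknegukpos}), and the genus computation (Section~\ref{sec:genuscalculation}).

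\textbf{Part (1).} Restrict $s_1=ms_0$ and consider $\Phi(s_0)=C_1T_{gc}+C_2^2C_3T_{gc}^2/2$ evaluated at $(s_0,ms_0)$ with $T_{gc}=3$. Each $C_i$ is increasing in both variables. At the origin $C_1=C_3=0$ and $C_2=1$, so $\Phi(0)=0$. By continuity and monotonicity there is a largest $s_0^*>0$, which we also take below $0.1$, at which $\Phi(s_0^*)\le \Delta r/(4\sqrt2)$; this is the value tabulated in Table~\ref{table:geodcontrolvalues}. Condition I then holds at $(s_0^*,ms_0^*)$. Lemma~\ref{lemma:monotonicityinGCeqn}(d) extends Condition I to every $(s_0,s_1)$ with $s_0\le s_0^*$ and $s_1\le ms_0^*$. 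Theorem~\ref{thm:Cibounds->geodcontrol->finitehorizon} (I$\Rightarrow$III) then gives the set version for $\T_{3/4}\subset\T_{1/4}$, and Theorem~\ref{thm:Deltattheorem} gives the time version with $T_{ret}=2.30571$ and $\Delta t=0.678015\,\lambda_s^{lb}$.

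\textbf{Part (2).} Fix $s_0^*$ and let $s_1\to0$ while keeping $s_1\le ms_0^*$ so that part (1) still applies. We check the three hypotheses of Theorem~\ref{thm:Anosov} with $S=\T_{1/4}$.
\begin{enumerate}
\item $K_{neg}=K_{neg}(1/4)$ from Theorem~\ref{thm:knegformula}. This is a valid negative bound once $s_1$ is small enough for (\ref{eqn:Knum<0}) and (\ref{eqn:Kdenom>0}) to hold, for example $s_1<0.0170409\,s_0^*$. Lemma~\ref{lemma:egmodelspacemonotonicitylight}(4) supplies the needed inequality $\psi_{1/4}\ge\psi^*$.
\item $K_{pos}=\max\{K^{ub}_{tor},K^{ub}_{\A_{1/4}}\}$ from Proposition~\ref{prop:kposbound}. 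Its positivity conditions (\ref{eqn:Kposgroupcheck}) hold for small $s_1$.
\item The strong finite horizon property comes from part (1).
\end{enumerate}

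The crux is the strictly invariant cone condition. By Lemma~\ref{lemma:uknegukpos} it suffices to show
\[
\sqrt{-K_{neg}}\tanh(\sqrt{-K_{neg}}\,\Delta t)-\sqrt{K_{pos}}\tan(\sqrt{K_{pos}}\,T_{ret})>0 .
\]
As $s_1\to0$ each quantity has a limit:
\begin{enumerate}
\item $K_{neg}(1/4)$ decreases to the strictly negative value $e_\M g_\M(\psi_{1/4})/(E^{ub}_{\M,abs}G^{ub}_{\M,abs})^2$, which depends only on $s_0^*$.
\item $\Delta t$ increases to $0.678015$.
\item $T_{ret}$ stays fixed at $2.30571$.
\item $K_{pos}$ tends to $0$: the bound $K^{ub}_{\A_{1/4}}\le (6.57265\,s_0s_1+16.507\,s_1^2)/(0.87258-10.7863\,s_1)$ and $K^{ub}_{tor}=O(s_1^3)$ both vanish.
\end{enumerate}
Hence the left-hand side converges to a positive number. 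By continuity there is an $s_1^*$ at which the inequality holds strictly. Because $K_{neg}$ and $K_{pos}$ both decrease as $s_1$ decreases and $\Delta t$ increases, the left-hand side is monotone in $s_1$, so the inequality holds for every $0<s_1\le s_1^*$. At $s_1=0$ the metric is the model metric, whose flow is already known to be Anosov. Theorem~\ref{thm:Anosov} applies because the periodicity of the construction gives the bound (\ref{eqn:suponDphi}), which is exactly the required extension to the non-compact model space.

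The main obstacle is this monotonicity and continuity step. The positive curvature $K_{pos}$ is of order $s_0s_1$, so it is small relative to $K_{neg}\sim -s_0^2$ only when $s_1\ll s_0$. One therefore has to check that $s_1^*$ is positive, and not merely that the limiting inequality holds. This follows from the strict inequality in the limit, combined with continuity of all the explicit rational expressions in $s_1$.

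\textbf{Part (3).} By Lemma~\ref{lemma:genusformula}, choosing $s_1^e=1/(\sqrt3\,n)$ with $n$ an integer large enough that $1/(\sqrt3\,n)\le s_1^*$ makes the periodicities match, with $m=3n^2$. Then $R_2+s_0^*<R_1$, since $R_1/R_2=1/s_1\to\infty$, so $X_s(\M_0)$ is an embedded compact surface of genus $6n^3+1$. Since $s_1^e\le s_1^*$, part (2) gives that its geodesic flow is Anosov.
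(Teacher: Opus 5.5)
Your proposal is correct and follows the paper's proof essentially step for step. Part (1) uses monotonicity of the $C_i$ along $s_1=ms_0$ together with Lemma~\ref{lemma:monotonicityinGCeqn}(d); part (2) uses monotonicity of $K_{neg}$, $K_{pos}$, $\Delta t$ in $s_1$ with the $s_1=0$ limit ($K_{pos}=0$, $K_{neg}<0$) in Lemma~\ref{lemma:uknegukpos}; part (3) chooses $n$ with $1/(\sqrt3\,n)\le s_1^*$. Your extra checks (staying in $s_0,s_1<0.1$, the non-compact extension via (\ref{eqn:suponDphi}), and $R_2+s_0^*<R_1$) are details the paper leaves implicit, and you should avoid reusing $m$ for $3n^2$, since $m$ already denotes the slope.
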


\begin{proof}
\begin{enumerate} 
    \item Let $m>0$. Setting $s_1 = ms_0$, and recalling that $T_{gc}=3$ and $\Delta r = 1-\cos(\pi/6)$ are constants, the left hand side of the inequality
    \begin{equation}
    C_1(s) T_{gc} + C_2^2(s) C_3(s) \frac {T_{gc}^2}2 \leq \frac{\Delta r}{4\sqrt{2}}
    \end{equation}
    (Condition I from Theorem~\ref{thm:Cibounds->geodcontrol->finitehorizon}) becomes a function of just one variable $s_0$. 
    The functions $C_i(s_0, ms_0), i=1,2,3$ (see Corollary \ref{cor:C1C2formulas} and Corollary~\ref{cor:C3formula}), are all strictly monotone increasing functions of $s_0$ with $$
    C_1(0,0)=0 \;\text{and}\; C_2^2(0,0)C_3(0,0)=0.$$ 
    
    As $s_0$ increases from $0$,  $\lambda_s^{lb} = (1-2ms_0-6.284m^2s_0^3) \to  0$, so the the denominator of $C_i$ approaches 0 and  the $C_i$ go to infinity. Thus, there is a unique $s_0^*>0$ before this singularity for which equality holds.
    
    Then, Lemma \ref{lemma:monotonicityinGCeqn}(d) proves that the sets $\T_{3/4} \subset \T_{1/4}$ have the $(T_{ret},\Delta t)$ strong finite horizon condition in the $g_{(s_0,s_1)}$ metric for any $s_0 \leq s_0^*$ and $s_1 \leq ms_0^*$.

    \item To identify an $s_1^*$ so that $g_{(s_0^*,s_1^*)}$ has an Anosov geodesic flow, Theorem \ref{thm:Anosov} and Lemma~\ref{lemma:uknegukpos} show we need only check that $s_0^*, s_1^*$ produce values of $K_{neg}$, $K_{pos}$, $\Delta t$, and $T_{ret}$ so that
        \begin{equation}\label{eqn:Riccaticondition}
        u_{neg}(\Delta t) + u_{pos} (T_{ret})  > 0,
        \end{equation}
    where 
        $$
        u_{pos} (T_{ret}) = -\sqrt{K_{pos}}\, \tan( \sqrt{K_{pos}} \, T_{ret}) 
        $$
    and 
        $$
        u_{neg}(\Delta t)= \sqrt{-K_{neg}}\tanh(\sqrt{-K_{neg} }\, \Delta t).
        $$

    These terms have explicit formulas determined in previous Theorems and are continous functions of $s_0$ and $s_1$: $K_{neg}(s_0,s_1)$ given by Theorem~\ref{thm:knegformula}, $K_{pos}(s_0,s_1)$ by Proposition~\ref{prop:kposbound}, and $\Delta t(s_0, s_1)$ and $T_{ret}$ by Theorem~\ref{thm:Deltattheorem}. 

    As $s_1$ decreases, $K_{neg}(s_0^*, s_1)<0$ and  $K_{pos}(s_0^*, s_1)>0$ both decrease and $\Delta t(s_0^*, s_1)$ increases. Thus $u_{neg}(\Delta t)>0$ and $ u_{pos} (T_{ret})<0$ increase, and hence $u_{neg}(\Delta t) + u_{pos} (T_{ret})$ increases. When $s_1 =0$, we have $K_{neg}(s_0, 0) < 0$, $K_{pos}(s_0, 0) = 0$ so that  $  u_{neg}(\Delta t) + u_{pos} (T_{ret})  >0 $. Thus there exists $s_1^*>0$ for which  $  u_{neg}(\Delta t) + u_{pos}(T_{ret})  >0 $ and hence $g_{(s_0^*, s_1)}$ is Anosov for all $0\leq s_1 \leq s_1^*$. 

    \item Choose any positive integer $n$ such that 
        \begin{equation}\label{eqn:embeddingcondition}
        s_1^e = \frac{1}{\sqrt3 n}<s_1^*
        \end{equation}
    By Section~\ref{sec:genuscalculation}, the corresponding metric $g_{(s_0^*,s_1^e)}$ is the pull-back of a metric on embedded surface with genus $6 n^3 +1$, and by part (2), it generates an Anosov geodesic flow.
\end{enumerate}
\end{proof}

\subsection{Numerical methods and minimizing the genus of the embedded surface}\label{sec:numericallycomputegenus} 

We use numerical methods and take advantage of the monotonicity of various functions to find boundary values of inequalities from the above proof. 

For a given $m$, we set $s=(s_0, m s_0)$ and use Mathematica to numerically solve 
    $$C_1(s) T_{gc} + C_2^2(s) C_3(s) \frac {T_{gc}^2}2 = \frac{\Delta r}{4\sqrt{2}}$$
denoting the solution $s_0^*$. The curve $(s_0^*, s_1 = m s_0^*)$ serves as the boundary of a closed region in the $(s_0,s_1)$ plane for which we have geodesic control, shown in blue below. Note that simply truncating  the decimal solution at any precision will produce an $s_0^*$ value for which inequality holds in the above equation. 
 
Among the metrics that satisfy geodesic control, we need to find those for which we can show the corresponding geodesic flow is Anosov. If the   Riccati condition~(\ref{eqn:Riccaticondition}) holds for $s=(s_0^*, m s_0^*)$  then we set $s_1^* = ms_0^*$. If not, we use a bisection method on the $s_1$ interval $[0, m s_0^*]$ to determine an $s_1^*$ for which (\ref{eqn:Riccaticondition}) holds  and that is within a specified tolerance of the boundary of the $u_{neg}(\Delta t) + u_{pos} (T_{ret})>0$ region. At this stage, we verify that $s_1^*$ satisfies the conditions in equations (\ref{eqn:Knum<0}) and (\ref{eqn:Kdenom>0}) in the definition of $K_{neg}$ and equations (\ref{eqn:Kposgroupcheck}) in the definition of $K_{\A_{1/4}}^{ub}$. This gives an approximately largest value of $s_1^*$ for which $g_{(s_0^*,s_1^*)}$ has Anosov geodesic flow.

Selecting the smallest positive integer $n$ that satisfies condition~(\ref{eqn:embeddingcondition}), we get an embedded surface from the metric $g_{(s_0^*, s_1^e)}$ with Anosov geodesic flow that has genus $6n^3+1$. If decreasing $n$ by one yields an $s_1$ value for which the $u$ condition (\ref{eqn:Riccaticondition}) is less than $0$, we know that we have the minimal genus for this value of $m$ for which our methods work.
 
This process, with tolerance set at $10^{-10}$, produces the following data:
    \begin{table}[h!]
    \renewcommand{\arraystretch}{1.2}
    \centering
    \begin{tabular}{c | c | c | c | c | c | c }
        $m$ & $s_0^*$ & $s_1= ms_0^*$ & $s_1^*$ & $s_1^e$ & $n$ & $g$ ($*10^{11}$)  \\ \hline \hline
        .1 & $2.9929*10^{-2}$ & $2.9929*10^{-3}$ & $1.2527*10^{-4}$ & $1.2526*10^{-4}$ & 4609 & $5.87451$ \\
        .01 & $4.0058*10^{-2}$ & $4.0058*10^{-4}$ & $1.6724*10^{-4}$ & $1.6720*10^{-4}$ & 3453 & $2.47025$ \\
        .006 & $4.0592*10^{-2}$ & $2.4355*10^{-4}$ & $1.6945*10^{-4}$ & $1.6941*10^{-4}$ & 3408 & $2.37493$ \\
        .005 & $4.0727*10^{-2}$ & $2.0363*10^{-4}$ & $1.7000*10^{-4}$ & $1.7000*10^{-4}$ & $3396$ & $2.34993$ \\
        .004 & $4.0862*10^{-2}$ & $1.6344*10^{-4}$ & $1.6344*10^{-4}$ & $1.6341*10^{-4}$ & 3533 & $2.64595$ \\
        .003 & $4.0997*10^{-2}$ & $1.2299*10^{-4}$ & $1.2299*10^{-4}$ & $1.2297*10^{-4}$ & 4695 & $6.20952$ \\
    \end{tabular}
    \caption{Results of the AnosovLoop algorithm for specified $m$ values. Note that for decreasing values of $m$, the genus $g$ decreases and then increases. A minimum will be found between $m=0.006$ and $m=0.004$.}
    \end{table}

Now we search for the optimal result across values of $m$. As $m$ decreases and $s_0^*(m)$ increases, we find numerically that $s_1^*$ increases until the point at which it coincides with the boundary of the geodesic control region. The intuition behind this result is that as $s_0^*$ increases, the outer parts of the tubes are flattened less and hence $K_{neg}$ increases. This in turn allows larger values for $K_{pos}$ and hence larger values of $s_1$. These values of $s_1^*$ produce a region in which we can prove the geodesic flow is Anosov (the purple region in Figure~\ref{fig:Anosovmetrics}).

    \begin{figure}
    \includegraphics[width=\textwidth]{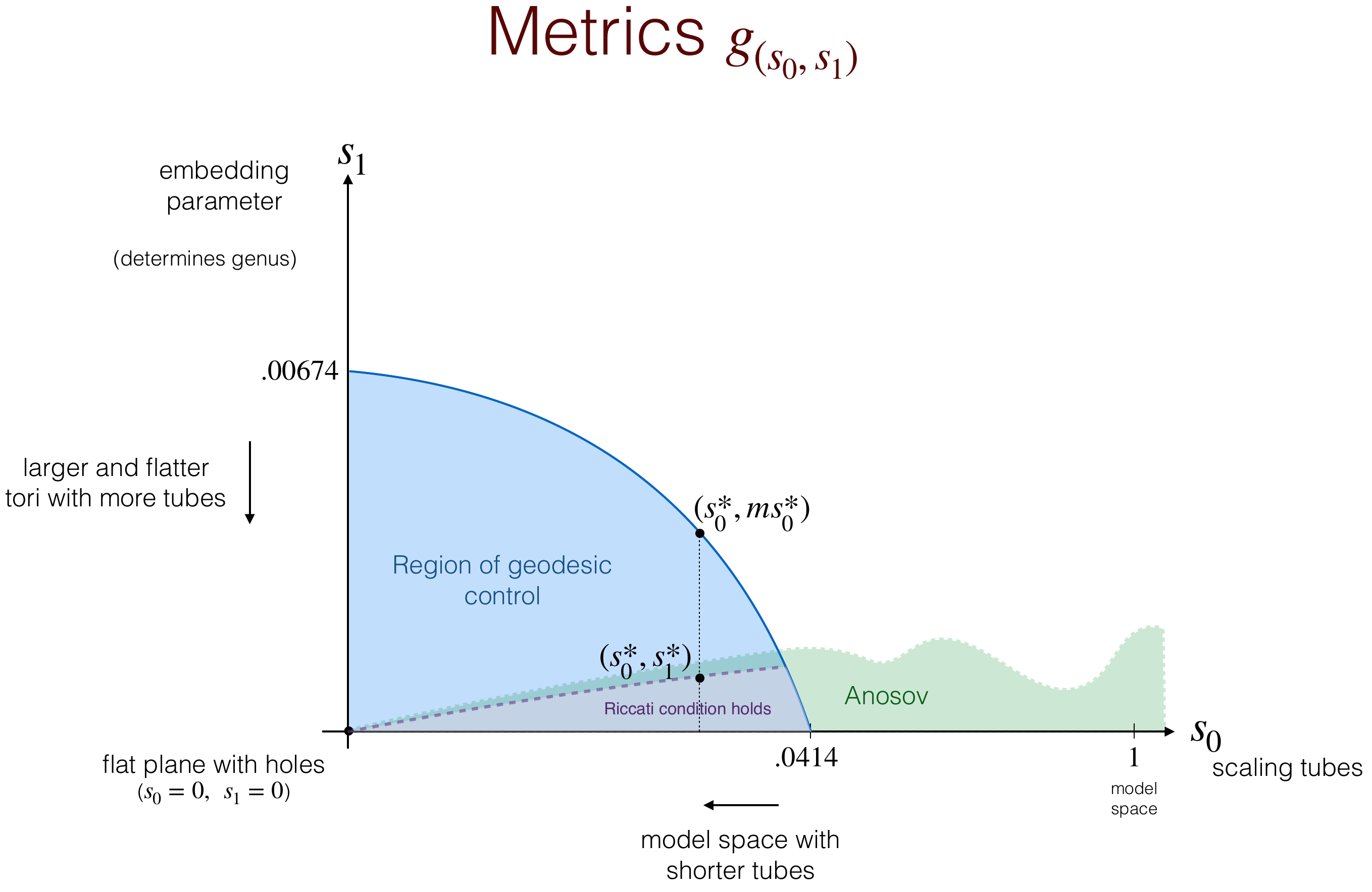}
    \caption{An illustration of various regions of interest in the $(s_0,s_1)$ parameter space of metrics.}\label{fig:Anosovmetrics}
    \end{figure}

The smallest genus embedded Anosov surface comes from the largest value of $s_1^e$, which is found near the intersection of this boundary curve and the geodesic control curve. 
This intersection point can be found numerically using a root finding algorithm. Following the same process of finding an embedded surface as above for a given $m$, yields the following:

\begin{theorem}\label{thm:genusbound}
    There exist smooth compact embedded surfaces of genus $\leq 2.3313 \times 10^{11}$ with Anosov geodesic flow. 
\end{theorem}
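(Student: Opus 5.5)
The plan is to apply Theorem~\ref{thm:AnosovCondition} with a single, well-chosen slope $m$, and then use Lemma~\ref{lemma:genusformula} to read off the genus. The table shows that the optimal $m$ lies between $0.004$ and $0.006$. Near this value, the Riccati boundary curve $u_{neg}(\Delta t)+u_{pos}(T_{ret})=0$ meets the geodesic control curve of Table~\ref{table:geodcontrolvalues}. I would locate this intersection by root finding in $m$. For each $m$, $s_0^*(m)$ is obtained by solving the equality in Condition~I. The root-finding target is then $u_{neg}(\Delta t)+u_{pos}(T_{ret})$ evaluated at $(s_0^*(m), m s_0^*(m))$.

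The root finder produces a candidate $m_0$, and from it a geodesic-control value $s_0^*$, truncated downward so that inequality~(\ref{eqn:contdep}) holds strictly. By Lemma~\ref{lemma:monotonicityinGCeqn}(d), every $(s_0^*,s_1)$ with $s_1\le m_0s_0^*$ then satisfies Condition~I. Theorem~\ref{thm:Cibounds->geodcontrol->finitehorizon} and Theorem~\ref{thm:Deltattheorem} therefore give the $(T_{ret}=2.30571,\ \Delta t=0.678015\lambda_s^{lb})$ strong finite horizon property for $\T_{1/4}$. Next I would pick the smallest integer $n$ with $s_1^e=1/(\sqrt3 n)\le m_0s_0^*$ and evaluate the following explicit closed-form bounds at $(s_0^*,s_1^e)$:
\begin{itemize}
\item $K_{neg}(1/4)$ from Theorem~\ref{thm:knegformula}, checking (\ref{eqn:Knum<0}) and (\ref{eqn:Kdenom>0});
\item $K_{pos}=\max\{K_{tor}^{ub},K_{\A_{1/4}}^{ub}\}$ from Proposition~\ref{prop:kposbound}, checking (\ref{eqn:Kposgroupcheck});
\item $\Delta t$ as above.
\end{itemize}
If the inequality of Lemma~\ref{lemma:uknegukpos} holds at this point, Theorem~\ref{thm:Anosov} makes $g_{(s_0^*,s_1^e)}$ Anosov. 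This uses its non-compact extension, justified by periodicity; alternatively one can apply it directly to the compact quotient. The quotient embeds by Section~\ref{sec:genuscalculation}, and Lemma~\ref{lemma:genusformula} gives its genus as $6n^3+1$. One checks that the resulting $n$ (around $3390$) gives $6n^3+1\le 2.3313\times10^{11}$.

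The main obstacle is ensuring that the numerical evaluation constitutes a rigorous verification. All quantities are elementary functions of rational inputs. I would therefore carry out the final check with exact rational values for $s_0^*$ and $s_1^e$, together with interval arithmetic or directed rounding. In doing so I must round $K_{neg}$ and $K_{pos}$ upward, round $\Delta t$ downward, and keep $T_{ret}$ rounded upward, so that the verified sign of $u_{neg}(\Delta t)+u_{pos}(T_{ret})$ is genuine.

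The optimization over $m$ itself need not be rigorous, since any admissible $m$ suffices; only the final inequality check at one point matters. I must still confirm several side conditions:
\begin{itemize}
\item $s_0,s_1<0.1$, which all the earlier bounds assume;
\item $\sqrt{K_{pos}}\,T_{ret}<\pi/2$, so that $u_{pos}$ is defined;
\item $R_2+s_0<R_1$, so that the image is an embedded rather than merely immersed surface.
\end{itemize}
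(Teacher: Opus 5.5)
Your proposal is correct and follows essentially the same route as the paper. You root-find in $m$ for the point where the Riccati boundary meets the geodesic-control curve, take the smallest $n$ with $1/(\sqrt3 n)$ below it, and certify that single point with exact rational arithmetic and directed rounding. The only difference is that you test the inequality of Lemma~\ref{lemma:uknegukpos} directly at $(s_0^*,s_1^e)$, whereas the paper first bisects for $s_1^*$ and then uses monotonicity in $s_1$ as in Theorem~\ref{thm:AnosovCondition}(2). The paper obtains $n=3387$, i.e.\ genus 233,129,289,619, and the stated bound leaves no slack: already $n=3388$ gives $6n^3+1>2.3313\times10^{11}$, so your ``around 3390'' must come out as exactly $n\le 3387$.
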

     
The embedded surfaces comes from a fundamental region consisting of $n$ by $3n^2$ copies of the basic tile with genus $g= 6n^3 +1$  as described in Section~\ref{sec:genuscalculation}. The above value of genus comes from $n = 3387$ so genus = $233,129,289,619$.

The purple region in parameter space for which our method proves Anosov does not contain all the Anosov metrics. Many of the bounds we use to carry out our calculations are rough. In Section~\ref{sec:SophisticatedEstimates}, we improve on some of the bounds and show that the set of Anosov metrics is strictly larger than the set we have identified here. 

Even without additional calculations, we can see that our method does not capture all the Anosov metrics. For example, the set of Anosov metrics includes a neighborhood of the $s_0$ axis since any model space $\M_{s_0}$ has an Anosov geodesic flow and being Anosov is an open condition for the model space metrics. Furthermore, there is an open neighborhood of Anosov systems around the set of points $(s_0^*, m s_0^*)$ on the geodesic control curve whose geodesic flow are Anosov.

\subsection{A note on validating numerical calculations in Mathematica}\label{sec:validatingMathematica}

The bound for the genus of the embedded Anosov system in Theorem \ref{thm:genusbound} is the result of a long chain of calculations involving upper and lower bounds. To insure that our final result is valid, we need to insure that Mathematica computes valid numeric upper (resp., lower) bounds at each stage of the calculation. 

Using numerical calculations in Mathematica  introduces small computational errors that are possibly problematic for ensuring correct bounds. For example, the number $\sqrt{3}$ stored numerically in machine precision is not exactly $\sqrt{3}$, and so the machine precision number is either a lower bound for the interval $(\sqrt{3},5)$ or an upper bound for the interval $(0,\sqrt{3})$ but not both. 
Our methods to guarantee our the numerically stored numbers are correct bounds are as follows. 

There are two types of computation we address: first, we do all arithmetic calculations using rational numbers, for which Mathematica is able to do computations exactly. Second, for any numerical solvers or functions that are not arithmetic, for upper bounds we ask Mathematica to carry out the computation to a greater precision  than what we are interested in, add in a small number larger than the computational precision but smaller than our specified working precision so that we are sure to have an upper bound, and then round up to the working precision and store as a rational number. Similarly for lower bounds.

Regardless of how one identifies parameter values, one can check that a proposed $g_{(s_0,s_1)}$ metric satisfies both the geodesic control and Riccati conditions that we use to prove Anosov. We carry out this confirmation for the metrics  that   come  from our optimization algorithm as a double check that our numerical methods have correctly worked.

\section{Refined estimates and improved genus} \label{sec:SophisticatedEstimates}
With the goal of lowering our bound on the genus of an embedded compact surface with Anosov geodesic flow, we refine our estimates at several stages of the proof:  the geodesic control condition and the formulas for the value of the coefficient  $C_3$ that appear in the geodesic control equation, and the bounds on negative curvature and the time spent in the negative curvature region. 

In this section we show that the conditions for geodesic control (Statement I of Theorem~\ref{thm:Cibounds->geodcontrol->finitehorizon}) still imply the same $(T_{ret},\Delta t)$ strongly finite horizon condition of Theorem~\ref{thm:Deltattheorem} under the less stringent conditions of $T_{gc}=2.5$ and an improved $C_3(s_0,s_1)$ formula. This allows us to find a larger region in the $(s_0,s_1)$ plane for which we have geodesic control.

We next introduce a  way to  better estimate the value  $u_{neg}$ of the Riccati solution in the   negatively curved tube. Note that the negative curvature bound $K_{neg}$ is achieved on the boundary of the tube $\T_{1/4}$, but a geodesic entering this tube in a good way spends most of its time in curvature which is more negative than $K_{neg}$. We capture the more negative curvature by picking an interior tube $T_{1/2}\subset T_{1/4}$ and obtaining a better bound on the negative curvature in this interior tube. We also get bounds on the time spent in this more negatively curved region. Improving the bounds on Ricatti solutions in the negatively curved region allows for more positive curvature leading to embedded surfaces of lower genus.

\subsection{Improvements on conditions for geodesic control}

\subsubsection{Improved $T_{gc}$}
\label{sect:improvedTgc}

The following theorem is an improvement of Theorem~\ref{thm:Cibounds->geodcontrol->finitehorizon}; the only difference is replacing $T_{gc}=3$ by $T_{gc}=2.5$. This makes Statement I weaker (i.e., it holds for more values of $(s_0,s_1)$), which in turn makes the corresponding I$\,\Rightarrow\,$III a stronger theorem. While the statement is a simple improvement, the proof is more involved and so we have reserved this result as a refinement of our basic method. 

Assume that the functions $C_i(s_0, s_1), i=1,2,3$, satisfy the bounding conditions of equation $(\ref{eqn:threebounds})$. Note that we will use an improved bounding formula for $C_3$ as described in Section~\ref{sec:ImprovedC3formula}. As before, for fixed value of $s$ and for $p \in \Omega$ and $v \in S_p \Omega$, let 
    $$T_{tube}(p,v)>0$$
be the first time either $\gamma_{0}(p, v, t)$ or $\gamma_{s}(p, v_s, t)$ leaves $\Omega$.  

\begin{theorem}\label{thm:Cibounds->geodcontrol->finitehorizon:Improved} (Improvement of Theorem~\ref{thm:Cibounds->geodcontrol->finitehorizon}.)
    Consider a metric $g_s$ and set $T_{gc}=2.5$. Then for the following statements, I $\Rightarrow$ II $\Rightarrow$ III.
        \begin{enumerate}
        \item[I.] For $s=(s_0,s_1)$ with $s_0, s_1 < 0.1$,
            \begin{equation}\label{eqn:contdepv2}
            C_1(s) \, T_{gc}  +  C_2^2(s)\, C_3(s) \, \frac{T_{gc}^2}{2}\,   \leq \frac{\Delta r}{4\sqrt{2}},
            \end{equation}
        \item[II.] for all $p \in \Omega$, $v \in S_p \Omega$, and $0 \leq t \leq \min\{ T_{tube}(p,v), T_{gc}\}$,
            \begin{equation}\label{eqn:Deltarover4v2}
            dist(\gamma_{s}(p, v_s, t), \gamma_{0}(p, v, t)) \leq \frac{\Delta r}{4}  
            \end{equation} 
        \item[III.] on $\M_{s_0}$ the  pair of sets $\T_{3/4} \subset \T_{1/4}$ is $T_{ret}$ strongly finite horizon in the $g_{s}$ metric with 
            $$
            T_{ret} = 2.30571. 
            $$
        \end{enumerate}
\end{theorem}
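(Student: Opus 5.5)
The implication I $\Rightarrow$ II goes through exactly as in Section~\ref{sec:ThmgeodcontimpliesfhpProof}. For $0\le t\le \min\{T_{tube}(p,v),2.5\}$ both geodesics stay in $\Omega$, so Theorem~\ref{thm:deltaabounds} applies with $T=2.5$. Combining it with the (improved) bounds $C_1,C_2,C_3$ satisfying (\ref{eqn:threebounds}) gives $dist(\gamma_s,\gamma_0)\le \sqrt2\,(C_1T_{gc}+C_2^2C_3T_{gc}^2/2)\le \Delta r/4$. The only thing to check is that the improved $C_3$ of Section~\ref{sec:ImprovedC3formula} still bounds the Christoffel sums on all of $\Omega$, which I take as given. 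All the new work is in II $\Rightarrow$ III, because control now only lasts until time $2.5$, while $T_{tube}$ can be as large as $3$.

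For II $\Rightarrow$ III I will split into cases according to $T_{tube}(p,v)$.

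\emph{Case $T_{tube}\le 2.5$.} The proof of Theorem~\ref{thm:Cibounds->geodcontrol->finitehorizon} goes through verbatim. It only used control up to $T_{tube}$, together with the fact that $\gamma_0$ enters $\D_{1/2}$ in a $\D_1$-good way by time $T_{ret}(\tfrac12)=2.30571<2.5$.

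\emph{Case $T_{tube}>2.5$.} Here neither geodesic has entered the inner disks before time $2.5$. By Proposition~\ref{prop:DrhoStrongFiniteHorizon}, $\gamma_0$ enters $\D_{1/2}$ at some time $t_{1/2}\le 2.30571$ and afterwards reaches $\D_1$. By II at time $t_{1/2}$, $\gamma_s(t_{1/2})\in\D_{1/4}$. Let $t^*\le t_{1/2}$ be the last entry time of $\gamma_s$ into $\D_{1/4}$. It remains to show that $\gamma_s$ stays in $\D_{1/4}$ after $t^*$ and reaches $\D_{3/4}$, even though this may happen after time $2.5$, where we have no direct control.

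The tool is a restarted comparison. I restart at a time $t'\in[t_{1/2},2.5]$, comparing $\gamma_s$ on $[t',t'+\tfrac12]$ with the straight line $\tilde\gamma_0$ through $\gamma_s(t')$ in the direction of $\gamma_s'(t')$. The geometry is organised as follows.
\begin{enumerate}
\item The derivative estimate inside the proof of Theorem~\ref{thm:deltaabounds}, $|\Uplambda a'(t)|,|\Uplambda b'(t)|\le C_1+C_2^2C_3\,t$, bounds the angle between $\gamma_s'(t')$ and $\gamma_0'$.
\item Together with the position bound $\Delta r/4$, this confines $\tilde\gamma_0$ to a thin wedge around $\gamma_0$.
\item Since $\gamma_0$ is in $\D_{1/2}$ and heading into $\D_1$, the line $\tilde\gamma_0$ reaches $\D_{7/8}$ (say) within time $\tfrac12$ without leaving $\D_{1/4}$ before then.
\item A short-time control over time $\tfrac12$, exactly as in (\ref{eqn:DeltaabBounds}), keeps $\gamma_s$ within $\Delta r/24$-type distance of $\tilde\gamma_0$. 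Hence $\gamma_s$ enters $\D_{3/4}$ while staying in $\D_{1/4}$.
\end{enumerate}
This gives an $\T_{3/4}$-good entry into $\T_{1/4}$ at time $t^*\le 2.30571$.

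The main obstacle is step 3 of the restart. One must show quantitatively that the angular error accumulated by time $2.5$, which is at most $C_1+2.5\,C_2^2C_3$, is small enough given only inequality (\ref{eqn:contdepv2}). I would first try to bound this angle by $\frac{2}{T_{gc}}\cdot\frac{\Delta r}{4\sqrt2}$, which I expect to follow from (\ref{eqn:contdepv2}), and then do the planar geometry for the worst case, where $\gamma_0$ is tangent to $\D_1$ as in Proposition~\ref{prop:DrhoStrongFiniteHorizon}. If the margin is too thin with $\D_{7/8}$, I would instead aim only for $\D_{3/4}$ and use the slack $T_{ret}(\tfrac12)<2.5$ to choose $t'$ earlier.
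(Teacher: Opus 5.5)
\textbf{Your Case $T_{tube}>2.5$ does not close. The failure is in step 3 of the restart, and that step is not merely delicate: it cannot be proved with the bounds you have.}

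At the restart time $t'$, II gives only $|\gamma_s(t')-\gamma_0(t')|\le \Delta r/4$. For $t'=2.5$ nothing better follows from I, and the displacement may point away from the disk center. Take $\gamma_0$ tangent to $\D_1$. Then even a $\tilde\gamma_0$ exactly parallel to $\gamma_0$ is the line at distance $1-\tfrac34\Delta r$ from the center. It never enters $\text{int}(\D_{3/4})$, let alone $\D_{7/8}$. The angular error and the fresh short-time error (which is $\Delta r/20$ for $T_{gc}=2.5$, not $\Delta r/24$) only push $\gamma_s$ further out.

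Restarting earlier does not help. Chaining the position error at $t'$, the velocity error $C_1+C_2^2C_3t'$, and the new short-time bound just reproduces the unrestarted estimate $\sqrt2\,(C_1t+C_2^2C_3t^2/2)$. This is evaluated at the time $t>2.5$ when $\gamma_0$ reaches $\D_1$. Condition I with $T_{gc}=2.5$ does not keep that below $\Delta r/4$: with $C_3=0$ and equality in I it equals $\frac{t}{2.5}\cdot\frac{\Delta r}{4}$.

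There is a second problem before the restart. Proposition~\ref{prop:DrhoStrongFiniteHorizon} bounds the \emph{gap} between good visits, not the first good entry time from an arbitrary $p$. Your case forces $p\in\text{int}(\D_0)$, because from outside $\text{int}(\D_0)$ Lemma~\ref{lemma:diskarrangementangletimeboundsD1version2} gives $t_{+1}\le 2.5$. If $\gamma_0$ has just grazed $\D_1$ and is leaving its disk, its next good entry into $\D_{1/2}$ can come as late as about $3-\sqrt{r_{1/2}^2-3/4}\approx 2.65>2.5$. At that time II says nothing about $\gamma_s$, so $t_{1/2}\le 2.30571$ is false and $\gamma_s(t_{1/2})\in\D_{1/4}$ is unjustified. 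Also, steps 1 and 4 use the $C_i$ bounds from I, so at best you would get I $\Rightarrow$ III rather than II $\Rightarrow$ III.

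\textbf{The missing idea is where to restart.} The paper restarts at the first time $t^*$ at which $\gamma_s$ crosses $\partial\D_0$ without having met $\D_{3/4}$. It compares with the fresh line $\overline\gamma_0$ through $\gamma_s(t^*)$ in direction $\gamma_s'(t^*)/\norm{\gamma_s'(t^*)}_0$.

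\begin{itemize}
\item Since II holds for every base point and direction, it applies to this new pair with zero initial error. No angle estimates and no use of I are needed.
\item Since $\overline\gamma_0(0)\notin\text{int}(\D_0)$, the position of $\overline\gamma_0$ is known from flat geometry alone, not from proximity to the old $\gamma_0$. Lemma~\ref{lemma:diskarrangementangletimeboundsD1version2} says it reaches $\D_1$ by time $2.5$. It also meets $\D_{1/2}$ in a $\D_1$-good way at times $t_{-1/2}<0<t_{+1/2}$ with $t_{+1/2}-t_{-1/2}\le 2.30571$.
\item Hence every needed comparison lies in a window of length $2.5$. II (forward, and backward by time reversal) places $\gamma_s$ in $\D_{1/4}$ on $[t^*+t_{+1/2},t^*+t_{+1}]$ and $\gamma_s(t^*+t_{+1})$ in $\D_{3/4}$.
\item Comparing $t^*$ with $|t_{-1/2}|$ bounds the gap from $\gamma_s$ leaving $\D_{1/4}$ to its next $\D_{3/4}$-good entry by $t_{+1/2}-t_{-1/2}$.
\end{itemize}

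The remaining cases are handled directly by II, since all relevant times there are at most $2.5$. These are $\gamma_s(0)\notin\text{int}(\D_0)$, and $\gamma_s$ reaching $\D_{3/4}$ before leaving its disk.
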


The case I $\Rightarrow$ II follows the same proof as for Theorem~\ref{thm:Cibounds->geodcontrol->finitehorizon} (see Section~\ref{sec:ThmgeodcontimpliesfhpProof}). Showing that II $\Rightarrow$ III is more complicated; it involves comparing geodesics $\gamma_s$ to reference straight line geodesics $\gamma_0$ but changing the comparision geodesic in certain cases to the point of exiting $\D_0$ rather than at the initial starting point. The following lemma summaries the information we will need about straight line geodesics. 

\begin{lemma} \label{lemma:diskarrangementangletimeboundsD1version2} 
    Every straight line geodesic that starts in $\bbR^2 \setminus \text{int}(\D_0)$ will: 
        \begin{itemize}
        \item intersect $\D_1$ at times $t_{-1} < 0 < t_{+1}$  with $|t_{\pm 1}|\leq 2.5$ and $t_{+1} - t_{-1} \leq 3$, and these time bounds are sharp.
        \item intersect $\D_{1/2}$ in a $\D_1-$ good way at times $t_{-1} < t_{-\frac 12} < 0 < t_{+\frac 12 } < t_{+1}$  with $|t_{\pm \frac 12}|  \leq 2.152851$  and $t_{+\frac 12} - t_{-\frac 12} \leq 2.30571$, and these time bounds are sharp (up to numerical precision).
        \end{itemize}
\end{lemma}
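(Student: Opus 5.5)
We will prove both bullets by reducing to the geometry of Theorem~\ref{thm:diskarrangementangletimebounds}, Corollary~\ref{thm:D1StrongFiniteHorizon} and Proposition~\ref{prop:DrhoStrongFiniteHorizon}, and running the same argument backwards in time. Fix $p \in \bbR^2\setminus\interior(\D_0)$ and a unit direction $v$. The line through $p$ in direction $v$ is considered in both time directions. Since $p$ lies in a curvilinear triangle bounded by $D^2,D^4,D^6$ as in Figure~\ref{fig:my_label}, the six-disk configuration used in Theorem~\ref{thm:diskarrangementangletimebounds} is available in both the forward direction $v$ and the backward direction $-v$.

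For the first bullet, we apply Theorem~\ref{thm:diskarrangementangletimebounds} to $v$ and to $-v$. The forward ray enters some disk of $\D_0$ with angle at least $\pi/6$ at a time $\tau_+\le 2$. As in the proof of Corollary~\ref{thm:D1StrongFiniteHorizon}, it then reaches $\D_1$ within a further time $\tfrac12$ (the worst case is tangency to $\D_1$), giving $t_{+1}\le 2.5$. The backward direction is symmetric and gives $t_{-1}\ge -2.5$.

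The bound $t_{+1}-t_{-1}\le 3$ does not follow from adding these two estimates. Instead, we treat the segment of the line between consecutive $\D_1$-hits as a single chord. The relevant consecutive hits are the last point of one $\D_1$-disk before $p$ and the first point of the next $\D_1$-disk after $p$. The straight line exits some $D_1$ tangentially or transversally, and Corollary~\ref{thm:D1StrongFiniteHorizon}(2) says every line starting on $\partial \D_1$ hits $\D_1$ again within time $3$. To make this rigorous we will redo the argument of Theorem~\ref{thm:diskarrangementangletimebounds}: compare with the exit point $p^{out}$ from the last disk and the entry point $p^{in}$ in the next good disk. By the estimate $d(p^{out},p^{in})\le 2$ there, plus $\tfrac12$ on each side inside $\D_0$, we get $3$.

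For sharpness we will use explicit lines. A line tangent to a $D_1$ in the direction of the sharp sequence from Theorem~\ref{thm:diskarrangementangletimebounds} realizes $t_{+1}-t_{-1}\to 3$. Placing $p$ at the far endpoint, near $p_1$ on $\partial D^4$, realizes $|t_{\pm1}|\to 2.5$.

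The second bullet follows the same scheme with $\D_{1/2}$ in place of $\D_0$-entry. Proposition~\ref{prop:DrhoStrongFiniteHorizon} gives the gap bound $t_{+\frac12}-t_{-\frac12}\le T_{ret}(\tfrac12)=2.30571$ directly, using the same chord reasoning. For the one-sided bound we take the worst-case tangent-to-$\D_1$ line. That line crosses $\D_{1/2}$ at horizontal offset $x=\sqrt{(1-\tfrac12\Delta r)^2-3/4}$ from the tangency point, and we start $p$ at the extreme boundary point of $\D_0$ on the opposite side of the gap. The distance from $p$ to the $\D_{1/2}$ entry is then $2.5-x$ if $p$ sits right at the exit from $\D_0$ (time $\tfrac12-x$ remaining there, plus $2$). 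The statement's $2.152851$ is smaller than this, so the extremal $p$ is not the naive one. We must optimize over $p$ on $\partial\D_0$ together with the direction, and also account for the requirement that $p\notin\interior(\D_0)$ forces $p$ to lie at least some distance past the previous $\D_{1/2}$-exit.

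The main obstacle is this last optimization. It requires parametrizing lines by the angle of entry $\theta\ge\pi/6$ into the next disk and the position of $p$ along the gap, and maximizing the time from $p$ to the $\D_{1/2}$-entry subject to $p$ lying outside $\D_0$. We will first try to show the maximum occurs for the tangent-to-$\D_1$ family with $p$ at the point where that line leaves the region bounded by the triangle $p_1p_2p_3$. We will then compute the resulting number in closed form and check it numerically against $2.152851$, with a computer-assisted maximization as a fallback.
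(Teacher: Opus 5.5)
Your treatment of the first bullet and of the gap bound $t_{+\frac12}-t_{-\frac12}\le 2.30571$ is sound and follows the paper's argument. Everything reduces to the line entering $\D_0$ at angle exactly $\pi/6$ (tangent to $\D_1$), combined with the good-return time $2$ of Theorem~\ref{thm:diskarrangementangletimebounds}.

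The genuine problem is the one-sided bound $|t_{\pm\frac12}|\le 2.152851$. You correctly derive the candidate value $2.5-x$ with $x=\sqrt{(1-\tfrac12\Delta r)^2-3/4}$, but then assert that $2.152851$ is strictly smaller than it. That is an arithmetic slip. Since $1-\tfrac12\Delta r=(2+\sqrt3)/4$, one has $x=\tfrac14\sqrt{4\sqrt3-5}\approx 0.3471494$. So $2.5-x\approx 2.1528506$, which rounded up is exactly $2.152851$ (just as $3-2x\approx 2.3057012$ rounds up to $2.30571$).

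Consequently the ``main obstacle'' does not exist, and the optimization you propose is aimed at an unreachable target. Take the sequence $p_n\to p_1$ on $\partial D^4$ with $v_n$ parallel to $\overrightarrow{p_np_3}$ from the proof of Theorem~\ref{thm:diskarrangementangletimebounds}. It enters $D^1$ at times tending to $2$, with entry angle tending to $\pi/6$ from above. Hence it reaches $\D_{1/2}$ at times tending to $2.5-x$. Any argument producing a maximum strictly below $2.5-x$ would therefore be wrong. The constraint you mention, that $p$ lies past the previous $\D_{1/2}$-exit, also has no effect on the forward time.

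What you need is the naive argument made uniform in the entry angle. From $p\notin\interior(\D_0)$, the forward ray enters some disk of $\D_0$ at angle $\theta\ge\pi/6$ within time $2$. This is the first disk where the ray can meet $\D_1$. Let $d=\cos\theta\le\sqrt3/2$ be the distance from the line to that disk's center. The time from entering the disk to entering its $D_{1/2}$ is $\sqrt{1-d^2}-\sqrt{(1-\tfrac12\Delta r)^2-d^2}$. This is increasing in $d$ (because $1-\tfrac12\Delta r<1$), hence at most $\tfrac12-x$. This gives $t_{+\frac12}\le \tfrac52-x\le 2.152851$. The bound for $t_{-\frac12}$ follows by replacing $v$ with $-v$, and sharpness comes from the sequence above.

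The same monotonicity in $d$, with $\sqrt{3/4-d^2}$ in place of the $D_{1/2}$ term, is what justifies your ``$\tfrac12$ on each side'' in $t_{+1}-t_{-1}\le 3$. It also justifies the two $\tfrac12-x$ pieces in the gap bound. You should state it once explicitly.
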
 

\noindent This lemma follows from the same logic as in the proof of Corollary \ref{thm:D1StrongFiniteHorizon}: examine the trajectory that enters $\D_0$ with angle $\pi/6$. 

\begin{proof}[Proof of Theorem~\ref{thm:Cibounds->geodcontrol->finitehorizon:Improved} II $\Rightarrow$ III]

Consider such a metric $g_s$ and a geodesic $\gamma_s$ in that metric. We wish to show the strong finite horizon condition of Statement III under the assumption that $g_s$ geodesics stay $\Delta r/4$-close to partner $g_0$ geodesics only for time up to 2.5. We proceed by cases on the location of $\gamma_s(0)$.

First, suppose that that $\gamma_s(0)\in \bbR^2 \setminus \text{int}(\D_0)$.  
If $\gamma_0(t)$ intersects $\D_1$ before $\gamma_s(t)$ does, then by Lemma \ref{lemma:diskarrangementangletimeboundsD1version2} this intersection occurs in time $t_{+1} \leq 2.5$. Hence by the geodesic control condition (\ref{eqn:Deltarover4v2}), $\gamma_s(t_{+1})\in \D_{3/4}$ and $\gamma_s(t_{+\frac 12}) \in \D_{1/4}$.
If $\gamma_s(t)$ intersects $\D_1$ before $\gamma_0(t)$, then the  same argument as in the proof of Theorem \ref{thm:Deltattheorem} applies.

Now suppose that $\gamma_s(0)\in  \D_0$. If $\gamma_s(0) \in \D_{3/4}$, then it trivially satisfies the strongly finite horizon condition. If $\gamma_s(0)\in  \D_0\setminus  \D_{3/4}$, then we divide into two cases: either $\gamma_s$ intersects $\D_{3/4}$ before leaving the particular disk in $\D_0$ or it does not. Note that, since individual disks $D_0 \subset \D_0$ are tangent, a geodesic could move from one disk to another without leaving $\D_0$; we wish to still count this behavior in the phrase ``leaving $\D_0$''.

If $\gamma_s$ does intersect $\D_{3/4}$ before leaving $\D_0$, then examine the corresponding straight-line geodesic $\gamma_0$ and consider two subcases. If $\gamma_0$ intersects $\D_1$ before leaving $\D_0$, it will do so in time at most $\frac 12$ at which point, by geodesic control, $\gamma_s$ will intersect $\D_{3/4}$. If $\gamma_0$ does not intersect $\D_1$, it will leave $\D_0$  in time at most 1 and then move away from $\D_0$. By geodesic control, $\gamma_s$ must intersect $\D_{3/4}$ in time at most 1. 

The final, and most complicated, case is when $\gamma_s(0)\in  \D_0\setminus  \D_{3/4}$ but $\gamma_s$ leaves $\D_0$ without intersecting $\D_{3/4}$. Let $t^* > 0$ be the first time at which $\gamma_s(t^*) \in \partial \D_0$, and let  $\overline \gamma_0$ be the straight line geodesic that matches up with $\gamma_s(t^*)$:   
    $$
    \overline \gamma_0(0) = \gamma_s(t^*), \text{ and } \, \overline \gamma_0'(0) = \gamma_s'(t^*) / \norm{\gamma_s'(t^*) }_0.
    $$
(We assume here that $\gamma_s$ does not reach $\D_1$ before $\gamma_0$ does; otherwise the argument in the proof of Theorem \ref{thm:Deltattheorem} again applies.)

By Lemma \ref{lemma:diskarrangementangletimeboundsD1version2}, there are times $-2.5 \leq t_{-1}< t_{-\frac 12} < 0 <   t_{+\frac 12} < t_{+1} \leq 2.5$ for which $\overline\gamma_0(t_{\pm 1}) \in \D_1$ with $t_{+1} - t_{-1} \leq 3$ and $\overline\gamma_0(t_{\pm \frac 12}) \in \D_{1/2}$ with $t_{+\frac 12} - t_{-\frac 12} \leq 2.30571$. By geodesic control, $\gamma_s(t^* + t) \in \D_{1/4}$ for $t_{+\frac 12} \leq t \leq t_{+1}$ and $\gamma_s (t^*+t_{\pm 1}) \in \D_{3/4}$.

\begin{itemize}
    \item If $|t_{-\frac 12}| \geq t^*$ (i.e. $t^* + t_{-\frac 12} \leq 0$), then $t^* + t_{+\frac 12} \leq t_{+\frac 12} - t_{-\frac 12} \leq 2.30571$. 
    \item If $|t_{-\frac 12}| \leq t^*$ (i.e. $t^* + t_{-\frac 12} \geq 0$), then before leaving $\D_0, \gamma_s$ will intersect $\D_{1/4}$:  $\overline\gamma_0(t_{- \frac 12}) \in \D_{1/2}$, so $\gamma_s(t^* + t_{- \frac 12}) \in \D_{1/4}$ with $ 0 \leq  t^* + t_{- \frac 12} \leq t^*$. Then the time from $\gamma_s$ leaving $\D_{1/4}$ to next returning to $\D_{1/4}$ in a good way is bounded above by $t^* -(t^* +t_{- \frac 12} ) + t_{+  \frac 12} = t_{+\frac 12} - t_{-\frac 12} \leq 2.30571$. 
\end{itemize}

\noindent We have now proven the result for all cases. 
\end{proof}

Decreasing the value of $T_{gc}$ from 3 to 2.5 and improving the estimate for $C_3$ (see below) causes the geodesic control curve to shift to the right, thereby expanding the region   in $(s_0, s_1)$ space for which geodesic control holds (see Figure   ~\ref{fig:IncreasingGeodesicControl}): the boundary of the geodesic control region (red curve) has moved outward from the boundary gotten using the weaker estimates from earlier in the paper (blue curve). 

\begin{figure}[h]
    \centering
    \includegraphics[width=.5\textwidth]{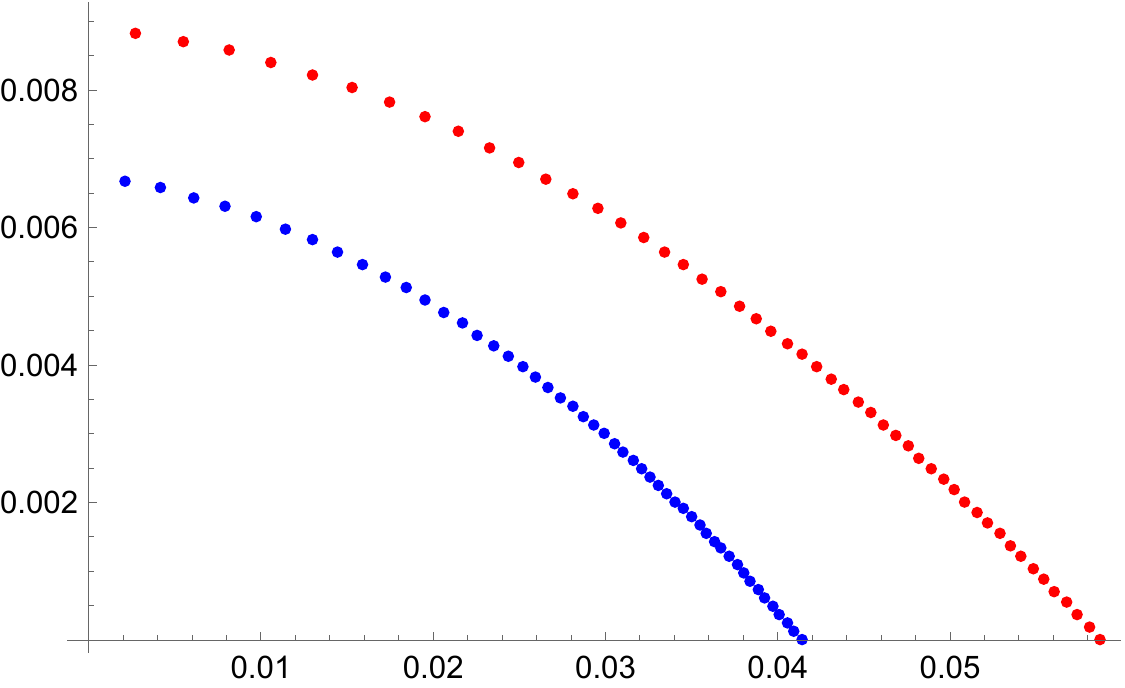}
    \caption{Comparing the boundary of the    geodesic control region using our refined estimates (red curve)  with the boundary arising from our initial estimates (blue curve).  }
    \label{fig:IncreasingGeodesicControl}
\end{figure}

\subsubsection{Improved $C_3(s_0,s_1)$ formula} \label{sec:ImprovedC3formula}

Recall that $C_3$ is an upper bound for the largest entry in the vector $(\Gamma_{11} +\Gamma_{12} +\Gamma_{22})$. Rather than estimating $C_3$  by using the matrix max norm as in Section~\ref{sec:boundC3} (see Corollary~\ref{cor:C3formula}), here we make a more precise estimate by using all of the terms arising in various multiplications and bounding them with quadratic terms (in $s_0, s_1$) using Mathematica. 

Recalling the definitions of the vectors $\Gamma_{ij} $ from equation~(\ref{Gammaeqation}), we have 
    $$
    \Gamma_{11} +\Gamma_{12} +\Gamma_{22}= Q_s^{-1} (  {\vb}_{11}  +{\vb}_{12} + {\vb}_{22})
    $$ 
with 
    \begin{align*}
    Q_s=\begin{pmatrix}
    E & F \\ 
    F&G
    \end{pmatrix},  \,
    {\vb}_{11} = 
    \begin{pmatrix}
    \frac 12 E_a \\ 
    F_a - \frac12 E_b  
    \end{pmatrix}, \, 
    {\vb}_{12} &= 
    \begin{pmatrix}
    \frac 12 E_b \\ 
     \frac12 G_a   
    \end{pmatrix}, \, 
    \vb_{22} = 
    \begin{pmatrix}
    F_b - \frac 12 G_a \\ 
    \frac12 G_b   \\ 
    \end{pmatrix}.
    \end{align*}
We express the first fundamental form using rectangular coordinates (see Section \ref{sec:rectangularcoordinates}). 

To bound $Q_s^{-1} (  {\vb}_{11}  +{\vb}_{12} + {\vb}_{22})$, we multiply 
    $\begin{bmatrix}
    G & -F \\ 
    -F&E
    \end{bmatrix}$
by $ ({\vb}_{11}  +{\vb}_{12} + {\vb}_{22}) $ and then bound each individual term in the resulting product by making it positive and using $\sin, \cos \leq 1$ and the rectangular coordinate bounds on the tube profile function $f$. Then we bound higher order terms (any term with $s_1^n$ for $n\geq 2$) by a constant times $s_1^2$ term. We then multiply by $\frac{1}{\det Q_s^{lb}}$ which is a lower bound for the deteminant (see Lemma \ref{lemma:DetLbQ}), producing a vector $C_3= 
    \begin{pmatrix}
         {C_3}_a\\
         {C_3}_b
    \end{pmatrix}$. Finally we define
    $$
    C_3^{improved}(s_0, s_1) = \max\{ {C_3}_a, {C_3}_b\}. 
    $$
Our calculations give
    \begin{equation}\label{eqn:improvedC3formulas}
    \begin{aligned}
     {C_3}_a &= \frac{2.135 s_0^2+0.874 s_0 s_1+13.612 s_0^3 s_1+4.100 s_1^2}{1 -2 s_1-0.155 s_0^2 s_1-1.946 s_1^2}\\
     \hbox{ and }&\\
     {C_3}_b &= \frac{2.135 s_0^2+0.331 s_0^4+1.748 s_0 s_1+4.269 s_0^2 s_1+0.068 s_0^3 s_1+4.762 s_1^2}{1 -2 s_1-0.155 s_0^2 s_1-1.946 s_1^2}\\
    \end{aligned}
    \end{equation}

\subsection{Improvements on bound for Riccati solution $u_{neg}$}

The global Anosov property is generated by all geodesics regularly encountering negative curvature for some uniform amount of time inside the tubes $\T_{1/4}$. Our initial approach bounded the negative curvature over all of $\T_{1/4}$ with a single value. However, a geodesic encounters more negative curvature the further into a tube it goes, and during a good time interval a trajectory will go so far as to reach $\T_{3/4}$, at which point there is significantly more negative curvature. In this section, we refine our analysis by dividing good time intervals inside $\T_{1/4}$ into two components: when the geodesic is in $\T_{1/4} \setminus \T_{1/2}$ (with the same curvature bound as before) and when it is in  $\T_{1/2}$ (with a better curvature bound). 

We give bounds on the curvature in these two regions denoted by $K_{1/4}= K_{neg}$ and $K_{1/2}$ with $K_{1/2}< K_{1/4}$ and bound the amount of time a geodesic spends in each of these regions (Section \ref{sec:MinTimeVariousRegions}). We then use these curvature and time bounds to give an improved estimate for the value of $u_{neg}$ of Ricatti solutions as geodesics pass through $\T_{1/4}$ in a $\T_{3/4}-$ good way (Section \ref{sec:improvedRicattisolution}). 

Here we extend the definition of strong finite horizon given in Definitions \ref{def:strongfhp} and \ref{def:strongfhpgeom} to capture this idea of bounding not only the time spent in a set $S_1$ but also the time spent in a subset $S_2$ of that set.   

\begin{definition}[Finite horizon condition: refined time version]\label{def:refinedstrongfhp}
    Let $M$ be a surface with Riemannian metric $g$, and let $S_2 \subset S_1$ be closed sets on $M$. We say $S_1, S_2$ have the \emph{$(T_{ret}, \Delta t_1, \Delta t_2)$-refined strong finite horizon property} if for any $g$-geodesic $\gamma$ there exists a sequence of times $t_i^{j \pm}$, $i \in \bbZ$, $j=1,2$ with $\lim_{i \to \pm \infty} t_i^{j\pm} = \pm \infty$, such that for all $i$,
        \begin{enumerate}
        \item $t_i^{1-} \leq t_i^{2-} \leq t_i^{2+} \leq t_i^{1+} < t_{i+1}^{1-} \leq t_{i+1}^{2-} \leq \ldots$ 
        \item for all $t \in [t_i^{1-}, t_i^{1+}]$, $\gamma(t) \in S_1$ and for all $t \in [t_i^{2-}, t_i^{2+}]$, $\gamma(t) \in S_2$
        \item $t_i^{1+}-t_i^{1-} \geq \Delta t_1$ and $t_i^{2+}-t_i^{2-} \geq \Delta t_2$
        \item $t_i^{2+}-t_i^{1-} \geq \frac12 (\Delta t_1+ \Delta t_2) $ and $t_i^{1+}-t_i^{2-} \geq \frac12 (\Delta t_1+ \Delta t_2)$
        \item $t_{i+1}^{1-} -t_i^{1+} \leq T_{ret}$
        \end{enumerate}
    We say such a geodesic enters $S_1$ in a $(\Delta t_1, \Delta t_2)$-good way at time $t_i^{1-}$.
\end{definition}

To summarize, a geodesic that enters $S_1$ in a $(\Delta t_1, \Delta t_2)$-good way will spend time at least $\Delta t_1$ in $S_1$ and of this time at least $\Delta t_2 < \Delta t_1$ will be spent inside $S_2$. 

To prove our Ricatti equation results (Section \ref{sec:improvedRicattisolution}), we also need   some control on how long a geodesic  spends in $S_1$ before entering $S_2$ and after leaving $S_2$. This control is provided by the lower bounds  on the time spent from entering $S_1$ to leaving $S_2$ and from entering $S_2$ to leaving $S_1$ (Condition 4).

\subsubsection{Improved curvature and time bounds}\label{sec:MinTimeVariousRegions}

We define 
    \begin{equation}\label{eqn:Krhodef}
    K_{\rho}\geq \max_{p\in \T_{\rho}} K(\rho)
    \end{equation}
to be an upper bound for the negative curvature on the set $\T_{\rho}$. This gives curvature  bounds $K_{1/4}$ on $\T_{1/4}$ and $K_{1/2}$ on $\T_{1/2}$ with $K_{1/2} \leq K_{1/4} <0$. Note that previously (see Section~\ref{sec:curvboundsinA14})  we used the notation  $  K_{neg}$ to denote an upper bound for the curvature on $\T_{1/4}$. Formulas for $K_\rho, \rho \leq 1/2$, come from Theorem~\ref{thm:knegformula}.

Here we show that if the pair of sets $\T_{3/4} \subset \T_{1/4}$ is $T_{ret}$ strongly finite horizon then the system will have the $(T_{ret},\Delta t_{1/4},\Delta t_{1/2})$ refined strongly finite horizon property.

\begin{theorem}\label{thm:timeintubes} (Improvement of Theorem~\ref{thm:Deltattheorem}.)
If a metric $g_s$ satisfies Condition I of Theorem~\ref{thm:Cibounds->geodcontrol->finitehorizon:Improved}, then $\T_{1/4}, \T_{1/2}$ has the $(T_{ret},\Delta t_{1/4},\Delta t_{1/2})$ refined strongly finite horizon property in the metric $g_s$ with 
    $$T_{ret} = 2.30571$$
and
    \begin{align*}
        \Delta t_{1/4} &=  \lambda_s^{lb} * 2 \sqrt{(r_{1/4})^2- (r_{3/4}+\frac{\Delta r}{20})^2} \\
        \Delta t_{1/2} &=  \lambda_s^{lb} * 2 \sqrt{(r_{1/2})^2- (r_{3/4}+\frac{\Delta r}{20})^2}.\\
        \frac 12 (\Delta t_{1/4}  + \Delta t_{1/2} )  &=\lambda_s^{lb} * \left \{\sqrt{(r_{1/4})^2- (r_{3/4}+\frac{\Delta r}{20})^2}+ \sqrt{(r_{1/2})^2- (r_{3/4}+\frac{\Delta r}{20})^2}\right \} \\
    \end{align*}
\end{theorem}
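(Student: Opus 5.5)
The proof will follow the template of Theorem~\ref{thm:Deltattheorem}, now with $T_{gc}=2.5$ and the improved $C_3$, and will track the time spent in two nested regions. The return-time bound $T_{ret}=2.30571$ and the existence of the times $t_i^{1\pm}$ come directly from Statement III of Theorem~\ref{thm:Cibounds->geodcontrol->finitehorizon:Improved}. After a good return, the geodesic $\gamma_s$ enters $\T_{1/4}$ and reaches $\T_{3/4}$. I set $t_i^{1\pm}$ to be the entry and exit times of $\D_{1/4}$, and $t_i^{2\pm}$ to be the entry and exit times of $\D_{1/2}$ on that same passage. Conditions 1, 2 and 5 of Definition~\ref{def:refinedstrongfhp} then hold automatically. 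The work is to prove the lower bounds in Conditions 3 and 4.

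\textbf{Extremal tangential geodesic.} As in case (1) of the proof of Theorem~\ref{thm:Deltattheorem}, first treat a geodesic tangent to $\D_{3/4}$ at time $0$, normalized so that $\gamma_s(0)=(0,r_{3/4})$ with horizontal tangent. The partner straight line leaves $\D_0$ within time $\tfrac12$, so $|t^{1\pm}|<\tfrac12$. Rerunning Theorem~\ref{thm:deltaabounds} on the window $|t|\le \tfrac12$ and comparing with Condition I at $T_{gc}=2.5$ gives
\[
\tfrac12 C_1+\tfrac18 C_2^2C_3 \le \tfrac15\Big(2.5\,C_1+\tfrac{2.5^2}{2}C_2^2C_3\Big) \le \frac{\Delta r}{20\sqrt2}.
\]
Both coefficients check out: $\tfrac12 = \tfrac15\cdot 2.5$, and $\tfrac18\le \tfrac15\cdot 3.125$. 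This gives $|b(\gamma_s(t))|\le r_{3/4}+\Delta r/20$. Applying the chord argument separately to $\D_{1/4}$ and to $\D_{1/2}$, the Euclidean length of $\gamma_s$ inside $\D_\rho$ is at least the horizontal chord of $\partial\D_\rho$ at height $r_{3/4}+\Delta r/20$. Multiplying by $\lambda^{lb}_s$ yields $\Delta t_{1/4}$ and $\Delta t_{1/2}$.

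\textbf{Condition 4.} The nesting bounds the time from entering $\D_{1/4}$ to leaving $\D_{1/2}$ below by the Euclidean length from the entry chord endpoint of $\D_{1/4}$ to the exit chord endpoint of $\D_{1/2}$. Since $\gamma_s$ stays in the strip $|b|\le r_{3/4}+\Delta r/20$ and has swept less than a half-turn, that horizontal span is at least
\[
\sqrt{r_{1/4}^2-(r_{3/4}+\tfrac{\Delta r}{20})^2}+\sqrt{r_{1/2}^2-(r_{3/4}+\tfrac{\Delta r}{20})^2}.
\]
This is exactly $\tfrac12(\Delta t_{1/4}+\Delta t_{1/2})/\lambda^{lb}_s$. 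The reversed condition, from entering $\D_{1/2}$ to leaving $\D_{1/4}$, follows by time reversal.

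\textbf{Non-tangential geodesics.} For geodesics entering $\D_{3/4}$ non-tangentially, I repeat case (2) of Theorem~\ref{thm:Deltattheorem}. The rotation angle $\Delta\theta(\alpha)$ is monotone because there are no conjugate points in negative curvature.
\begin{itemize}
\item If $\Delta\theta\le\pi$, the chords only lengthen, including the partial chords used in Condition 4.
\item If $\Delta\theta>\pi$, or if the geodesic crosses to the other sheet, the Euclidean length is at least $\pi/2$ or $2(r_{\rho}-\tfrac12)$, respectively. I will check numerically that these exceed the required bounds, including the Condition 4 sums.
\end{itemize}

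\textbf{Main obstacle.} I expect the hardest step to be Condition 4 in the non-tangential regime. The time from entering $\D_{1/4}$ to leaving $\D_{1/2}$ is not obviously monotone in $\alpha$, because the geodesic can go deep into the tube and exit $\D_{1/2}$ on a different side. I would first try to show that any such path must traverse both annuli $\D_{1/4}\setminus\D_{1/2}$ and $\D_{1/2}\setminus\D_{3/4}$ along a radial-type segment. Failing that, I would fall back on the crude bound, $\pi/2$ or the full depth to the neck, and verify numerically that it dominates $\tfrac12(\Delta t_{1/4}+\Delta t_{1/2})/\lambda^{lb}_s$.
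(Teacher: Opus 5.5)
Your proposal is correct and follows essentially the same route as the paper. You take $T_{ret}$ from Statement III and rerun the tangential-extremal case of Theorem~\ref{thm:Deltattheorem} with the window-$\tfrac12$ estimate $\tfrac12 C_1+\tfrac18 C_2^2C_3\le \Delta r/(20\sqrt2)$, which gives the chords of $\D_{1/4}$ and $\D_{1/2}$ at height $r_{3/4}+\Delta r/20$. You then get Condition 4 by adding the half-chord of $\D_{1/4}$ up to the midline to the half-chord of $\D_{1/2}$ beyond it.

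The non-tangential version of Condition 4, which you flag as the main obstacle, is not treated in the paper either; it only says the half-chord bound follows by ``checking the proof'' of Theorem~\ref{thm:Deltattheorem}. If you carry out your fallback, split cases on the angle swept between entering $\D_{1/4}$ and leaving $\D_{1/2}$ (and symmetrically), not over the whole passage through $\D_{1/4}$. Only then does the $\pi/2$ bound actually apply to the Condition 4 window.
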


\begin{proof}
These results follow from Theorem \ref{thm:Deltattheorem} and   Corollary \ref{cor:deltatinTrho} once  we make a small modification to the theorem's proof with $\lambda_s^{lb}$ given by equation~(\ref{lemma:lengthratio2}).  Since here we assume $g_s$ has geodesic control for time $2.5$ rather than for time $3$, the estimate for $ \max\{ |\Delta a(t,s)|, |\Delta b(t,s)|\} $ in equation (\ref{eqn:DeltaabBounds}) becomes 
    \begin{align*}
        &\leq \frac 12  C_1 + \frac 18  C_2^2 C_3 \\
        &\leq \frac 15 ( 2.5 C_1 + \frac {2.5^2}2  C_2^2 C_3 ) \\
        & \leq \frac 15 ( \frac{\Delta r}{ 4 \sqrt{2}} )
        = \frac{\Delta r}{ 20 \sqrt{2}}.
    \end{align*}
The $\Delta t_{1/4}$ amd $\Delta t_{1/2}$ results   now follow from Corollary \ref{cor:deltatinTrho} with the 24 replaced by 20. 
    
To prove Condition 4 of the refined strongly finite horizon property (Definition \ref{def:refinedstrongfhp}), we need a lower bound on the time a good geodesic spends from entering $\T_{1/4}$ to leaving $\T_{1/2}$ and  from entering $\T_{1/2}$ to leaving $\T_{1/4}$. We denote such  bounds by  $\Delta t_{\frac 14, \frac 12}$ and  $\Delta t_{\frac 12, \frac 14}$ respectively. 
    
Checking the proof of Theorem \ref{thm:Deltattheorem}, one sees that a lower bound on the time spent from entering a tube $T_\rho$ to crossing the $x=0$ half-way line is given by 
    $$
     \lambda_s^{lb} *  \sqrt{(r_{\rho})^2- (r_{3/4}+\frac{\Delta r}{20})^2}. 
    $$
Applying that result twice, once to the tube $T_{1/4}$ and once to the tube $T_{1/2}$ gives that 
    \begin{align*}
         \Delta t_{\frac 14, \frac 12}= \Delta t_{\frac 12, \frac 14}&=\lambda_s^{lb} * \left \{\sqrt{(r_{1/4})^2- (r_{3/4}+\frac{\Delta r}{20})^2}+ \sqrt{(r_{1/2})^2- (r_{3/4}+\frac{\Delta r}{20})^2}\right \} \\
         &= \frac 12( \Delta t_{1/4} + \Delta t_{1/2}).
    \end{align*} 
\end{proof}

From entering $\T_{1/4}$ to leaving $\T_{1/2}$ the geodesic spends time at least $\Delta t_{\frac 14, \frac 12}$. Of that time, at least $\Delta t_{1/2}$ is spent in $\T_{1/2}$ where the curvature is bounded above by $K_{1/2}$. The remaining time $\Delta t_{\frac 14, \frac 12}- \Delta t_{1/2}$ is \emph{not} a lower bound on the time spent in the annulus $\T_{1/4} \setminus \T_{1/2}$ as it 
may include a component when the geodesic is in $\T_{1/2}$. All we can conclude is that during this time interval, the curvature the geodesic encounters is bounded above by $K_{1/4}$. 

Note that by geodesic control, once a geodesic leaves $\T_{1/2}$ it will pass through $\T_{1/4}$ and then leave $\T_0$.

\subsubsection{Improved Ricatti solution bounds}\label{sec:improvedRicattisolution}

For an $(s_0,s_1)$ parameter value such that $\T_{1/4}, \T_{1/2}$ has the $(T_{ret},\Delta t_{1/4},\Delta t_{1/2})$ refined strongly finite horizon property with respective curvature upper bounds $K_{1/2}\leq K_{1/4}<0$, we show that there exists a lower bound $u_{lb}^* > 0$ for solutions of the Ricatti equation along geodesics that pass through $T_{1/4}$ in a $(\Delta t_1, \Delta t_2)$ good way. This lower bound is an improvement over our earlier approach when we only considered the curvature $K_{neg}=K_{1/4}$ in $\T_{1/4}$ and got a lower bound $u_{neg}$ (see  equation (\ref{eqn:uneg})). It will be the case that $u_{lb}^* > u_{neg} >0$. 

\begin{definition}
    Define 
        \begin{equation}\label{eqn:KLdefinition}
        K^* (t) = 
            \begin{cases}
            K_{1/4} &  t\in [0, \tau_1^*] \notag \\
            K_{1/2} & t\in (\tau_1^*, \tau_2^*]\notag\\
            K_{1/4} & t\in (\tau_2^*,\tau_f^*] \notag \\
            \end{cases} ,
        \end{equation}
    with 
        $$
        \tau_1^* =  \frac 12 (\Delta t_{1/4} - \Delta t_{1/2}), \quad 
        \tau_2^* =  \tau_1^* + \Delta t_{1/2}, \quad 
        \tau_f^*= \tau_2^* + \tau_1^* = \Delta t_{1/4 }
        $$
    and with  $K_{1/2} \leq K_{1/4} <0$    upper bounds for the curvature on $T_{1/2}$ and   $T_{1/4} $ respectively.

    Let $u^* (t), t\in[0,\tau_f^*] $, be the solution of the Ricatti equation
        $$
        u'(t) = - K^*(t) - u^2(t)
        $$
    with initial condition $u^* (0) = 0$, and set 
        $$
        u^{*}_{lb}  = \min\{ u^* (\tau_f^*), .99 \sqrt{-K_{1/4}} \}.
        $$
\end{definition}

We put the $.99$ in the definition to simplify the proof of the Kourganoff conditon in Theorem \ref{thm:ImprovedAnosov}. In our computations, the above minimum turns out to be the $ u^* (\tau_f^*)$ value. 

By Theorem~\ref{thm:timeintubes}, when the metric $g_s$ satisfies Condition I of Theorem~\ref{thm:Cibounds->geodcontrol->finitehorizon:Improved}, the values for  the $\Delta t$ terms lead to the following $\tau^*$ values: 
    \begin{align*}
    \tau_1^* &= 0.114016 \sqrt{1 -2 s_1-6.284 s_0 s_1^2}, \\
    \tau_2^* &= 0.558007 \sqrt{1 -2 s_1-6.284 s_0 s_1^2}, \\
    \tau_f^* &= 0.672023 \sqrt{1 -2 s_1-6.284 s_0 s_1^2}. \\
    \end{align*} 
    
\begin{proposition}
    Let $\T_{1/4}, \T_{1/2}$ have the $(T_{ret},\Delta t_{1/4},\Delta t_{1/2})$ refined strongly finite horizon property in the metric $g_s$.     Let $\gamma_s$ be a geodesic that enters $\T_{1/4}$ in a $(\Delta t_1, \Delta t_2)$-good way at time 0 and then  leaves $\T_{1/4}$ at time $\tau_f^s$. Let $u^s(t), t\in[0, \tau_f^s] $, be the Ricatti solution with initial condition $u^s(0) = 0 $ and curvature $K^s(t) = K(\gamma_s(t)) $. 

    Then 
        $$
        u^s(\tau_f^s )\geq u^{*}_{lb}.
        $$
\end{proposition}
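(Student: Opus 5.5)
We compare $u^s$ with $u^*$ using the Riccati comparison theorem. Two facts drive the argument. First, the flow of $u'=-K-u^2$ is monotone in both the initial value and the curvature: larger initial data and smaller curvature give larger solutions at later times. Second, when the curvature is bounded above by $K<0$ and $0\le u<\sqrt{-K}$, we have $u'\ge -K-u^2>0$. So the solution increases toward $\sqrt{-K}$ and never decreases while it stays below that level.

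\textbf{The good-way times.} Let $0=t^{1-}$, $t^{2-}$, $t^{2+}$, $t^{1+}=\tau_f^s$ be the times in Definition~\ref{def:refinedstrongfhp}. On $[0,\tau_f^s]$ we have $K^s\le K_{1/4}$, and on $[t^{2-},t^{2+}]$ we have $K^s\le K_{1/2}$. The time bounds are
\[
t^{2+}-t^{2-}\ge \Delta t_{1/2},\qquad t^{2+}\ge \tfrac12(\Delta t_{1/4}+\Delta t_{1/2})=\tau_2^*,\qquad \tau_f^s-t^{2-}\ge \tau_2^*.
\]

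\textbf{Reduction to a two-case bound.} Define the reparametrized comparison curvature
\[
\tilde K(t)=\begin{cases} K_{1/4}, & t\notin[t^{2-},t^{2+}],\\ K_{1/2}, & t\in[t^{2-},t^{2+}],\end{cases}
\]
and let $\tilde u$ solve the Riccati equation with $\tilde K$ and $\tilde u(0)=0$. Comparison gives $u^s\ge\tilde u$ on $[0,\tau_f^s]$. It therefore suffices to show $\tilde u(\tau_f^s)\ge u^*_{lb}$. Since $u^*_{lb}\le .99\sqrt{-K_{1/4}}$, it is enough to prove
\[
\tilde u(\tau_f^s)\ \ge\ \min\{u^*(\tau_f^*),\ .99\sqrt{-K_{1/4}}\}.
\]
If $\tilde u$ ever reaches $.99\sqrt{-K_{1/4}}$ on $[0,\tau_f^s]$, it stays at or above that level. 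This holds throughout, because $K_{1/2}\le K_{1/4}$ and so $\tilde u$ increases whenever it is below $\sqrt{-K_{1/4}}$. That case is done, so we may assume $\tilde u<\sqrt{-K_{1/4}}$ on the whole interval, in which case $\tilde u$ is increasing.

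\textbf{Rearrangement step (main obstacle).} We need to show that spending the $K_{1/4}$-time before and after a $K_{1/2}$-block of length at least $\Delta t_{1/2}$ produces at least as much growth as the symmetric schedule $K^*$. The plan is as follows.

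We first truncate $[t^{2-},t^{2+}]$ to a subinterval of length exactly $\Delta t_{1/2}$, placing it so that at least $\tau_1^*$ of $K_{1/4}$-time remains on each side. Condition 4 of Definition~\ref{def:refinedstrongfhp} should allow this: the $K_{1/2}$-block ends no earlier than $\tau_2^*$ and starts no later than $\tau_f^s-\tau_2^*$. Truncating only raises the curvature, so by comparison this only lowers the solution.

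Next we handle the extra $K_{1/4}$-time. A monotone increasing $\tilde u$ below $\sqrt{-K_{1/4}}$ only grows during additional $K_{1/4}$-time. So extra time at the start makes the solution larger when the $K_{1/2}$-block begins, and extra time at the end can only help. Comparing initial values at each switching time then gives $\tilde u(\tau_f^s)\ge u^*(\tau_f^*)$.

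The delicate point is matching the time shifts precisely. If the placement constraints above do not follow cleanly from Condition 4, we would instead argue that the $K_{1/2}$-block can slide within the allowed window. Sliding it either way yields a value at least the symmetric one, by an autonomous-flow monotonicity argument.
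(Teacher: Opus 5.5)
Your proposal is correct and is essentially the paper's argument. The paper splits according to whether the entry time $\tau_1^s=t^{2-}$ into $T_{1/2}$ is at most $\tau_1^*$ (then $K^s\le K^*$ directly on $[0,\tau_f^*]$) or larger (then it shifts time by $t^{2-}-\tau_1^*$, using $u^s\ge 0$ there). This is exactly your placement of a $K_{1/2}$-block of length $\Delta t_{1/2}$ at $a=\max(t^{2-},\tau_1^*)$, followed by the same $\sqrt{-K_{1/4}}$ threshold argument for the extra tail time.

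One bookkeeping point: the placement window $\max(t^{2-},\tau_1^*)\le\min(t^{2+}-\Delta t_{1/2},\,\tau_f^s-\tau_2^*)$ also needs $\tau_f^s\ge\Delta t_{1/4}=\tau_f^*$ from Condition~3 (this is what covers the case $t^{2-}\le\tau_1^*$), not only Condition~4. With that bound the window is always nonempty, so your sliding fallback is unnecessary.
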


\begin{proof}
Denote by $\tau_1^s$ and $\tau_2^s$ the times at which $\gamma_s(t)$ enters and then leaves $T_{1/2}$ (a specific tube in $\T_{1/2}$). Then $\tau_2^s - \tau_1^s$ is the time $\gamma_s$ spends in $T_{1/2}$.

Definition \ref{def:refinedstrongfhp} implies
    \begin{align*}
    \tau_f^s&\geq \tau_f^* =\Delta t_{1/4},\\
    \tau_2^s & \geq \tau_2^*=\Delta t_{\frac14,\frac12}, \\
    \tau_2^s - \tau_1^s & \geq \tau_2^* - \tau_1^* =   \Delta t_{1/2},\\
    \tau_f^s-\tau_1^s  & \geq \tau_f^* - \tau_1^* = \Delta t_{\frac 12, \frac 14}.
    \end{align*}
There are two cases to consider determined by the value of $\tau_1^s$.

\medskip
    
\noindent Case 1: $\tau_1^s \leq  \tau_1^*$. Then for $t\in [0, \tau_f^*]$, we have  
    $$
    K^s(t) \leq K^*(t) 
    $$
since
\begin{itemize}
    \item $K^s(t) \leq K_{1/4} = K^*(t)$ for $t\in [0, \tau_1^s]$;
    \item by the ordering of times, $0< \tau_1^s \leq \tau_1^* \leq \tau_2^* \leq \tau_2^s$ so $K^s(t) \leq K_{1/2} \leq  K^*(t)$ for $t\in [\tau_1^s, \tau_2^s]$; and
    \item $K^s(t) \leq K_{1/4} = K^*(t)$ for $t\in [\tau_2^*, \tau_f^*]$.
\end{itemize}
Hence, by the Comparison Theorem, $u^s(t) \geq u^* (t)$
for all $t\in [0, \tau_f^*]$ and thus
    $$
    u^s(\tau_f^*) \geq u^* (\tau_f^*).
    $$

We still need to account for the time from $\tau_f^*$ to $\tau^s_f$; we continue to examine $u^s(t)$ for $t \in [\tau_f^*,\tau^s_f]$. There are two cases. 

\begin{itemize}
    \item If $u^s(\tau_f^*) \geq \sqrt{-K_{1/4}}$, then since $\gamma_s(t)$ is in $\T_{1/4}$ (potentially also in $\T_{1/2}$), its curvature satisfies $K^s(t) \leq K_{1/4}$. Solving the Ricatti equation gives   $u^s(t ) \geq \sqrt{-K_{1/4}}$ so that 
        $$
        u^s(\tau_f^s ) \geq \sqrt{-K_{1/4}} \geq u^{*}_{lb}, 
        $$
    (although potentially ($u^s(\tau_f^s )< u^*(\tau_f^* )$).

    \item If $u^s(\tau_f^*) <  \sqrt{-K_{1/4}}$, then $u^s(\tau_f^*)' >  0$ and $u^s(t)$ will increase so  long as $u^s(t) < \sqrt{-K^s(t)}$. So either $u^s (t)$ will attain the value  $\sqrt{-K_{1/4}}$ , after which it will never decrease below that value, or it will keep increasing until time  $\tau_f^s$. In either case,   
        $$
        u^s( \tau_f^s) > u^* (\tau_f^*) \geq u^{*}_{lb}.  
        $$
\end{itemize}

\noindent Case 2: $\tau^s_1 > \tau_1^*$.  We will repeat the above argument with slight modifications. 

By assumption, $\tau^s_1-\tau_1^*$ is positive.  Then for $t\in (0, \tau^s_1-\tau_1^*], \, u^s(t) 
> 0$ and hence
    $$
    u^s(\tau_1^s-\tau_1^*) > u^*(0) = 0. 
    $$

Shifting the time along $\gamma_s$ by $(\tau_1^s-\tau_1^*)$ allows us to compare the curvature again to $K^*$.  Since
    \begin{equation*}
    K^s((\tau_1^s-\tau_1^*)+t) \leq K^*(t)\, \text{ for } \, t\in [0, \tau_f^*]
    \end{equation*}
we can use the argument of  Case 1 to show that 
    $$
    u^s( \tau_f^s)   \geq u^{*}_{lb}.  
    $$ 
\end{proof}

The following is a refinement of Definition~\ref{def:uRiccatisoln} as expressed using Lemma~\ref{lemma:uknegukpos}.

\begin{definition}\label{def:improvedstrictlyinvariantcones}
    The numbers $\Delta t_{1/2}, \Delta t_{1/4}, T_{ret}>0$ and $K_{1/2} < K_{1/4} < 0 < K_{pos}$  satisfy the {improved strictly invariant cone condition} if 
        $$
        u^{* }_{lb} + u_{pos}(T_{ret}) > 0 
        $$ 
    where $u_{pos}$ is the solution of the Ricatti equation with $K(t) \equiv K_{pos} $ and initial condition $u_{pos}(0) =0$.
\end{definition}

\subsection{Improved Anosov condition}

The following is an improvement on Theorem~\ref{thm:Anosov} because part (4) is a weaker assumption on $(s_0,s_1)$ than equation~(\ref{eqn:strictlyinvconecondition}).

\begin{theorem}\label{thm:ImprovedAnosov}
    Let $s=(s_0,s_1)$ be parameter values  for the metric $g_s$ on the model space $\M_0$ such that 
    \begin{enumerate}
        \item the sets $\T_{1/4} \subset \T_{1/2}$ have the $(T_{ret},\Delta t_{1/4},\Delta t_{1/2})$ refined strongly finite horizon property,   
        \item the Gaussian curvature inside $\T_{1/4}$ is bounded above by $K_{1/4}<0$, and inside $\T_{1/2}$  by $K_{1/2}<0$, 
        \item the Gaussian curvature on $\M_0$ is bounded above by $K_{pos}$,
        \item the improved strongly invariant cone condition (Definition \ref{def:improvedstrictlyinvariantcones}) holds for the numbers $\Delta t_{1/2}, \Delta t_{1/4}, T_{ret}>0$ and $K_{1/2} < K_{1/4} < 0 < K_{pos}$. 
    \end{enumerate}
    Then the geodesic flow is Anosov.
\end{theorem}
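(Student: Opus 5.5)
The plan is to mimic the proof of Theorem~\ref{thm:Anosov}: build a Kourganoff sequence of times along every geodesic and then invoke Theorem~\ref{thm:Kournagoff}, with the extension noted after it for the non-compact, periodic model space, where the supremum condition (\ref{eqn:suponDphi}) holds by periodicity. Fix a geodesic $\gamma$ and take the canonical good times $t_i^{1-}\le t_i^{2-}\le t_i^{2+}\le t_i^{1+}$ given by hypothesis (1) and Definition~\ref{def:refinedstrongfhp}. The Riccati input is the preceding Proposition. A geodesic that enters $\T_{1/4}$ in a $(\Delta t_1,\Delta t_2)$-good way at $t_i^{1-}$, with $u(t_i^{1-})=0$, has $u(t_i^{1+})\ge u^*_{lb}$ on exit. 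Comparison with constant curvature $K_{pos}$ over the at most $T_{ret}$ excursion then gives $u(t_{i+1}^{1-})\ge u_{pos}$-evolved value. Exactly as in Lemma~\ref{lemma:uknegukpos}, via the time-reversal trick, this value is strictly positive precisely when $u^*_{lb}+u_{pos}(T_{ret})>0$, which is hypothesis (4). Monotonicity of Riccati solutions in the initial value justifies starting the positive-curvature stretch from $u^*_{lb}$ rather than from the true value.

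The complication is unbounded sojourns in $\T_{1/4}$. To handle them I would subdivide each long interval $[t_i^{1-},t_i^{1+}]$ as in the proof of Theorem~\ref{thm:Anosov}. Insert extra Kourganoff times inside the sojourn, spaced by about $\Delta t_{1/4}$, keeping the last piece of length between $\Delta t_{1/4}$ and $2\Delta t_{1/4}$. On a subinterval lying wholly in $\T_{1/4}$ with $u=0$ at its start, comparison with constant $K_{1/4}$ gives $u\ge u_{neg}(\Delta t_{1/4})>0$. For the final piece, ending with the excursion outside, I would restart at the last inserted time and need a lower bound of the form $u^*_{lb}$ at exit. The cleanest way is to choose the last restart time to be $t_i^{1-}$ itself whenever the sojourn is shorter than roughly $2\Delta t_{1/4}$, and otherwise to argue directly. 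Once $u$ on a subinterval inside $\T_{1/4}$ reaches $0.99\sqrt{-K_{1/4}}$, which it does after a fixed time depending only on $K_{1/4}$, it stays above it while in $\T_{1/4}$, since $\sqrt{-K_{1/4}}$ is an attracting level for $u'=-K-u^2$ with $K\le K_{1/4}$. This is where the $.99$ cap in $u^*_{lb}$ is used. So restarting at a time far enough before $t_i^{1+}$ still yields $u(t_i^{1+})\ge u^*_{lb}$.

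With these choices the gaps between consecutive times are bounded below by $c=\Delta t_{1/4}$ (or the saturation time, if smaller) and above by $C=T_{ret}+2\Delta t_{1/4}$ plus that saturation time. Every endpoint value is at least $m=\min\{u_{neg}(\Delta t_{1/4}),\,u^*_{lb}+\ldots\}>0$, where the second entry is the positive quantity delivered by hypothesis (4). Kourganoff's theorem then gives the Anosov property.

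The main obstacle is the final-piece bookkeeping for long sojourns. Restarting $u=0$ at an intermediate time destroys the refined structure: the geodesic may already have left $\T_{1/2}$, so the Proposition does not apply verbatim. I would first try the saturation argument above, showing that a restart at least a fixed time $\tau_{sat}$ before exit yields $u\ge 0.99\sqrt{-K_{1/4}}\ge u^*_{lb}$. Failing that, I would use the restart only inside long sojourns and show that the last restart can always be placed at or before $t_i^{2-}$ when the sojourn passes through $\T_{1/2}$ once, treating multiple passages through $\T_{1/2}$ separately.
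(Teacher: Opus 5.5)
Your strategy matches the paper's proof. You rerun the subdivision from the proof of Theorem~\ref{thm:Anosov}, apply the preceding Proposition to sojourns in $\T_{1/4}$ that are left undivided, and use saturation at $0.99\sqrt{-K_{1/4}}$ (the reason for the $.99$ in $u^*_{lb}$) on the final piece of a divided sojourn. You then close the excursion with hypothesis (4) via Lemma~\ref{lemma:uknegukpos}.

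\textbf{The gap.} The concrete rule you write down leaves a case uncovered. Let $\tau_{\mathrm{sat}}$ be the time at which $\sqrt{-K_{1/4}}\tanh(\sqrt{-K_{1/4}}\,t)$ reaches $0.99\sqrt{-K_{1/4}}$. You space restarts by $\Delta t_{1/4}$ and leave a sojourn undivided only when it is shorter than about $2\Delta t_{1/4}$. So for a sojourn of length $L$ with $2\Delta t_{1/4}\le L<\tau_{\mathrm{sat}}$, your last restart lies less than $2\Delta t_{1/4}$ before $t_i^{1+}$.
\begin{itemize}
\item The Proposition no longer applies, since the geodesic may already have finished its passage through $\T_{1/2}$.
\item The saturation argument is unavailable, since the sojourn is too short to restart $\tau_{\mathrm{sat}}$ before exit.
\end{itemize}
Comparison then only gives $u(t_i^{1+})\ge u_{neg}(\ell)$, where $\ell$ can equal $\Delta t_{1/4}=\tau_f^*$. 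This is strictly below $u^*(\tau_f^*)$ because $K_{1/2}<K_{1/4}$, and below $0.99\sqrt{-K_{1/4}}$ because $\ell<\tau_{\mathrm{sat}}$. Hence it is below $u^*_{lb}$, and hypothesis (4) gives no control over an excursion started from such a value.

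This range is not vacuous. The paper reports that the minimum in $u^*_{lb}$ is $u^*(\tau_f^*)$, which already forces $\tau_{\mathrm{sat}}>\Delta t_{1/4}$. Moreover, $|K_{1/4}|$ is so small for the relevant parameters that $\tau_{\mathrm{sat}}$ is far larger than $2\Delta t_{1/4}$.

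\textbf{The fix.} This is exactly the paper's proof: let $\tau_{\mathrm{sat}}$, not $\Delta t_{1/4}$, set the subdivision scale.
\begin{itemize}
\item In the proof of Theorem~\ref{thm:Anosov}, replace $\Delta t$ by $\tau_{\mathrm{sat}}$.
\item Divide $[t_i^{1-},t_i^{1+}]$ only when $t_i^{1+}-t_i^{1-}\ge 2\tau_{\mathrm{sat}}$, into pieces of length $\tau_{\mathrm{sat}}$ with a last piece of length in $[\tau_{\mathrm{sat}},2\tau_{\mathrm{sat}})$.
\item Leave every shorter sojourn undivided, so the Proposition applies from $t_i^{1-}$.
\end{itemize}
On a last piece, plain comparison with constant curvature $K_{1/4}$ gives $u(t_i^{1+})\ge u_{neg}(\tau_{\mathrm{sat}})=0.99\sqrt{-K_{1/4}}\ge u^*_{lb}$. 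So neither the separate invariance argument nor your fallback about restarting before $t_i^{2-}$ is needed.

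The Kourganoff constants are then:
\begin{itemize}
\item $c=\min\{\Delta t_{1/4},\tau_{\mathrm{sat}}\}$;
\item $C=T_{ret}+2\tau_{\mathrm{sat}}$;
\item $m$ is the smaller of $0.99\sqrt{-K_{1/4}}$ and the value after time $T_{ret}$ of the constant-$K_{pos}$ Riccati solution started at $u^*_{lb}$, which is positive by (4).
\end{itemize}
Your stated $C$, which already contains the saturation time, suggests this is where you were heading. The undivided-sojourn threshold just has to be tied to $\tau_{\mathrm{sat}}$ rather than to $2\Delta t_{1/4}$.
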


Again this result would be an immediate consequenence of Kourganoff's Theorem~\ref{thm:Kournagoff} if the times spent in $\T_{1/4}$ were bounded. 

\begin{proof}
The proof is the same as in Theorem \ref{thm:Anosov} with one modification: we change the definition of $\Delta t$. Let $\Delta t$ be the time for a Ricatti solution with initial condition 0 and $K(t) \equiv K_{1/4}$ to have value greater than $.99 \sqrt {-K_{1/4}}$. We divide  $[t_i^{1-},t_i^{1+}]$ when $t_i^{1+} - t_i^{1-} \geq   2 \Delta t$. 

Consider a geodesic segment of length between $[\Delta t, 2\Delta t)$ that ends on $\partial \T_{1/4}$. The corresponding Ricatti solution will have value $u\geq .99 \sqrt{-K_{1/4}}\geq u_{lb}^*$ when it reaches $\partial \T_{1/4}$. The improved strongly invariant cone condition now implies that the Korganoff conditions hold. 
\end{proof}

\subsection{Effects on Anosov analysis and algorithm}

Using the refined strongly finite horizon formulation, we are able to lower our bound on the genus of embedded Anosov surfaces. 

\begin{theorem}\label{thm:improvedgenusbound}
    There exists a  smooth compact embedded surface of genus $17,288,843,803$  with Anosov geodesic flow. 
\end{theorem}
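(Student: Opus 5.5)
The plan is to rerun the pipeline of Theorem~\ref{thm:AnosovCondition} with every ingredient replaced by its refined counterpart from Section~\ref{sec:SophisticatedEstimates}, and then let the genus formula of Lemma~\ref{lemma:genusformula} turn a certified parameter value into the stated genus. Since $17{,}288{,}843{,}803 = 6n^3+1$ with $n=1423$ (indeed $1423^3 = 2{,}881{,}473{,}967$), the goal reduces to the following. We will exhibit an $s_0^*$ such that the metric $g_{(s_0^*, s_1^e)}$ with $s_1^e = 1/(\sqrt3\cdot 1423)\approx 4.0573\times 10^{-4}$ satisfies all four hypotheses of Theorem~\ref{thm:ImprovedAnosov}.

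\textbf{Step 1 (geodesic control).} Use $T_{gc}=2.5$ and $C_3^{improved}=\max\{{C_3}_a,{C_3}_b\}$ from (\ref{eqn:improvedC3formulas}), together with $C_1,C_2$ from Corollary~\ref{cor:C1C2formulas}. Check Condition~I of Theorem~\ref{thm:Cibounds->geodcontrol->finitehorizon:Improved} at $(s_0^*,s_1^e)$. Because all the $C_i$ are increasing in both variables, the monotonicity of Lemma~\ref{lemma:monotonicityinGCeqn}(d) still applies, so this single check suffices. Theorem~\ref{thm:timeintubes} then gives the refined strong finite horizon property with $T_{ret}=2.30571$ and the explicit $\Delta t_{1/4},\Delta t_{1/2}$, hence the explicit $\tau_1^*,\tau_2^*,\tau_f^*$.

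\textbf{Step 2 (curvature bounds).} Compute $K_{1/4}$ and $K_{1/2}$ from Theorem~\ref{thm:knegformula}, using Lemma~\ref{lemma:DeltaEFGupperboundsangularcoord} for $\rho=\tfrac14$ and Lemma~\ref{lemma:DeltaEFGupperboundsangularcoordrho1} for $\rho=\tfrac12$. Lemma~\ref{lemma:egmodelspacemonotonicitylight}(4) guarantees $\psi_{1/2}>\psi^*$, so the formula is valid at $\rho=\tfrac12$. Along the way, verify the sign conditions (\ref{eqn:Knum<0}) and (\ref{eqn:Kdenom>0}) at both radii. Next compute $K_{pos}=\max\{K_{tor}^{ub},K^{ub}_{\A_{1/4}}\}$ by Proposition~\ref{prop:kposbound}, checking the positivity conditions (\ref{eqn:Kposgroupcheck}).

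\textbf{Step 3 (Riccati condition).} Solve the piecewise-constant Riccati equation defining $u^*$ explicitly. On each constant-curvature piece the solution is a $\tanh$-type Möbius expression in the initial value, so $u^*(\tau_f^*)$ has a closed form obtained by composing three such maps. Set $u^*_{lb}=\min\{u^*(\tau_f^*),0.99\sqrt{-K_{1/4}}\}$ and verify $u^*_{lb}-\sqrt{K_{pos}}\tan(\sqrt{K_{pos}}\,T_{ret})>0$. Theorem~\ref{thm:ImprovedAnosov} then yields the Anosov property, and Section~\ref{sec:genuscalculation} with $s_1^e=1/(\sqrt3 n)$ gives the embedding with genus $6n^3+1$.

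\textbf{Step 4 (finding $s_0^*$).} This is a search in the spirit of Section~\ref{sec:numericallycomputegenus}. Sweep the slope $m$, intersect the refined geodesic control curve with the boundary of the refined Riccati region, and take the largest certified $s_1$. Then take the smallest $n$ with $1/(\sqrt3 n)$ below it, and check that $n-1$ fails.

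The main obstacle is certification, not logic. Every constant (the $\tau^*$ coefficients, the $\Delta$-bounds, $K_{1/2}$, and the composed $\tanh$ expression) must be evaluated with rigorous one-sided rounding in rational arithmetic, as in Section~\ref{sec:validatingMathematica}. The margin near the optimal $n$ is tiny, so I would carry the final verification at the chosen $(s_0^*,s_1^e)$ directly with interval or rational bounds, independently of the optimization loop.
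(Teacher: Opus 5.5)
Your proposal is correct and follows essentially the same route as the paper: rerun the argument of Theorem~\ref{thm:AnosovCondition} with $T_{gc}=2.5$, the improved $C_3$, the refined finite horizon property of Theorem~\ref{thm:timeintubes}, the curvature bounds $K_{1/4}$, $K_{1/2}$, $K_{pos}$, and the improved cone condition feeding Theorem~\ref{thm:ImprovedAnosov}, with the numerics producing $n=1423$ and genus $6n^3+1=17{,}288{,}843{,}803$. Your two deviations change nothing essential: you check the conditions directly at $s_1^e=1/(\sqrt3\cdot 1423)$ instead of invoking monotonicity below some $s_1^*$ (if anything slightly cleaner), and you solve the piecewise Riccati equation in closed form.
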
 

\begin{proof}
The proof of this result parallels  that of  Theorem~\ref{thm:AnosovCondition}. 

\begin{enumerate} 
    \item For a given $m$ value, there exists a unique parameter value $s_0^*$ for which 
        \begin{equation}
        C_1(s) T_{gc} + C_2^2(s) C_3(s) \frac {T_{gc}^2}2 = \frac{\Delta r}{4\sqrt{2}}
        \end{equation}
    with $s= (s_0, m s_0)$ and $T_{gc} = 2.5$. The $C_1, C_2$ functions are as before (Corollary \ref{cor:C1C2formulas}) while  $C_3(s_0, ms_0)$ is now given by equation (\ref{eqn:improvedC3formulas}). 

    For any $(s_0,s_1)$ with $s_0 \leq s_0^*$ and $s_1 \leq ms_0^*$, the sets $\T_{1/2} \subset \T_{1/4}$ have the $(T_{ret},\Delta t_{1/4}, \Delta t_{1/2})$ refined  strongly finite horizon property in the  $g_{(s_0,m s_1)}$ metric, with values of $T_{ret}, \Delta t_{1/4}, \Delta t_{1/2}$ given in Theorem~\ref{thm:timeintubes}. 

    \item There exists an $s_1^* \leq m s_0^*$ such that for any $s_1 \leq s_1^*$,   the metric $g_{(s_0^*,s_1)}$ has the improved strictly invariant cone condition (Definition ~\ref{def:improvedstrictlyinvariantcones}) 
        $$
        u^{* }_{lb} + u_{pos}(T_{ret}) > 0,
        $$
    and hence by Theorem \ref{thm:ImprovedAnosov} its geodesic flow will be Anosov. 

    \item There exist  $s_1^{e}=\frac{1}{\sqrt3 n} \leq s_1^*$ for which the metric $g_{(s_0^*,s_1^e)}$ comes from a compact embedded surface of genus $6 n^3 +1$.

    \item Using the numerical methods of Section \ref{sec:numericallycomputegenus}, we compute values for $s_0^*, s_1^*$ and $s_1^e$ and then iterate the calculations to maximize $s_1^e$ and hence minize the genus. Our calculations give $n = 1423$ leading to genus $17,288,843,803$. 
\end{enumerate} 
\end{proof}

\begin{figure}[h]
    \centering
    \includegraphics[width=.5\textwidth]{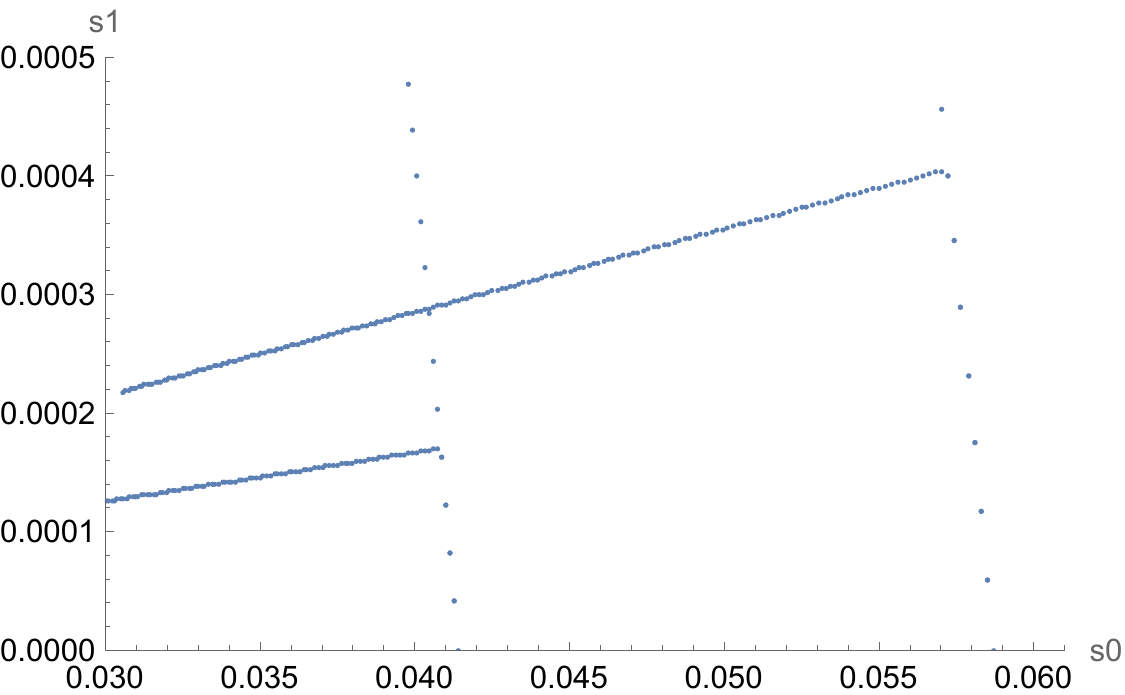}
    \caption{Geodesic control and Riccati condition curves for refined estimates as compared with original estimates.}
    \label{fig:ImprovedAnosov}
\end{figure}

The improvement in the bound on genus due to our refined estimates can be visualized in Figure~\ref{fig:ImprovedAnosov}. Decreasing the value of $T_{gc}$ from 3 to 2.5 and improving the estimate for $C_3$ causes the geodesic control curve to shift to the right (Figure~\ref{fig:IncreasingGeodesicControl}). Using the refined finite horizon condition and improved strictlly invariant cone condition causes the Anosov curve $(s_0^*, s_1^*)$ to move upward. The intersection of the geodesic control and Anosov curves, which corresponds to our surface of minimal genus,  moves to the right (larger $s_0$ value) and up (larger $s_1^*$ value).

\bibliographystyle{abbrv}
\bibliography{references}

\begin{thebibliography}{10}

\bibitem{Anosov-67}
D.~V. Anosov.
\newblock Geodesic flows on closed {Riemann} manifolds with negative curvature.
\newblock Proc. {Steklov} {Inst}. {Math}. 90, 235 p. (1967)., 1967.

\bibitem{Arnold-63}
V.~I. Arnol'd.
\newblock Small denominators and problems of stability of motion in classical
  and celestial mechanics.
\newblock {\em Russ. Math. Surv.}, 18(6):85--191, 1963.

\bibitem{Dolgopyat-98}
D.~Dolgopyat.
\newblock On decay of correlations in {Anosov} flows.
\newblock {\em Ann. Math. (2)}, 147(2):357--390, 1998.

\bibitem{Donnay-Pugh-03}
V.~Donnay and C.~Pugh.
\newblock Anosov geodesic flows for embedded surfaces.
\newblock {\em Geometric methods in dynamics (II): Ast\'erisque}, 287:61--69,
  2003.

\bibitem{Donnay-Pugh-04}
V.~Donnay and C.~Pugh.
\newblock Finite horizon {Riemann} structures and ergodicity.
\newblock {\em Ergodic Theory Dyn. Syst.}, 24(1):89--106, 2004.

\bibitem{Donnay-Visscher-18}
V.~Donnay and D.~Visscher.
\newblock A new proof of the existence of embedded surfaces with {Anosov}
  geodesic flow.
\newblock {\em Regul. Chaotic Dyn.}, 23(6):685--694, 2018.

\bibitem{Donnay-Visscher-mathematica}
V.~Donnay and D.~Visscher.
\newblock Anosov embedded surfaces: Mathematica computations.
\newblock DOI: 10.5281/zenodo.20834665, 2026.

\bibitem{Eberlein-73-I}
P.~Eberlein.
\newblock When is a geodesic flow of {Anosov} type, {I}.
\newblock {\em J. Differ. Geom.}, 8:437--463, 1973.

\bibitem{Eberlein-73-II}
P.~Eberlein.
\newblock When is a geodesic flow of {Anosov} type, {II}.
\newblock {\em J. Differ. Geom.}, 8:565--577, 1973.

\bibitem{Guglielmo-Ruggiero-26}
G.~Guglielmo and R.~Ruggiero.
\newblock Path {Connectivity} of {Anosov} {Metrics} on {Surfaces}.
\newblock Preprint, {arXiv}:2601.08656, 2026.

\bibitem{Gulliver-75}
R.~Gulliver.
\newblock On the variety of manifolds without conjugate points.
\newblock {\em Trans. Am. Math. Soc.}, 210:185--201, 1975.

\bibitem{Hadamard-98}
J.~Hadamard.
\newblock Les surfaces {\`a} courbures oppos{\'e}es et leurs lignes
  g{\'e}od{\'e}siques.
\newblock {\em Journ. de Math. (5)}, 4:27--73, 1898.

\bibitem{Hedlund-39}
G.~Hedlund.
\newblock The dynamics of geodesic flows.
\newblock {\em Bull. Am. Math. Soc.}, 45:241--260, 1939.

\bibitem{Hopf-39}
E.~Hopf.
\newblock Statistik der geod{\"a}tischen {Linien} in {Mannigfaltigkeiten}
  negativer {Kr{\"u}mmung}.
\newblock Ber. {Verh}. {S{\"a}chs}. {Akad}. {Leipzig} 91, 261-304 (1939).,
  1939.

\bibitem{Hopf-48}
E.~Hopf.
\newblock Closed surfaces without conjugate points.
\newblock {\em Proc. Natl. Acad. Sci. USA}, 34:47--51, 1948.

\bibitem{Hunt-MacKay-03}
T.~J. Hunt and R.~S. MacKay.
\newblock Anosov parameter values for the triple linkage and a physical system
  with a uniformly chaotic attractor.
\newblock {\em Nonlinearity}, 16(4):1499--1510, 2003.

\bibitem{Jane-Ruggiero-14}
D.~Jane and R.~Ruggiero.
\newblock Boundary of {Anosov} dynamics and evolution equations for surfaces.
\newblock {\em Math. Nachr.}, 287(17-18):2002--2020, 2014.

\bibitem{Klingenberg-74}
W.~Klingenberg.
\newblock Riemannian manifolds with geodesic flow of {Anosov} type.
\newblock {\em Ann. Math. (2)}, 99:1--13, 1974.

\bibitem{Kourganoff-16-linkage}
M.~Kourganoff.
\newblock Anosov geodesic flows, billiards and linkages.
\newblock {\em Commun. Math. Phys.}, 344(3):831--856, 2016.

\bibitem{Kourganoff-16-embed}
M.~Kourganoff.
\newblock Embedded surfaces with {Anosov} geodesic flows, approximating
  spherical billiards.
\newblock Preprint, {arXiv}:1612.05430, 2016.

\bibitem{Kourganoff-18}
M.~Kourganoff.
\newblock Uniform hyperbolicity in nonflat billiards.
\newblock {\em Discrete Contin. Dyn. Syst.}, 38(3):1145--1160, 2018.

\bibitem{Liverani-04}
C.~Liverani.
\newblock On contact {Anosov} flows.
\newblock {\em Ann. Math. (2)}, 159(3):1275--1312, 2004.

\bibitem{Ratner-74}
M.~Ratner.
\newblock Anosov flows with {Gibbs} measures are also {Bernoullian}.
\newblock {\em Isr. J. Math.}, 17:380--391, 1974.

\end{thebibliography}
 
\end{document}